\newenvironment{altenumerate}
   {\begin{list}
      {\textup{(\theenumi)} }
      {\usecounter{enumi}
       \setlength{\labelwidth}{0pt}
       \setlength{\labelsep}{2pt}
       \setlength{\leftmargin}{0pt}
       \setlength{\itemsep}{\the\smallskipamount}
       \renewcommand{\theenumi}{\roman{enumi}}
      }}
   {\end{list}}
\newtheorem{lem}{Lemma}[section]
\newtheorem{definition}[lem]{Definition}
\newtheorem{defprop}[lem]{Definition/Proposition}
\newtheorem{cor}[lem]{Corollary}
\newtheorem{thm}[lem]{Theorem}
\newtheorem{prop}[lem]{Proposition}
\newtheorem{conj}[lem]{Conjecture}
\newtheorem{claim}[lem]{Claim}
\theoremstyle{remark}
\newtheorem{rem}[lem]{Remark}
\newtheorem{example}[lem]{Example}
\DeclareMathOperator{\Hom}{Hom}
\DeclareMathOperator{\Ext}{Ext}
\DeclareMathOperator{\Tor}{Tor}
\DeclareMathOperator{\im}{im}
\DeclareMathOperator{\supp}{supp}
\DeclareMathOperator{\Spa}{Spa}
\DeclareMathOperator{\Spec}{Spec}
\DeclareMathOperator{\Sp}{Sp}
\DeclareMathOperator{\Spf}{Spf}
\DeclareMathOperator{\alHom}{alHom}
\DeclareMathOperator{\End}{End}
\DeclareMathOperator{\Gal}{Gal}
\newcommand{\tr}{\operatorname*{tr}}
\newcommand{\GL}{\mathrm{GL}}
\newcommand{\ad}{\mathrm{ad}}
\newcommand{\perf}{\mathrm{perf}}
\newcommand{\spec}{\mathrm{sp}}
\newcommand{\Berk}{\mathrm{Berk}}
\newcommand{\fet}{\mathrm{f\acute{e}t}}
\newcommand{\et}{\mathrm{\acute{e}t}}
\newcommand{\Perf}{\mathrm{Perf}}
\newcommand{\Fil}{\mathrm{Fil}}
\newcommand{\gr}{\mathrm{gr}}
\begin{document}

\title{Perfectoid Spaces}
\author{Peter Scholze}
\begin{abstract}
We introduce a certain class of so-called perfectoid rings and spaces, which give a natural framework for Faltings' almost purity theorem, and for which there is a natural tilting operation which exchanges characteristic $0$ and characteristic $p$. We deduce the weight-monodromy conjecture in certain cases by reduction to equal characteristic.
\end{abstract}

\date{\today}
\maketitle
\tableofcontents
\pagebreak

\section{Introduction}

In commutative algebra and algebraic geometry, some of the most subtle problems arise in the context of mixed characteristic, i.e. over local fields such as $\mathbb{Q}_p$ which are of characteristic $0$, but whose residue field $\mathbb{F}_p$ is of characteristic $p$. The aim of this paper is to establish a general framework for reducing certain problems about mixed characteristic rings to problems about rings in characteristic $p$. We will use this framework to establish a generalization of Faltings's almost purity theorem, and new results on Deligne's weight-monodromy conjecture.

The basic result which we want to put into a larger context is the following canonical isomorphism of Galois groups, due to Fontaine and Wintenberger, \cite{FontaineWintenberger}. A special case is the following result.

\begin{thm} The absolute Galois groups of $\mathbb{Q}_p(p^{1/p^\infty})$ and $\mathbb{F}_p((t))$ are canonically isomorphic.
\end{thm}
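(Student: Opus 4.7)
The plan is to realize both fields, after harmless extensions and completions, as ``tilts'' of each other, and then to invoke a general correspondence between finite étale covers on the two sides. I would first replace $\mathbb{Q}_p(p^{1/p^\infty})$ by its $p$-adic completion $K$, and $\mathbb{F}_p((t))$ by the $t$-adic completion $L$ of its maximal purely inseparable extension $\mathbb{F}_p((t^{1/p^\infty}))$. Since passing to a purely inseparable extension does not change the absolute Galois group, and completing a henselian nonarchimedean field does not either by Krasner's lemma, the theorem reduces to constructing a natural isomorphism $G_K \cong G_L$.

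The key construction is the \emph{tilt} of $K$, defined as
\[
  K^\flat := \varprojlim_{x \mapsto x^p} K,
\]
with multiplication componentwise and addition reconstructed by a standard limit procedure; it is a perfect nonarchimedean field of characteristic $p$. The compatible system $(p, p^{1/p}, p^{1/p^2}, \dots)$ defines an element $\varpi \in K^\flat$ with the correct valuation, and sending $\varpi \mapsto t$ should extend to a natural isomorphism $K^\flat \cong L$. The remaining problem is then to show that tilting induces an isomorphism $G_K \cong G_{K^\flat}$.

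The main step is a \emph{tilting equivalence} asserting that the categories of finite étale $K$-algebras and finite étale $K^\flat$-algebras are canonically equivalent. Granting this, applying it to the tower of finite Galois subextensions and passing to inverse limits yields the desired canonical isomorphism, functorially in the data on both sides.

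The hard part is proving this tilting equivalence. The idea is to work integrally: one has a natural identification $\mathcal{O}_K/p \cong \mathcal{O}_{K^\flat}/\varpi$, which allows transport of structures modulo the uniformizer, and one wants to promote this to a statement about finite étale covers. Two obstacles arise. First, finite étale algebras over $K$ do not in general descend to honestly finite étale integral models, only to \emph{almost} finite étale ones, so the whole argument must take place in the framework of almost mathematics developed by Faltings. Second, once a mod-uniformizer cover has been produced on the tilt side, lifting it to characteristic $0$ requires a strong deformation statement that is essentially Faltings's almost purity theorem for this field. This lifting step is the most delicate point and is where I would expect the technical core of the perfectoid formalism to be deployed.
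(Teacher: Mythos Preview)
Your outline is essentially the paper's approach: complete both fields, identify $K^\flat$ with the completed perfection of $\mathbb{F}_p((t))$, and prove $K_\fet \cong K^\flat_\fet$ via the chain
\[
K_\fet\cong K^{\circ a}_\fet\cong (K^{\circ a}/\varpi)_\fet = (K^{\flat \circ a}/\varpi^\flat)_\fet\cong K^{\flat \circ a}_\fet\cong K^\flat_\fet\ .
\]
One clarification: your final paragraph conflates two distinct steps. The lifting $(K^{\circ a}/\varpi)_\fet \cong K^{\circ a}_\fet$ is \emph{formal} deformation theory---finite \'etale covers lift uniquely over nilpotents because their cotangent complex vanishes---and is easy. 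Almost purity is the separate step $K_\fet \cong K^{\circ a}_\fet$, i.e.\ that $L^{\circ a}/K^{\circ a}$ is almost finite \'etale for every finite extension $L/K$; you describe this correctly as your first obstacle but then mislabel the deformation step as almost purity in your second.

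The paper also records a more elementary alternative for this hard outer step, avoiding ramification-theoretic almost purity altogether. One shows directly, by a Newton-polygon approximation argument, that if $K^\flat$ is algebraically closed then so is $K$. Applying this with $K^\flat$ replaced by the completed algebraic closure of the given tilt, one obtains an algebraically closed perfectoid field over $K$; Krasner's lemma then shows that the union of untilts of finite extensions of $K^\flat$ is already algebraically closed, which gives essential surjectivity of $K^\flat_\fet\hookrightarrow K_\fet$ by a Galois-descent argument. This route is lighter than the almost-purity machinery you anticipate.
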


In other words, after adjoining all $p$-power roots of $p$ to a mixed characteristic field, it looks like an equal characteristic ring in some way. Let us first explain how one can prove this theorem. Let $K$ be the completion of $\mathbb{Q}_p(p^{1/p^\infty})$ and let $K^\flat$ be the completion of $\mathbb{F}_p((t))(t^{1/p^\infty})$; it is enough to prove that the absolute Galois groups of $K$ and $K^\flat$ are isomorphic. Let us first explain the relation between $K$ and $K^\flat$, which in vague terms consists in replacing the prime number $p$ by a formal variable $t$. Let $K^\circ$ and $K^{\flat \circ}$ be the subrings of integral elements. Then
\[
K^\circ / p = \mathbb{Z}_p[p^{1/p^\infty}]/p \cong \mathbb{F}_p[t^{1/p^\infty}]/t = K^{\flat \circ}/t\ ,
\]
where the middle isomorphism sends $p^{1/p^n}$ to $t^{1/p^n}$. Using it, one can define a continuous multiplicative, but nonadditive, map $K^\flat\rightarrow K$, $x\mapsto x^\sharp$, which sends $t$ to $p$. On $K^{\flat\circ}$, it is given by sending $x$ to $\lim_{n\rightarrow\infty} y_n^{p^n}$, where $y_n\in K^\circ$ is any lift of the image of $x^{1/p^n}$ in $K^{\flat\circ}/t = K^\circ/p$. Then one has an identification
\[
K^\flat = \varprojlim_{x\mapsto x^p} K\ , x\mapsto (x^\sharp,(x^{1/p})^\sharp,\ldots)\ .
\]
In order to prove the theorem, one has to construct a canonical finite extension $L^\sharp$ of $K$ for any finite extension $L$ of $K^\flat$. There is the following description. Say $L$ is the splitting field of a polynomial $X^d + a_{d-1}X^{d-1}+\ldots+a_0$, which is also the splitting field of $X^d + a_{d-1}^{1/p^n} X^{d-1}+\ldots+a_0^{1/p^n}$ for all $n\geq 0$. Then $L^\sharp$ can be defined as the splitting field of $X^d + (a_{d-1}^{1/p^n})^\sharp X^{d-1}+\ldots+(a_0^{1/p^n})^\sharp$ for $n$ large enough: these fields stabilize as $n\rightarrow \infty$.

In fact, the same ideas work in greater generality.

\begin{definition} A perfectoid field is a complete topological field $K$ whose topology is induced by a nondiscrete valuation of rank $1$, such that the Frobenius $\Phi$ is surjective on $K^\circ / p$.
\end{definition}

Here $K^\circ\subset K$ denotes the set of powerbounded elements. Generalizing the example above, a construction of Fontaine associates to any perfectoid field $K$ another perfectoid field $K^\flat$ of characteristic $p$, whose underlying multiplicative monoid can be described as
\[
K^\flat = \varprojlim_{x\mapsto x^p} K\ .
\]
The theorem above generalizes to the following result.

\begin{thm}\label{GeneralFW} The absolute Galois groups of $K$ and $K^\flat$ are canonically isomorphic.
\end{thm}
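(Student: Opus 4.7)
The plan is to establish an equivalence of categories between finite (separable) extensions of $K$ and finite extensions of $K^\flat$, preserving degrees and compatible with inclusions. Passing to a compatible system of separable closures then yields the canonical isomorphism of absolute Galois groups.

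First, I would verify that if $L/K$ is finite separable, then $L$ is again a perfectoid field. The nondiscrete rank-$1$ valuation on $K$ extends uniquely to $L$, and $L$ is automatically complete because $K$ is. The substantive point is that Frobenius is surjective on $L^\circ/p$, which I would establish by combining the assumed surjectivity on $K^\circ/p$ with the fact that $L^\circ$ is finite over $K^\circ$ and an approximation/lifting argument in the valuation topology. Dually, any finite extension of $K^\flat$ is perfect (since $K^\flat$ is already perfect) and is therefore automatically perfectoid. Thus the tilting construction $K\mapsto K^\flat$ upgrades to a functor $L\mapsto L^\flat=\varprojlim_{x\mapsto x^p} L$ from finite extensions of $K$ to finite extensions of $K^\flat$.

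Second, and this is the heart of the matter, I would construct an untilting functor in the other direction. Given a finite extension $M/K^\flat$, its ring of integers $M^{\flat\circ}$ is finite over $K^{\flat\circ}$, and the reduction $M^{\flat\circ}/t$ is a finite algebra over $K^{\flat\circ}/t\cong K^\circ/p$. The goal is to lift this mod-$p$ algebra to a $p$-adically complete flat $K^\circ$-algebra $A$ such that $M^\sharp:=A[1/p]$ is a finite separable extension of $K$ whose tilt is canonically $M$. A separate degree computation, comparing the length of $M^\circ/p$ and $M^{\flat\circ}/t$ via the mod-$p$ identification, then shows $[M^\sharp:K]=[M:K^\flat]$ and that the two functors are quasi-inverse.

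The main obstacle is the construction and uniqueness of the untilt. Since $K^\circ$ is typically non-noetherian and not a discrete valuation ring, one cannot directly invoke classical étale deformation theory along the nilpotent thickenings $K^\circ/p\to K^\circ/p^n$. The remedy is to work in the framework of \emph{almost mathematics}: modules and algebras are considered modulo those annihilated by the maximal ideal $\mathfrak{m}\subset K^\circ$, and in this almost category finite étale algebras deform uniquely along nilpotent thickenings, producing an almost finite étale $K^\circ$-algebra from the mod-$p$ input. The final step---descending from an almost finite étale $K^\circ$-algebra to an honest finite extension of $K$ after inverting $p$---is in essence Faltings' almost purity theorem. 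Controlling both the almost deformation and this descent simultaneously on the $K$ side and the $K^\flat$ side, and checking that the resulting bijection of finite extensions is canonical, is the principal technical difficulty.
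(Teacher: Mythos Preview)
Your strategy is broadly the same as the paper's: establish the chain of equivalences
\[
K_\fet \cong K^{\circ a}_\fet \cong (K^{\circ a}/\varpi)_\fet = (K^{\flat\circ a}/\varpi^\flat)_\fet \cong K^{\flat\circ a}_\fet \cong K^\flat_\fet
\]
via almost mathematics, where the inner equivalences are unique lifting of almost finite \'etale covers along nilpotent thickenings, and the outer ones are almost purity (easy in characteristic $p$, hard in characteristic $0$). This is exactly the architecture the paper uses.

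There is, however, a structural issue with your Step~1. You propose to first prove that any finite extension $L/K$ is again perfectoid ``by combining the assumed surjectivity on $K^\circ/p$ with the fact that $L^\circ$ is finite over $K^\circ$ and an approximation/lifting argument.'' This is not an independent preliminary lemma: surjectivity of Frobenius on $L^\circ/p$ is essentially the content of almost purity in mixed characteristic, i.e.\ of the statement that $L^{\circ a}/K^{\circ a}$ is finite \'etale. The paper does not prove $L$ perfectoid in advance; rather, it obtains this as a \emph{consequence} of the equivalence $K_\fet\cong K^{\circ a}_\fet$, which in the paper's first proof is imported from Gabber--Ramero's ramification-theoretic analysis of deeply ramified fields. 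So your Step~1 is either circular or is itself the hard step you defer to the end.

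It is also worth noting that the paper gives a second, more elementary proof that avoids ramification theory entirely. One lets $M$ be the completed algebraic closure of $K^\flat$, untilts to a perfectoid field $M^\sharp$ over $K$, checks directly (Proposition~\ref{PerfectoidAlgebrClosed}) that $M^\sharp$ is algebraically closed, and then uses Krasner's lemma to show that the union of untilts of finite subextensions is already all of $\overline{K}$. This reduces essential surjectivity of the untilting functor to a Galois-descent argument inside a single large field, and sidesteps the need to prove almost purity for $K$ itself.
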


Our aim is to generalize this to a comparison of geometric objects over $K$ with geometric objects over $K^\flat$. The basic claim is the following.

\begin{claim} The affine line $\mathbb{A}^1_{K^\flat}$ `is equal to' the inverse limit $\varprojlim_{T\mapsto T^p} \mathbb{A}^1_K$, where $T$ is the coordinate on $\mathbb{A}^1$.
\end{claim}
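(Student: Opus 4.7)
The plan is to interpret the claim inside the category of adic spaces and prove it by tilting, using the fact that in characteristic $p$ Frobenius is a universal homeomorphism. First I would reduce to the closed unit disk: replace $\mathbb{A}^1_K$ by $\mathbb{D}_K = \Spa(K\langle T\rangle, K^\circ\langle T\rangle)$ and analogously over $K^\flat$. Disks of any radius can be handled in the same way, and $\mathbb{A}^1$ is recovered by gluing. The transition maps $T\mapsto T^p$ restrict to finite flat covers $\mathbb{D}_K\to\mathbb{D}_K$, so one has a well-defined cofiltered system.

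Next I would produce a candidate for the inverse limit on the $K$-side. Consider the $K$-algebra
\[
A = \bigl(\varinjlim\nolimits_n K\langle T^{1/p^n}\rangle\bigr)^{\wedge},
\]
where the colimit is along the obvious inclusions and the completion is $p$-adic. One checks that $(A, A^\circ)$ is a perfectoid affinoid $K$-algebra: completeness is automatic, and Frobenius is surjective on $A^\circ/p$ because every $T^{1/p^n}$ is visibly a $p$-th power of $T^{1/p^{n+1}}$. One then has to verify that $\Spa(A, A^\circ)$ genuinely represents $\varprojlim_{T\mapsto T^p} \mathbb{D}_K$ in the relevant sense, i.e.\ that on the level of underlying topological spaces and on rational subsets one recovers the inverse limit.

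The decisive step is to tilt. By the tilting equivalence for perfectoid affinoid algebras, $A^\flat$ is the analogous completed perfection over $K^\flat$, namely $(\varinjlim_n K^\flat\langle T^{1/p^n}\rangle)^{\wedge}$. Now, because $K^\flat$ has characteristic $p$, the Frobenius $T\mapsto T^p$ on $K^\flat\langle T\rangle$ is a universal homeomorphism on $\Spa$ and induces an equivalence of étale sites; hence $A^\flat$ has the same underlying topological space and étale site as $K^\flat\langle T\rangle$ itself. Combined with the tilting equivalence on étale sites (the perfectoid generalization of Theorem \ref{GeneralFW} beyond the case of fields), this identifies the inverse limit with $\mathbb{D}_{K^\flat}$, and gluing gives the claim for $\mathbb{A}^1$.

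The principal obstacle is that essentially none of this machinery is available at this point in the paper: one must develop a theory of perfectoid spaces so that $\Spa(A,A^\circ)$ is a sheafy adic space, prove the tilting equivalence for perfectoid affinoid algebras (not only for fields), and show that tilting preserves the étale site. These are precisely the core technical goals of the paper, so the claim should be read as a motivational target whose rigorous form will be an output of the forthcoming theory rather than something provable directly from Theorem \ref{GeneralFW}.
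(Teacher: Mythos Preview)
Your reading is correct: the claim is a motivational statement in the introduction, not a theorem with a self-contained proof, and the paper justifies it only after building the theory of perfectoid spaces. Your outline---reduce to the closed disk, form the perfectoid algebra $K\langle T^{1/p^\infty}\rangle$, identify its tilt as $K^\flat\langle T^{1/p^\infty}\rangle$, and then use that Frobenius is a universal homeomorphism in characteristic $p$---is exactly the route the paper takes (see Proposition \ref{ExampleTilting} for the tilt computation, Theorem \ref{MainThmAnalytic} for sheafiness and the topological identification, Theorem \ref{TiltingEquivalenceSites} for the \'etale site, and Corollary \ref{DecompletionFrob} for the Frobenius step); the general toric statement in Section 8 packages precisely this argument. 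Your final paragraph identifying the claim as a target rather than something provable from Theorem \ref{GeneralFW} alone is spot on.
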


One way in which this is correct is the observation that it is true on $K^\flat$-, resp. $K$-, valued points. Moreover, for any finite extension $L$ of $K$ corresponding to an extension $L^\flat$ of $K^\flat$, we have the same relation
\[
L^\flat = \varprojlim_{x\mapsto x^p} L\ .
\]
Looking at the example above, we see that the explicit description of the map between $\mathbb{A}^1_{K^\flat}$ and $\varprojlim_{T\mapsto T^p} \mathbb{A}^1_K$ involves a limit procedure. For this reason, a formalization of this isomorphism has to be of an analytic nature, and we have to use some kind of rigid-analytic geometry over $K$. We choose to work with Huber's language of adic spaces, which reinterprets rigid-analytic varieties as certain locally ringed topological spaces. In particular, any variety $X$ over $K$ has an associated adic space $X^\ad$ over $K$, which in turn has an underlying topological space $|X^\ad|$.

\begin{thm}\label{ThmHomeomA1} There is a homeomorphism of topological spaces
\[
|(\mathbb{A}^1_{K^\flat})^\ad|\cong \varprojlim_{T\mapsto T^p} |(\mathbb{A}^1_K)^\ad|\ .
\]
\end{thm}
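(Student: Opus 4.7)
My plan is to identify both sides with the topological space of a ``perfectoid affine line'', using the theory of perfectoid affinoids and their tilting correspondence.

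First, I would exhaust $(\mathbb{A}^1_K)^\ad$ by the closed disks $\mathbb{B}_K^{(n)} = \Spa(K\langle T/\varpi^n\rangle, K^\circ\langle T/\varpi^n\rangle)$ of radius $|\varpi|^{-n}$ for a pseudo-uniformizer $\varpi \in K^\circ$, and similarly over $K^\flat$ using $t \in K^{\flat\circ}$ with $t^\sharp = \varpi$. Since $\varphi: T \mapsto T^p$ satisfies $\varphi^{-1}(\mathbb{B}_K^{(n)}) = \mathbb{B}_K^{(n/p)} \subseteq \mathbb{B}_K^{(n)}$, any compatible sequence $(x_k) \in \varprojlim_\varphi |(\mathbb{A}^1_K)^\ad|$ with $x_0 \in \mathbb{B}_K^{(n)}$ stays inside $\mathbb{B}_K^{(n)}$, so
\[
\varprojlim_\varphi |(\mathbb{A}^1_K)^\ad| = \bigcup_n \varprojlim_k |\mathbb{B}_K^{(n/p^k)}|.
\]
After the normalization $V_k = T^{1/p^k}/\varpi^{n/p^k}$, the tower $\mathbb{B}_K^{(n)} \leftarrow \mathbb{B}_K^{(n/p)} \leftarrow \cdots$ corresponds to the tower of Tate algebras $K\langle V\rangle \hookrightarrow K\langle V^{1/p}\rangle \hookrightarrow \cdots$, whose completed colimit is the perfectoid Tate algebra $K\langle V^{1/p^\infty}\rangle$; I would then set $\tilde{\mathbb{B}}_K^{(n)} = \Spa(K\langle V^{1/p^\infty}\rangle, K^\circ\langle V^{1/p^\infty}\rangle)$ and $\tilde{\mathbb{A}}^1_K := \bigcup_n \tilde{\mathbb{B}}_K^{(n)}$, with analogous definitions over $K^\flat$.

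Second, I would invoke two foundational statements to be developed in the paper. (i) $\Spa$ of a completed filtered colimit of Tate rings with quasi-compact transitions is the inverse limit of the individual $\Spa$'s on underlying topological spaces. Applied here, this gives $|\tilde{\mathbb{B}}_K^{(n)}| \cong \varprojlim_k |\mathbb{B}_K^{(n/p^k)}|$, hence $|\tilde{\mathbb{A}}^1_K| \cong \varprojlim_\varphi |(\mathbb{A}^1_K)^\ad|$, and the analogous statement over $K^\flat$. (ii) The tilting equivalence for perfectoid affinoids. The tilts $\tilde{\mathbb{B}}_K^{(n)}$ and $\tilde{\mathbb{B}}_{K^\flat}^{(n)}$ correspond under the isomorphism
\[
K^\circ\langle V^{1/p^\infty}\rangle/p \cong K^{\flat\circ}\langle V^{1/p^\infty}\rangle/t, \quad V \leftrightarrow V,
\]
extending the identification $K^\circ/p \cong K^{\flat\circ}/t$ recalled in the introduction; the tilting equivalence then gives $|\tilde{\mathbb{A}}^1_K| \cong |\tilde{\mathbb{A}}^1_{K^\flat}|$ on underlying topological spaces.

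Third, to complete the chain I would observe that over $K^\flat$ the map $\varphi$ differs from the absolute Frobenius by the automorphism $a \mapsto a^{1/p}$ of $K^\flat$ (which exists because $K^\flat$ is perfect) acting on coefficients; since the absolute Frobenius is the identity on underlying topological spaces in characteristic $p$, $\varphi$ induces a homeomorphism of $|(\mathbb{A}^1_{K^\flat})^\ad|$, so the $K^\flat$-side inverse limit collapses to $|(\mathbb{A}^1_{K^\flat})^\ad|$. Combining the identifications yields
\[
|(\mathbb{A}^1_{K^\flat})^\ad| \cong \varprojlim_\varphi |(\mathbb{A}^1_{K^\flat})^\ad| \cong |\tilde{\mathbb{A}}^1_{K^\flat}| \cong |\tilde{\mathbb{A}}^1_K| \cong \varprojlim_\varphi |(\mathbb{A}^1_K)^\ad|.
\]
The principal obstacles I expect are (a) the tilting equivalence for perfectoid affinoids at the topological level --- the paper's central foundational result --- and (b) the compatibility of $\Spa$ with completed filtered colimits of Tate rings, which will require showing that compatible systems of valuations along the Frobenius tower extend uniquely to continuous valuations on the perfectoid completion.
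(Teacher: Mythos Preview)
Your proposal is correct and aligns with how the paper establishes this result: the statement in the introduction is not given a dedicated proof but is deduced from the general machinery exactly along the lines you sketch. The perfectoid disk $K\langle V^{1/p^\infty}\rangle$ with tilt $K^\flat\langle V^{1/p^\infty}\rangle$ is the paper's Proposition~\ref{ExampleTilting}; the homeomorphism under tilting is Theorem~\ref{MainThmAnalytic}(i); the identification $|\tilde{\mathbb{B}}_K^{(n)}|\cong\varprojlim_k |\mathbb{B}_K^{(n/p^k)}|$ is the content of $X\sim\varprojlim X_i$ (Definition~\ref{DecompletionAdicSpace}) together with the argument of Lemma~\ref{DirectLimitofPFinite}(ii) and Proposition~\ref{SpaCompletion}; and the collapse of the $K^\flat$-side tower is what the paper observes when it remarks (just after Theorem~\ref{EtTopoiPn}) that $\varphi$ is purely inseparable and hence a homeomorphism --- your Frobenius-twist argument is an equivalent way to see this. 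The explicit treatment in Section~8 covers the affinoid (closed-ball) piece, and your exhaustion by disks of growing radius is precisely the step needed to pass from there to the full affine line; this is routine once one notes that the tilting homeomorphisms on the $\tilde{\mathbb{B}}^{(n)}$ are compatible with the rational-subset inclusions (again Theorem~\ref{MainThmAnalytic}(i)) and hence glue.
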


Note that both sides of this isomorphism can be regarded as locally ringed topological spaces. It is natural to ask whether one can compare the structure sheaves on both sides. There is the obvious obstacle that the left-hand side has a sheaf of characteristic $p$ rings, whereas the right-hand side has a sheaf of characteristic $0$ rings. Fontaine's functors make it possible to translate between the two worlds. There is the following result.

\begin{definition} Let $K$ be a perfectoid field. A perfectoid $K$-algebra is a Banach $K$-algebra $R$ such that the set of powerbounded elements $R^\circ\subset R$ is bounded, and such that the Frobenius $\Phi$ is surjective on $R^\circ / p$.
\end{definition}

\begin{thm} There is natural equivalence of categories, called the tilting equivalence, between the category of perfectoid $K$-algebras and the category of perfectoid $K^\flat$-algebras. Here a perfectoid $K$-algebra $R$ is sent to the perfectoid $K^\flat$-algebra
\[
R^\flat = \varprojlim_{x\mapsto x^p} R\ .
\]
\end{thm}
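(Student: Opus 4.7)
The plan is to construct the tilt $R\mapsto R^\flat$ and a quasi-inverse untilting functor via Witt vectors, then verify that both compositions are naturally isomorphic to the identity. Throughout, the key move is to reduce the equivalence to a statement about the integral subrings $R^\circ$ and then invert a pseudo-uniformizer.

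First I construct the tilt. The multiplicative monoid $R^{\flat\circ}:=\varprojlim_{x\mapsto x^p} R^\circ$ admits a natural map to the ring $\varprojlim_\Phi R^\circ/p$ given by reducing each coordinate modulo $p$. Using the perfectoid hypothesis (Frobenius surjective mod $p$) together with the $p$-adic completeness of $R^\circ$, this map is a bijection: any compatible sequence $(\overline{x}_n)\in \varprojlim_\Phi R^\circ/p$ lifts via $x_n:=\lim_{m\to\infty} y_{n+m}^{p^m}$, where $y_n\in R^\circ$ is any lift of $\overline{x}_n$. Transporting structure, $R^{\flat\circ}$ becomes a perfect $\mathbb{F}_p$-algebra. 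Choosing a pseudo-uniformizer $\varpi\in K^\circ$ with a compatible system of $p$-power roots yields $\varpi^\flat\in K^{\flat\circ}$, and $R^\flat:=R^{\flat\circ}[1/\varpi^\flat]$, equipped with the $\varpi^\flat$-adic topology on $R^{\flat\circ}$, is a perfectoid $K^\flat$-algebra; boundedness of the power-bounded subring and surjectivity of Frobenius modulo $\varpi^\flat$ transfer from the corresponding properties of $R$.

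For the untilting, given a perfectoid $K^\flat$-algebra $S$, I form the ring of Witt vectors $W(S^\circ)$, which is a $p$-adically complete $W(K^{\flat\circ})$-algebra, and set
\[
S^{\sharp\circ}:=W(S^\circ)/\xi,\qquad \xi:=p-[\varpi]\in W(K^{\flat\circ}),
\]
where $[\cdot]$ denotes the Teichm\"uller lift and $\varpi\in K^{\flat\circ}$ is a pseudo-uniformizer with $\varpi^\sharp$ a pseudo-uniformizer of $K$. Then $S^\sharp:=S^{\sharp\circ}[1/p]$ with the $p$-adic topology is a perfectoid $K$-algebra: surjectivity of Frobenius on $S^{\sharp\circ}/p\cong S^\circ/\varpi$ follows from the perfectness of $S^{\flat\circ}$ in characteristic $p$. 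The round-trip $(S^\sharp)^\flat\cong S$ is relatively painless, since under the identification $S^{\sharp\circ}/p=S^\circ/\varpi$ the inverse limit $\varprojlim_\Phi S^{\sharp\circ}/p$ reconstructs $\varprojlim_\Phi S^\circ/\varpi=S^\circ$.

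The main obstacle is the other round trip, $(R^\flat)^\sharp\cong R$. This amounts to showing that the natural ring map
\[
\theta\colon W(R^{\flat\circ})\longrightarrow R^\circ,\qquad [x]\longmapsto x^\sharp,
\]
is surjective with kernel generated by $\xi=p-[\varpi^\flat]$. Surjectivity modulo $p$ is exactly the bijection $R^{\flat\circ}\xrightarrow{\sim}\varprojlim_\Phi R^\circ/p$ constructed in the tilt step projected onto its first coordinate, and this upgrades to genuine surjectivity by $p$-adic completeness of $R^\circ$. Pinning down the kernel as the principal ideal $(\xi)$ is the technical heart of the proof: one first shows that $\xi$ is a non-zero-divisor in $W(R^{\flat\circ})$ (using the perfectness of $R^{\flat\circ}$ to analyze Witt coordinates), then compares $W(R^{\flat\circ})/\xi$ with $R^\circ$ via an almost isomorphism, and finally promotes this to an honest isomorphism via Frobenius-pullback arguments and almost-purity-type inputs developed elsewhere in the paper. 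With this in hand, both compositions are mutually quasi-inverse and the tilting equivalence follows.
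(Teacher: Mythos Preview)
Your approach is genuinely different from the paper's. The paper does \emph{not} prove the tilting equivalence via Witt vectors and the map $\theta$; instead it factors the equivalence through the almost world and through reduction modulo $\varpi$:
\[
K\text{-}\Perf \;\cong\; K^{\circ a}\text{-}\Perf \;\cong\; (K^{\circ a}/\varpi)\text{-}\Perf \;=\; (K^{\flat\circ a}/\varpi^\flat)\text{-}\Perf \;\cong\; K^{\flat\circ a}\text{-}\Perf \;\cong\; K^\flat\text{-}\Perf.
\]
The outer two equivalences (between Banach algebras and their almost integral models) are the content of Proposition~\ref{PropKvsKAlm} and Lemma~\ref{FrobIsom}. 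The substantial middle equivalence (Theorem~\ref{DeformPerfectoid}) is proved by showing that the cotangent complex $\mathbb{L}^a_{\overline{A}/(K^{\circ a}/\varpi)}$ vanishes for any perfectoid $K^{\circ a}/\varpi$-algebra $\overline{A}$, so that the flat deformation to $K^{\circ a}$ is unique by Illusie's deformation theory. The paper never computes $\ker\theta$ and explicitly remarks afterward that your Witt-vector description $R = W(R^{\flat\circ})\otimes_{W(K^{\flat\circ})} K$ is an alternative route, left as an exercise and attributed to Kedlaya--Liu. What your approach buys is that one avoids the machinery of almost mathematics and cotangent complexes; what the paper's approach buys is a conceptual explanation for why the lifting is unique (vanishing obstructions) and a framework that is reused later for the sheaf property and for almost purity.

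There is a gap in your last paragraph. Having shown $\theta$ surjective and $\xi$ a non-zero-divisor, the claim $\ker\theta=(\xi)$ is a direct Witt-coordinate computation: given $x$ with $\theta(x)=0$, write $x=[a_0]+p[a_1]+\cdots$ and use $p\equiv [\varpi^\flat]u\pmod{\xi}$ together with $(a_0)^\sharp=0\Rightarrow a_0=0$ to peel off one term at a time. Your invocation of ``almost-purity-type inputs'' is a red herring---almost purity (Theorem~\ref{AlmostEtaleChar0}) concerns finite \'etale covers and plays no role in the tilting equivalence for algebras. Also, your choice $\xi=p-[\varpi]$ presupposes that $p$ itself admits a compatible system of $p$-power roots in $K$; in general one takes $\xi$ to be any generator of $\ker(\theta\colon W(K^{\flat\circ})\to K^\circ)$, which has the form $p+[\varpi^\flat]\alpha$ for some $\alpha\in W(K^{\flat\circ})$.
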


We note in particular that for perfectoid $K$-algebras $R$, we still have a map $R^\flat\rightarrow R$, $f\mapsto f^\sharp$. An example of a perfectoid $K$-algebra is the algebra $R=K\langle T^{1/p^\infty}\rangle$ for which $R^\circ = K^\circ\langle T^{1/p^\infty}\rangle$ is the $p$-adic completion of $K[T^{1/p^\infty}]$. This is the completion of an algebra that appears on the right-hand side of Theorem \ref{ThmHomeomA1}. Its tilt is given by $R^\flat = K^\flat\langle T^{1/p^\infty}\rangle$, which is the completed perfection of an algebra that appears on the left-hand side of Theorem \ref{ThmHomeomA1}.

Now an affinoid perfectoid space is associated to a perfectoid affinoid $K$-algebra, which is a pair $(R,R^+)$, where $R$ is a perfectoid $K$-algebra, and $R^+\subset R^\circ$ is open and integrally closed (and often $R^+=R^\circ$). There is a natural way to form the tilt $(R^\flat,R^{\flat +})$. To such a pair $(R,R^+)$, Huber, \cite{HuberContVal}, associates a space $X=\Spa(R,R^+)$ of equivalence classes of continuous valuations $R\rightarrow \Gamma\cup \{0\}$, $f\mapsto |f(x)|$, which are $\leq 1$ on $R^+$. The topology on this space is generated by so-called rational subsets. Moreover, Huber defines presheaves $\mathcal{O}_X$ and $\mathcal{O}_X^+$ on $X$, whose global sections are $R$, resp. $R^+$. 

\begin{thm} Let $(R,R^+)$ be a perfectoid affinoid $K$-algebra, and let $X=\Spa(R,R^+)$, $X^\flat = \Spa(R^\flat,R^{\flat +})$.
\begin{altenumerate}
 \item[{\rm (i)}] There is a homeomorphism $X\cong X^\flat$, given by mapping $x\in X$ to the valuation $x^\flat \in X^\flat$ defined by $|f(x^\flat)| = |f^\sharp(x)|$. This homeomorphism identifies rational subsets.
\item[{\rm (ii)}] For any rational subset $U\subset X$ with tilt $U^ \flat\subset X^ \flat$, the pair $(\mathcal{O}_X(U),\mathcal{O}_X^+(U))$ is a perfectoid affinoid $K$-algebra with tilt $(\mathcal{O}_{X^\flat}(U^ \flat),\mathcal{O}_{X^\flat}^+(U^\flat))$.
\item[{\rm (iii)}] The presheaves $\mathcal{O}_X$, $\mathcal{O}_X^+$ are sheaves.
\item[{\rm (iv)}] The cohomology group $H^i(X,\mathcal{O}_X^+)$ is $\mathfrak{m}$-torsion for $i>0$.
\end{altenumerate}
\end{thm}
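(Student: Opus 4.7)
The overall plan is to prove (i)--(iv) essentially in order, with the main technical input being an approximation lemma: every $g\in R^\circ$ agrees modulo any fixed power of $p$ with $f^\sharp$ for some $f\in R^{\flat\circ}$. This is what allows one to transfer between the sharp-image elements (for which tilting is tautological) and arbitrary elements of $R$ (which actually appear in the definition of rational subsets and of the presheaf $\mathcal{O}_X^+$).

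For (i), I first verify that $|f(x^\flat)|:=|f^\sharp(x)|$ defines a continuous valuation on $R^\flat$. Multiplicativity is immediate from $(fg)^\sharp=f^\sharp g^\sharp$. For the ultrametric inequality I use $(f+g)^\sharp\equiv f^\sharp+g^\sharp\pmod p$, applied to $p^n$-th roots and then raised back to the $p^n$-th power, letting the error shrink as $n\to\infty$. To construct an inverse, given $y\in X^\flat$ and $g\in R^\circ$, I pick $f\in R^{\flat\circ}$ with $g-f^\sharp\in p^N R^\circ$, define $|g(y^\sharp)|$ via $|f(y)|$ together with the error term $|p^N|$, and check independence of choices by letting $N\to\infty$. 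Rational subsets match because an inequality $|f(x)|\le |g(x)|\neq 0$ can be sandwiched, up to arbitrarily small perturbation, between inequalities defined by sharps, so the two families of rational subsets generate the same topology and are identified pointwise.

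For (ii), the same approximation lets me assume the generators of a rational $U\subset X$ are all of the form $(f_1^\sharp,\ldots,f_n^\sharp;g^\sharp)$. The completed localization $\mathcal{O}_X^+(U)$ is then the $p$-adic completion of the integral closure of $R^\circ[f_1^\sharp/g^\sharp,\ldots,f_n^\sharp/g^\sharp]$, and its reduction mod $p$ is identified via $R^\circ/p\cong R^{\flat\circ}/t$ with the analogous construction over $R^\flat$, where the Frobenius is already a bijection on the perfect ring $\mathcal{O}_{X^\flat}^+(U^\flat)/t$. Lifting perfectness through the $p$-adic completion shows that $(\mathcal{O}_X(U),\mathcal{O}_X^+(U))$ is perfectoid; inspecting the construction of $R^\flat$ as $\varprojlim R$ identifies its tilt with $(\mathcal{O}_{X^\flat}(U^\flat),\mathcal{O}_{X^\flat}^+(U^\flat))$.

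For (iii) and (iv), I argue via tilting: by (ii), the \v Cech complex of $\mathcal{O}_X^+$ for a finite rational covering $X=\bigcup U_i$ reduces mod $p$ to the \v Cech complex of $\mathcal{O}_{X^\flat}^+$ mod $t$. In characteristic $p$, a Tate-style acyclicity argument gives exactness up to controlled $\mathfrak{m}$-torsion, and perfectness of the rings lets one bootstrap to almost exactness by extracting $p$-th roots of the error. Untilting and taking an inverse limit over powers of $p$, using that $R^\circ$ is bounded, recovers almost exactness of the original \v Cech complex for $\mathcal{O}_X^+$; inverting $p$ kills the remaining $\mathfrak{m}$-torsion, giving the sheaf property for $\mathcal{O}_X$ and for $\mathcal{O}_X^+$, while the $\mathfrak{m}$-torsion bound itself yields (iv). The main obstacle is the approximation lemma and its deployment in (i) and (ii): rational subsets and $R^\circ$ are defined using arbitrary elements of $R$, whereas tilting only directly understands sharps, and one must control the errors precisely enough that both the topological structure in (i) and the ring-theoretic structure in (ii) match; once this is in place, (iii)--(iv) follow from the characteristic $p$ computation via almost mathematics.
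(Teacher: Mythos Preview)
Your overall architecture is the right one and matches the paper: approximate arbitrary elements by sharps to handle (i), identify the rational localizations with their tilts for (ii), and then deduce (iii)--(iv) from the characteristic~$p$ case via almost mathematics. The weak approximation you state (any $g\in R^\circ$ is congruent to some $f^\sharp$ modulo $\varpi^N$) is indeed enough for (i): after adding a power of $\varpi$ to the $f_i$'s one has $|g(x)|\geq |\varpi^N|$ on the rational subset, so congruence modulo $\varpi^{N+1}$ preserves all the inequalities exactly. The paper proves a much sharper approximation lemma with error bounded by $|\varpi|^{1-\epsilon}\max(|f(x)|,|\varpi|^c)$, but that extra strength is needed for the weight-monodromy application, not for (i).

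The genuine gap is in (ii). You assert that $\mathcal{O}_X^+(U)/p$ is identified with $\mathcal{O}_{X^\flat}^+(U^\flat)/t$ via $R^\circ/p\cong R^{\flat\circ}/t$, but $\mathcal{O}_X^+(U)$ is obtained from $R^+[f_1^\sharp/g^\sharp,\ldots]$ by first taking integral closure in $R[1/g^\sharp]$ and then $\varpi$-adically completing, and neither of these operations commutes with reduction modulo $\varpi$ in any evident way. In particular there is no reason the characteristic~$0$ integral closure, reduced mod~$p$, should agree with the characteristic~$p$ integral closure. The paper avoids this entirely: it introduces the explicit ring
\[
R^\circ\big\langle (f_1^\sharp/g^\sharp)^{1/p^\infty},\ldots,(f_n^\sharp/g^\sharp)^{1/p^\infty}\big\rangle
\]
with all $p$-power roots adjoined, shows directly (first in characteristic~$p$, then in general by mapping to the untilt of $\mathcal{O}_{X^\flat}(U)$) that this is a perfectoid $K^{\circ a}$-algebra whose reduction mod~$\varpi$ has an explicit presentation matching the tilt side, and finally shows it is almost equal to $\mathcal{O}_X(U)^\circ$. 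Adjoining the $p$-power roots is exactly what makes the reduction mod~$\varpi$ computable; without them your identification is not justified.

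A smaller point for (iii)--(iv): ``a Tate-style acyclicity argument'' does not apply directly to perfectoid algebras, which are not strongly noetherian. The paper first writes any perfectoid affinoid $(R,R^+)$ in characteristic~$p$ as a completed filtered colimit of \emph{p-finite} ones, i.e.\ completed perfections of reduced tft algebras; Tate's theorem applies to the tft algebras, giving exactness up to a bounded power of $\varpi$, and taking the perfection turns this into almost exactness. You should make this reduction explicit.
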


Here $\mathfrak{m}\subset K^\circ$ is the subset of topologically nilpotent elements. Part (iv) implies that $H^i(X,\mathcal{O}_X)=0$ for $i>0$, which gives Tate's acyclicity theorem in the context of perfectoid spaces. However, it says that this statement about the generic fibre extends {\it almost} to the integral level, in the language of Faltings's so-called almost mathematics. In fact, this is a general property of perfectoid objects: Many statements that are true on the generic fibre are automatically almost true on the integral level.

Using the theorem, one can define general perfectoid spaces by gluing affinoid perfectoid spaces $X=\Spa(R,R^+)$. We arrive at the following theorem.

\begin{thm} The category of perfectoid spaces over $K$ and the category of perfectoid spaces over $K^\flat$ are equivalent.
\end{thm}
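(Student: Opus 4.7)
The plan is to build the global equivalence out of the affinoid tilting statements already established. Combining the tilting equivalence on perfectoid $K$-algebras with the identification of integral subrings $R^+ \leftrightarrow R^{\flat +}$ coming from part (i) of the previous theorem, one first upgrades tilting to an equivalence of categories between perfectoid affinoid $K$-algebras $(R,R^+)$ and perfectoid affinoid $K^\flat$-algebras $(R^\flat,R^{\flat+})$. Passing to $\Spa$ gives an equivalence between the categories of affinoid perfectoid spaces over $K$ and over $K^\flat$, and, by part (i), this equivalence is realized on topological spaces by the canonical homeomorphism $x\mapsto x^\flat$.

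Next I would check that the tilting functor is compatible with passage to open subspaces. By part (i) it identifies rational subsets on both sides, and by part (ii) each rational subset is itself affinoid perfectoid with structure sheaves matching under tilting; part (iii) then ensures that arbitrary open subsets of an affinoid perfectoid space are perfectoid (i.e. admit an affinoid perfectoid cover by rational subsets) on one side if and only if they are on the other. Morphisms between affinoid perfectoid spaces transfer under tilting because the underlying map of algebras does, and morphisms between arbitrary perfectoid spaces transfer because they are determined, by the sheaf property, by their restrictions to any affinoid perfectoid cover.

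Given this, the global tilting functor $X\mapsto X^\flat$ is defined by choosing an affinoid perfectoid cover $\{U_i=\Spa(R_i,R_i^+)\}$ of $X$, forming the tilts $U_i^\flat=\Spa(R_i^\flat,R_i^{\flat+})$, and gluing along the tilts of the overlaps $U_i\cap U_j$, which are again perfectoid and carry canonically identified tilts by the previous paragraph. Independence of the cover, up to canonical isomorphism, follows by passing to a common refinement by rational subsets. For essential surjectivity one runs the same construction in reverse, starting from an affinoid perfectoid cover of a space over $K^\flat$ and using the inverse of the tilting equivalence on algebras to untilt each piece. Full faithfulness likewise reduces, via the sheaf property, to the affinoid case handled above.

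The main obstacle is organizational rather than conceptual: one must verify, cover by cover, that open immersions tilt to open immersions, that composition and restriction of morphisms commute with tilting, and that the gluing cocycles on $U_i\cap U_j\cap U_k$ are faithfully transported. All of this is a bookkeeping exercise given parts (i)--(iii) of the previous theorem, but it is where some care is required, since perfectoid spaces are genuinely analytic objects and one has to work with Huber's valuation-theoretic morphisms throughout rather than with naive continuous maps of ringed spaces.
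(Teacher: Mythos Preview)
Your approach is correct and matches the paper's in substance: the paper also deduces the global equivalence as a formal consequence of the tilting equivalence for perfectoid affinoid $K$-algebras (Lemma~\ref{TiltPerfectoidAffinoid}, Theorem~\ref{TiltingEquivalence}), the main analytic theorem on $\Spa(R,R^+)$ (Theorem~\ref{MainThmAnalytic}), and the description of maps into an affinoid (Proposition~\ref{MapsToAffinoid}). The one organizational difference worth noting is that the paper \emph{defines} the tilt of a perfectoid space $X$ by a functor-of-points condition, namely a functorial identification $\Hom(\Spa(R^\flat,R^{\flat+}),X^\flat)\cong\Hom(\Spa(R,R^+),X)$ for all perfectoid affinoid $(R,R^+)$; this makes uniqueness up to unique isomorphism automatic by Yoneda and, combined with Proposition~\ref{MapsToAffinoid}, reduces existence to the affinoid case without having to verify cocycle conditions by hand. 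Your explicit gluing argument is equivalent but does require the bookkeeping you flag at the end; the representability framing is precisely the device that lets the paper dismiss that step as ``formal''.
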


We denote the tilting functor by $X\mapsto X^\flat$. Our next aim is to define an \'{e}tale topos of perfectoid spaces. This necessitates a generalization of Faltings's almost purity theorem, cf. \cite{FaltingsPadicHodgeTheory}, \cite{FaltingsAlmostEtale}.

\begin{thm} Let $R$ be a perfectoid $K$-algebra. Let $S/R$ be finite \'{e}tale. Then $S$ is a perfectoid $K$-algebra, and $S^\circ$ is almost finite \'{e}tale over $R^\circ$.
\end{thm}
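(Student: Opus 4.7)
My plan is to use the tilting equivalence to transport the problem between characteristics, handle characteristic $p$ directly, and lift the results via almost-mathematics deformation theory.

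First I would dispose of the characteristic $p$ case, i.e.\ the analogous statement for perfectoid $K^\flat$-algebras. If $R^\flat$ is a perfectoid algebra in characteristic $p$, then $R^\flat$ is perfect (Frobenius $\Phi$ is bijective on $R^\flat$ because it is bijective on $R^{\flat\circ}$ modulo $t$ and on $R^{\flat\circ}/t$). Any finite étale $S^\flat/R^\flat$ is then automatically perfect, because the relative Frobenius of $S^\flat/R^\flat$ is both étale and, after base change along $\Phi_{R^\flat}$, the identity; hence $\Phi_{S^\flat}$ is bijective. Then $S^\flat$ inherits a Banach structure from a finite free presentation and $S^{\flat\circ}$ is bounded, so $S^\flat$ is a perfectoid $K^\flat$-algebra. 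Almost finite étaleness of $S^{\flat\circ}/R^{\flat\circ}$ is a standard consequence of the trace pairing: the discriminant of $S^\flat/R^\flat$ cuts out an ideal which, after extracting $p^n$-th roots via the perfect Frobenius, must be $\mathfrak{m}$-divisible, hence almost the unit ideal.

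Next I would set up the transfer between characteristics at the integral level. The key is the isomorphism $R^\circ/p \cong R^{\flat\circ}/t$ provided by the tilting construction. Using the deformation theory of étale morphisms in the almost category, an almost finite étale $R^\circ$-algebra is determined uniquely, up to unique isomorphism, by its reduction modulo $p$; similarly for $R^{\flat\circ}$ modulo $t$. This gives an equivalence between the categories of almost finite étale $R^\circ$-algebras and of almost finite étale $R^{\flat\circ}$-algebras, realized by reduction and Hensel-style lifting through the common quotient $R^\circ/p = R^{\flat\circ}/t$. Inverting $p$ (respectively $t$) then extends this to an equivalence between finite étale $R$-algebras that come from almost finite étale integral models and finite étale $R^\flat$-algebras with the same property.

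To finish, given finite étale $S/R$, I would use the equivalence above to produce a candidate almost finite étale $S^{\flat\circ}/R^{\flat\circ}$, whose generic fibre $S^\flat$ is finite étale over $R^\flat$ and perfectoid by the characteristic-$p$ step. Lifting the almost finite étale $S^{\flat\circ}$-structure back across $R^{\flat\circ}/t = R^\circ/p$ produces an almost finite étale $R^\circ$-algebra whose generic fibre must equal $S$ (by the tilting equivalence applied to $S$, once perfectoidness is known), and this is the desired $S^\circ$. Boundedness of $S^\circ$ and Frobenius surjectivity of $S^\circ/p$ are then inherited from the corresponding properties of $S^{\flat\circ}$ via the isomorphism of reductions.

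The main obstacle is the deformation-theoretic step in the second paragraph: one must show that almost finite étale morphisms satisfy invariance of the étale site along the thickening $R^\circ \to R^\circ/p$, and likewise along $R^{\flat\circ} \to R^{\flat\circ}/t$, and that these are compatible with the identification $R^\circ/p \cong R^{\flat\circ}/t$. This is the technical heart of almost purity and is where Faltings's framework of almost mathematics is indispensable, since the naive lifting fails in mixed characteristic but becomes true after tolerating $\mathfrak{m}$-torsion obstructions.
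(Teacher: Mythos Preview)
Your proposal has a genuine gap in the third paragraph, and it is precisely the heart of the matter. You have correctly set up an equivalence between almost finite \'{e}tale $R^{\circ a}$-algebras and almost finite \'{e}tale $R^{\flat\circ a}$-algebras via the common reduction modulo $\varpi$ (this is Theorem~\ref{LiftingAlmostEtaleCovers} in the paper, and is indeed standard almost-mathematics). But given an arbitrary finite \'{e}tale $S/R$, you do \emph{not} yet know that $S^{\circ a}$ is almost finite \'{e}tale over $R^{\circ a}$ --- that is exactly the conclusion you are trying to prove. So $S$ is not a priori an object of the source category of your equivalence, and you cannot ``use the equivalence above to produce a candidate almost finite \'{e}tale $S^{\flat\circ}/R^{\flat\circ}$''. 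What you have actually shown is that the functor $R^\flat_\fet \to R_\fet$ is fully faithful with image consisting of those $S/R$ admitting almost finite \'{e}tale integral models; the theorem is the assertion that this image is everything, i.e.\ essential surjectivity.

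The paper's proof of essential surjectivity is genuinely different from anything in your outline: it is \emph{geometric}. One passes to the adic space $X=\Spa(R,R^+)$. At each point $x\in X$, the completed residue field $\widehat{k(x)}$ is a perfectoid field, and for perfectoid fields the tilting equivalence on finite \'{e}tale covers is already known (Theorem~\ref{TiltingEquivFields}). A henselian limit argument then shows that $S$, restricted to some rational neighbourhood of $x$, does come from the tilt. These local pieces glue to a strongly finite \'{e}tale cover of $X$, which is shown to be affinoid perfectoid; comparing sheaf sequences identifies its global sections with $S$. Thus the passage through the adic space and the reduction to the field case are the missing ingredients. Incidentally, the deformation-theoretic lifting you flag as ``the main obstacle'' is not the difficulty here; it is the essential surjectivity that requires new ideas.
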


In fact, as for perfectoid fields, it is easy to construct a fully faithful functor from the category of finite \'{e}tale $R^\flat$-algebras to finite \'{e}tale $R$-algebras, and the problem becomes to show that this functor is essentially surjective. But locally on $X=\Spa(R,R^+)$, the functor is essentially surjective by the result for perfectoid fields; one deduces the general case by a gluing argument.

Using this theorem, one proves the following theorem. Here, $X_\et$ denotes the \'{e}tale site of a perfectoid space $X$, and we denote by $X_\et^\sim$ the associated topos.

\begin{thm} Let $X$ be a perfectoid space over $K$ with tilt $X^\flat$ over $K^\flat$. Then tilting induces an equivalence of sites $X_\et\cong X^\flat_\et$.
\end{thm}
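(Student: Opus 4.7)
The plan is to reduce to the affinoid case, then decompose an étale morphism into rational localizations and finite étale covers and transport each piece under tilting. Since every perfectoid space is glued from affinoid perfectoid spaces and tilting is compatible with this gluing (by the preceding equivalence of categories of perfectoid spaces over $K$ and $K^\flat$), it suffices to prove the equivalence for $X = \Spa(R,R^+)$ affinoid perfectoid. Following Huber, I would take étale morphisms $Y\to X$ to be those that are, locally on the source, a composition of an open immersion with a finite étale cover of a rational subset $U\subset X$. Hence three pieces of data need to be matched under tilting: rational subsets, finite étale covers, and coverings.

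For rational subsets, the theorem on $\Spa(R,R^+)$ already establishes a bijection between rational $U\subset X$ and rational $U^\flat\subset X^\flat$, together with a tilt of the pair $(\mathcal{O}_X(U),\mathcal{O}_X^+(U))$ to $(\mathcal{O}_{X^\flat}(U^\flat),\mathcal{O}_{X^\flat}^+(U^\flat))$, so rational localizations transport functorially. For finite étale covers, the almost purity theorem asserts that a finite étale $R$-algebra $S$ is itself perfectoid, hence admits a tilt $S^\flat$; one then verifies that $S^\flat$ is finite étale over $R^\flat$. Conversely, a finite étale $R^\flat$-algebra is perfect and perfectoid (being in characteristic $p$), hence can be untilted through the tilting equivalence, and almost purity shows the untilt is finite étale over $R$. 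This gives an equivalence between finite étale $R$-algebras and finite étale $R^\flat$-algebras, natural in rational localizations of the base. For coverings, a family $\{f_i\colon Y_i\to Y\}$ in $X_\et$ is covering iff it is jointly surjective on underlying topological spaces; since each $Y_i$ and $Y$ are perfectoid, the preceding theorem gives a homeomorphism of their topological spaces under tilting, so the covering condition matches.

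The main obstacle is the coherent gluing: an étale $Y\to X$ is only locally a composition of a rational localization with a finite étale, so one must show the locally defined tilts glue to a single perfectoid space $Y^\flat$ étale over $X^\flat$, independently of the chosen local presentation. This requires verifying (a) functoriality of almost purity under rational localization of the base, (b) compatibility of the finite étale and rational-localization correspondences so that the two constructions commute up to canonical isomorphism, and (c) sheafiness on both sides, so that descent data pulls through tilting. Once these compatibilities are established, the local equivalences patch into a global equivalence of sites $X_\et \cong X^\flat_\et$, proved first for affinoid $X$ and then globally by gluing via the tilting equivalence of perfectoid spaces.
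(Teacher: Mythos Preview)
Your proposal is correct and captures exactly the content underlying the paper's proof. The paper's own proof reads simply ``This is immediate,'' because by that point all the ingredients you list have already been established: the homeomorphism and identification of rational subsets (Theorem~\ref{MainThmAnalytic}), almost purity (Theorem~\ref{AlmostEtaleChar0}), and the compatibility of finite \'etale covers with tilting (Theorem~\ref{AlmostFiniteEtaleCovers}).

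The one organisational difference worth noting is how the paper handles your ``main obstacle'' of coherent gluing. Rather than checking compatibilities (a), (b), (c) directly for the locally-defined tilts, the paper introduces an auxiliary notion of \emph{strongly} (finite) \'etale morphism, defined so that it is manifestly preserved under tilting in both directions (since the condition lives at the almost-integral level, which is identified under tilting). Proposition~\ref{DecompletionFiniteEtaleGlobal} then shows strongly finite \'etale is a global condition over affinoids, and almost purity shows \'etale $=$ strongly \'etale. With this repackaging, the gluing issues you raise dissolve: the tilting equivalence of perfectoid spaces (Proposition~\ref{TiltingEquivalenceSpaces}) already transports the objects, and the strongly-\'etale condition transports by design. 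Your direct approach would work too, but the paper's detour through ``strongly \'etale'' makes the final statement a tautology.
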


As a concrete application of this theorem, we have the following result. Here, we use the \'{e}tale topoi of adic spaces, which are the same as the \'{e}tale topoi of the corresponding rigid-analytic variety. In particular, the same theorem holds for rigid-analytic varieties.

\begin{thm}\label{EtTopoiPn} The \'{e}tale topos $(\mathbb{P}^{n,\ad}_{K^\flat})^\sim_\et$ is equivalent to the inverse limit $\varprojlim_\varphi (\mathbb{P}^{n,\ad}_K)^\sim_\et$.
\end{thm}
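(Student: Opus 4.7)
My plan is to interpolate both sides by a single perfectoid space. Define the perfectoid projective space $Y = \mathbb{P}^{n,\ad,\perf}_K$ over $K$ by gluing the $n+1$ standard affinoid perfectoid charts
\[
U_i^{\perf} = \Spa\bigl(K\langle S_1^{1/p^\infty},\ldots,S_n^{1/p^\infty}\rangle,\;K^\circ\langle S_1^{1/p^\infty},\ldots,S_n^{1/p^\infty}\rangle\bigr),
\]
where the $S_j$ are the usual inhomogeneous coordinates on the $i$-th chart, and the transition maps are the standard ones extended compatibly to all $p$-power roots. There is a natural morphism $Y\to \mathbb{P}^{n,\ad}_K$ that intertwines with the $\varphi$-tower and hence induces a map of topoi $Y^\sim_\et \to \varprojlim_\varphi (\mathbb{P}^{n,\ad}_K)^\sim_\et$. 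The plan is to show this map is an equivalence and then transport through the tilt.

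To argue the equivalence, any étale $W\to Y$ is locally on $Y$ a composition of an open immersion into a rational subset with a finite étale cover. Rational subsets of $Y$ descend to rational subsets at some finite stage of the $\varphi$-tower by part (i) of the preceding affinoid tilting theorem applied chart-by-chart, and finite étale covers descend because each $\mathcal{O}_Y(V)$ is the $p$-adic completion of a filtered colimit of the corresponding rings at finite levels, and the almost purity theorem shows that finite étale covers over such a colimit arise from some finite level (via the almost-finite-étale integral structure, which tilts to a characteristic-$p$ statement where spreading out is standard). Conversely, étale covers at finite levels pull back to étale covers of $Y$. Together these give $Y^\sim_\et \cong \varprojlim_\varphi (\mathbb{P}^{n,\ad}_K)^\sim_\et$.

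Define $Y^\flat = \mathbb{P}^{n,\ad,\perf}_{K^\flat}$ analogously. Tilting commutes with the gluing data: the chart tilting is given by the preceding affinoid theorem, and the transition maps are multiplicative in the coordinates, hence compatible with $f\mapsto f^\sharp$. Therefore $Y^\flat$ is the tilt of $Y$, and the preceding étale-site tilting theorem yields $Y^\sim_\et \cong (Y^\flat)^\sim_\et$. In characteristic $p$ the Frobenius is a universal homeomorphism, hence induces an equivalence of étale sites at every level, so $(Y^\flat)^\sim_\et \cong (\mathbb{P}^{n,\ad}_{K^\flat})^\sim_\et$ by the argument of the preceding paragraph --- now trivial, since every transition map in the tower is itself an equivalence. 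Concatenating the three equivalences proves the theorem.

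The main obstacle will be the descent of finite étale covers from $Y$ to a finite stage in the $\varphi$-tower in the mixed-characteristic setting. This is exactly where the almost purity theorem is invoked: it reduces the question to an assertion about almost-finite-étale $\mathcal{O}_Y^+$-algebras that, via the tilting equivalence of étale sites over the affinoid perfectoids, becomes a statement about finite étale covers of filtered colimits of complete rings in characteristic $p$, which is standard. The rest is bookkeeping about gluing along the standard cover of $\mathbb{P}^n$, and rests on the rational-subset identification and $\sharp$-compatibility built into the preceding structural theorems on perfectoid spaces.
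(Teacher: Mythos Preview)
Your overall architecture is exactly the paper's: build the perfectoid $\mathbb{P}^n$ by gluing the charts $\Spa(K\langle T^{1/p^\infty}\rangle,K^\circ\langle T^{1/p^\infty}\rangle)$, compare its \'etale topos to the inverse limit of the $\varphi$-tower on the one side, tilt, and use that Frobenius is a universal homeomorphism on the characteristic-$p$ side. The paper does this for all proper toric varieties at once (Section~8), packaging the comparison step via the relation $X\sim\varprojlim X_i$ of Definition~\ref{DecompletionAdicSpace} and the abstract Theorem~\ref{DecompletionTopoi}, together with Corollary~\ref{DecompletionFrob} for the characteristic-$p$ collapse.

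Where you deviate is in the justification of the descent step, and here you make the problem harder than it is. The ``main obstacle'' you flag---descending a finite \'etale cover of $\mathcal{O}_Y(V)$ to some finite level of the $\varphi$-tower---does not require almost purity or a passage through the tilt. Each chart ring is the $\varpi$-adic completion of the filtered colimit $\varinjlim_n K^\circ\langle T^{1/p^n}\rangle$; since $\varpi$-adically complete rings are henselian along $(\varpi)$ and henselianness passes to filtered colimits, Proposition~\ref{FiniteEtaleHenselian} and Lemma~\ref{FiniteEtaleDirectLimit} give directly that finite \'etale covers of the completion arise from some finite $n$, in any characteristic. Almost purity plays no role here; it is needed elsewhere in the paper to know that finite \'etale covers of perfectoid algebras are again perfectoid, but not for this spreading-out. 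So your plan is correct, but your diagnosis of the crux is off: replace the almost-purity-and-tilt detour by a one-line appeal to the henselian lemma, and you recover precisely the argument the paper gives inside Theorem~\ref{DecompletionTopoi}.
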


Here, one has to interpret the latter as the inverse limit of a fibred topos in an obvious way, and $\varphi$ is the map given on coordinates by $\varphi(x_0:\ldots:x_n) = (x_0^p:\ldots:x_n^p)$. The same theorem stays true for proper toric varieties without change. We note that the theorem gives rise to a projection map
\[
\pi: \mathbb{P}^n_{K^\flat}\rightarrow \mathbb{P}^n_K
\]
defined on topological spaces and \'{e}tale topoi of adic spaces, and which is given on coordinates by $\pi(x_0:\ldots:x_n) = (x_0^\sharp:\ldots:x_n^\sharp)$. In particular, we see again that this isomorphism is of a deeply analytic and transcendental nature.

We note that $(\mathbb{P}^n_{K^\flat})^\ad$ is itself not a perfectoid space, but $\varprojlim_\varphi (\mathbb{P}^n_{K^\flat})^\ad$ is, where $\varphi: \mathbb{P}^n_{K^\flat}\rightarrow \mathbb{P}^n_{K^\flat}$ denotes again the $p$-th power map on coordinates. However, $\varphi$ is purely inseparable and hence induces an isomorphism on topological spaces and \'{e}tale topoi, which is the reason that we have not written this inverse limit in Theorem \ref{ThmHomeomA1} and Theorem \ref{EtTopoiPn}.

Finally, we apply these results to the weight-monodromy conjecture. Let us recall its formulation. Let $k$ be a local field whose residue field is of characteristic $p$, let $G_k=\Gal(\bar{k}/k)$, and let $q$ be the cardinality of the residue field of $k$. For any finite-dimensional $\bar{\mathbb{Q}}_\ell$-representation $V$ of $G_k$, we have the monodromy operator $N:V\rightarrow V(-1)$ induced from the action of the $\ell$-adic inertia subgroup. It induces the monodromy filtration $\Fil_i^N V\subset V$, $i\in \mathbb{Z}$, characterized by the property that $N(\Fil_i^N V)\subset \Fil_{i-2}^N V(-1)$ for all $i\in \mathbb{Z}$ and $\gr_i^N V\cong \gr_{-i}^N V(-i)$ via $N^i$ for all $i\geq 0$.

\begin{conj}[Deligne, \cite{DeligneICM}] Let $X$ be a proper smooth variety over $k$, and let $V=H^i(X_{\bar{k}},\bar{\mathbb{Q}}_\ell)$. Then for all $j\in \mathbb{Z}$ and for any geometric Frobenius $\Phi\in G_k$, all eigenvalues of $\Phi$ on $\gr_j^N V$ are Weil numbers of weight $i+j$, i.e. algebraic numbers $\alpha$ such that $|\alpha|=q^{(i+j)/2}$ for all complex absolute values.
\end{conj}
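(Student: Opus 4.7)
The plan is to reduce the conjecture from mixed characteristic to equal characteristic, where it was established by Deligne as a consequence of the Weil conjectures. The bridge is the tilting machinery: by Theorem~\ref{EtTopoiPn}, the \'{e}tale topos of $\mathbb{P}^{n,\ad}_{K^\flat}$ is the inverse limit of the \'{e}tale topoi of $\mathbb{P}^{n,\ad}_K$ under Frobenius, and the same should hold for proper smooth toric varieties. This yields a projection $\pi\colon \mathbb{P}^n_{K^\flat}\to \mathbb{P}^n_K$ which is compatible with Galois actions on \'{e}tale cohomology (through the identification of $G_K$ with $G_{K^\flat}$ from Theorem~\ref{GeneralFW}), so invariants of cohomology on the characteristic $p$ side should transport to the characteristic $0$ side.

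The first step is therefore to restrict attention to varieties $X$ admitting a projective (or toric) embedding $X \hookrightarrow \mathbb{P}^n_K$ realising $X$ as a set-theoretic complete intersection, cut out by equations $f_1,\ldots,f_r\in K\langle T_0,\ldots,T_n\rangle$. Pulling back along $\pi$ gives a closed adic subspace $\pi^{-1}(X)\subset \mathbb{P}^{n,\ad}_{K^\flat}$. The second step is an approximation argument: one chooses polynomials $g_1,\ldots,g_r$ over $K^\flat$ whose sharps $g_j^\sharp$ are close to the $f_j$ in the sup norm. By continuity of $\pi$ and openness of smoothness, for a sufficiently good approximation the zero locus $Y=V(g_1,\ldots,g_r)\subset \mathbb{P}^n_{K^\flat}$ is smooth of the correct dimension, and the sets $\pi^{-1}(X)$ and $Y^\ad$ coincide as subspaces of the perfectoid cover of $(\mathbb{P}^n_{K^\flat})^\ad$ — up to a controlled tubular neighbourhood.

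The third step is to compare \'{e}tale cohomologies. Using proper base change and the topos equivalence, one shows that $H^i(X_{\bar K},\bar{\mathbb{Q}}_\ell)$ and $H^i(Y_{\bar{K}^\flat},\bar{\mathbb{Q}}_\ell)$ are related: concretely, one expects an injection (or surjection) of Galois representations that exhibits the former as a subquotient of the latter, perhaps after twisting by Tate objects and using the fact that $Y$ and $\pi^{-1}(X)$ have the same underlying set inside the perfectoid total space. The weight-monodromy property passes to subquotients of Galois representations, so weight-monodromy for the smooth proper $Y$ over the equal characteristic local field $K^\flat$ (which is Deligne's theorem) implies it for $X$.

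The main obstacle is the third step, i.e.\ the precise cohomological comparison between $X$ and the approximating smooth variety $Y$ over $K^\flat$. While the homeomorphism of topological spaces and the equivalence of \'{e}tale topoi are delivered by the perfectoid formalism, extracting from this an actual morphism of Galois representations $H^i(Y,\bar{\mathbb{Q}}_\ell)\to H^i(X,\bar{\mathbb{Q}}_\ell)$ requires handling the fact that $\pi$ is defined only on the perfectoid level and is highly transcendental; one must verify that the approximation $Y\rightsquigarrow \pi^{-1}(X)$ indeed induces a map in the \'{e}tale cohomology of the ambient projective spaces that is close enough to the identity to preserve the relevant direct summands. A secondary obstacle is that the whole argument is constrained to varieties admitting a set-theoretic complete intersection presentation in a projective smooth toric variety; extending beyond this class (e.g.\ to general smooth proper $X$) would require genuinely new input and remains the principal open question.
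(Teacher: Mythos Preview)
The statement is the full weight-monodromy conjecture, which the paper does \emph{not} prove in general; it establishes only the case of set-theoretic complete intersections in projective smooth toric varieties. Your proposal is really a sketch of that special case, and you correctly flag the restriction at the end. Comparing your sketch to the paper's actual argument for that case, there are substantive gaps.

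First, you hope the approximating variety over $K^\flat$ is smooth by ``openness of smoothness.'' This does not work: the approximation lemma gives no control over smoothness of the zero locus. The paper instead produces a possibly singular closed $Z\subset X_{\Sigma,K^\flat}$ of the correct dimension, arranged to be defined over a global subfield of $K^\flat$, and then applies de Jong's alterations to obtain a projective smooth $Z'\to Z$. Deligne's theorem is applied to $Z'$.

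Second, the cohomological comparison---which you rightly flag as the crux---is handled in the paper not by showing $\pi^{-1}(Y)$ and $Z^{\mathrm{ad}}$ coincide, but via a finiteness theorem of Huber: for $Y$ closed in a proper smooth adic space there is a small open neighbourhood $\tilde{Y}\supset Y^{\mathrm{ad}}$ with $H^i(\tilde{Y}_{\mathbb{C}_p},\mathbb{Z}/\ell^m)=H^i(Y_{\mathbb{C}_p},\mathbb{Z}/\ell^m)$. The approximation lemma then only needs to place $Z^{\mathrm{ad}}$ inside $\pi^{-1}(\tilde{Y})$, and the chain
\[
H^i(Y_{\mathbb{C}_p})\;=\;H^i(\tilde{Y}_{\mathbb{C}_p})\;\longrightarrow\;H^i(\pi^{-1}(\tilde{Y})_{\mathbb{C}_p^\flat})\;\longrightarrow\;H^i(Z'_{\mathbb{C}_p^\flat})
\]
produces an honest $G$-equivariant map. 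Your phrase ``coincide up to a controlled tubular neighbourhood'' gestures at this but misses that the neighbourhood lives on the characteristic-$0$ side and that Huber's result is what makes it cohomologically invisible.

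Third, to show $H^i(Y)$ is a direct summand of $H^i(Z')$ (not merely a subquotient), the paper checks the map is an isomorphism in top degree $2\dim Y$---via nonvanishing of the restriction of a power of an ample Chern class from the ambient toric variety---and then invokes Poincar\'e duality. You do not supply this mechanism.

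Finally, the obstacle you emphasise (Galois equivariance of the comparison) is not where the difficulty lies: all maps in the diagram of \'etale topoi are $G$-equivariant by construction once $G_K\cong G_{K^\flat}$ is fixed.
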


Deligne, \cite{DeligneWeil2}, proved this conjecture if $k$ is of characteristic $p$, and the situation is already defined over a curve. The general weight-monodromy conjecture over fields $k$ of characteristic $p$ can be deduced from this case, as done by Terasoma, \cite{Terasoma}, and by Ito, \cite{Ito}.

In mixed characteristic, the conjecture is wide open. Introducing what is now called the Rapoport-Zink spectral sequence, Rapoport and Zink, \cite{RapoportZink}, have proved the conjecture when $X$ has dimension at most $2$ and $X$ has semistable reduction. They also show that in general it would follow from a suitable form of the standard conjectures, the main point being that a certain linear pairing on cohomology groups should be nondegenerate. Using de Jong's alterations, \cite{deJongAlterations}, one can reduce the general case to the case of semistable reduction, and in particular the case of dimension at most $2$ follows. Apart from that, other special cases are known. Notably, the case of varieties which admit $p$-adic uniformization by Drinfeld's upper half-space is proved by Ito, \cite{ItoUpperHalfPlane}, by pushing through the argument of Rapoport-Zink in this special case, making use of the special nature of the components of the special fibre, which are explicit rational varieties.

On the other hand, there is a large amount of activity that uses automorphic arguments to prove results in cases of certain Shimura varieties, notably those of type $U(1,n-1)$ used in the book of Harris-Taylor \cite{HarrisTaylor}. Let us only mention the work of Taylor and Yoshida, \cite{TaylorYoshida}, later completed by Shin, \cite{Shin}, and Caraiani, \cite{Caraiani}, as well as the independent work of Boyer, \cite{Boyer}, \cite{Boyer2}. Boyer's results were used by Dat, \cite{Dat}, to handle the case of varieties which admit uniformization by a covering of Drinfeld's upper half-space, thereby generalizing Ito's result.

Our last main theorem is the following.

\begin{thm} Let $k$ be a local field of characteristic $0$. Let $X$ be a geometrically connected proper smooth variety over $k$ such that $X$ is a set-theoretic complete intersection in a projective smooth toric variety. Then the weight-monodromy conjecture is true for $X$.
\end{thm}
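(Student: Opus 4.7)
The plan is to reduce weight-monodromy for $X$ over $k$ to the corresponding statement for a set-theoretic complete intersection $Y$ in the same ambient toric variety but over the equal-characteristic field $K^\flat$, and then invoke Deligne's theorem together with the work of Terasoma and Ito.

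First, let $K$ denote the completion of $k(p^{1/p^\infty})$, so $K$ is a perfectoid field and its tilt $K^\flat$ is a perfectoid field of characteristic $p$. The extension $K/k$ is totally ramified with a pro-$p$ Galois group, hence has the same residue field as $k$ and the same $\ell$-adic tame inertia for any $\ell\neq p$. Consequently the geometric Frobenius eigenvalues and the monodromy operator $N$ on $H^i(X_{\bar k}, \bar{\mathbb{Q}}_\ell)$ are unchanged upon restriction to $G_K$, so the weight-monodromy conjecture for $X$ reduces to that for the $G_K$-representation $H^i(X_{\bar K}, \bar{\mathbb{Q}}_\ell)$. By the Fontaine--Wintenberger isomorphism $G_K \cong G_{K^\flat}$ (Theorem \ref{GeneralFW}), it suffices to realize this cohomology geometrically over the characteristic-$p$ field $K^\flat$.

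Second, tilt the toric ambient. Let $P$ be the projective smooth toric variety containing $X$, with $X = V(f_1,\ldots,f_r)_{\mathrm{red}}$ where $r = \dim P - \dim X$. Put $\hat P_K = \varprojlim_\varphi P_K^{\mathrm{ad}}$ with $\varphi$ the $p$-th power map on torus coordinates; this is a perfectoid space whose tilt, by the toric extension of Theorem \ref{EtTopoiPn}, is $\hat P_K^\flat = \varprojlim_\varphi P_{K^\flat}^{\mathrm{ad}}$. The projection $\hat P_K^\flat \to P_{K^\flat}^{\mathrm{ad}}$ is the inverse limit of Frobenius in characteristic $p$, hence a universal homeomorphism and an equivalence of \'{e}tale topoi. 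The projection $\hat P_K \to P_K^{\mathrm{ad}}$ is an inverse limit of finite covers of $p$-power degree, so its Hochschild--Serre spectral sequence degenerates on $\mathbb{Q}_\ell$-cohomology for $\ell\neq p$ and also induces an isomorphism. Combined with the tilting equivalence of \'{e}tale topoi, we obtain a $G_K = G_{K^\flat}$-equivariant chain
\[
H^i(X_{\bar K}, \mathbb{Q}_\ell) \cong H^i(\hat X_K, \mathbb{Q}_\ell) \cong H^i(\hat X_K^\flat, \mathbb{Q}_\ell),
\]
where $\hat X_K \subset \hat P_K$ is the preimage of $X_K^{\mathrm{ad}}$ and $\hat X_K^\flat \subset \hat P_K^\flat$ is its tilt.

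Third, descend $\hat X_K^\flat$ to a classical subvariety $Y \subset P_{K^\flat}$. The defining conditions of $\hat X_K^\flat$ are the tilts of the $f_i$, which live in the perfectoid completion of the homogeneous coordinate ring over $K^\flat$ and are formal series in $p$-power roots of toric coordinates. By density, approximate each such tilt by a section $\tilde f_i$ of the corresponding line bundle on $P_{K^\flat}$ differing from the exact tilt by a topologically nilpotent quantity. Because $\hat P_K^\flat \to P_{K^\flat}^{\mathrm{ad}}$ is a universal homeomorphism, for a sufficiently accurate approximation the preimage in $\hat P_K^\flat$ of $Y := V(\tilde f_1,\ldots,\tilde f_r)_{\mathrm{red}} \subset P_{K^\flat}$ coincides with $\hat X_K^\flat$, both as a subspace and as an \'{e}tale topos. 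Thus $Y$ is a set-theoretic complete intersection of codimension $r$, so $\dim Y = \dim X$, and we obtain a $G_K = G_{K^\flat}$-equivariant isomorphism $H^i(X_{\bar K}, \mathbb{Q}_\ell) \cong H^i(Y_{\overline{K^\flat}}, \mathbb{Q}_\ell)$. Deligne's theorem (as extended by Terasoma and Ito) applied to $Y$ over the characteristic-$p$ field $K^\flat$ yields weight-monodromy for $Y$, and hence for $X_K$ and then for $X$.

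The main obstacle is the third step: turning the formal tilts of the $f_i$ into honest sections over $K^\flat$ so that $Y$ has precisely the correct dimension and precisely the correct \'{e}tale cohomology, not merely one that is close. The set-theoretic complete intersection hypothesis is essential here, as it guarantees that $r$ equations suffice to cut out $\hat X_K^\flat$, hence that $Y$ remains of codimension $r$ after perturbation; without it one could neither pin down $\dim Y$ nor locate the correct cohomological degree in which Deligne's weights are to be read off. Subsidiary care is required to check that every identification in the chain is Galois-equivariant, relying on the naturality of the tilting equivalence and of the sharp map.
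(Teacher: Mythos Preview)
Your second and third steps contain a genuine gap that the paper explicitly warns about. The map $\varphi$ on the toric variety $P$ does \emph{not} preserve $X$: the $p$-th power map on coordinates sends a hypersurface $\{f=0\}$ to a different hypersurface. Hence the restriction $\hat X_K\to X_K^{\ad}$ is \emph{not} an inverse limit of finite $p$-power covers of $X$; it is the inverse limit of the subvarieties $\varphi^{-n}(X)$, whose degrees (and Betti numbers) grow without bound. As the paper says in the introduction for the example $X=\{x_0+x_1+x_2=0\}$, the preimage $\pi^{-1}(X)$ is a fractal-like object with infinite-dimensional cohomology. So your claimed isomorphism $H^i(X_{\bar K},\mathbb{Q}_\ell)\cong H^i(\hat X_K,\mathbb{Q}_\ell)$ is false, and your Hochschild--Serre argument does not apply. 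For the same reason, there is no algebraic $Y\subset P_{K^\flat}$ whose preimage in $\hat P_K^\flat$ equals $\hat X_K^\flat$ on the nose: the latter is simply not algebraic.

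The paper's fix is to abandon the hope for an isomorphism and settle for a Galois-equivariant \emph{injection}. One replaces $X$ by a small open neighbourhood $\tilde Y\subset P_K^{\ad}$ with the same $\mathbb{Z}/\ell^m$-cohomology (Huber's Theorem 3.6), then uses the approximation lemma to find an algebraic complete intersection $Z\subset P_{K^\flat}$ of the correct dimension with $Z^{\ad}\subset \pi^{-1}(\tilde Y)$, defined over a global subfield. Passing to a smooth alteration $Z'$, one gets a Galois-equivariant map $H^i(X_{\mathbb{C}_p})\to H^i(Z'_{\mathbb{C}_p^\flat})$. A Chern-class argument in top degree plus Poincar\'e duality shows this map makes $H^i(X)$ a direct summand of $H^i(Z')$; Deligne's theorem for $Z'$ then finishes. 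The set-theoretic complete intersection hypothesis is used exactly where you say, to control $\dim Z$, but the logical structure of the argument --- neighbourhood, approximation, direct summand --- is essentially different from the chain of isomorphisms you propose.
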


Let us give a short sketch of the proof for a smooth hypersurface in $X\subset \mathbb{P}^n$, which is already a new result. We have the projection
\[
\pi: \mathbb{P}^n_{K^\flat}\rightarrow \mathbb{P}^n_K\ ,
\]
and we can look at the preimage $\pi^{-1}(X)$. One has an injective map $H^i(X)\rightarrow H^i(\pi^{-1}(X))$, and if $\pi^{-1}(X)$ were an algebraic variety, then one could deduce the result from Deligne's theorem in equal characteristic. However, the map $\pi$ is highly transcendental, and $\pi^{-1}(X)$ will not be given by equations. In general, it will look like some sort of fractal, have infinite-dimensional cohomology, and will have infinite degree in the sense that it will meet a line in infinitely many points. As an easy example, let
\[
X=\{x_0+x_1+x_2=0\}\subset (\mathbb{P}^2_K)^\ad\ .
\]
Then the homeomorphism
\[
|(\mathbb{P}^2_{K^\flat})^\ad|\cong \varprojlim_\varphi |(\mathbb{P}^2_K)^\ad|
\]
means that $\pi^{-1}(X)$ is topologically the inverse limit of the subvarieties
\[
X_n=\{x_0^{p^n}+x_1^{p^n}+x_2^{p^n}=0\}\subset (\mathbb{P}^2_K)^\ad\ .
\]
However, we have the following crucial approximation lemma.

\begin{lem} Let $\tilde{X}\subset (\mathbb{P}^n_K)^\ad$ be a small open neighborhood of the hypersurface $X$. Then there is a hypersurface $Y\subset \pi^{-1}(\tilde{X})$.
\end{lem}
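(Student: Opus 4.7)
The plan is to write $X = V(f)$ for a homogeneous polynomial $f = \sum_I a_I x^I \in K[x_0,\ldots,x_n]$ of some degree $d$, then (after rescaling) take $a_I \in K^\circ$ with $\max_I|a_I| = 1$ and $\tilde X$ of the form $\{ x : |f(x)| \leq c \cdot \max_i|x_i|^d\}$ for some $c$ in the value group of $K$. Since $\pi$ acts on coordinates as $y \mapsto y^\sharp$, the task reduces to constructing a homogeneous polynomial $g \in K^\flat[y_0,\ldots,y_n]$ such that $g(y) = 0$ with $\max_i|y_i|=1$ forces $|f(y^\sharp)| \leq c$; the hypersurface $Y := V(g)$ will then lie in $\pi^{-1}(\tilde X)$.

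For the basic construction, I would produce $g$ coefficient-by-coefficient as an ``approximate tilt'' of $f$. Using the tilting isomorphism $K^{\flat\circ}/p^\flat \cong K^\circ/p$ (for a pseudouniformizer $p^\flat \in K^{\flat\circ}$ with $(p^\flat)^\sharp$ a unit multiple of $p$), I would lift each coefficient $a_I$ to $\alpha_I \in K^{\flat\circ}$ with $\alpha_I^\sharp \equiv a_I \pmod p$, and set $g(y) = \sum_I \alpha_I y^I$. The sharp map is multiplicative and additive modulo $p$, so applying it to the equation $g(y) = 0$ gives $0 = g(y)^\sharp \equiv \sum_I \alpha_I^\sharp (y^\sharp)^I \equiv f(y^\sharp) \pmod p$, whence $|f(y^\sharp)| \leq |p|$. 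This settles the lemma in the case $c \geq |p|$.

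For smaller $c$, the argument must be refined. The idea is to use finer pseudouniformizers of the form $p^{1/p^n}$ (available by the perfectoid hypothesis) and the corresponding tilting correspondences to iterate the above construction, obtaining lifts with greater precision, at the possible cost of allowing $g$ to have degree $dp^N$ for large $N$. The identity $(\sum_I \alpha_I y^I)^{p^N} = \sum_I \alpha_I^{p^N} y^{I p^N}$, valid in the characteristic-$p$ ring $K^\flat[y]$, will organize this Frobenius-tower refinement: raising the basic $g$ to the $p^N$-th power leaves its zero locus unchanged but exposes finer structure of the approximation under $\sharp$, which one then uses to redistribute the coefficients among the monomials $y^{I p^N}$. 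The main obstacle is the non-additivity of the sharp map, which caps the precision of the naive construction at $|p|$; improving beyond this will require combining the multiplicative information of $\sharp$ with the ring-theoretic content of the tilting equivalence at higher precisions, and checking at the end that the refined polynomial continues to cut out an honest algebraic hypersurface rather than merely a convergent series in the perfectoid Tate algebra.
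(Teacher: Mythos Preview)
Your setup and the case $c \geq |p|$ are correct and match the first step of the paper's argument. The gap is in the refinement step: the ideas you sketch for pushing the precision below $|p|$ do not work. Raising $g$ to the $p^N$-th power changes nothing, since $g^{p^N}(y)=0$ is equivalent to $g(y)=0$, and applying $\sharp$ to the expanded sum $\sum_I \alpha_I^{p^N} y^{Ip^N}=0$ still only yields a congruence modulo $p$, because the tilting identification $K^{\flat\circ}/\varpi^\flat\cong K^\circ/\varpi$ lives at precision $|\varpi|$ and there is no analogous isomorphism at higher precision. Similarly, passing to $p^{1/p^n}$ gives a coarser congruence, not a finer one. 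So the non-additivity obstacle you name is real, and your proposal contains no mechanism that overcomes it.

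The paper's method is genuinely different from a coefficient-by-coefficient lift. One argues by induction on $c$: given $g_c$ with $|f(x)-g_c^\sharp(x)|\leq |\varpi|^{1-\epsilon}\max(|f(x)|,|\varpi|^c)$, one works on the rational subset $U_c=\{|g_c|\leq |\varpi^\flat|^c\}$ and uses the explicit description of its perfectoid structure sheaf (Lemma 6.4), namely $\mathcal{O}_X(U_c^\sharp)^{\circ a}=R^\circ\langle (g_c^\sharp/\varpi^c)^{1/p^\infty}\rangle^a$, to expand the error $f-g_c^\sharp$ on $U_c^\sharp$ in fractional powers of $g_c^\sharp/\varpi^c$. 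Tilting these correction terms produces $g_{c'}$ with the improved estimate. The key input you are missing is this structural description of the perfectoid algebra on the rational subdomain where $g_c$ is small: it is what allows one to write the error in a form that can be tilted and added back to $g_c$ while remaining a \emph{polynomial} (after clearing denominators via a $p$-power), rather than just an element of the completed perfectoid ring.
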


The proof of this lemma is by an explicit approximation algorithm for the homogeneous polynomial defining $X$, and is the reason that we have to restrict to complete intersections. Using a result of Huber, one finds some $\tilde{X}$ such that $H^i(X) = H^i(\tilde{X})$, and hence gets a map $H^i(X)=H^i(\tilde{X})\rightarrow H^i(Y)$. As before, one checks that it is injective and concludes.

After the results of this paper were first announced, Kiran Kedlaya informed us that he had obtained related results in joint work with Ruochuan Liu, \cite{KedlayaLiu}. In particular, in our terminology, they prove that for any perfectoid $K$-algebra $R$ with tilt $R^\flat$, there is an equivalence between the finite \'{e}tale $R$-algebras and the finite \'{e}tale $R^\flat$-algebras. However, the tilting equivalence, the generalization of Faltings's almost purity theorem and the application to the weight-monodromy conjecture were not observed by them. This led to an exchange of ideas, with the following two influences on this paper. In the first version of this work, Theorem \ref{GeneralFW} was proved using a version of Faltings's almost purity theorem for fields, proved in \cite{GabberRamero}, Chapter 6, using ramification theory. Kedlaya observed that one could instead reduce to the case where $K^\flat$ is algebraically closed, which gives a more elementary proof of the theorem. We include both arguments here. Secondly, a certain finiteness condition on the perfectoid $K$-algebra was imposed at some places in the first version, a condition close to the notion of p-finiteness introduced below; in most applications known to the author, this condition is satisfied. Kedlaya made us aware of the possibility to deduce the general case by a simple limit argument.

{\bf Acknowledgments.} First, I want to express my deep gratitude to my advisor M. Rapoport, who suggested that I should think about the weight-monodromy conjecture, and in particular suggested that it might be possible to reduce it to the case of equal characteristic after a highly ramified base change. Next, I want to thank Gerd Faltings for a crucial remark on a first version of this paper. Moreover, I wish to thank all participants of the ARGOS seminar on perfectoid spaces at the University of Bonn in the summer term 2011, for working through an early version of this manuscript and the large number of suggestions for improvements. The same applies to Lorenzo Ramero, whom I also want to thank for his very careful reading of the manuscript. Moreover, I thank Roland Huber for answering my questions on adic spaces. Further thanks go to Ahmed Abbes, Bhargav Bhatt, Pierre Colmez, Laurent Fargues, Jean-Marc Fontaine, Ofer Gabber, Luc Illusie, Adrian Iovita, Kiran Kedlaya, Gerard Laumon, Ruochuan Liu, Wieslawa Niziol, Arthur Ogus, Martin Olsson, Bernd Sturmfels and Jared Weinstein for helpful discussions. Finally, I want to heartily thank the organizers of the CAGA lecture series at the IHES for their invitation. This work is the author's PhD thesis at the University of Bonn, which was supported by the Hausdorff Center for Mathematics, and the thesis was finished while the author was a Clay Research Fellow. He wants to thank both institutions for their support. Moreover, parts of it were written while visiting Harvard University, the Universit\'{e} Paris-Sud at Orsay, and the IHES, and the author wants to thank these institutions for their hospitality.

\section{Adic spaces}

Throughout this paper, we make use of Huber's theory of adic spaces. For this reason, we recall some basic definitions and statements about adic spaces over nonarchimedean local fields. We also compare Huber's theory to the more classical language of rigid-analytic geometry, and to the theory of Berkovich's analytic spaces. The material of this section can be found in \cite{Huber}, \cite{HuberDefAdic} and \cite{HuberContVal}.

\begin{definition} A nonarchimedean field is a topological field $k$ whose topology is induced by a nontrivial valuation of rank $1$.
\end{definition}

In particular, $k$ admits a norm $|\cdot|:k\rightarrow \mathbb{R}_{\geq 0}$, and it is easy to see that $|\cdot|$ is unique up to automorphisms $x\mapsto x^\alpha$, $0<\alpha<\infty$, of $\mathbb{R}_{\geq 0}$.

Throughout, we fix a nonarchimedean field $k$. Replacing $k$ by its completion will not change the theory, so we may and do assume that $k$ is complete.

The idea of rigid-analytic geometry, and the closely related theories of Berkovich's analytic spaces and Huber's adic spaces, is to have a nonarchimedean analogue of the notion of complex analytic spaces over $\mathbb{C}$. In particular, there should be a functor
\[
\{\mathrm{varieties}/k\}\rightarrow \{\mathrm{adic\ spaces}/k\}\ :\ X\mapsto X^{\ad}\ ,
\]
sending any variety over $k$ to its analytification $X^{\ad}$. Moreover, it should be possible to define subspaces of $X^{\ad}$ by inequalities: For any $f\in \Gamma(X,\mathcal{O}_X)$, the subset
\[
\{x\in X^{\ad}\mid |f(x)|\leq 1\}
\]
should make sense. In particular, any point $x\in X^{\ad}$ should give rise to a valuation function $f\mapsto |f(x)|$. In classical rigid-analytic geometry, one considers only the maximal points of the scheme $X$. Each of them gives a map $\Gamma(X,\mathcal{O}_X)\rightarrow k^\prime$ for some finite extension $k^\prime$ of $k$; composing with the unique extension of the absolute value of $k$ to $k^\prime$ gives a valuation on $\Gamma(X,\mathcal{O}_X)$. In Berkovich's theory, one considers norm maps $\Gamma(X,\mathcal{O}_X)\rightarrow \mathbb{R}_{\geq 0}$ inducing a fixed norm map on $k$. Equivalently, one considers valuations of rank $1$ on $\Gamma(X,\mathcal{O}_X)$. In Huber's theory, one allows also valuations of higher rank.

\begin{definition} Let $R$ be some ring. A valuation on $R$ is given by a multiplicative map $|\cdot|: R\rightarrow \Gamma\cup \{0\}$, where $\Gamma$ is some totally ordered abelian group, written multiplicatively, such that $|0|=0$, $|1|=1$ and $|x+y|\leq \max(|x|,|y|)$ for all $x,y\in R$.

If $R$ is a topological ring, then a valuation $|\cdot|$ on $R$ is said to be continuous if for all $\gamma\in \Gamma$, the subset $\{x\in R\mid |x|<\gamma\}\subset R$ is open.
\end{definition}

\begin{rem} The term valuation is somewhat unfortunate: If $\Gamma = \mathbb{R}_{>0}$, then this would usually be called a seminorm, and the term valuation would be used for (a constant multiple of) the map $x\mapsto -\log |x|$. On the other hand, the term higher-rank norm is much less commonly used than the term higher-rank valuation. For this reason, we stick with Huber's terminology.
\end{rem}

\begin{rem} Recall that a valuation ring is an integral domain $R$ such that for any $x\neq 0$ in the fraction field $K$ of $R$, at least one of $x$ and $x^{-1}$ is in $R$. Any valuation $|\cdot|$ on a field $K$ gives rise to the valuation subring $R=\{x\mid |x|\leq 1\}$. Conversely, a valuation ring $R$ gives rise to a valuation on $K$ with values in $\Gamma = K^\times/R^\times$, ordered by saying that $x\leq y$ if $x=yz$ for some $z\in R$. With respect to a suitable notion of equivalence of valuations defined below, this induces a bijective correspondence between valuation subrings of $K$ and valuations on $K$.
\end{rem}

If $|\cdot|: R\rightarrow \Gamma\cup \{0\}$ is a valuation on $R$, let $\Gamma_{|\cdot|}\subset \Gamma$ denote the subgroup generated by all $|x|$, $x\in R$, which are nonzero. The set $\supp(|\cdot|) = \{x\in R\mid |x|=0\}$ is a prime ideal of $R$ called the support of $|\cdot|$. Let $K$ be the quotient field of $R/\supp(|\cdot|)$. Then the valuation factors as a composite $R\rightarrow K\rightarrow \Gamma\cup \{0\}$. Let $R(|\cdot|)\subset K$ be the valuation subring, i.e. $R(|\cdot|) = \{x\in K\mid |x|\leq 1\}$.

\begin{definition} Two valuations $|\cdot|$, $|\cdot|^\prime$ are called equivalent if the following equivalent conditions are satisfied.
\begin{altenumerate}
\item[{\rm (i)}] There is an isomorphism of totally ordered groups $\alpha: \Gamma_{|\cdot|}\cong \Gamma_{|\cdot|^\prime}$ such that $|\cdot|^\prime = \alpha\circ |\cdot|$.
\item[{\rm (ii)}] The supports $\supp(|\cdot|) = \supp(|\cdot|^\prime)$ and valuation rings $R(|\cdot|) = R(|\cdot|^\prime)$ agree.
\item[{\rm (iii)}] For all $a,b\in R$, $|a|\geq |b|$ if and only if $|a|^\prime\geq |b|^\prime$.
\end{altenumerate}
\end{definition}

In \cite{HuberContVal}, Huber defines spaces of (continuous) valuations in great generality. Let us specialize to the case of interest to us.

\begin{definition}
\begin{altenumerate}
\item[{\rm (i)}] A Tate $k$-algebra is a topological $k$-algebra $R$ for which there exists a subring $R_0\subset R$ such that $aR_0$, $a\in k^\times$, forms a basis of open neighborhoods of $0$. A subset $M\subset R$ is called bounded if $M\subset aR_0$ for some $a\in k^\times$. An element $x\in R$ is called power-bounded if $\{x^n\mid n\geq 0\}\subset R$ is bounded. Let $R^\circ\subset R$ denote the subring of powerbounded elements.
\item[{\rm (ii)}] An affinoid $k$-algebra is a pair $(R,R^+)$ consisting of a Tate $k$-algebra $R$ and an open and integrally closed subring $R^+\subset R^\circ$.
\item[{\rm (iii)}] An affinoid $k$-algebra $(R,R^+)$ is said to be of topologically finite type (tft for short) if $R$ is a quotient of $k\langle T_1,\ldots,T_n\rangle$ for some $n$, and $R^+=R^\circ$.
\end{altenumerate}
\end{definition}

Here,
\[
k\langle T_1,\ldots,T_n\rangle = \{\sum_{i_1,\ldots,i_n\geq 0} x_{i_1,\ldots,i_n} T_1^{i_1}\cdots T_n^{i_n}\mid x_{i_1,\ldots,i_n}\in k, x_{i_1,\ldots,i_n}\rightarrow 0\}
\]
is the ring of convergent power series on the ball given by $|T_1|,\ldots,|T_n|\leq 1$. Often, only affinoid $k$-algebras of tft are considered; however, this paper will show that other classes of affinoid $k$-algebras are of interest as well. We also note that any Tate $k$-algebra $R$, resp. affinoid $k$-algebra $(R,R^+)$, admits the completion $\hat{R}$, resp. $(\hat{R},\hat{R}^+)$, which is again a Tate, resp. affinoid, $k$-algebra. Everything depends only on the completion, so one may assume that $(R,R^+)$ is complete in the following.

\begin{definition} Let $(R,R^+)$ be an affinoid $k$-algebra. Let
\[
X=\Spa(R,R^+) = \{|\cdot|: R\rightarrow \Gamma\cup \{0\}\ \mathrm{continuous\ valuation}\mid \forall f\in R^+: |f|\leq 1\}/\cong\ .
\]
For any $x\in X$, write $f\mapsto |f(x)|$ for the corresponding valuation on $R$. We equip $X$ with the topology which has the open subsets
\[
U(\frac{f_1,\ldots,f_n}g) = \{x\in X\mid \forall i: |f_i(x)|\leq |g(x)|\}\ ,
\]
called rational subsets, as basis for the topology, where $f_1,\ldots,f_n\in R$ generate $R$ as an ideal and $g\in R$.
\end{definition}

\begin{rem}\label{RemRationalSubset} Let $\varpi\in k$ be topologically nilpotent, i.e. $|\varpi|<1$. Then to $f_1,\ldots,f_n$ one can add $f_{n+1}=\varpi^N$ for some big integer $N$ without changing the rational subspace. Indeed, there are elements $h_1,\ldots,h_n\in R$ such that $\sum h_i f_i = 1$. Multiplying by $\varpi^N$ for $N$ sufficiently large, we have $\varpi^N h_i\in R^+$, as $R^+\subset R$ is open. Now for any $x\in U(\frac{f_1,\ldots,f_n}g)$, we have
\[
|\varpi^N(x)| = |\sum (\varpi^N h_i)(x) f_i(x)|\leq \max |(\varpi^N h_i)(x)| |f_i(x)|\leq |g(x)|\ ,
\]
as desired. In particular, we see that on rational subsets, $|g(x)|$ is nonzero, and bounded from below.
\end{rem}

The topological spaces $\Spa(R,R^+)$ have some special properties reminiscent of the properties of $\Spec(A)$ for a ring $A$. In fact, let us recall the following result of Hochster, \cite{Hochster}.

\begin{defprop} A topological space $X$ is called spectral if it satisfies the following equivalent properties.
\begin{altenumerate}
\item[{\rm (i)}] There is some ring $A$ such that $X\cong \Spec(A)$.
\item[{\rm (ii)}] One can write $X$ as an inverse limit of finite $T_0$ spaces.
\item[{\rm (iii)}] The space $X$ is quasicompact, has a basis of quasicompact open subsets stable under finite intersections, and every irreducible closed subset has a unique generic point.
\end{altenumerate}
\end{defprop}

In particular, spectral spaces are quasicompact, quasiseparated and $T_0$. Recall that a topological space $X$ is called quasiseparated if the intersection of any two quasicompact open subsets is again quasicompact. In the following we will often abbreviate quasicompact, resp. quasiseparated, as qc, resp. qs.

\begin{prop}[{\cite[Theorem 3.5]{HuberContVal}}] For any affinoid $k$-algebra $(R,R^+)$, the space $\Spa(R,R^+)$ is spectral. The rational subsets form a basis of quasicompact open subsets stable under finite intersections.
\end{prop}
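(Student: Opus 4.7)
My plan is to verify Hochster's criterion for spectrality together with the stated properties of rational subsets. First I would show that rational subsets are closed under finite intersection, which then makes them a basis since they generate the topology by definition. Concretely, one writes
\[
U\!\left(\tfrac{f_1,\ldots,f_n}{g}\right) \cap U\!\left(\tfrac{h_1,\ldots,h_m}{g'}\right) = U\!\left(\tfrac{\{f_ih_j\}\cup\{f_ig'\}\cup\{gh_j\}}{gg'}\right);
\]
the auxiliary terms $f_ig'$ and $gh_j$ are needed so that, after dividing by the nonzero values $|g'|$ and $|g|$ on the right-hand rational subset (cf.\ Remark \ref{RemRationalSubset}), one recovers $|f_i|\leq|g|$ and $|h_j|\leq|g'|$ separately. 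The generating family lies in the unit ideal because already $(f_ih_j)_{i,j}=(f_i)(h_j)=R\cdot R=R$.

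For spectrality itself, I would verify Hochster's criterion (ii), exhibiting $X$ as a cofiltered inverse limit of finite $T_0$ spaces. The cleanest formulation uses an auxiliary \emph{constructible} topology: embed $X$ into the profinite space $\{0,1\}^{R\times R}$ by sending $x$ to the tuple whose $(a,b)$-coordinate records whether $|a(x)|\leq|b(x)|$, and then show that the image is closed. The valuation axioms (totality, multiplicativity, the ultrametric inequality, and $|0|=0\neq|1|$), together with the condition $|f|\leq 1$ for $f\in R^+$, each translate into closed conditions depending on only finitely many coordinates; continuity of the valuation, using the Tate algebra structure of $R$, can be phrased as $|\varpi^n f|\leq 1$ for all $n\geq 0$ and $f$ in a fixed ring of definition $R_0$, which is again a conjunction of closed coordinate conditions. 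Being closed in a compact profinite space gives quasicompactness of $X$ in the constructible topology, and since the adic topology is coarser, $X$ itself is quasicompact.

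Every rational subset is, by construction, the intersection of $X$ with a clopen of $\{0,1\}^{R\times R}$ involving only finitely many coordinates, hence is clopen in the constructible topology and quasicompact in the coarser adic topology. For the last clause of Hochster's criterion (iii)---unique generic points of irreducible closed subsets---I would use that the specialization order on $X$ corresponds to coarsening a valuation by passing to a larger convex subgroup of its value group, and apply Zorn on the specialization order inside an irreducible closed $Z$ to produce a generic point, unique by the $T_0$ property (distinct equivalence classes of valuations differ on some relation $|a|\leq|b|$). The principal obstacle throughout is the middle paragraph: correctly encoding continuity of the valuation as a closed condition on the Stone-space coordinates. The Tate structure, through the pseudo-uniformizer $\varpi$ and the ring of definition $R_0$, is exactly what makes continuity a finitary, closed condition; without it, the inverse limit in the constructible topology would produce discontinuous valuations and the argument would break down.
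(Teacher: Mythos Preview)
The paper gives no proof of this proposition; it is quoted from Huber's \emph{Continuous valuations}, so there is nothing in the paper to compare against. Your overall strategy---verifying Hochster's criterion by embedding $\Spa(R,R^+)$ into a profinite product $\{0,1\}^{R\times R}$---is the standard one and is essentially Huber's. Your intersection formula for rational subsets is also correct; the auxiliary generators $f_ig'$ and $gh_j$ are indeed necessary (the naive family $\{f_ih_j\}$ alone does not recover the separate inequalities).

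There is, however, a genuine gap exactly where you anticipated one. Your proposed encoding of continuity, ``$|\varpi^n f|\leq 1$ for all $n\geq 0$ and $f\in R_0$'', does not characterise continuous valuations. The trivial valuation (sending every nonzero element to $1$) satisfies this condition and also satisfies $|R^+|\leq 1$, yet it is not continuous on a Tate ring with non-discrete topology, since $\{x:|x|<1\}=\{0\}$ is not open. The correct characterisation, once $|R^+|\leq 1$ is imposed, is that $|\varpi|$ is \emph{cofinal} in the value group $\Gamma_{|\cdot|}$: for every $a\in R$ with $|a|\neq 0$ there exists $n$ with $|\varpi^n|\leq |a|$. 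For fixed $a$ this is an \emph{open} condition in your profinite encoding (a countable union of coordinate conditions), so the intersection over all $a$ is only $G_\delta$, not closed. Huber's actual argument does not cut out $\mathrm{Cont}(R)$ directly as a closed subset of such a product; instead he first shows $\mathrm{Spv}(R)$ is spectral, then proves that the continuous valuations form a \emph{pro-constructible} subset by a delicate argument involving, for each valuation $v$, the convex subgroup $c\Gamma_v\subset\Gamma_v$ generated by $\{|a|:|a|\geq 1\}$ and an associated retraction. This is the substantive content of his Theorem 3.1, and it does not reduce to a finitary closed condition of the form you wrote down.
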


\begin{prop}[{\cite[Proposition 3.9]{HuberContVal}}]\label{SpaCompletion} Let $(R,R^+)$ be an affinoid $k$-algebra with completion $(\hat{R},\hat{R}^+)$. Then $\Spa(R,R^+)\cong \Spa(\hat{R},\hat{R}^+)$, identifying rational subsets.
\end{prop}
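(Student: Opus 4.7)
The plan is to construct a natural bijection $\Spa(\hat R, \hat R^+) \cong \Spa(R, R^+)$ and check that it identifies rational subsets. In one direction, the map $R \to \hat R$ is continuous and sends $R^+$ into $\hat R^+$, so any continuous valuation on $\hat R$ bounded by $1$ on $\hat R^+$ restricts to such a valuation on $R$, giving a map $\Spa(\hat R, \hat R^+) \to \Spa(R, R^+)$. The core task is to produce the inverse by extending a given continuous valuation $|\cdot|\colon R\to \Gamma\cup\{0\}$ uniquely to $\hat R$.

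The key lemma behind the extension is the following ultrametric dichotomy: for any Cauchy sequence $(x_n)$ in $R$, either $|x_n|\to 0$ in the sense that for every $\gamma\in\Gamma$, $|x_n|<\gamma$ eventually, or $|x_n|$ is eventually constant. Indeed, by continuity of $|\cdot|$ applied to the Cauchy condition $x_n - x_m\to 0$, we get $|x_n - x_m|<\gamma$ eventually for every $\gamma$; if the first alternative fails, fix $\gamma_0$ with $|x_n|\geq \gamma_0$ for infinitely many $n$, and for sufficiently large $n,m$ with $|x_n-x_m|<\gamma_0\leq |x_n|$, the non-archimedean equality $|x_m|=|x_n|$ is forced. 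This lets me set $|\hat x| = \lim |x_n|$ whenever $(x_n)\to \hat x$ in $\hat R$; applying the dichotomy to differences of two approximating sequences shows the definition is independent of choice. Multiplicativity, the ultrametric inequality, and $|0|=0$, $|1|=1$ transfer from $R$ to the extension. Continuity of the extended valuation on $\hat R$ is immediate because for each $\gamma$, the open $\{x\in R : |x|<\gamma\}$ contains some neighborhood $aR_0$ of $0$, which in turn is dense in the open neighborhood $a\widehat{R_0}\subset \hat R$; and $|\hat x|\leq 1$ for $\hat x\in \hat R^+$ since $\hat R^+$ is the (integrally closed) closure of $R^+$ in $\hat R$ and the eventual value of $|\cdot|$ on a sequence in $R^+$ is at most $1$. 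Uniqueness of the extension is forced by density plus continuity, yielding the claimed bijection.

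For the topological statement, each rational subset $U(\frac{f_1,\ldots,f_n}{g})$ with $f_i,g\in R$ makes sense on both sides and is preserved under the bijection, since the bijection preserves the values $|f|$ for $f\in R$. Conversely, an arbitrary rational subset of $\Spa(\hat R, \hat R^+)$ defined by $\hat f_1,\ldots,\hat f_n,\hat g\in \hat R$ can be pulled back to one defined by elements of $R$: using the device of Remark \ref{RemRationalSubset}, after enlarging the generators by some $\varpi^N$ one sees that replacing each $\hat f_i$ and $\hat g$ by sufficiently close elements of $R$ (in particular, close enough that the relevant differences are bounded by the lower bound $|\hat g|$ guarantees on the rational subset) does not change the defining inequalities on the open. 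Hence the bijection sends the basis on one side to the basis on the other, and is a homeomorphism identifying rational subsets.

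The main obstacle is the extension step: one must verify that the valuation, a priori only defined on $R$ and taking values in a possibly non-complete ordered group $\Gamma$, extends to $\hat R$ without introducing any genuinely new values. The ultrametric dichotomy resolves exactly this difficulty, and once it is in hand the remaining verifications — continuity, compatibility with $\hat R^+$, and identification of rational subsets — are density arguments of the type already illustrated in Remark \ref{RemRationalSubset}.
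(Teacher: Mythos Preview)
The paper does not prove this proposition; it is quoted from Huber's \emph{Continuous valuations} (Proposition~3.9) without argument. So there is no ``paper's own proof'' to compare against.

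That said, your reconstruction is the standard one and is essentially correct. A couple of points deserve tightening. First, for the bound on $\hat R^+$: the completion $\hat R^+$ is the closure of $R^+$ in $\hat R$, but you should note that your extended valuation satisfies $|\cdot|\leq 1$ on this closure automatically (the eventually-constant dichotomy gives it), and then that the set $\{|\cdot|\leq 1\}$ is integrally closed in $\hat R$ because it is a valuation ring of the residue field, so it also contains the integral closure. Second, in the rational-subset approximation, the logic should be made explicit: after adjoining $\varpi^N$ to the $\hat f_i$ as in Remark~\ref{RemRationalSubset}, the $f_i$ you pick in $R$ (including $\varpi^N$ itself) trivially generate the unit ideal, and for $x$ with $|\hat g(x)|\geq |\varpi^N|$ the inequalities $|\hat f_i(x)|\leq |\hat g(x)|$ and $|f_i(x)|\leq |g(x)|$ agree once the approximation error is $<|\varpi^N|$; symmetrically on the complement. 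Your sketch gestures at this but does not quite nail down that the approximation works uniformly on all of $X$, not just on the rational subset itself. With these clarifications, the argument goes through.
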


Moreover, the space $\Spa(R,R^+)$ is large enough to capture important properties.

\begin{prop} Let $(R,R^+)$ be an affinoid $k$-algebra, $X=\Spa(R,R^+)$.
\begin{altenumerate}
\item[{\rm (i)}] If $X=\emptyset$, then $\hat{R}=0$.
\item[{\rm (ii)}] Let $f\in R$ be such that $|f(x)|\neq 0$ for all $x\in X$. If $R$ is complete, then $f$ is invertible.
\item[{\rm (iii)}] Let $f\in R$ be such that $|f(x)|\leq 1$ for all $x\in X$. Then $f\in R^+$.
\end{altenumerate}
\end{prop}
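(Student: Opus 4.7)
The plan is to apply Huber's theory: the spectrality of $X = \Spa(R,R^+)$, continuity criteria for valuations, and the integral-closure description of $R^+$. By Proposition~\ref{SpaCompletion}, I may assume $R$ is complete throughout. I will also use, without further comment, that one can choose a ring of definition $R_0 \subset R^+$ (take $R_0' \cap R^+$ for any ring of definition $R_0'$), and correspondingly that $R = R^+[\varpi^{-1}]$.

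For (i), the content is essentially a nonarchimedean Gelfand--Mazur-type assertion: any nonzero complete Tate $k$-algebra admits a continuous bounded multiplicative seminorm (a rank-$1$ point of its Berkovich spectrum). Such a seminorm $v$ is automatically $\leq 1$ on any ring of definition $R_0 \subset R^+$, since continuity bounds $v(R_0) \leq C$ for some constant $C$, and then $v(r^n) = v(r)^n$ forces $v(r) \leq C^{1/n} \to 1$; by integrality this extends to $R^\circ \supset R^+$. Hence $v$ defines a point of $\Spa(R,R^+)$.

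For (ii), the rational subsets $U_n = U(\varpi^n/f) = \{x : |\varpi^n(x)| \leq |f(x)|\}$ are well-defined since $\varpi^n$ is a unit in $R$, so $\varpi^n$ and $f$ generate the unit ideal. They form a nested increasing cover of $X$: continuity of $|\cdot|_x$ forces $|\varpi^n(x)| \to 0$ in $\Gamma_x$, while $|f(x)| > 0$ by hypothesis, so $x \in U_n$ for large $n$. By quasicompactness of $X$, some $U_N = X$, yielding $|\varpi^N(x)| \leq |f(x)|$ uniformly. Consider now the complete quotient $S = R/\overline{Rf}$. If $\overline{Rf} = R$, then some $1 - fg$ lies in $\varpi R_0$, hence is topologically nilpotent, and $\sum_{n \geq 0} (1-fg)^n$ converges in $R$ to an inverse of $fg$, so $f$ is a unit. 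Otherwise $S$ is a nonzero complete Tate $k$-algebra in which $\varpi$ is still a unit, and (i) applied to $(S, S^+)$ (with $S^+$ the integral closure of the image of $R^+$) produces a continuous valuation pulling back to a point of $X$ at which $|f| = 0 < |\varpi^N|$, a contradiction.

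For (iii), first observe that $R^+$ is integrally closed in $R$: the subring $R^\circ$ is integrally closed in $R$, because given $f \in R$ integral over $R^\circ$, the ring of definition $R_0$ can be enlarged by adjoining the (powerbounded) coefficients of an integrality equation to get a larger ring of definition $R_1$ over which $f$ is integral, so $f$ is itself powerbounded; and $R^+$ is integrally closed in $R^\circ$ by definition. Suppose $f \notin R^+$, so $f$ is not integral over $R^+$. By the classical valuation-ring criterion, there are a prime $\mathfrak{p} \subset R$ and a valuation ring $V \subset \mathrm{Frac}(R/\mathfrak{p})$ containing the image of $R^+$ but not the image of $f$. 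The induced valuation $v_0$ on $R$ satisfies $v_0(R^+) \leq 1$ and $v_0(f) > 1$, and is automatically continuous: the fact that $f \notin V$ forces $\varpi^{-1} \notin V$ (otherwise $R = R^+[\varpi^{-1}] \subset V \ni f$), so $v_0(\varpi) < 1$; and every $r \in R$ has the form $\varpi^{-m} s$ with $s \in R^+$, giving $v_0(r) \leq v_0(\varpi)^{-m}$. Hence the value group $\Gamma_{v_0}$ coincides with the convex subgroup generated by $v_0(\varpi)$, so $v_0(\varpi)$ is cofinal with $0$; combined with $v_0(R_0) \leq 1$ this gives continuity of $v_0$. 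Thus $v_0$ is a point of $X$ at which $|f| > 1$, contradicting the hypothesis. The main obstacle throughout is the Gelfand--Mazur step in (i); once this is in hand, (ii) and (iii) reduce to it via the sketched valuation-theoretic constructions.
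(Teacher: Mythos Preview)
The paper itself does not prove this proposition; it simply cites Huber (\cite{HuberContVal}, Proposition 3.6 (i) and Lemma 3.3 (i), and \cite{HuberDefAdic}, Lemma 1.4). Your attempt therefore goes further than the paper. Parts (i) and (ii) are correct as written. Incidentally, in (ii) the quasicompactness detour is unnecessary: once $S=R/\overline{Rf}\neq 0$, the point of $\Spa(S,S^+)$ furnished by (i) pulls back to $x\in X$ with $|f(x)|=0$, contradicting the hypothesis directly.

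In (iii) there is a genuine gap in the continuity argument. From $r=\varpi^{-m}s$ with $s\in R^+$ you correctly get $v_0(r)\le v_0(\varpi)^{-m}$, but this \emph{upper} bound on the generators of $\Gamma_{v_0}$ does not imply that $\Gamma_{v_0}$ equals the convex subgroup $H$ generated by $v_0(\varpi)$: you have no lower bound ruling out elements $v_0(s)$, $s\in R^+$, with $0<v_0(s)<v_0(\varpi)^n$ for all $n$. Nothing in the valuation-ring criterion prevents $V$ from having higher rank with $v_0(\varpi)$ sitting in a proper convex subgroup, and then $v_0$ is not continuous. The repair --- which is precisely the content of Huber's Lemma 3.3 --- is to replace $v_0$ by the valuation $v_1$ with values in $H$: set $v_1(r)=v_0(r)$ when $v_0(r)\in H$ and $v_1(r)=0$ when $v_0(r)$ lies strictly below $H$. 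Your upper bound is exactly what ensures $v_0(r)$ is never strictly above $H$, so $v_1$ is a well-defined valuation; it is continuous by construction, still $\le 1$ on $R^+$, and still has $v_1(f)=v_0(f)>1$ since $1<v_0(f)\le v_0(\varpi)^{-m}$ forces $v_0(f)\in H$ by convexity.
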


\begin{proof} Part (i) is \cite{HuberContVal}, Proposition 3.6 (i). Part (ii) is \cite{HuberDefAdic}, Lemma 1.4, and part (iii) follows from \cite{HuberContVal}, Lemma 3.3 (i).
\end{proof}

We want to endow $X=\Spa(R,R^+)$ with a structure sheaf $\mathcal{O}_X$. The construction is as follows.

\begin{definition}\label{DefinitionPresheaf} Let $(R,R^+)$ be an affinoid $k$-algebra, and let $U=U(\frac{f_1,\ldots,f_n}g)\subset X=\Spa(R,R^+)$ be a rational subset. Choose some $R_0\subset R$ such that $aR_0$, $a\in k^\times$, is a basis of open neighborhoods of $0$ in $R$. Consider the subalgebra $R[\frac{f_1}g,\ldots,\frac{f_n}g]$ of $R[g^{-1}]$, and equip it with the topology making $aR_0[\frac{f_1}g,\ldots,\frac{f_n}g]$, $a\in k^\times$, a basis of open neighborhoods of $0$. Let $B\subset R[\frac{f_1}g,\ldots,\frac{f_n}g]$ be the integral closure of $R^+[\frac{f_1}g,\ldots,\frac{f_n}g]$ in $R[\frac{f_1}g,\ldots,\frac{f_n}g]$. Then $(R[\frac{f_1}g,\ldots,\frac{f_n}g],B)$ is an affinoid $k$-algebra. Let $(R\langle \frac{f_1}g,\ldots,\frac{f_n}g\rangle,\hat{B})$ be its completion.
\end{definition}

Obviously,
\[
\Spa(R\langle \frac{f_1}g,\ldots,\frac{f_n}g\rangle,\hat{B})\rightarrow \Spa(R,R^+)
\]
factors over the open subset $U\subset X$.

\begin{prop}[{\cite[Proposition 1.3]{HuberDefAdic}}]\label{UnivPropertyPresheaf} In the situation of the definition, the following universal property is satisfied. For every complete affinoid $k$-algebra $(S,S^+)$ with a map $(R,R^+)\rightarrow (S,S^+)$ such that the induced map $\Spa(S,S^+)\rightarrow \Spa(R,R^+)$ factors over $U$, there is a unique map
\[
(R\langle \frac{f_1}g,\ldots,\frac{f_n}g\rangle,\hat{B})\rightarrow (S,S^+)
\]
making the obvious diagram commute.

In particular, $(R\langle \frac{f_1}g,\ldots,\frac{f_n}g\rangle,\hat{B})$ depends only on $U$. Define
\[
(\mathcal{O}_X(U),\mathcal{O}_X^+(U)) = (R\langle \frac{f_1}g,\ldots,\frac{f_n}g\rangle,\hat{B})\ .
\]
For example, $(\mathcal{O}_X(X),\mathcal{O}_X^+(X))$ is the completion of $(R,R^+)$.
\end{prop}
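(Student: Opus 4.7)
The plan is to construct the required map in stages, working first at the level of the localized ring $R[f_1/g,\ldots,f_n/g]$, then passing to its completion using completeness of $S$, and finally verifying that the integral-closure condition on $\hat{B}$ is automatic.

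First I would show that the image of $g$ in $S$ is a unit. Because the map $\Spa(S,S^+)\to\Spa(R,R^+)$ factors through $U=U(\frac{f_1,\ldots,f_n}{g})$, Remark~\ref{RemRationalSubset} (which permits adjoining $f_{n+1}=\varpi^N$ to the numerators) shows that $|g(x)|\geq|\varpi|^N$ for every $x\in\Spa(S,S^+)$; in particular $|g(x)|\neq 0$ everywhere. Applying part (ii) of the preceding proposition to the complete Tate ring $S$, the image of $g$ is invertible in $S$. Thus $R\to S$ factors uniquely through $R[g^{-1}]$, giving a well-defined algebra homomorphism $\varphi\colon R[\tfrac{f_1}{g},\ldots,\tfrac{f_n}{g}]\to S$, and there is no other choice of where to send the $f_i/g$.

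Next I would verify that $\varphi$ is continuous for the topology of Definition~\ref{DefinitionPresheaf}. This reduces to showing that $\varphi(R_0[\tfrac{f_1}{g},\ldots,\tfrac{f_n}{g}])$ is bounded in $S$. The image $\varphi(R_0)$ is bounded by continuity of $R\to S$. Moreover, $|\varphi(f_i/g)(x)|\leq 1$ on all of $\Spa(S,S^+)$, so part (iii) of the preceding proposition places each $\varphi(f_i/g)$ inside $S^+\subset S^\circ$; in particular these are power-bounded. Standard manipulations in a Tate ring then show that the subring generated by a bounded set together with finitely many power-bounded elements is again bounded, giving the desired conclusion. Because $S$ is complete and Hausdorff, $\varphi$ extends uniquely to a continuous ring homomorphism $\hat\varphi\colon R\langle\tfrac{f_1}{g},\ldots,\tfrac{f_n}{g}\rangle\to S$.

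Finally, one checks that $\hat\varphi(\hat{B})\subset S^+$. By hypothesis $R^+\to S^+$, and we have just shown $\varphi(f_i/g)\in S^+$; hence $\varphi(R^+[\tfrac{f_1}{g},\ldots,\tfrac{f_n}{g}])\subset S^+$. Since $S^+$ is integrally closed in $S$, it contains $\varphi(B)$, and being an open subring it is also closed, so $\hat\varphi(\hat{B})\subset\overline{\varphi(B)}\subset S^+$. Uniqueness is built in at every stage: $\hat\varphi$ is forced on $R$ by the original map, on $R[\tfrac{f_1}{g},\ldots,\tfrac{f_n}{g}]$ by the invertibility of $g$, and on the completion by density and continuity.

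The main obstacle I anticipate is the boundedness step underlying continuity: one has to use honestly that power-boundedness of the finitely many elements $\varphi(f_i/g)$ combines with the boundedness of $\varphi(R_0)$ to yield a bounded subring, rather than just a subring of $S^\circ$ (which in general need not be bounded). This is precisely what makes the topology in Definition~\ref{DefinitionPresheaf} the right one, and is the content that must be verified rather than merely quoted.
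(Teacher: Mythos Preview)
Your proposal is correct and follows precisely the idea the paper sketches in the paragraph immediately after the proposition: the paper does not give a self-contained proof but cites \cite[Proposition~1.3]{HuberDefAdic}, noting only that $g$ becomes invertible in $S$ (since $|g(x)|\neq 0$ on $U$) and that each $f_i/g$ lands in $S^+$ (since $|(f_i/g)(x)|\leq 1$). You have filled in exactly these two points with the appropriate references to Remark~\ref{RemRationalSubset} and parts (ii), (iii) of the preceding proposition, and then handled continuity, completion, and the integral-closure condition in the expected way; the boundedness step you flag as an obstacle is indeed routine once one observes that adjoining a power-bounded element to a ring of definition yields another ring of definition.
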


The idea is that since $f_1,\ldots,f_n$ generate $R$, not all $|f_i(x)|=0$ for $x\in U$; in particular, $|g(x)|\neq 0$ for all $x\in U$. This implies that $g$ is invertible in $S$ in the situation of the proposition. Moreover, $|(\frac{f_i}g)(x)|\leq 1$ for all $x\in U$, which means that $\frac{f_i}g\in S^+$.

We define presheaves $\mathcal{O}_X$ and $\mathcal{O}_X^+$ on $X$ as above on rational subsets, and for general open $U\subset X$ by requiring
\[
\mathcal{O}_X(W) = \varprojlim_{U\subset W\ \mathrm{rational}} \mathcal{O}_X(U)\ ,
\]
and similarly for $\mathcal{O}_X^+$.

\begin{prop}[{\cite[Lemma 1.5, Proposition 1.6]{HuberDefAdic}}] For any $x\in X$, the valuation $f\mapsto |f(x)|$ extends to the stalk $\mathcal{O}_{X,x}$, and
\[
\mathcal{O}_{X,x}^+ = \{f\in \mathcal{O}_{X,x}\mid |f(x)|\leq 1\}\ .
\]
The ring $\mathcal{O}_{X,x}$ is a local ring with maximal ideal given by $\{f\mid |f(x)|=0\}$. The ring $\mathcal{O}_{X,x}^+$ is a local ring with maximal ideal given by $\{f\mid |f(x)|<1\}$. Moreover, for any open subset $U\subset X$,
\[
\mathcal{O}_X^+(U) = \{f\in \mathcal{O}_X(U)\mid \forall x\in U: |f(x)|\leq 1\}\ .
\]

If $U\subset X$ is rational, then $U\cong \Spa(\mathcal{O}_X(U),\mathcal{O}_X^+(U))$ compatible with rational subsets, the presheaves $\mathcal{O}_X$ and $\mathcal{O}_X^+$, and the valuations at all $x\in U$.
\end{prop}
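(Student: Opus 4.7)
My plan is to reduce every assertion to statements on rational neighborhoods of $x$, where by Definition \ref{DefinitionPresheaf} $(\mathcal{O}_X(U),\mathcal{O}_X^+(U))$ is a complete affinoid $k$-algebra to which the preceding proposition applies, and where the universal property (Proposition \ref{UnivPropertyPresheaf}) controls maps out of it. The two workhorses are this universal property and the density of $R[f_1/g,\ldots,f_n/g]$ in $\mathcal{O}_X(U) = R\langle f_1/g,\ldots,f_n/g\rangle$ for $U = U(\frac{f_1,\ldots,f_n}{g})$.

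The first step is to extend $|\cdot|_x$ to each $\mathcal{O}_X(U)$ with $x \in U$. Since $|g(x)|\neq 0$ on $U$ (Remark \ref{RemRationalSubset}), $|\cdot|_x$ extends uniquely to $R[1/g]$ and restricts to $R[f_1/g,\ldots,f_n/g]$, on which each generator $f_i/g$ has value $\leq 1$; since valuation rings are integrally closed in their fraction fields, the extension takes values $\leq 1$ on the integral closure $B$ of $R^+[f_1/g,\ldots,f_n/g]$. Continuity of the original valuation, together with the Tate structure (any pseudo-uniformizer $\varpi$ satisfies $|\varpi^N|_x\to 0$), implies continuity for the topology on $R[f_1/g,\ldots,f_n/g]$ of Definition \ref{DefinitionPresheaf}, so the extension passes uniquely to the completion $\mathcal{O}_X(U)$, taking values $\leq 1$ on $\mathcal{O}_X^+(U) = \hat{B}$. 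Uniqueness makes these compatible as $U$ shrinks, giving a well-defined valuation on $\mathcal{O}_{X,x}$.

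Next I would establish the identification $U \cong \Spa(\mathcal{O}_X(U),\mathcal{O}_X^+(U))$ for rational $U$. The map $(R,R^+)\to(\mathcal{O}_X(U),\mathcal{O}_X^+(U))$ induces $\Spa(\mathcal{O}_X(U),\mathcal{O}_X^+(U))\to X$; the image lies in $U$ because $g$ is a unit in $\mathcal{O}_X(U)$ (writing $\sum h_i f_i = 1$ in $R$ yields $1/g = \sum h_i (f_i/g)$) and $|f_i/g|\leq 1$ on $\mathcal{O}_X^+(U)$. Conversely, the valuation extension just constructed provides for each $x\in U$ a point of $\Spa(\mathcal{O}_X(U),\mathcal{O}_X^+(U))$ lying over $x$; uniqueness of extension by continuity makes these maps mutually inverse, matching valuations tautologically. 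The agreement of rational subsets on the two sides, as well as the compatibility of the presheaves, follows from applying the universal property twice.

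Using this identification, the stalk assertions reduce to producing, for each pointwise condition at $x$, a rational neighborhood of $x$ on which the condition holds throughout. If $f\in\mathcal{O}_X(U)$ satisfies $|f(x)|\leq 1$, the rational subset $V = U(\frac{f,1}{1})\subset U = \{y\in U:|f(y)|\leq 1\}$ contains $x$, and part (iii) of the previous proposition applied to $(\mathcal{O}_X(V),\mathcal{O}_X^+(V))$ gives $f\in\mathcal{O}_X^+(V)$, so $f\in\mathcal{O}_{X,x}^+$; the reverse inclusion is immediate from the same part (iii). If $|f(x)|\neq 0$, then for large $N$ the rational subset $\{y\in U:|f(y)|\geq|\varpi^N|\}$ contains $x$ and $f$ is invertible there by part (ii), giving the local ring property of $\mathcal{O}_{X,x}$; similarly, if $f\in\mathcal{O}_X^+(U)$ with $|f(x)|=1$, then $\{y\in U:|f(y)|\geq 1\}$ contains $x$ and forces $|f(y)|=1$, whence $f^{-1}\in\mathcal{O}_X^+$ locally. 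Finally, for a general open $U\subset X$, the formula $\mathcal{O}_X^+(U)=\{f:|f(x)|\leq 1\ \forall x\in U\}$ follows by passing to the inverse limit over rational $V\subset U$ and invoking the rational case. The main technical obstacle I expect is verifying continuity of the extended valuation for the refined topology on $R[f_1/g,\ldots,f_n/g]$ used in Definition \ref{DefinitionPresheaf}, which is strictly finer than the subspace topology inherited from $R[1/g]$; it is here that the Tate structure, and specifically the existence and cofinality of powers of a pseudo-uniformizer, are essential, and once this is in hand the rest of the argument is largely formal.
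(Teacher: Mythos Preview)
The paper does not supply its own proof of this proposition; it is quoted verbatim from Huber, with the citation \cite[Lemma 1.5, Proposition 1.6]{HuberDefAdic} standing in lieu of an argument. So there is nothing in the paper to compare your proposal against directly.

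That said, your sketch is a sound reconstruction of the standard argument and follows the same architecture as Huber's original: extend the valuation to rational localizations by continuity, use the universal property of Proposition~\ref{UnivPropertyPresheaf} to identify $U$ with $\Spa(\mathcal{O}_X(U),\mathcal{O}_X^+(U))$, and then read off the stalk statements by producing suitable rational neighborhoods. The one point you flag as delicate---continuity of the extended valuation for the topology of Definition~\ref{DefinitionPresheaf}---is indeed the crux, and your indication is correct but could be made more explicit. Concretely: continuity on $R$ gives some $N$ with $|\varpi^N r(x)|<1$ for all $r\in R_0$, so $|r(x)|$ is bounded above on $R_0$ by $|\varpi(x)|^{-N}$; since $|f_i/g(x)|\leq 1$, every element of $R_0[f_1/g,\ldots,f_n/g]$ (a finite sum of monomials with coefficients in $R_0$) inherits the same bound, and then $|\varpi^M R_0[f_1/g,\ldots,f_n/g](x)|\to 0$ as $M\to\infty$. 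With this in hand, the rest of your outline goes through as written.
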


Unfortunately, it is not in general known\footnote{and probably not true} that $\mathcal{O}_X$ is sheaf. We note that the proposition ensures that $\mathcal{O}_X^+$ is a sheaf if $\mathcal{O}_X$ is. The basic problem is that completion behaves in general badly for nonnoetherian rings.

\begin{definition} A Tate $k$-algebra $R$ is called strongly noetherian if
\[
R\langle T_1,\ldots,T_n\rangle = \{\sum_{i_1,\ldots,i_n\geq 0} x_{i_1,\ldots,i_n} T_1^{i_1}\cdots T_n^{i_n}\mid x_{i_1,\ldots,i_n}\in \hat{R}, x_{i_1,\ldots,i_n}\rightarrow 0\}
\]
is noetherian for all $n\geq 0$.
\end{definition}

For example, if $R$ is of tft, then $R$ is strongly noetherian.

\begin{thm}[{\cite[Theorem 2.2]{HuberDefAdic}}] If $(R,R^+)$ is an affinoid $k$-algebra such that $R$ is strongly noetherian, then $\mathcal{O}_X$ is a sheaf.
\end{thm}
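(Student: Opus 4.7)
The plan is to mimic Tate's proof of the sheaf property in classical rigid-analytic geometry, reducing step by step from an arbitrary cover down to a single simple Laurent cover, where the strongly noetherian hypothesis makes the direct computation tractable.

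First, since the rational subsets form a basis of quasi-compact opens and $X$ itself is quasi-compact, it is enough to verify the sheaf axiom for finite covers of a rational subset $U$ by rational subsets of $U$. Replacing $(R,R^+)$ by $(\mathcal{O}_X(U),\mathcal{O}_X^+(U))$ (which is again an affinoid $k$-algebra with strongly noetherian first component, since strongly noetherian is preserved by rational localization thanks to the identity $R\langle \frac{f_1}{g},\ldots,\frac{f_n}{g}\rangle\langle T_1,\ldots,T_m\rangle = R\langle T_1,\ldots,T_m\rangle\langle \frac{f_1}{g},\ldots,\frac{f_n}{g}\rangle$, which is noetherian), we are reduced to proving that for a finite cover of $X = \Spa(R,R^+)$ by rational subsets $U_1,\ldots,U_r$, the augmented \v{C}ech complex
\[
0\to \mathcal{O}_X(X)\to \prod_i \mathcal{O}_X(U_i)\to \prod_{i<j}\mathcal{O}_X(U_i\cap U_j)\to \cdots
\]
is exact, and similarly with $\mathcal{O}_X^+$.

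Next, I would carry out two combinatorial reductions, exactly as in Bosch--G\"untzer--Remmert. First, refine any finite rational cover to a \emph{standard rational cover}, i.e.\ a cover of the form $U_i = U(\frac{f_1,\ldots,f_n}{f_i})$ for $i=1,\ldots,n$ where $f_1,\ldots,f_n$ generate the unit ideal of $R$ (using Remark~\ref{RemRationalSubset} to adjoin a topologically nilpotent element, ensuring the $f_i$ have no common zero). Second, refine any standard rational cover into an iterated Laurent cover, that is, an intersection of two-piece covers of the form $X = \{|f|\leq 1\}\cup \{|f|\geq 1\}$. A standard argument shows that if the sheaf property (and the vanishing of higher \v{C}ech cohomology) holds for every two-piece Laurent cover of every rational localization, then it holds for every iterated Laurent cover, hence for every standard rational cover, and, by a refinement lemma for \v{C}ech cohomology, for every finite rational cover.

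The remaining core case is the two-piece Laurent cover $X = U\cup V$ with $U=U(\frac{f,1}{1})=\{|f|\leq 1\}$ and $V=U(\frac{1,f}{f})=\{|f|\geq 1\}$. Concretely,
\[
\mathcal{O}_X(U)=R\langle T\rangle/(T-f),\quad \mathcal{O}_X(V)=R\langle S\rangle/(1-fS),\quad \mathcal{O}_X(U\cap V)=R\langle T,T^{-1}\rangle/(T-f),
\]
and the \v{C}ech sequence to check is
\[
0\to R\to R\langle T\rangle/(T-f)\oplus R\langle S\rangle/(1-fS)\to R\langle T,T^{-1}\rangle/(T-f)\to 0.
\]
This is where the strongly noetherian hypothesis does real work: because $R\langle T\rangle$, $R\langle S\rangle$, and $R\langle T,T^{-1}\rangle$ are noetherian Banach $k$-algebras, every finitely generated ideal in them is closed, finitely generated modules carry a canonical Banach topology in which they are complete, and the natural map $M\otimes_{R\langle T\rangle} R\langle T,T^{-1}\rangle \to \widehat{M\otimes_{R\langle T\rangle} R\langle T,T^{-1}\rangle}$ is an isomorphism for finitely generated $M$. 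With these tools, exactness is verified by writing down explicit splittings: given a power series in $R\langle T,T^{-1}\rangle$, split it into the nonnegative-degree part (living in $R\langle T\rangle$) and the strictly-negative-degree part (which, after substituting $T=f$, lives in $R\langle S\rangle$ via $T^{-1}=S$), and check that the ambiguity is precisely an element of $R$. The statement about $\mathcal{O}_X^+$ then follows formally from the last proposition.

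The main obstacle is this last step: it is not a one-line computation, and one really needs the noetherian hypothesis to guarantee that $(T-f)$ and $(1-fS)$ cut out closed, ``geometric'' ideals in the completed rings, so that completion commutes with passing to the quotient and with extension of scalars along $R\langle T\rangle\to R\langle T,T^{-1}\rangle$. Without strong noetherianity these completions can behave pathologically, which is precisely why the sheaf property is not known in general, and why the footnote hedges ``probably not true''.
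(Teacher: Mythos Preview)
The paper does not prove this theorem; it is simply quoted from Huber (\cite[Theorem 2.2]{HuberDefAdic}) as background input. So there is no ``paper's own proof'' to compare against here.

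That said, your outline is the standard one and is essentially Huber's argument: reduce to finite rational covers of affinoids, refine to standard rational covers and then to iterated Laurent covers, and finally check acyclicity for the two-piece Laurent cover by an explicit computation, using the strongly noetherian hypothesis to control completions and to know that the relevant ideals are closed. One small correction: you should not claim exactness of the $\mathcal{O}_X^+$ sequence in the strongly noetherian setting --- neither Huber nor this paper asserts that, and indeed for tft algebras the integral \v{C}ech complex is only exact up to bounded $\varpi$-torsion (this is exactly the point exploited later in Proposition \ref{SheafPFiniteCase}). Otherwise your sketch is sound and matches the source.
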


Later, we will show that this theorem is true under the assumption that $R$ is a perfectoid $k$-algebra. We define perfectoid $k$-algebras later; let us only remark that except for trivial examples, they are huge and in particular not strongly noetherian.

Finally, we can define the category of adic spaces over $k$. Namely, consider the category $(V)$ of triples $(X,\mathcal{O}_X,(|\cdot(x)|\mid x\in X))$ consisting of a locally ringed topological space $(X,\mathcal{O}_X)$, where $\mathcal{O}_X$ is a sheaf of complete topological $k$-algebras, and a continuous valuation $f\mapsto |f(x)|$ on $\mathcal{O}_{X,x}$ for every $x\in X$. Morphisms are given by morphisms of locally ringed topological spaces which are continuous $k$-algebra morphisms on $\mathcal{O}_X$, and compatible with the valuations in the obvious sense.

Any affinoid $k$-algebra $(R,R^+)$ for which $\mathcal{O}_X$ is a sheaf gives rise to such a triple $(X,\mathcal{O}_X,(|\cdot(x)|\mid x\in X))$. Call an object of $(V)$ isomorphic to such a triple an affinoid adic space.

\begin{definition} An adic space over $k$ is an object $(X,\mathcal{O}_X,(|\cdot(x)|\mid x\in X))$ of $(V)$ that is locally on $X$ an affinoid adic space.
\end{definition}

\begin{prop}[{\cite[Proposition 2.1 (ii)]{HuberDefAdic}}]\label{MapsToAffinoid} For any affinoid $k$-algebra $(R,R^+)$ with $X=\Spa(R,R^+)$ such that $\mathcal{O}_X$ is a sheaf, and any adic space $Y$ over $k$, we have
\[
\Hom(Y,X) = \Hom((\hat{R},\hat{R}^+),(\mathcal{O}_Y(Y),\mathcal{O}_Y^+(Y)))\ .
\]
Here, the latter set denotes the set of continuous $k$-algebra morphisms $\hat{R}\rightarrow \mathcal{O}_Y(Y)$ such that $\hat{R}^+$ is mapped into $\mathcal{O}_Y^+(Y)$.
\end{prop}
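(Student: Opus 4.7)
The map from left to right is the obvious one: a morphism $f\colon Y\to X$ induces, via global sections, a continuous $k$-algebra homomorphism $\hat R=\mathcal{O}_X(X)\to\mathcal{O}_Y(Y)$ sending $\hat R^+$ into $\mathcal{O}_Y^+(Y)$ (by the compatibility with the valuations at each $y$ and the characterization of $\mathcal{O}_Y^+$ as sections with $|\cdot(y)|\le 1$). The real content is constructing the inverse. My plan is to reduce to the affinoid case and then invoke Proposition~\ref{UnivPropertyPresheaf}.

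Given $\varphi\colon(\hat R,\hat R^+)\to(\mathcal{O}_Y(Y),\mathcal{O}_Y^+(Y))$, first I would define the underlying map of topological spaces $|f|\colon |Y|\to |X|$ pointwise: for $y\in Y$, composing $\varphi$ with the localization $\mathcal{O}_Y(Y)\to\mathcal{O}_{Y,y}$ and with the continuous valuation at $y$ yields a continuous valuation $\hat R\to\Gamma_y\cup\{0\}$ satisfying $|\hat R^+|\le 1$; equivalently a point $f(y)\in X=\Spa(\hat R,\hat R^+)$. To check continuity of $|f|$, it suffices to check that preimages of rational subsets are open. For a rational subset $U(\tfrac{f_1,\dots,f_n}{g})\subset X$ its preimage in $Y$ is $\{y\in Y\mid |\varphi(f_i)(y)|\le|\varphi(g)(y)|,\ |\varphi(g)(y)|\ne 0\}$, which is open in $Y$ (each condition being open in the valuation spectrum of any affinoid chart of $Y$; cf.\ Remark~\ref{RemRationalSubset}).

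Next I would construct the map of structure sheaves together with the compatibility of valuations. Cover $Y$ by affinoid adic spaces $V_\alpha=\Spa(S_\alpha,S_\alpha^+)$ and restrict $\varphi$ to obtain $\varphi_\alpha\colon(\hat R,\hat R^+)\to(S_\alpha,S_\alpha^+)$. I claim each $\varphi_\alpha$ induces a morphism of affinoid adic spaces $V_\alpha\to X$ whose underlying map on points agrees with $|f||_{V_\alpha}$. Indeed, after refining the cover so that $|f|(V_\alpha)$ is contained in a rational subset is not needed; we define the sheaf map on rational subsets $U\subset X$ by applying Proposition~\ref{UnivPropertyPresheaf}: the composite $(\hat R,\hat R^+)\to(S_\alpha,S_\alpha^+)\to(\mathcal{O}_Y(|f|^{-1}(U)\cap V_\alpha),\mathcal{O}_Y^+(\cdot))$ factors through the completed localization $(\mathcal{O}_X(U),\mathcal{O}_X^+(U))$, because on $|f|^{-1}(U)\cap V_\alpha$ the conditions $|f_i|\le|g|$ and $g$ invertible are satisfied pointwise, hence $\varphi(g)$ is invertible on this open by the proposition preceding Definition~\ref{DefinitionPresheaf} (value $\ne 0$ everywhere) and $\tfrac{\varphi(f_i)}{\varphi(g)}$ lies in $\mathcal{O}_Y^+$ on that open. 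Sheafifying, this defines a map $\mathcal{O}_X\to |f|_*\mathcal{O}_Y$ on the basis of rational subsets, which extends to all open subsets by the defining inverse limit for $\mathcal{O}_Y$ (note we use that $\mathcal{O}_X$ is a sheaf, which is in the hypothesis).

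Finally I would check the two assignments are mutually inverse, locality of the ringed-space structure (the map on stalks is local), and compatibility with valuations; these are formal once the construction on rational subsets has been pinned down via Proposition~\ref{UnivPropertyPresheaf}, since that proposition also guarantees the uniqueness of the maps on each rational chart. The step I expect to be the main obstacle is the sheaf-map construction: one must verify that the locally defined morphisms $V_\alpha\to X$ glue to a global morphism, i.e.\ agree on overlaps, which again is a consequence of the uniqueness clause in Proposition~\ref{UnivPropertyPresheaf} applied to the intersections $V_\alpha\cap V_\beta$ after passing to a common affinoid refinement.
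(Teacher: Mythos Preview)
The paper does not prove this proposition; it is simply quoted from Huber's paper \cite{HuberDefAdic}, Proposition 2.1 (ii), as a background result. So there is no ``paper's own proof'' to compare against. Your sketch is essentially the standard argument one finds in Huber's work: define the map on points by composing with the valuation at $y$, check continuity on the basis of rational subsets, and use the universal property of the completed localizations (Proposition~\ref{UnivPropertyPresheaf}) to produce the sheaf map.

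One point in your write-up deserves tightening. You apply Proposition~\ref{UnivPropertyPresheaf} to the pair $(\mathcal{O}_Y(|f|^{-1}(U)\cap V_\alpha),\mathcal{O}_Y^+(\cdot))$, but that proposition is stated for \emph{complete affinoid $k$-algebras} $(S,S^+)$, and $|f|^{-1}(U)\cap V_\alpha$ is an open subset of the affinoid $V_\alpha$ which need not itself be affinoid (nor even quasicompact). Similarly, the invertibility criterion you invoke (``value $\neq 0$ everywhere implies invertible'') is stated for affinoids. The clean fix is to cover $|f|^{-1}(U)\cap V_\alpha$ by rational subsets $W$ of $V_\alpha$; on each such $W$ the pair $(\mathcal{O}_Y(W),\mathcal{O}_Y^+(W))$ is a complete affinoid $k$-algebra, the map $\Spa(\mathcal{O}_Y(W),\mathcal{O}_Y^+(W))\to X$ factors through $U$ by construction, and Proposition~\ref{UnivPropertyPresheaf} yields a unique map $\mathcal{O}_X(U)\to\mathcal{O}_Y(W)$. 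These glue over all such $W$ by the uniqueness clause and the sheaf property of $\mathcal{O}_Y$. With this adjustment your argument goes through.
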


In particular, the category of complete affinoid $k$-algebras for which the structure presheaf is a sheaf is equivalent to the category of affinoid adic spaces over $k$.

For the rest of this section, let us discuss an example, and explain the relation to rigid-analytic varieties, \cite{TateRigidAnalytic}, and Berkovich's analytic spaces, \cite{BerkovichSpectralTheory}.

\begin{example} Assume that $k$ is complete and algebraically closed. Let $R=k\langle T\rangle$, $R^+=R^\circ = k^\circ\langle T\rangle$ the subspace of power series with coefficients in $k^\circ$. Then $(R,R^+)$ is an affinoid $k$-algebra of tft. We want to describe the topological space $X=\Spa(R,R^+)$. For convenience, let us fix the norm $|\cdot|: k\rightarrow \mathbb{R}_{\geq 0}$. Then there are in general points of $5$ different types, of which the first four are already present in Berkovich's theory.

\includegraphics[scale = 0.5, viewport = 0 200 0 560]{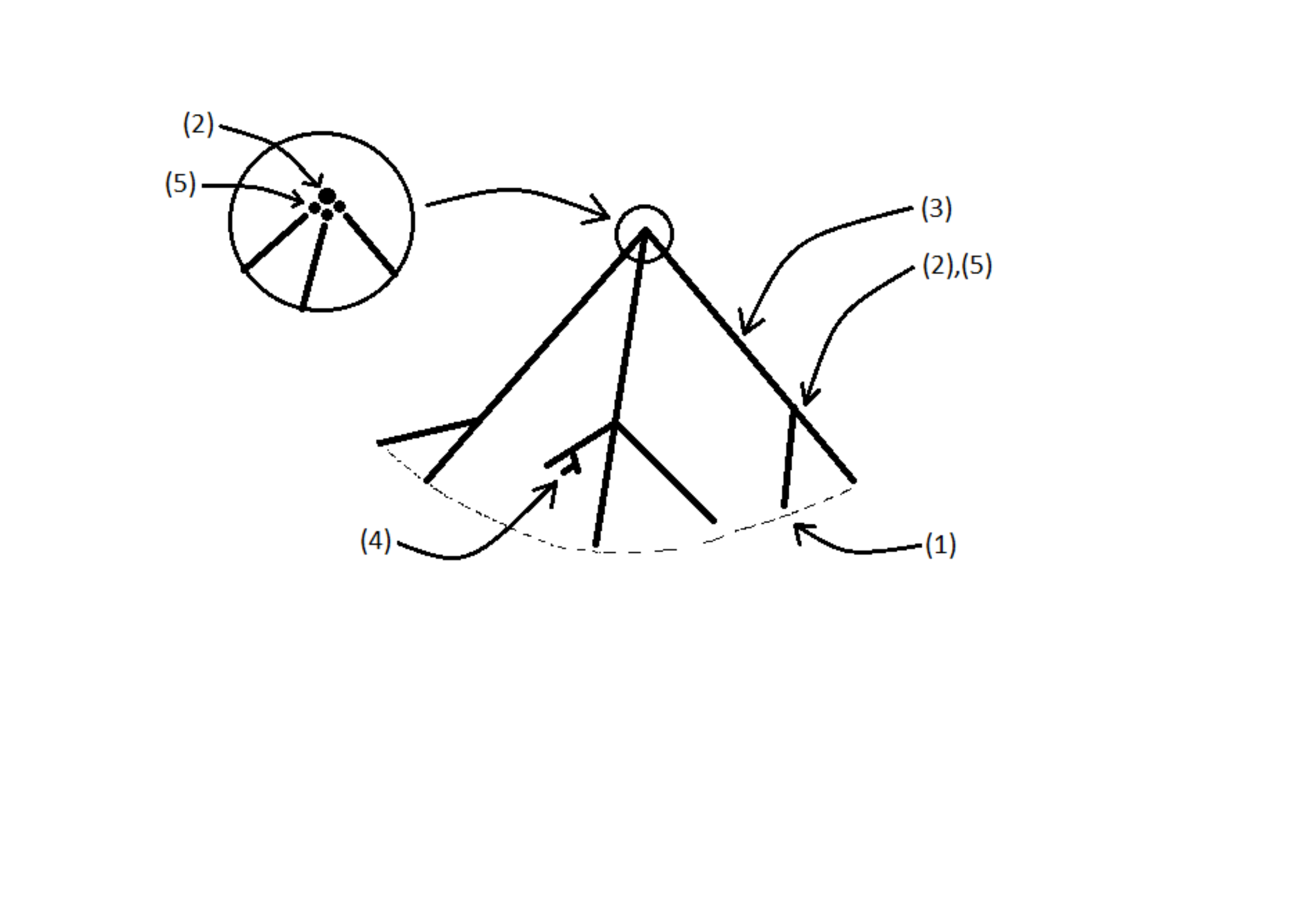}

\begin{altenumerate}
\item[{\rm (1)}] The classical points: Let $x\in k^\circ$, i.e. $x\in k$ with $|x|\leq 1$. Then for any $f\in k\langle T\rangle$, we can evaluate $f$ at $x$ to get a map $R\rightarrow k$, $f=\sum a_n T^n\mapsto \sum a_n x^n$. Composing with the norm on $k$, one gets a valuation $f\mapsto |f(x)|$ on $R$, which is obviously continuous and $\leq 1$ for all $f\in R^+$.
\item[{\rm (2), (3)}] The rays of the tree: Let $0\leq r\leq 1$ be some real number, and $x\in k^\circ$. Then
\[
f=\sum a_n (T-x)^n\mapsto \sup |a_n|r^n = \sup_{y\in k^\circ : |y-x|\leq r} |f(y)|
\]
defines another continuous valuation on $R$ which is $\leq 1$ for all $f\in R^+$. It depends only on the disk $D(x,r)=\{y\in k^\circ\mid |y-x|\leq r\}$. If $r=0$, then it agrees with the classical point corresponding to $x$. For $r=1$, the disk $D(x,1)$ is independent of $x\in k^\circ$, and the corresponding valuation is called the Gau\ss point.

If $r\in |k^\times|$, then the point is said to be of type (2), otherwise of type (3). Note that a branching occurs at a point corresponding to the disk $D(x,r)$ if and only if $r\in |k^\times|$, i.e. a branching occurs precisely at the points of type (2).
\item[{\rm (4)}] Dead ends of the tree: Let $D_1\supset D_2\supset \ldots$ be a sequence of disks with $\bigcap D_i=\emptyset$. Such families exist if $k$ is not spherically complete, e.g. if $k=\mathbb{C}_p$. Then
\[
f\mapsto \inf_i \sup_{x\in D_i} |f(x)|
\]
defines a valuation on $R$, which again is $\leq 1$ for all $f\in R^+$.
\item[{\rm (5)}] Finally, there are some valuations of rank $2$ which are only seen in the adic space. Let us first give an example, before giving the general classification. Consider the totally ordered abelian group $\Gamma = \mathbb{R}_{>0}\times \gamma^{\mathbb{Z}}$, where we require that $r<\gamma<1$ for all real numbers $r<1$. It is easily seen that there is a unique such ordering. Then
\[
f=\sum a_n (T-x)^n\mapsto \max |a_n| \gamma^n
\]
defines a rank-$2$-valuation on $R$. This is similar to cases (2), (3), but with the variable $r$ infinitesimally close to $1$. One may check that this point only depends on the disc $D(x,<1)=\{y\in k^\circ\mid |y-x|<1\}$.

Similarly, take any $x\in k^\circ$, some real number $0<r<1$ and choose a sign $?\in \{<,>\}$. Consider the totally ordered abelian group $\Gamma_{?r} = \mathbb{R}_{>0}\times \gamma^{\mathbb{Z}}$, where $r^\prime < \gamma < r$ for all real numbers $r^\prime < r$ if $?=<$, and $r^\prime>\gamma>r$ for all real numbers $r^\prime>r$ if $?=>$. Then
\[
f=\sum a_n (T-x)^n\mapsto \max |a_n| \gamma^n
\]
defines a rank $2$-valuation on $R$. If $?=<$, then it depends only on $D(x,<r) = \{y\in k^\circ\mid |y-x|<r\}$. If $?=>$, then it depends only on $D(x,r)$.

One checks that if $r\not\in |k^\times|$, then these points are all equivalent to the corresponding point of type (3). However, at each branching point, i.e. point of type (2), this gives exactly one additional point for each ray starting from this point.
\end{altenumerate}

All points except those of type (2) are closed. Let $\kappa$ be the residue field of $k$. Then the closure of the Gau\ss point is exactly the Gau\ss point together with the points of type (5) around it, and is homeomorphic to $\mathbb{A}^1_\kappa$, with the Gau\ss point as the generic point. At the other points of type (2), one gets $\mathbb{P}^1_\kappa$.

We note that in case (5), one could define a similar valuation
\[
f=\sum a_n T^n \mapsto \max |a_n| \gamma^n\ ,
\]
with $\gamma$ having the property that $r>\gamma>1$ for all $r>1$. This valuation would still be continuous, but it takes the value $\gamma>1$ on $T\in R^+$. This shows the relevance of the requirement $|f(x)|\leq 1$ for all $f\in R^+$, which is automatic for rank-$1$-valuations.
\end{example}

\begin{thm}[{\cite[(1.1.11)]{Huber}}] There is a fully faithful functor
\[
r: \{\mathrm{rigid-analytic\ varieties}/k\}\rightarrow \{\mathrm{adic\ spaces}/k\}\ :\ X\mapsto X^{\ad}
\]
sending $\Sp(R)$ to $\Spa(R,R^+)$ for any affinoid $k$-algebra $(R,R^+)$ of tft. It induces an equivalence
\[
\{\mathrm{qs\ rigid-analytic\ varieties}/k\}\cong \{\mathrm{qs\ adic\ spaces\ locally\ of\ finite\ type}/k\}\ ,
\]
where an adic space over $k$ is called locally of finite type if it is locally of the form $\Spa(R,R^+)$, where $(R,R^+)$ is of tft. Let $X$ be a rigid-analytic variety over $k$ with corresponding adic space $X^\ad$. As any classical point defines an adic point, we have $X\subset X^\ad$. If $X$ is quasiseparated, then mapping a quasicompact open subset $U\subset X^\ad$ to $U\cap X$ defines a bijection
\[
\{\mathrm{qc\ admissible\ opens\ in\ }X\}\cong \{\mathrm{qc\ opens\ in\ }X^\ad\}\ ,
\]
the inverse of which is denoted $U\mapsto \tilde{U}$. Under this bijection a family of quasicompact admissible opens $U_i\subset X$ forms an admissible cover if and only if the corresponding subsets $\tilde{U}_i\subset X^\ad$ cover $X^\ad$.

In particular, for any rigid-analytic variety $X$, the topos of sheaves on the Grothendieck site associated to $X$ is equivalent to the category of sheaves on the sober topological space $X^\ad$.
\end{thm}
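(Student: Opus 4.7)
The plan is to construct the functor $r$ affinoid-by-affinoid and then glue. First, for an affinoid $k$-algebra $(R,R^\circ)$ of tft, I would set $r(\Sp(R)) = \Spa(R,R^\circ)$. The inclusion $\Sp(R)\hookrightarrow \Spa(R,R^\circ)$ sends a maximal ideal $\mathfrak{m}$ to the continuous valuation obtained by composing $R\twoheadrightarrow R/\mathfrak{m}$ (a finite extension of $k$) with the unique extension of $|\cdot|$ to that residue field. Full faithfulness at the affinoid level would then be a direct consequence of combining the classical rigid-analytic universal property $\Hom(\Sp(S),\Sp(R)) = \Hom_{\text{cont}}(R,S)$ with Proposition \ref{MapsToAffinoid}: when $R^+=R^\circ$ and $S^+=S^\circ$, the condition that $\hat{R}^+$ map into $\mathcal{O}_Y^+$ is automatic because continuous $k$-algebra maps preserve powerbounded elements.

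Next I would identify rational subdomains on the two sides. Classically, $U(f_1,\ldots,f_n/g)\subset \Sp(R)$ is $\Sp(R\langle f_1/g,\ldots,f_n/g\rangle)$; on the adic side Definition \ref{DefinitionPresheaf} produces $\Spa$ of the same completed algebra. Hence rational subsets match and form a basis for the topology in both settings (Grothendieck on the left, honest on the right). Gluing this identification along an admissible affinoid cover of a quasiseparated rigid variety $X$ extends $r$ globally. For the announced bijection of opens, I would argue that a quasicompact open of $X^{\ad}$ restricted to an affinoid $\Spa(R,R^\circ)$ is a finite union of rational subsets by spectrality (Proposition 3.5 of \cite{HuberContVal}); these correspond to finite unions of rational subdomains of $\Sp(R)$, which are exactly the quasicompact admissible opens, giving both directions of $U\mapsto U\cap X$ and $U\mapsto \tilde{U}$.

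The admissible-cover statement would rest crucially on quasicompactness of $\Spa(R,R^\circ)$: a family of rational subsets of an affinoid is an admissible cover in the sense of Tate exactly when a finite subfamily still covers all classical points, and by the fact that classical points are very dense (every nonempty rational subset contains a classical point, since $k$ is nonarchimedean and $R$ is of tft) this is equivalent to a finite subfamily covering the entire spectral space. Patching from the affinoid case uses quasiseparatedness so that pairwise intersections of affinoids are quasicompact and admissibility can be tested on finite affinoid covers. The topos equivalence then follows formally, since the two sites have equivalent categories of objects and the same covering families.

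The main obstacle is precisely this last translation between the Grothendieck topology on $\Sp(R)$ and the genuine topology on $\Spa(R,R^\circ)$: the whole point of passing from rigid-analytic varieties to adic spaces is that the non-classical points (rank-$1$ points coming from disks and the rank-$2$ points such as those of type (5) in the preceding example) are exactly what is needed to promote Tate's Grothendieck topology to an honest one. Making this rigorous requires a careful comparison using the spectrality of $\Spa$, and it is here that the hypothesis of quasiseparation is unavoidable --- without it, one cannot control arbitrary unions, since admissibility in Tate's sense is inherently a finite-character condition whereas topological covers on the adic side are not.
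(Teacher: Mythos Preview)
The paper does not give its own proof of this theorem: it is stated with the citation \cite[(1.1.11)]{Huber} and no proof environment follows. It is recalled purely as background from Huber's book, so there is nothing in the paper to compare your sketch against.

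That said, since you have attempted an outline, let me point out where it would need more work if you were to carry it out. The construction of $r$ on affinoids, the full faithfulness via Proposition~\ref{MapsToAffinoid}, and the matching of rational subdomains are all fine. The real content lies in the admissible-cover statement, and here your argument has a gap. You write that a finite family of rational subsets covering all classical points of $\Spa(R,R^\circ)$ must cover the whole space ``by density of classical points''. But density is not enough: the complement of a finite union of rational opens is a closed set, and closed subsets of $\Spa(R,R^\circ)$ need not contain any classical point (for instance, the closure of the Gau\ss\ point in the disk example consists only of points of types (2) and (5)). What is actually needed is that every point of $\Spa(R,R^\circ)$ admits a classical point as a \emph{specialization}, together with the fact that rational subsets are stable under generalization; equivalently, one can go through Raynaud's formal models and the description $\Spa(R,R^\circ)\cong\varprojlim_{\mathfrak{X}}\mathfrak{X}$ recorded in the theorem immediately following this one in the paper. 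This is precisely the substantive step in Huber's proof, and your sketch does not supply it.
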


We recall that for abstract reasons, there is up to equivalence at most one sober topological space with the last property in the theorem. This gives the topological space underlying the adic space a natural interpretation.

In the example $X=\Sp(k\langle T\rangle)$ discussed above, a typical example of a non-admissible cover is the cover of $\{x\mid |x|\leq 1\}$ as
\[
\{x\mid |x|\leq 1\} = \{x\mid |x|=1\}\cup \bigcup_{r<1} \{x\mid |x|\leq r\}\ .
\]
In the adic world, one can see this non-admissibility as being caused by the point of type (5) which gives $|x|$ a value $\gamma<1$ bigger than any $r<1$.

\includegraphics[scale = 0.5, viewport = 50 320 0 560]{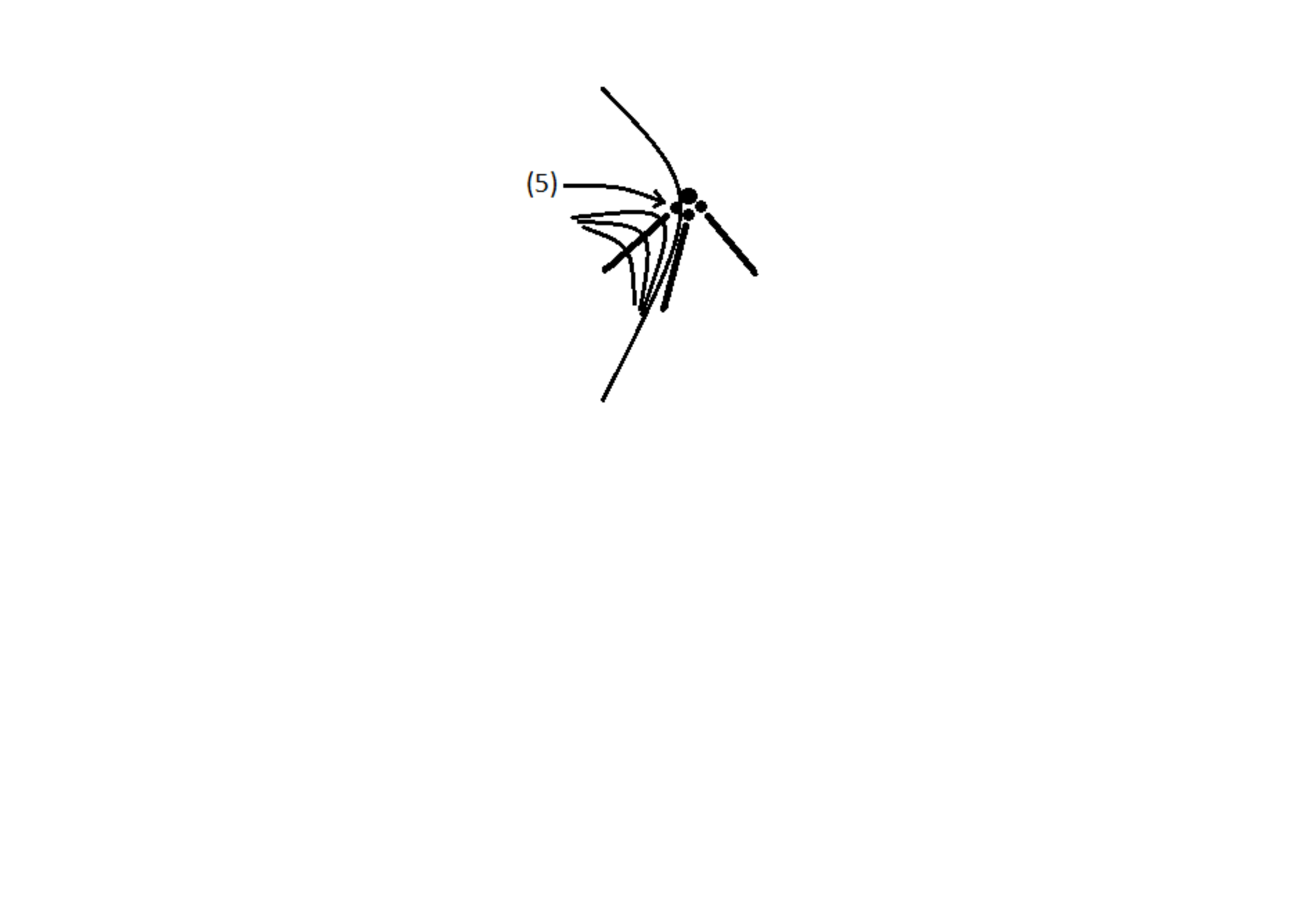}

Moreover, adic spaces behave well with respect to formal models. In fact, one can define adic spaces in greater generality so as to include locally noetherian formal schemes as a full subcategory, but we will not need this more general theory here.

\begin{thm} Let $\mathfrak{X}$ be some admissible formal scheme over $k^\circ$, let $X$ be its generic fibre in the sense of Raynaud, and let $X^\ad$ be the associated adic space. Then there is a continuous specialization map
\[
\spec: X^\ad\rightarrow \mathfrak{X}\ ,
\]
extending to a morphism of locally ringed topological spaces $(X^\ad,\mathcal{O}_{X^\ad}^+)\rightarrow (\mathfrak{X},\mathcal{O}_{\mathfrak{X}})$.

Now assume that $X$ is a fixed quasicompact quasiseparated adic space locally of finite type over $k$. By Raynaud, there exist formal models $\mathfrak{X}$ for $X$, unique up to admissible blowup. Then there is a homeomorphism
\[
X\cong \varprojlim_{\mathfrak{X}} \mathfrak{X}\ ,
\]
where $\mathfrak{X}$ runs over formal models of $X$, extending to an isomorphism of locally ringed topological spaces $(X,\mathcal{O}_X^+)\cong \varprojlim_{\mathfrak{X}} (\mathfrak{X},\mathcal{O}_{\mathfrak{X}})$, where the right-hand side is the inverse limit in the category of locally ringed topological spaces.
\end{thm}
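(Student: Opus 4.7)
The plan splits into two parts, tracking the theorem's two assertions.

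\textbf{Part 1: the specialization map.} I would first reduce to the affinoid case $\mathfrak{X} = \Spf A$ with $A$ admissible over $k^\circ$, so $R := A[\varpi^{-1}]$ and $R^+$ equal to the integral closure of $A$ in $R$ give $X^\ad = \Spa(R, R^+)$. Since $A \subset R^+$, every $x \in X^\ad$ satisfies $|a(x)| \leq 1$ for $a \in A$, and continuity of $|\cdot(x)|$ forces $|\varpi(x)| < 1$. Hence $\mathfrak{p}_x := \{a \in A \mid |a(x)| < 1\}$ is a prime ideal of $A$ containing $\varpi$, giving an open prime of $A$ and thus a point of $\Spf A$; define $\spec(x) = \mathfrak{p}_x$. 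For continuity, the preimage of the principal open $D(f) \subset \Spf A$ is $\{x \mid |f(x)| = 1\}$, which coincides with the rational subset $U(1/f)$ because $|f(x)| \leq 1$ is automatic. The morphism of locally ringed spaces on structure sheaves is built from the canonical $A \to \mathcal{O}_{X^\ad}^+(X^\ad)$ via the universal property in Proposition \ref{UnivPropertyPresheaf}, and the resulting stalk maps are local because the maximal ideal of $\mathcal{O}_{X^\ad,x}^+$ is $\{f \mid |f(x)| < 1\}$, whose contraction to $A$ is precisely $\mathfrak{p}_x$. Gluing over an affine cover handles a general $\mathfrak{X}$.

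\textbf{Part 2: the inverse limit.} By Raynaud's theorem, qcqs admissible formal schemes with generic fibre $X$ form a cofiltered system under admissible blowups, and composing the maps from Part 1 yields a morphism of locally ringed spaces $\pi: X^\ad \to \varprojlim_\mathfrak{X} \mathfrak{X}$. To show $\pi$ is a homeomorphism I would verify: (a) every quasicompact open of $X^\ad$ pulls back from some $\mathfrak{X}$, which is the content of Raynaud's theorem itself, since any finite cover of $X$ by rational subsets can be lifted to an affine cover of an admissible blowup; (b) distinct points of $X^\ad$ are separated by some $\mathfrak{X}$, by picking $f, g \in R^+$ with $|f(x)|/|g(x)| \neq |f(y)|/|g(y)|$ and realizing the rational cover by $\{|f| \leq |g|\}$ and $\{|g| \leq |f|\}$ via a blowup of $(f,g)$; and (c) the set-theoretic bijection. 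Once (a)--(c) hold, the isomorphism on structure sheaves follows because on each rational $U \subset X^\ad$, $\mathcal{O}_{X^\ad}^+(U)$ is the completion of the directed colimit $\varinjlim \mathcal{O}_{\mathfrak{X}}(U_\mathfrak{X})$ along admissible blowups, again by the universal property.

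The main obstacle is (c). Injectivity is immediate from (b). For surjectivity, given a compatible family $(x_\mathfrak{X}) \in \varprojlim_\mathfrak{X} \mathfrak{X}$, one forms the filtered colimit of local rings $V := \varinjlim_\mathfrak{X} \mathcal{O}_{\mathfrak{X}, x_\mathfrak{X}}$; the crux is to show $V$ is a valuation ring with $\Frac(V) = R/\supp$, giving a valuation on $R$ that is $\leq 1$ on $R^+$ and recovers the system under $\pi$. This relies on a Nagata-type blowup argument: if $V$ failed to be a valuation ring, one could find $a, b \in V$ with neither $a/b$ nor $b/a$ in $V$, and the admissible blowup of some $\mathfrak{X}$ along $(a, b)$ would produce two distinct points over $x_\mathfrak{X}$, contradicting compatibility. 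Conversely, any $x \in X^\ad$ gives the valuation ring $\mathcal{O}_{X^\ad,x}^+$ in $R$, which dominates $\mathcal{O}_{\mathfrak{X}, \spec(x)}$ for every formal model and hence provides a compatible system mapping to $x$. This interplay between blowups and valuation rings is precisely Huber--Raynaud's comparison and is the essential input beyond the affine construction of Part 1.
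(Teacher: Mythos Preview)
The paper does not give a self-contained argument here; it simply records that the statement follows from the preceding comparison theorem together with \cite{BoschLuetkebohmert}, Section~4, and \cite{HuberDefAdic}, Section~4. Your sketch is precisely what one obtains by unpacking those references: the specialization map is the center of the valuation on the formal model, and the inverse-limit statement is the Zariski--Riemann picture, with surjectivity coming from the fact that $V = \varinjlim_{\mathfrak{X}} \mathcal{O}_{\mathfrak{X}, x_{\mathfrak{X}}}$ is a valuation ring via the blowup trick you describe. So your approach is correct and is the intended one.

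Two small imprecisions are worth flagging. First, the assertion that $\mathrm{Frac}(V)$ equals $R$ modulo the support is not generally true: after blowups and localizations $V$ will contain fractions not in the image of $R$, so one only has an inclusion of $R/\supp$ into $\mathrm{Frac}(V)$. This is harmless for your purposes, as the composite $R \to \mathrm{Frac}(V) \to \Gamma \cup \{0\}$ already gives the desired valuation; continuity and the bound on $R^+$ follow because $\varpi$ lies in the maximal ideal of each $\mathcal{O}_{\mathfrak{X}, x_{\mathfrak{X}}}$ and the image of $A$ lands in $V$. Second, no completion should be needed for the sheaf comparison: once a rational $U$ is realized as an affine open $\Spf B$ in some model with $B$ integrally closed in $B[\varpi^{-1}]$, further admissible blowups are isomorphisms over the generic fibre and leave the global sections over that open unchanged, so the colimit already equals $\mathcal{O}_{X^\ad}^+(U)$.
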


\begin{proof} It is an easy exercise to deduce this from the previous theorem and the results of \cite{BoschLuetkebohmert}, Section 4, and \cite{HuberDefAdic}, Section 4.
\end{proof}

In the example, one can start with $\mathfrak{X} = \Spf(k^\circ\langle T\rangle)$ as a formal model; this gives $\mathbb{A}^1_\kappa$ as underlying topological space. After that, one can perform iterated blowups at closed points. This introduces additional $\mathbb{P}^1_\kappa$'s; the strict transform of each component survives in the inverse limit and gives the closure of a point of type (2). Note that the point of type (2) is given as the preimage of the generic point of the component in the formal model.

We note that in order to get continuity of $\spec$, it is necessary to use nonstrict equalities in the definition of open subsets.

Now, let us state the following theorem about the comparison of Berkovich's analytic spaces and Huber's adic spaces. For this, we need to recall the following definition:

\begin{definition} An adic space $X$ over $k$ is called taut if it is quasiseparated and for every quasicompact open subset $U\subset X$, the closure $\overline{U}$ of $U$ in $X$ is still quasicompact.
\end{definition}

Most natural adic spaces are taut, e.g. all affinoid adic spaces, or more generally all qcqs adic spaces, and also all adic spaces associated to separated schemes of finite type over $k$. However, in recent work of Hellmann, \cite{Hellmann}, studying an analogue of Rapoport-Zink period domains in the context of deformations of Galois representations, it was found that the weakly admissible locus is in general a nontaut adic space.

We note that one can also define taut rigid-analytic varieties, and that one gets an equivalence of categories between the category of taut rigid-analytic varieties over $k$ and taut adic spaces locally of finite type over $k$. Hence the first equivalence in the following theorem could be stated without reference to adic spaces.

\begin{thm}[{\cite[Proposition 8.3.1, Lemma 8.1.8]{Huber}}] There is an equivalence of categories
\[\begin{aligned}
\{\mathrm{hausdorff\ strictly\ }&k\mathrm{-analytic\ Berkovich\ spaces}\}\\
&\cong \{\mathrm{taut\ adic\ spaces\ locally\ of\ finite\ type}/k\}\ ,
\end{aligned}\]
sending $\mathcal{M}(R)$ to $\Spa(R,R^+)$ for any affinoid $k$-algebra $(R,R^+)$ of tft.

Let $X^\Berk$ map to $X^\ad$ under this equivalence. Then there is an injective map of sets $X^\Berk\rightarrow X^\ad$, whose image is precisely the subset of rank-$1$-valuations. This map is in general not continuous. It admits a continuous retraction $X^\ad\rightarrow X^\Berk$, which identifies $X^\Berk$ with the maximal hausdorff quotient of $X^\ad$.
\end{thm}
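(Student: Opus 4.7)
The plan is to reduce to the affinoid case and then glue, using the tautness hypothesis on the adic side as the counterpart of Hausdorffness on the Berkovich side. Both categories are built from affinoid pieces, so the main work is to compare $\mathcal{M}(R)$ with $\Spa(R, R^\circ)$ for a strictly $k$-affinoid algebra $R$.

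For the affinoid comparison, I would unpack definitions: a Berkovich point is a bounded multiplicative seminorm on $R$ extending the norm of $k$, which is exactly the same datum as a continuous rank-$1$ valuation $R \to \mathbb{R}_{\geq 0}$ taking values $\leq 1$ on $R^\circ$. This gives the injection $\mathcal{M}(R) \hookrightarrow \Spa(R, R^\circ)$ onto the rank-$1$ locus. To build the retraction, send a continuous valuation $v\colon R \to \Gamma \cup \{0\}$ to its \emph{maximal rank-$1$ generalization} $r(v)$: take the largest convex subgroup $H \subset \Gamma_v$ such that $\Gamma_v/H$ is archimedean, and compose $v$ with $\Gamma_v \twoheadrightarrow \Gamma_v/H \hookrightarrow \mathbb{R}_{>0}$. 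A short check shows that $r(v)$ is continuous and still bounded by $1$ on $R^\circ$, and that it depends only on the equivalence class of $v$. Continuity of $r$ follows because the preimage of a Berkovich subbasic open $\{|f| < s\}$ or $\{|f| > s\}$ is a union of rational subsets; the fibers of $r$ are exactly the equivalence classes of points sharing a common rank-$1$ generalization, comprising that point together with its higher-rank specializations (the type (5) points in the example). Since $\mathcal{M}(R)$ is compact Hausdorff and $r$ is a continuous surjection, this identifies $\mathcal{M}(R)$ with the maximal Hausdorff quotient of $\Spa(R, R^\circ)$.

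For globalization, the construction of $r$ is canonical and compatible with open immersions, so it sheafifies to a continuous retraction $X^\ad \to X^\Berk$ on any quasiseparated adic space locally of finite type, and affinoid gluing produces an equivalence of categories on qcqs objects. The decisive point is the match between Hausdorffness on the Berkovich side and tautness on the adic side: if $U \subset X^\ad$ is a quasicompact open whose closure is not quasicompact, then $\overline{U}$ contains higher-rank limit points that cannot be separated from interior rank-$1$ points in any candidate Hausdorff quotient, so the image in the Berkovich world fails to be Hausdorff; conversely, for taut $X^\ad$ the quotient by the equivalence relation induced by $r$ is locally compact Hausdorff and inherits a well-defined strictly $k$-analytic structure.

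I expect the main obstacle to be precisely this last step: translating admissible Berkovich covers into open adic covers, checking that morphisms respect the retraction, and verifying that tautness is exactly the correct hypothesis (neither too weak to give Hausdorffness, nor too strong to lose natural examples). Huber carries this out carefully in [Huber, Chapter 8] by a detailed analysis of closure and specialization in both frameworks; the existence of non-taut adic spaces outside the image, as in Hellmann's work on weakly admissible loci, shows that the hypothesis cannot be weakened without sacrificing Hausdorffness of the Berkovich counterpart.
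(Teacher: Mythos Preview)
The paper does not prove this theorem at all: it is stated purely as background, with the proof delegated to the citation \cite[Proposition 8.3.1, Lemma 8.1.8]{Huber}. So there is no ``paper's own proof'' to compare against; your proposal is really a sketch of how Huber's argument goes.

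As such a sketch, your outline is broadly correct. The identification of $\mathcal{M}(R)$ with the rank-$1$ locus of $\Spa(R,R^\circ)$ is exactly right, and the retraction you describe---sending a point to its unique maximal (rank-$1$) generalization---is indeed Huber's construction. Your handling of the globalization is where the sketch thins out: the claim that tautness is \emph{exactly} equivalent to Hausdorffness of the quotient is the substance of \cite[Lemma 8.1.8]{Huber}, and your heuristic (``higher-rank limit points that cannot be separated'') gestures at the right phenomenon but does not constitute an argument. Huber's actual proof analyzes how closures of quasicompact opens behave under the retraction and uses that the fibres of $r$ are precisely the sets of specializations of a given rank-$1$ point. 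You also do not address why the equivalence extends from qcqs objects to the full categories stated, which requires checking that morphisms of Hausdorff Berkovich spaces correspond to adic morphisms respecting the retraction; this is part of what \cite[Proposition 8.3.1]{Huber} establishes.

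In short: your proposal is a fair summary of the strategy, and since the paper offers no proof of its own, there is nothing further to compare. If you were to write this up as an actual proof you would need to fill in the tautness $\Leftrightarrow$ Hausdorff step rigorously rather than by intuition.
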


In the example above, the image of the map $X^\Berk\rightarrow X^\ad$ consists of the points of type (1) - (4). The retraction $X^\ad\rightarrow X^\Berk$ contracts each point of type (2) with all points of type (5) around it, mapping them to the corresponding point of type (2) in $X^\Berk$. For any map from $X^\ad$ to a hausdorff topological space, any point of type (2) will have the same image as the points of type (5) around it, as they lie in its topological closure, which verifies the last assertion of the theorem in this case.

Let us end this section by describing in more detail the fibres of the map $X^\ad\rightarrow X^\Berk$. In fact, this discussion is valid even for adic spaces which are not related to Berkovich spaces. As the following discussion is local, we restrict to the affinoid case.

Let $(R,R^+)$ be an affinoid $k$-algebra, and let $X=\Spa(R,R^+)$. We need not assume that $\mathcal{O}_X$ is a sheaf in the following. For any $x\in X$, we let $k(x)$ be the residue field of $\mathcal{O}_{X,x}$, and $k(x)^+\subset k(x)$ be the image of $\mathcal{O}_{X,x}^+$. We have the following crucial property, surprising at first sight.

\begin{prop} Let $\varpi\in k$ be topologically nilpotent. Then the $\varpi$-adic completion of $\mathcal{O}_{X,x}^+$ is equal to the $\varpi$-adic completion $\widehat{k(x)}^+$ of $k(x)^+$.
\end{prop}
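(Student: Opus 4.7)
The plan is to reduce the claim to the purely algebraic inclusion $\mathfrak{m}_x \subseteq \varpi^n \mathcal{O}_{X,x}^+$ for every $n \geq 1$, where $\mathfrak{m}_x$ is the maximal ideal of the local ring $\mathcal{O}_{X,x}$. Since $|f(x)| = 0 \leq 1$ for every $f \in \mathfrak{m}_x$, we have $\mathfrak{m}_x \subseteq \mathcal{O}_{X,x}^+$, so the surjection $\mathcal{O}_{X,x}^+ \twoheadrightarrow k(x)^+$ has kernel exactly $\mathfrak{m}_x$. Once the inclusion above is established, the induced map $\mathcal{O}_{X,x}^+/\varpi^n \to k(x)^+/\varpi^n$ is an isomorphism for each $n$, and passing to the inverse limit yields the asserted identification of $\varpi$-adic completions.

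To prove the inclusion I would use a rational-subset construction. Fix $f \in \mathfrak{m}_x$; since rational subsets form a basis of the topology, we may choose a rational subset $V \ni x$ and a representative $\tilde f \in \mathcal{O}_X^+(V)$. Replacing $(R, R^+)$ by $(\mathcal{O}_X(V), \mathcal{O}_X^+(V))$, we may assume $V = X = \Spa(R,R^+)$ and $\tilde f \in R^+$. Because $\varpi \in k^\times$ is a unit in $R$, the elements $\tilde f$ and $\varpi^n$ generate $R$ as an ideal, so
\[
U := U\bigl(\tfrac{\tilde f,\, \varpi^n}{\varpi^n}\bigr) = \{y \in X : |\tilde f(y)| \leq |\varpi^n(y)|\}
\]
is a well-defined rational subset, and it contains $x$ because $|\tilde f(x)| = 0$. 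By Definition~\ref{DefinitionPresheaf}, $\mathcal{O}_X^+(U)$ is the completion of the integral closure of $R^+[\tilde f/\varpi^n]$ in $R[\tilde f/\varpi^n]$, which tautologically contains $\tilde f/\varpi^n$. Passing to germs at $x$, we obtain $f/\varpi^n \in \mathcal{O}_{X,x}^+$, and therefore $f = \varpi^n \cdot (f/\varpi^n) \in \varpi^n \mathcal{O}_{X,x}^+$, as required.

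There is essentially no obstacle: the argument is formal once one has the rational-subset construction of $\mathcal{O}_X^+$. In particular, the (possibly unknown) sheaf property of $\mathcal{O}_X$ is not invoked; only the values on rational subsets enter. I note that the argument in fact yields the stronger statement
\[
\mathfrak{m}_x \subseteq \bigcap_{n \geq 1} \varpi^n \mathcal{O}_{X,x}^+,
\]
so $\mathfrak{m}_x$ already collapses in the $\varpi$-adic topology of $\mathcal{O}_{X,x}^+$ before any completion is taken. This collapse — very much unlike the behavior of stalks in algebraic geometry, where the completion is far larger than the residue field — is the reason behind the mildly surprising content of the proposition.
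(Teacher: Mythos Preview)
Your proof is correct and follows essentially the same approach as the paper: both reduce to the fact that the kernel $\mathfrak{m}_x$ of $\mathcal{O}_{X,x}^+ \to k(x)^+$ is $\varpi$-divisible. The paper's proof is a single sentence stating exactly this; your rational-subset argument supplies the details the paper leaves implicit.
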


\begin{proof} It is enough to note that kernel of the map $\mathcal{O}_{X,x}^+\rightarrow k(x)^+$, which is also the kernel of the map $\mathcal{O}_{X,x}\rightarrow k(x)$, is $\varpi$-divisible.
\end{proof}

\begin{definition} An affinoid field is pair $(K,K^+)$ consisting of a nonarchimedean field $K$ and an open valuation subring $K^+\subset K^\circ$.
\end{definition}

In other words, an affinoid field is given by a nonarchimedean field $K$ equipped with a continuous valuation (up to equivalence). In the situation above, $(k(x),k(x)^+)$ is an affinoid field. The completion of an affinoid field is again an affinoid field. Also note that affinoid fields for which $k\subset K$ are affinoid $k$-algebras. The following description of points is immediate.

\begin{prop} Let $(R,R^+)$ be an affinoid $k$-algebra. The points of $\Spa(R,R^+)$ are in bijection with maps $(R,R^+)\rightarrow (K,K^+)$ to complete affinoid fields $(K,K^+)$ such that the quotient field of the image of $R$ in $K$ is dense.
\end{prop}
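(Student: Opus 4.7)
The plan is to construct mutually inverse assignments between points of $X=\Spa(R,R^+)$ and isomorphism classes of maps $(R,R^+)\to(K,K^+)$ as in the statement. For the forward direction, fix $x\in X$ with continuous valuation $|\cdot(x)|$, set $\mathfrak{p}=\supp(x)$, and let $k(x)=\mathrm{Frac}(R/\mathfrak{p})$. The valuation descends to a valuation on $k(x)$ with valuation subring $k(x)^+$. Because $\varpi\in k$ is topologically nilpotent, continuity forces $|\varpi|^n$ to be cofinal to zero in the value group, so the principal rank-$1$ specialization of $|\cdot(x)|$ gives $k(x)$ the structure of a nonarchimedean field in which the $\varpi$-adic topology on $k(x)^+$ coincides with the valuation topology. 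Completing yields a complete nonarchimedean field $K(x)=\widehat{k(x)}$ with open valuation subring $K(x)^+=\widehat{k(x)^+}\subset K(x)^\circ$; the canonical composite $(R,R^+)\to(K(x),K(x)^+)$ is then a morphism of affinoid $k$-algebras, and its image has $k(x)$ as fraction field, which is dense in $K(x)$.

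For the backward direction, let $\phi:(R,R^+)\to(K,K^+)$ be as in the statement and write $|\cdot|_{K^+}$ for the valuation on $K$ attached to $K^+$. The pullback $|\cdot|_{K^+}\circ\phi$ is a valuation on $R$, bounded by $1$ on $R^+$ (since $\phi(R^+)\subset K^+$) and continuous (since $\phi$ is continuous and openness of $K^+\subset K$ makes $|\cdot|_{K^+}$ itself continuous), so it gives a point $x_\phi\in X$. To see the assignments are mutually inverse: starting from $x$, the pullback valuation has the same support and valuation subring as $|\cdot(x)|$, hence is equivalent to it. Starting from $\phi$, the density hypothesis identifies $k(x_\phi)$ with $\mathrm{Frac}(\phi(R))$ as a dense subfield of $K$, and its valuation subring $k(x_\phi)\cap K^+$ is dense in $K^+$ (which is closed in $K$, being an open subgroup); completing therefore recovers $(K,K^+)$ uniquely and with it the map $\phi$.

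The main obstacle is the topological-field step in the forward direction: one must verify that the $\varpi$-adic topology on $k(x)^+$ indeed comes from a nonarchimedean norm on $k(x)$, equivalently, from a rank-$1$ specialization of the possibly higher-rank valuation $|\cdot|_{k(x)^+}$. With the preceding proposition, which identifies the $\varpi$-adic completion of $\mathcal{O}_{X,x}^+$ with that of $k(x)^+$, and the standard fact that continuity of a valuation on a Tate $k$-algebra forces $|\varpi|^n$ cofinal to zero in the value group, this step is essentially formal; the rest is routine manipulation of supports, valuation rings, and density.
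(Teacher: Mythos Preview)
The paper does not give a proof of this proposition at all; it is prefaced by the sentence ``The following description of points is immediate'' and then stated without argument. Your proposal is a correct and standard fleshing-out of exactly what one would mean by ``immediate'' here: construct the completed residue field from a point, pull back the valuation from a map, and check the two assignments are inverse using the density hypothesis; there is nothing to compare.
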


\begin{definition} For two points $x,y$ in some topological space $X$, we say that $x$ specializes to $y$ (or $y$ generalizes to $x$), written $x\succ y$ (or $y\prec x$), if $y$ lies in the closure $\overline{\{x\}}$ of $x$.
\end{definition}

\begin{prop}[{\cite[(1.1.6) - (1.1.10)]{Huber}}] Let $(R,R^+)$ be an affinoid $k$-algebra, and let $x,y\in X=\Spa(R,R^+)$ correspond to maps $(R,R^+)\rightarrow (K,K^+)$, resp. $(R,R^+)\rightarrow (L,L^+)$. Then $x\succ y$ if and only if $K\cong L$ as topological $R$-algebras and $L^+\subset K^+$.

For any point $y\in X$, the set $\{x\mid x\succ y\}$ of generalizations of $y$ is a totally ordered chain of length exactly the rank of the valuation corresponding to $y$.
\end{prop}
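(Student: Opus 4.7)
My plan is to test specialization against the basis of rational subsets, from which the valuation-theoretic content extracts itself. For ``$K\cong L$ and $L^+\subseteq K^+$ imply $x\succ y$'', I just need every rational subset $U=U(\frac{f_1,\ldots,f_n}{g})$ containing $y$ to also contain $x$: the identification $K=L$ forces $|g(x)|\neq 0$, and the hypothesis $|f_i(y)|\leq|g(y)|$ says $f_i/g\in L^+\subseteq K^+$, hence $|f_i(x)|\leq|g(x)|$. Since rational subsets form a basis, $x$ lies in every open neighborhood of $y$, i.e., $y\in\overline{\{x\}}$.

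For the converse, fix a topologically nilpotent $\varpi\in k^\times$. The key device is the rational subset $U(\frac{p,\varpi^N}{q})$, well-defined for any $p,q\in R$ because $\varpi^N$ is a unit. Continuity of $|\cdot|_y$ makes $|\varpi|^N$ cofinal at $0$ in $\Gamma_y$, so when $|p(y)|\leq|q(y)|$ and $|q(y)|\neq 0$ one can pick $N$ large enough that $y\in U(\frac{p,\varpi^N}{q})$; applying $x\succ y$ then gives $x\in U$, hence $|p(x)|\leq|q(x)|$ and $|q(x)|\neq 0$. Feeding $q\in k^\times$ into this, and then running the same argument with the roles of $p,q$ reversed (using a small $a\in k^\times$ with $|a|\leq|p(y)|$), I get $\supp(x)=\supp(y)$. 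Passing to the common fraction field $F$ of $\bar R=R/\supp$, the same inequality becomes an inclusion of valuation subrings $F^+_y\subseteq F^+_x$ in $F$.

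The crux is now the identification $K\cong L$ as topological $R$-algebras. I would invoke the fundamental fact that the overrings of $F^+_y$ in $F$ form a totally ordered chain with a unique maximal proper element; that element is therefore also the maximal proper overring of every intermediate ring such as $F^+_x$, and it corresponds to the (coarsest) rank-$1$ specialization of both $v_y$ and $v_x$. Hence $K$ and $L$ induce the same rank-$1$ topology on $F$, so both are the completion of $F$ in that topology, canonically as topological $R$-algebras. The inclusion $L^+\subseteq K^+$ then follows from $F^+_y\subseteq F^+_x$ by density of $F$ and the openness of $L^+$ in $L$ (which is part of the definition of an affinoid field, giving $L^+=\overline{F^+_y}$ in $L=K$).

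For the second assertion, the first part identifies generalizations of $y$ with proper overrings of $F^+_y$ in $F$, which by the standard theory are in order-preserving bijection with the proper convex subgroups of $\Gamma_y$ and form a totally ordered chain of length exactly $\mathrm{rank}(v_y)$. Each such overring gives a valid point of $X$: its valuation is continuous on $R$ (the induced topology is coarser than that of $v_y$) and is $\leq 1$ on $R^+$ (since $R^+$ already maps into $F^+_y$). The principal obstacle I anticipate is precisely this identification $K\cong L$, which converts a purely topological specialization relation into an equality of completed nonarchimedean fields; once that valuation-theoretic bookkeeping is in place, the rest of the proposition follows formally.
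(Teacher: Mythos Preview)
The paper does not give its own proof of this proposition: it is quoted directly from \cite{Huber}, (1.1.6)--(1.1.10), with no argument supplied. So there is nothing in the paper to compare your approach against.

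That said, your proposal is a correct self-contained proof. The forward implication is immediate from the basis of rational subsets, as you wrote. For the converse, your device of testing against $U\!\left(\frac{p,\varpi^N}{q}\right)$ (with $\varpi^N$ ensuring the numerator generates $R$) correctly yields $\supp(x)=\supp(y)$ and $F^+_y\subseteq F^+_x$; your identification of $K$ with $L$ via the unique rank-$1$ overring of $F^+_y$ (which is automatically also the unique rank-$1$ overring of $F^+_x$, since $F^+_x\subsetneq F$ as $\varpi^{-1}\notin F^+_x$) is the standard valuation-theoretic argument, and the density step for $L^+\subseteq K^+$ is fine because $K^+$ and $L^+$ are open and closed in $K=L$. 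For the last assertion, your parenthetical justification of continuity is in fact correct: if $V\subsetneq W$ are valuation rings of $F$ with convex kernel $H\subset\Gamma_V$ nontrivial, then for $z\in\mathfrak{m}_W$ one can take $c$ with $v_V(c)=v_V(z)\cdot h_0^{-1}$ for some $1\neq h_0\in H$, giving $z\in c\,\mathfrak{m}_V\subseteq\mathfrak{m}_W$; hence the $W$-topology on $F$ is genuinely coarser than the $V$-topology, and continuity of $v_y$ on $R$ forces continuity of every coarsening.
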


Note that in particular, for a given complete nonarchimedean field $K$ with a map $R\rightarrow K$, there is the point $x_0$ corresponding to $(K,K^\circ)$. This corresponds to the unique continuous rank-$1$-valuation on $K$. The point $x_0$ specializes to any other point for the same $K$.

\section{Perfectoid fields}

\begin{definition} A perfectoid field is a complete nonarchimedean field $K$ of residue characteristic $p>0$ whose associated rank-$1$-valuation is nondiscrete, such that the Frobenius is surjective on $K^\circ/p$.
\end{definition}

We note that the requirement that the valuation is nondiscrete is needed to exclude unramified extensions of $\mathbb{Q}_p$. It has the following consequence.

\begin{lem} Let $|\cdot|: K\rightarrow \Gamma\cup \{0\}$ be the unique rank-$1$-valuation on $K$, where $\Gamma = |K^\times|$ is chosen minimal. Then $\Gamma$ is $p$-divisible.
\end{lem}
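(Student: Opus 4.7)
The plan is to split on the characteristic of $K$. If $\mathrm{char}\, K = p$, then $p = 0$ in $K$, so $K^\circ/p = K^\circ$ and the Frobenius-surjectivity hypothesis says every element of $K^\circ$ is a $p$-th power in $K^\circ$. Since for $x \in K^\times$ at least one of $x$ and $x^{-1}$ lies in $K^\circ$, the whole of $K^\times$ consists of $p$-th powers and $\Gamma$ is trivially $p$-divisible. So I may assume from now on that $\mathrm{char}\, K = 0$, hence $0 < |p| < 1$.

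The first step is to extract the sharp consequence of Frobenius surjectivity in this case. Given $z \in K^\circ$ with $|z| > |p|$, pick $y \in K^\circ$ with $y^p - z \in pK^\circ$; then $|y^p - z| \leq |p| < |z|$, so the ultrametric equality forces $|y^p| = |z|$, and hence $|z| = |y|^p \in \Gamma^p$. Taking inverses in $K^\times$ extends this to $|z| \in [1, |p|^{-1})$, so altogether $\Gamma \cap (|p|, |p|^{-1}) \subset \Gamma^p$.

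The heart of the argument, and the step I expect to be the main obstacle, is to show $|p| \in \Gamma^p$; Frobenius surjectivity alone only produces $p$-th roots of norms strictly larger than $|p|$, so something further is needed. This is where the nondiscreteness hypothesis enters essentially: a nondiscrete subgroup of $(\mathbb{R}_{>0}, \cdot)$ is dense, so one can pick $t \in \Gamma$ with $|p| < t < 1$. Then both $t$ and $t/|p|$ lie in $(|p|, |p|^{-1}) \cap \Gamma$, hence in $\Gamma^p$ by the first step, and dividing gives $|p| = t \cdot (t/|p|)^{-1} \in \Gamma^p$. The proof is then completed by a trivial reduction: for any $\gamma \in \Gamma$ choose $m \in \mathbb{Z}$ with $\gamma|p|^m \in (|p|, 1]$; then $\gamma|p|^m \in \Gamma^p$ by step one and $|p|^m \in \Gamma^p$ by step two, so $\gamma \in \Gamma^p$.
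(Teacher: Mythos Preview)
Your proof is correct and follows essentially the same approach as the paper. Both arguments use Frobenius surjectivity to show that every $|x|$ with $|p|<|x|\leq 1$ lies in $\Gamma^p$, and then use nondiscreteness to conclude; the paper compresses your Steps~2 and~3 into the single claim that $\Gamma$ is generated by $\{|x|:|p|<|x|\leq 1\}$ (stated as a consequence of $\Gamma\neq |p|^{\mathbb{Z}}$), whereas you unpack this by first isolating $|p|\in\Gamma^p$ and then reducing the general case. Your separate treatment of characteristic $p$ is also fine; the paper's proof covers it implicitly by reading $|p|=0$ there.
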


\begin{proof} As $\Gamma\neq |p|^{\mathbb{Z}}$, the group $\Gamma$ is generated by the set of all $|x|$ for $x\in K$ with $|p|<|x|\leq 1$. For such $x$, choose some $y$ such that $|x-y^p|\leq |p|$. Then $|y|^p = |y^p| = |x|$, as desired.
\end{proof}

The class of perfectoid fields naturally separates into the fields of characteristic $0$ and those of characteristic $p$. In characteristic $p$, a perfectoid field is the same as a complete perfect nonarchimedean field.

\begin{rem} The notion of a perfectoid field is closely related to the notion of a deeply ramified field. Taking the definition of deeply ramified fields given in \cite{GabberRamero}, we remark that Proposition 6.6.6 of \cite{GabberRamero} says that a perfectoid field $K$ is deeply ramified, and conversely, a complete deeply ramified field with valuation of rank $1$ is a perfectoid field.
\end{rem}

Now we describe the process of tilting for perfectoid fields, which is a functor from the category of all perfectoid fields to the category of perfectoid fields in characteristic $p$.

For its first description, choose some element $\varpi\in K^\times$ such that $|p|\leq |\varpi|<1$. Now consider
\[
\varprojlim_\Phi K^\circ/\varpi\ ,
\]
where $\Phi$ denotes the Frobenius morphism $x\mapsto x^p$. This gives a perfect ring of characteristic $p$. We equip it with the inverse limit topology; note that each $K^\circ/\varpi$ naturally has the discrete topology.

\begin{lem}\label{DefTiltingField}\begin{altenumerate}
\item[{\rm (i)}] There is a multiplicative homeomorphism
\[
\varprojlim_{x\mapsto x^p} K^\circ\buildrel\cong\over\rightarrow \varprojlim_\Phi K^\circ/\varpi\ ,
\]
given by projection. In particular, the right-hand side is independent of $\varpi$. Moreover, we get a map
\[
\varprojlim_\Phi K^\circ/\varpi\rightarrow K^\circ\ :\ x\mapsto x^\sharp\ .
\]
\item[{\rm (ii)}] There is an element $\varpi^\flat\in \varprojlim_\Phi K^\circ/\varpi$ with $|(\varpi^\flat)^\sharp| = |\varpi|$. Define
\[
K^\flat = (\varprojlim_\Phi K^\circ/\varpi)[(\varpi^{\flat})^{-1}]\ .
\]
\item[{\rm (iii)}] There is a multiplicative homeomorphism
\[
K^\flat = \varprojlim_{x\mapsto x^p} K\ .
\]
In particular, there is a map $K^\flat\rightarrow K$, $x\mapsto x^\sharp$. Then $K^\flat$ is a perfectoid field of characteristic $p$,
\[
K^{\flat \circ} = \varprojlim_{x\mapsto x^p} K^\circ\cong \varprojlim_\Phi K^\circ/\varpi\ ,
\]
and the rank-$1$-valuation on $K^\flat$ can be defined by $|x|_{K^\flat}=|x^\sharp|_K$. We have $|K^{\flat \times}| = |K^\times|$. Moreover,
\[
K^{\flat \circ}/ \varpi^\flat \cong K^\circ / \varpi\ ,\ K^{\flat \circ}/\mathfrak{m}^\flat = K^\circ/\mathfrak{m}\ ,
\]
where $\mathfrak{m}$, resp. $\mathfrak{m}^\flat$, is the maximal ideal of $K^\circ$, resp. $K^{\flat \circ}$.
\item[{\rm (iv)}] If $K$ is of characteristic $p$, then $K^\flat = K$.
\end{altenumerate}
\end{lem}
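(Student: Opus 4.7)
My plan hinges on a single technical estimate established first: if $a\equiv b\pmod{\varpi}$ in $K^\circ$, then $a^p\equiv b^p\pmod{\varpi^2}$. This I would prove by binomial expansion: since $|p|\leq|\varpi|$, every middle term is divisible by $p\varpi$ and hence by $\varpi^2$, and $\varpi^p$ is divisible by $\varpi^2$ for $p\geq 2$. For (i), the forward map $\varprojlim_{x\mapsto x^p}K^\circ\to\varprojlim_\Phi K^\circ/\varpi$ is componentwise reduction, obviously multiplicative and continuous. For the inverse, given $(\bar y_n)$ I lift each $\bar y_n$ arbitrarily to $y_n\in K^\circ$ and set $x_n=\lim_{k\to\infty}y_{n+k}^{p^k}$; iterating the key estimate shows this sequence is Cauchy, independent of the choice of lifts, and satisfies $x_n^p=x_{n-1}$. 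Multiplicativity and bicontinuity of the resulting bijection are then routine, and the map $\sharp$ is defined as projection to the zeroth coordinate.

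For (ii), I would exploit that Frobenius is surjective on $K^\circ/\varpi$ (a quotient of $K^\circ/p$, since $pK^\circ\subset\varpi K^\circ$). My plan is to inductively construct $\alpha_n\in K^\circ$ with $\alpha_{n+1}^p\equiv\alpha_n\pmod{p}$ and $|\alpha_n|=|\varpi|^{1/p^n}$, starting from $\alpha_0=\varpi$. Surjectivity provides some valid $\alpha_{n+1}$, and the non-archimedean inequality forces $|\alpha_{n+1}|\leq|\varpi|^{1/p^n}$; if this is strict, I would correct by adding $\beta$ with $|\beta|^p\leq|p|$ so that the congruence is preserved, choosing $|\beta|$ to equal $|\varpi|^{1/p^{n+1}}$, which is achievable by the $p$-divisibility of $|K^\times|$ from the preceding lemma. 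Applying (i) to the resulting sequence $(\alpha_n)$ produces $\varpi^\flat\in\varprojlim_\Phi K^\circ/\varpi$ with $(\varpi^\flat)^\sharp$ of norm exactly $|\alpha_0|=|\varpi|$. I then set $K^\flat:=(\varprojlim_\Phi K^\circ/\varpi)[(\varpi^\flat)^{-1}]$.

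For (iii), my first move is to transport the ring structure of $\varprojlim_\Phi K^\circ/\varpi$, which is manifestly a perfect $\mathbb{F}_p$-algebra, to $K^{\flat\circ}=\varprojlim_{x\mapsto x^p}K^\circ$ through the bijection of (i); the induced addition is the twisted formula $(x+y)_n=\lim_k(x_{n+k}+y_{n+k})^{p^k}$. To identify $K^\flat$ with $\varprojlim_{x\mapsto x^p}K$ as multiplicative monoids I observe that any $(x_n)\in\varprojlim K$ has $|x_n|=|x_0|^{1/p^n}\to 1$, so a sufficiently large power of $\varpi^\flat$ brings it into $K^{\flat\circ}$. Then $K^\flat$ is a field (any nonzero element has a coordinatewise inverse in $\varprojlim K$), of characteristic $p$ and complete; the formula $|x|_{K^\flat}=|x^\sharp|$ is a rank-$1$ valuation, multiplicativity being obvious and the ultrametric inequality following by passing to the limit in $|(x_k+y_k)^{p^k}|\leq\max(|x_k|,|y_k|)^{p^k}=\max(|x_0|,|y_0|)$. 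The equality $|K^{\flat\times}|=|K^\times|$ holds because $|\varpi|$ and its $p$-power roots all lie in $|K^{\flat\times}|$ and generate $|K^\times|$ by $p$-divisibility, and the residue identifications $K^{\flat\circ}/\varpi^\flat\cong K^\circ/\varpi$ and $K^{\flat\circ}/\mathfrak{m}^\flat\cong K^\circ/\mathfrak{m}$ then fall out. Finally, (iv) is immediate: in characteristic $p$ the Frobenius on $K^\circ$ is bijective (injective trivially, surjective by the perfectoid hypothesis), so every transition map in $\varprojlim_{x\mapsto x^p}K$ is an isomorphism and the limit collapses to $K$. The main conceptual obstacle is really the ring structure in (iii): $\varprojlim_{x\mapsto x^p}K^\circ$ is only naturally a multiplicative monoid, and it is the identification with $\varprojlim_\Phi K^\circ/\varpi$ that manufactures an addition out of nothing. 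The estimate of (i) is precisely what allows componentwise addition in the quotient to lift to a well-defined twisted addition upstairs, and in that sense (i) is not mere bookkeeping but the conceptual heart of tilting.
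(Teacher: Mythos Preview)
Your approach is essentially the same as the paper's: both hinge on the estimate $a\equiv b\pmod\varpi\Rightarrow a^{p^k}\equiv b^{p^k}\pmod{\varpi^{k+1}}$, then build the inverse to the projection in (i) via $x_n=\lim_k y_{n+k}^{p^k}$, and derive (iii), (iv) from there. Two points are worth noting.

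\medskip
\textbf{Part (ii).} Your inductive construction of $(\alpha_n)$ with controlled norms works, but the correction step is stated in a way that looks inconsistent: the conditions $|\beta|=|\varpi|^{1/p^{n+1}}$ and $|\beta|^p\leq|p|$ are incompatible for $n\geq 1$ (since $|\varpi|^{1/p^n}>|\varpi|\geq|p|$). Fortunately the correction is never invoked for $n\geq 1$: once $|\alpha_n|=|\varpi|^{1/p^n}>|p|$, the strict ultrametric inequality forces $|\alpha_{n+1}^p|=|\alpha_n|$ automatically. You should say this. The paper avoids the issue by a slicker construction: pick any $\varpi_1\in K^\circ$ with $|\varpi_1|^p=|\varpi|$ (possible by $p$-divisibility of $|K^\times|$), set $\varpi^\flat=(0,\bar\varpi_1,\dots)\in\varprojlim_\Phi K^\circ/\varpi$ by extending via Frobenius surjectivity, and observe directly from the estimate in (i) that $|(\varpi^\flat)^\sharp-\varpi_1^p|\leq|\varpi|^2$, whence $|(\varpi^\flat)^\sharp|=|\varpi|$. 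No norm bookkeeping is needed along the way.

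\medskip
\textbf{Part (iii).} Your argument that $|K^{\flat\times}|=|K^\times|$ is incomplete: you claim $|\varpi|$ and its $p$-power roots generate $|K^\times|$, but this only gives $|\varpi|^{\mathbb{Z}[1/p]}$, which need not exhaust $|K^\times|$ (e.g.\ $K=\mathbb{C}_p$ has $|K^\times|=p^{\mathbb{Q}}$). The fix is to rerun the argument of (ii) for an arbitrary $a\in K^\circ$ with $|\varpi|<|a|<1$: choose $a_1$ with $|a_1|^p=|a|$, so $a_1$ is nonzero in $K^\circ/\varpi$, extend to a compatible sequence by Frobenius surjectivity, and conclude $|x^\sharp|=|a|$ as before. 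The paper, for what it's worth, also leaves this as ``one easily deduces all other claims''.
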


We call $K^\flat$ the tilt of $K$.

\begin{rem} Obviously, $\varpi^\flat$ in (ii) is not unique. As it is not harmful to replace $\varpi$ with an element of the same norm, we will usually redefine $\varpi = (\varpi^\flat)^\sharp$, which comes equipped with a compatible system
\[
\varpi^{1/p^n} = ((\varpi^{\flat})^{1/p^n})^\sharp
\]
of $p^n$-th roots. Conversely, $\varpi$ together with such a choice of $p^n$-th roots gives an element $\varpi^\flat$ of
\[
K^\flat = \varprojlim_{x\mapsto x^p} K\ .
\]
\end{rem}

\begin{proof}\begin{altenumerate}
\item[{\rm (i)}] We begin by constructing a multiplicative continuous map
\[
\varprojlim_\Phi K^\circ/\varpi\rightarrow K^\circ\ :\ x\mapsto x^\sharp\ .
\]
Let $(\overline{x}_0,\overline{x}_1,\ldots)\in \varprojlim_\Phi K^\circ/\varpi$. Choose any lift $x_n\in K^\circ$ of $\overline{x}_n$. Then we claim that the limit
\[
x^\sharp = \lim_{n\rightarrow \infty} x_n^{p^n}
\]
exists and is independent of all choices. For this, it is enough to see that $x_n^{p^n}$ gives a well-defined element of $K^\circ/\varpi^{n+1}$. But if $x_n^\prime$ is a second lift, then $x_n - x_n^\prime$ is divisible by $\varpi$. One checks by induction on $i=0,1,\ldots,n$ that
\[
(x_n^{\prime})^{p^i} - x_n^{p^i} = (x_n^{p^{i-1}} + ((x_n^\prime)^{p^{i-1}} - x_n^{p^{i-1}}))^p - x_n^{p^i}
\]
gives a well-defined element of $K^\circ/\varpi^{i+1}$, using that $\varpi|p$.

It is clear from the definition that $x\mapsto x^\sharp$ is multiplicative and continuous. Now the map $\varprojlim_\Phi K^\circ/\varpi\rightarrow \varprojlim_{x\mapsto x^p} K^\circ$ given by $x\mapsto (x^\sharp, (x^{1/p})^\sharp,\ldots)$ gives an inverse to the obvious projection map.

\item[{\rm (ii)}] Pick some element $\varpi_1$ with $|\varpi_1|^p = |\varpi|$. Then $\varpi_1$ defines a nonzero element of $K^\circ / \varpi$. Choose any sequence
\[
\varpi^\flat = (0,\varpi_1,\ldots)\in \varprojlim_\Phi K^\circ/\varpi\ ;
\]
this is possible by surjectivity of $\Phi$ on $K^\circ/\varpi$. By the proof of part (i), we have $|(\varpi^\flat)^\sharp - \varpi_1^p|\leq |\varpi|^2$. This gives $|(\varpi^\flat)^\sharp|=|\varpi|$, as desired.

\item[{\rm (iii)}] As $x\mapsto x^\sharp$ is multiplicative, it extends to a map
\[
K^\flat\rightarrow \varprojlim_{x\mapsto x^p} K\ .
\]
One easily checks that it is a homeomorphism; in particular $K^\flat$ is a field. One also checks that the topology on $\varprojlim_\Phi K^\circ/\varpi$ is induced by the norm $x\mapsto |x^\sharp|$. Hence the topology on $K^\flat$ is induced by the rank-$1$-valuation $x\mapsto |x^\sharp|$. Clearly, $K^\flat$ is perfect and complete, so $K^\flat$ is a perfectoid field of characteristic $p$. One easily deduces all other claims.

\item[{\rm (iv)}] This is clear since $K^\flat\cong \varprojlim_{x\mapsto x^p} K$.
\end{altenumerate}
\end{proof}

Recall that when working with adic spaces, it is important to understand the continuous valuations on $K$. Under the process of tilting, we have the following equivalence.

\begin{prop}\label{TiltingValuationFields1} Let $K$ be a perfectoid field with tilt $K^\flat$. Then the continuous valuations $|\cdot|$ of $K$ (up to equivalence) are mapped bijectively to the continuous valuations $|\cdot|^\flat$ of $K^\flat$ (up to equivalence) via $|x|^\flat = |x^\sharp|$.
\end{prop}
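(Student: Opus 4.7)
The plan is to translate the claim into one about open valuation subrings of $K$ and $K^\flat$, so that the residue field identification $K^\circ/\mathfrak{m} = K^{\flat\circ}/\mathfrak{m}^\flat$ of Lemma \ref{DefTiltingField}(iii) delivers the bijection formally. Recall that a continuous valuation on the complete nonarchimedean field $K$ (up to equivalence) amounts to an open valuation subring $K^+\subset K^\circ$: openness forces $\mathfrak{m}\subset K^+$, since for topologically nilpotent $y$ the powers $y^n$ lie in the open set $K^+$ and $K^+$ is integrally closed in $K^\circ$. Such $K^+$ correspond bijectively to valuation subrings of $\kappa := K^\circ/\mathfrak{m}$ via $K^+\mapsto K^+/\mathfrak{m}$ and the inverse map $\bar V\mapsto \pi^{-1}(\bar V)$ (with $\pi:K^\circ\twoheadrightarrow\kappa$). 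The same description applies to $K^\flat$ with $\kappa^\flat := K^{\flat\circ}/\mathfrak{m}^\flat$.

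The key technical step is to identify $\kappa$ and $\kappa^\flat$ compatibly with $\sharp$. For a topologically nilpotent $\varpi$ with $|p|\leq|\varpi|<1$, the composite $K^{\flat\circ} = \varprojlim_\Phi K^\circ/\varpi \to K^\circ/\varpi \twoheadrightarrow \kappa$ is a ring homomorphism, and its value on $x\in K^{\flat\circ}$ is $\overline{x^\sharp}$ by the description of $\sharp$ in Lemma \ref{DefTiltingField}(i). Its kernel is $\mathfrak{m}^\flat$, so it coincides with the isomorphism $\kappa^\flat \cong \kappa$ of Lemma \ref{DefTiltingField}(iii). In particular, this shows $(x+y)^\sharp \equiv x^\sharp + y^\sharp \pmod{\mathfrak{m}}$ for all $x,y\in K^{\flat\circ}$. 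Composed with the reduction in the previous paragraph, we obtain a canonical bijection between continuous valuations on $K$ and on $K^\flat$.

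It remains to check that this bijection is the one given by $|x|^\flat := |x^\sharp|$. Starting from $|\cdot|$ on $K$ with ring $K^+$, the resulting $K^{\flat+}\subset K^{\flat\circ}$ is the preimage of $K^+/\mathfrak{m}\subset\kappa$, which by the identification equals $\{x\in K^{\flat\circ} : x^\sharp\in K^+\} = \{x : |x^\sharp|\leq 1\}$. In particular this \emph{set} is a subring of $K^{\flat\circ}$. Combined with multiplicativity of $\sharp$, this shows $x\mapsto|x^\sharp|$ is multiplicative and satisfies $|(x+y)^\sharp|\leq 1$ whenever $|x^\sharp|,|y^\sharp|\leq 1$. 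The general non-archimedean inequality then follows by rescaling: choose $a\in K^\flat$ with $|a^\sharp| = \max(|x^\sharp|,|y^\sharp|)$, which is possible since $|K^{\flat\times}|=|K^\times|$ by Lemma \ref{DefTiltingField}(iii), and apply the subring property to $x/a$ and $y/a$; the case that this maximum is zero reduces to injectivity of $\sharp$. Continuity of $|\cdot|^\flat$ follows from continuity of $\sharp$ (Lemma \ref{DefTiltingField}(i)). Thus $|\cdot|^\flat$ is a continuous valuation with valuation ring $K^{\flat+}$, hence equivalent to the valuation assigned to $|\cdot|$ by the bijection.

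The main obstacle is the triangle inequality for $|\cdot|^\flat$: a direct approximation using $(x+y)^\sharp \equiv x^\sharp + y^\sharp \pmod{\varpi}$ does not immediately produce a valuation inequality when $\Gamma$ has rank greater than $1$, since a congruence modulo a specific element of $K^\circ$ does not translate cleanly to an inequality in a higher-rank ordered group. The residue-field route circumvents this by constructing the candidate valuation subring on the $K^\flat$-side first (where it automatically comes out as a subring), and only then verifying the valuation axioms by rescaling.
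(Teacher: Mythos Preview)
Your proof is correct and follows the same structural route as the paper: both reduce the bijection to the identification of valuation subrings of the common residue field $K^\circ/\mathfrak{m} \cong K^{\flat\circ}/\mathfrak{m}^\flat$. The one substantive difference is in how the triangle inequality for $|\cdot|^\flat$ is verified. The paper does this first and directly, using the formula $(x+y)^\sharp = \lim_n \bigl((x^{1/p^n})^\sharp + (y^{1/p^n})^\sharp\bigr)^{p^n}$ and applying $|\cdot|$ termwise; for higher-rank $\Gamma$ that limit has to be read as eventually constant (for nonzero value), which is correct but terse. Your approach instead first constructs the subring $K^{\flat+}$ via the residue-field bijection and then deduces the inequality by rescaling, which cleanly sidesteps any discussion of limits in $\Gamma$ --- precisely the point you make in your final paragraph. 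One cosmetic remark: your appeal to $|K^{\flat\times}| = |K^\times|$ is unnecessary (and that identity in Lemma~\ref{DefTiltingField}(iii) concerns the rank-$1$ valuation, not the given $|\cdot|$); since the maximum is achieved by one of $x,y$, you can simply take $a$ equal to that element.
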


\begin{proof} First, we check that the map $|\cdot|\mapsto |\cdot|^\flat$ maps valuations to valuations. All properties except $|x+y|^\flat\leq \max(|x|^\flat,|y|^\flat)$ are immediate, using that $x\mapsto x^\sharp$ is multiplicative. But
\[\begin{aligned}
|x+y|^\flat&= \lim_{n\rightarrow \infty} |(x^{1/p^n})^\sharp + (y^{1/p^n})^\sharp|^{p^n}\\
&\leq \max(\lim_{n\rightarrow \infty} |(x^{1/p^n})^\sharp|^{p^n}, \lim_{n\rightarrow \infty} |(y^{1/p^n})^\sharp|^{p^n})=\mathrm{max}(|x^\sharp|,|y^\sharp|)\ .
\end{aligned}\]
It is clear that continuity is preserved.

On the other hand, continuous valuations are in bijection with open valuation subrings $K^+\subset K^\circ$. They necessarily contain the topologically nilpotent elements $\mathfrak{m}$. We see that open valuation subrings $K^+\subset K^\circ$ are in bijection with valuation subrings in $K^\circ/\mathfrak{m} = K^{\flat \circ}/\mathfrak{m}^\flat$. This implies that one gets a bijection with continuous valuations of $K$ and continuous valuations of $K^\flat$ which is easily seen to be the one described.
\end{proof}

The main theorem about tilting for perfectoid fields is the following theorem. For many fields, this was known by the classical work of Fontaine-Wintenberger.

\begin{thm}\label{TiltingEquivFields} Let $K$ be a perfectoid field.
\begin{altenumerate}
\item[{\rm (i)}] Let $L$ be a finite extension of $K$. Then $L$ (with its natural topology as a finite-dimensional $K$-vector space) is a perfectoid field.
\item[{\rm (ii)}] Let $K^\flat$ be the tilt of $K$. Then the tilting functor $L\mapsto L^\flat$ induces an equivalence of categories between the category of finite extensions of $K$ and the category of finite extensions of $K^\flat$. This equivalence preserves degrees.
\end{altenumerate}
\end{thm}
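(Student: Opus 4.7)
The plan is to prove (i) and (ii) together by constructing mutually inverse functors between the categories of finite extensions of $K$ and of $K^\flat$. The workhorse is Krasner's lemma together with the fact that the image of $\sharp : K^\flat \to K$ is dense, which follows from Lemma~\ref{DefTiltingField}: $|K^{\flat\times}| = |K^\times|$ and $K^{\flat\circ}/\mathfrak{m}^\flat = K^\circ/\mathfrak{m}$, so any element of $K^\circ$ is congruent mod arbitrary powers of $\varpi$ to some $c^\sharp$.

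First I construct the functor $M \mapsto M^\sharp$ from finite extensions of $K^\flat$ to finite extensions of $K$. Any finite $M/K^\flat$ is separable since $K^\flat$ is perfect, and $M$ itself is then perfect. Choose a primitive element $\alpha \in M$ with minimal polynomial $f(X) = X^d + \sum a_i X^i \in K^\flat[X]$. For each $n$, the polynomial $f_n(X) = X^d + \sum a_i^{1/p^n} X^i \in K^\flat[X]$ is the minimal polynomial of $\alpha^{1/p^n}$, and $K^\flat(\alpha^{1/p^n}) = M$. Applying $\sharp$ coefficientwise (extending multiplicatively from $K^{\flat\circ}$ to $K^\flat$) yields $f_n^\sharp \in K[X]$ of degree $d$. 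By Krasner's lemma, for $n$ sufficiently large $f_n^\sharp$ is irreducible, and the field $M^\sharp := K(\beta_n)$ generated by any chosen root $\beta_n$ of $f_n^\sharp$ stabilizes in $n$, giving a well-defined functor with $[M^\sharp : K] = d$. For essential surjectivity, take a finite extension $L/K$ (separable in char $0$; trivial in char $p$ where $K = K^\flat$), pick a primitive element with minimal polynomial $h \in K[X]$, and approximate its coefficients by elements of the form $c^\sharp$ with $c \in K^\flat$. By Krasner, $L$ is generated by a root of the approximation $\tilde h = h_0^\sharp$ for some $h_0 \in K^\flat[X]$, and letting $L^\flat = K^\flat[X]/(h_0)$ one verifies $(L^\flat)^\sharp = L$, giving essential surjectivity and the reverse degree bound.

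Part (i) then follows: for any finite $L/K$ we have $L = (L^\flat)^\sharp$ for some finite $L^\flat/K^\flat$, and imitating the argument of Lemma~\ref{DefTiltingField}(iii) in the finite-extension setting yields an identification $L^\circ/\varpi \cong L^{\flat\circ}/\varpi^\flat$; since Frobenius is bijective on $L^{\flat\circ}/\varpi^\flat$ (as $L^\flat$ is perfect of characteristic $p$), it is surjective on $L^\circ/\varpi$ and hence on $L^\circ/p$ for any choice with $|p|\leq |\varpi|<1$. The valuation on $L$ is nondiscrete because its value group contains $|K^\times|$, so $L$ is perfectoid. The main obstacle I anticipate is the careful execution of the Krasner estimates, particularly proving stabilization and irreducibility of $f_n^\sharp$ and exact degree preservation; the subtle point is that the identification $L^\circ/\varpi \cong L^{\flat\circ}/\varpi^\flat$ underlying part (i) must be established at the level of the approximating polynomials rather than presupposed, in order to avoid circular reasoning that would require $L$ to already be known to be perfectoid.
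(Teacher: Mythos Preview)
Your approach is the classical Fontaine--Wintenberger strategy sketched in the introduction, and it is genuinely different from what the paper actually does. The paper does not build both functors directly by Krasner approximation of polynomials. Instead it first develops the tilting equivalence for perfectoid $K^{\circ a}$-algebras (Theorem~\ref{TiltingEquivalence}) and the almost-mathematics framework, obtaining from Theorem~\ref{AlmostFiniteEtaleCovers} a fully faithful degree-preserving functor $K^\flat_\fet\hookrightarrow K_\fet$. Only essential surjectivity remains, and for this the paper gives two arguments: either invoke almost purity for fields from Gabber--Ramero's ramification theory, or pass to $M=\widehat{\overline{K^\flat}}$, untilt to an algebraically closed $M^\sharp$ via Proposition~\ref{PerfectoidAlgebrClosed}, and apply Krasner \emph{once} to the dense subfield $\bigcup_L L^\sharp\subset M^\sharp$. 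The paper's route buys you part~(i) for free (it falls out of $K^{\circ a}_\fet\cong K^{\flat\circ a}_\fet$), whereas in your route (i) is the hard step.

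There is a genuine gap at exactly the point you flag. To prove (i) you need Frobenius surjective on $L^\circ/\varpi$, and you propose to extract this from an isomorphism $L^\circ/\varpi\cong L^{\flat\circ}/\varpi^\flat$ obtained by ``imitating Lemma~\ref{DefTiltingField}(iii)''. But that lemma \emph{uses} Frobenius surjectivity on the base to construct the tilt; running it for $L$ presupposes what you want. Your last sentence acknowledges the circularity but supplies no replacement mechanism. What is missing is control of $L^\circ$ over $K^\circ$ --- concretely, that $L^{\circ a}$ is finite \'etale over $K^{\circ a}$ --- and this is precisely almost purity for fields, which the paper imports from Gabber--Ramero or circumvents by the algebraically-closed trick. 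Secondary gaps: the stabilization and irreducibility of $f_n^\sharp$ are asserted but not argued, and they are not automatic from Krasner, since in characteristic~$0$ there is no simple algebraic relation between $f_{n-1}^\sharp$ and $f_n^\sharp$ (the map $\sharp$ is only multiplicative).
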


It turns out that many of the arguments will generalize directly to the context of perfectoid $K$-algebras introduced later. For this reason, we defer the proof of this theorem. Let us only prove the following special case here.

\begin{prop}\label{PerfectoidAlgebrClosed} Let $K$ be a perfectoid field with tilt $K^\flat$. If $K^\flat$ is algebraically closed, then $K$ is algebraically closed.
\end{prop}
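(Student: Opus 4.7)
If $\mathrm{char}\,K = p$ then $K = K^\flat$ is already algebraically closed by hypothesis, so assume $\mathrm{char}\,K = 0$. Given a nonconstant monic $P \in K[X]$, after rescaling $X \mapsto \lambda X$ for a suitable $\lambda \in K^\times$ we may assume $P \in K^\circ[X]$, of degree $d$; the plan is strong induction on $d$, the base $d = 1$ being immediate. The basic tool is an approximation lemma: for any monic $Q \in K^\circ[X]$ there exists $x \in K^\circ$ with $|Q(x)| \leq |\varpi|$. To see this, the reduction $\bar Q \in (K^\circ/\varpi)[X] = (K^{\flat\circ}/\varpi^\flat)[X]$ lifts to a monic $Q^\flat \in K^{\flat\circ}[X]$ of the same degree, algebraic closure of $K^\flat$ yields a root $\alpha \in K^{\flat\circ}$, and the element $x := \alpha^\sharp$ satisfies $Q(x) \equiv Q^\flat(\alpha) \equiv 0 \pmod \varpi$ under the ring isomorphism $K^\circ/\varpi \cong K^{\flat\circ}/\varpi^\flat$ from Lemma \ref{DefTiltingField}. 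It is important that the non-additivity of $\sharp$ never enters, because the entire argument takes place modulo $\varpi$, where $\sharp$ becomes a ring isomorphism.

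Apply this to $P$ to get $x_0 \in K^\circ$ with $|P(x_0)| \leq |\varpi|$, and set $P_1(Y) := P(x_0 + Y) \in K^\circ[Y]$, so that $\bar P_1(0) = 0$ in the residue field $k = K^\circ/\mathfrak m = K^{\flat\circ}/\mathfrak m^\flat$, which is algebraically closed because $K^\flat$ is. \emph{Case A}: if $\bar P_1 \neq Y^d$ in $k[Y]$, factor $\bar P_1 = Y^e \bar T$ with $\bar T(0) \neq 0$ and $1 \leq e \leq d-1$; since $Y^e$ and $\bar T$ are coprime, Hensel's lemma in the complete valuation ring $K^\circ$ lifts this to $P_1 = S \cdot T$ with $\deg T = d - e < d$, and the induction hypothesis supplies a root $\gamma \in K^\circ$ of $T$, whence $x_0 + \gamma$ is a root of $P$. \emph{Case B}: if $\bar P_1 = Y^d$, every non-leading coefficient $c_i$ of $P_1$ lies in $\mathfrak m$; choose $\pi \in K^\times$ with $|\pi|$ (nearly) minimal subject to $|\pi|^{d-i} \geq |c_i|$ for all $i < d$, using that $|K^\times|$ is dense in $\mathbb R_{>0}$, as the valuation of $K$ is non-discrete and $p$-divisible. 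Then $P_2(Z) := \pi^{-d} P_1(\pi Z) \in K^\circ[Z]$ is monic of degree $d$; either $\bar P_2 \neq Z^d$, returning us to Case A and thus reducing the degree by induction, or else the minimum defining $|\pi|$ is attained at $i = 0$, giving $|\pi|^d \approx |P(x_0)|$, and a single application of the approximation lemma to $P_2$ yields $x_1 := x_0 + \pi z$ with $|P(x_1)| \leq |\pi|^d |\varpi|$, strictly smaller than $|P(x_0)|$ by a uniform factor.

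Iterating either terminates via Case A, in which case the proof concludes by the induction on degree, or else produces an infinite sequence $(x_n) \subset K^\circ$ with $|P(x_n)| \to 0$ geometrically and with $|x_{n+1} - x_n| \leq |\pi_n| \to 0$; by completeness $(x_n)$ converges to some $x_\infty \in K^\circ$, and continuity of $P$ gives $P(x_\infty) = 0$. The main technical obstacle lies in the Newton rescaling of Case B: the ``ideal'' value $|\pi|^d = |P(x_0)|$ need not lie in $|K^\times|^d$, because the value group is only $p$-divisible rather than divisible; density of $|K^\times|$ allows sufficiently accurate choices of $\pi$, and the split into the two sub-cases ``constant term dominates'' versus ``another coefficient dominates'' is precisely what forces the iteration either to descend in degree or to make uniform geometric progress in $|P(x_n)|$.
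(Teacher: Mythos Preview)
Your proof shares the core mechanism with the paper's (lift $P$ modulo $\varpi$ to $K^{\flat\circ}[X]$, use algebraic closure of $K^\flat$ to find a root there, transfer via $\sharp$, rescale, iterate to convergence), but it contains a gap traceable to a false premise. You write that ``the value group is only $p$-divisible rather than divisible''; in fact $|K^\times| = |K^{\flat\times}|$ is divisible (a $\mathbb{Q}$-vector space), because $K^\flat$ is algebraically closed and hence every element of $K^{\flat\times}$ admits an $n$-th root for every $n$. The paper uses exactly this at the outset to normalize so that $|a_0| = 1$.

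This matters in your Case~B. If the optimal rescaling value $M = \max_{i<d} |c_i|^{1/(d-i)}$ were not in $|K^\times|$, you would be forced to take $|\pi|$ strictly larger than $M$, whence every non-leading coefficient of $P_2$ lies in $\mathfrak m$ and $\bar P_2 = Z^d$ automatically, \emph{regardless} of which index $i$ realizes $M$. Thus the dichotomy ``either $\bar P_2 \neq Z^d$, or else the minimum is attained at $i=0$'' is not justified, and when the binding constraint sits at some $i_0 > 0$ you only obtain $|P(x_1)| \le |\pi|^d |\varpi|$ with $|\pi|^d$ potentially much larger than $|P(x_0)| = |c_0|$; there is no uniform progress.

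Once you grant divisibility, your argument can be repaired, but it is heavier than needed. The paper takes $P$ irreducible (which suffices for proving algebraic closure), normalizes so that $|a_0| = 1$, and uses that an irreducible polynomial over a complete nonarchimedean field has a one-segment Newton polygon: after each shift the constant term alone governs the rescaling, one takes $|c|^d = |P(y^\sharp)|$ exactly, and your Hensel/Case~A branch never arises. Your route has the minor benefit of not reducing to an irreducible factor first, at the cost of the case split.
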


\begin{proof} Let $P(X)=X^d + a_{d-1} X^{d-1} + \ldots + a_0\in K^\circ[X]$ be any monic irreducible polynomial of positive degree $d$. Then the Newton polygon of $P$ is a line. Moreover, we may assume that the constant term of $P$ has absolute value $|a_0|=1$, as $|K^\times |=|K^{\flat \times}|$ is a $\mathbb{Q}$-vector space.

Now let $Q(X)=X^d + b_{d-1}X^{d-1}+\ldots+b_0\in K^{\flat \circ}[X]$ be any polynomial such that $P$ and $Q$ have the same image in $K^\circ/\varpi[X] = K^{\flat \circ}/\varpi^\flat[X]$, and let $y\in K^{\flat \circ}$ be a root of $Q$.

Considering $P(X+y^\sharp)$, we see that the constant term $P(y^\sharp)$ is divisible by $\varpi$.  As it is still irreducible, its Newton polygon is a line and hence the polynomial $P_1(X) = c^{-d}P(cX+y^\sharp)$ has integral coefficients again, where $|c|^d = |P(y^\sharp)|\leq |\varpi|$. Repeating the arguments gives an algorithm converging to a root of $P$.
\end{proof}

Our proof of Theorem \ref{TiltingEquivFields} will make use of Faltings's almost mathematics. For this reason, we recall some necessary background in the next section.

\section{Almost mathematics}

We will use the book of Gabber-Ramero, \cite{GabberRamero}, as our basic reference.

Fix a perfectoid field $K$. Let $\mathfrak{m}=K^{\circ\circ}\subset K^\circ$ be the subset of topologically nilpotent elements; it is also the set $\{x\in K\mid |x|<1\}$, and the unique maximal ideal of $K^\circ$. The basic idea of almost mathematics is that one neglects $\mathfrak{m}$-torsion everywhere.

\begin{definition} Let $M$ be a $K^\circ$-module. An element $x\in M$ is almost zero if $\mathfrak{m} x=0$. The module $M$ is almost zero if all of its elements are almost zero; equivalently, $\mathfrak{m} M = 0$.
\end{definition}

\begin{lem} The full subcategory of almost zero objects in $K^\circ-\mathrm{mod}$ is thick.
\end{lem}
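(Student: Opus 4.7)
The plan is to show the three closure properties that define a thick (Serre) subcategory: closure under subobjects, quotients, and extensions. Recall that $M$ is almost zero means $\mathfrak{m} M = 0$.

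Closure under subobjects and quotients is immediate from the definition. If $N \subset M$ and $\mathfrak{m} M = 0$ then certainly $\mathfrak{m} N = 0$; and if $M \twoheadrightarrow Q$ is surjective and $\mathfrak{m} M = 0$ then $\mathfrak{m} Q = 0$. So the only real content is closure under extensions.

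Given a short exact sequence $0 \to M' \to M \to M'' \to 0$ with $M'$ and $M''$ almost zero, pick $x \in M$ and $\varepsilon \in \mathfrak{m}$. The image of $\varepsilon x$ in $M''$ is killed by $\mathfrak{m}$, so in particular $\varepsilon x$ itself lies in $M'$ (here I only use $\mathfrak{m} M'' = 0$). Then for any $\delta \in \mathfrak{m}$ we have $\delta(\varepsilon x) \in \mathfrak{m} M' = 0$. Naively this only gives $\mathfrak{m}^2 M = 0$, so the crucial point I need is that $\mathfrak{m}^2 = \mathfrak{m}$ in $K^\circ$.

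The latter identity is where the hypothesis that $K$ is perfectoid (in particular, the valuation has nondiscrete rank $1$, so $|K^\times|$ is a divisible subgroup of $\mathbb{R}_{>0}$) enters: for any $\varepsilon \in \mathfrak{m}$, choose $\delta \in K^\circ$ with $|\delta|^2 \geq |\varepsilon|$ and $|\delta| < 1$, which exists because $|K^\times|$ is dense in $\mathbb{R}_{>0}$ near $1$. Then $\varepsilon/\delta^2 \in K^\circ$ (in fact in $\mathfrak{m}$ if we insist on $|\delta|^2 > |\varepsilon|$), so $\varepsilon = \delta \cdot (\delta \cdot \varepsilon/\delta^2) \in \mathfrak{m}\cdot\mathfrak{m}$. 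Hence $\mathfrak{m} = \mathfrak{m}^2$, and the argument of the previous paragraph then yields $\varepsilon x = 0$ for every $\varepsilon \in \mathfrak{m}$ and $x \in M$, i.e. $M$ is almost zero.

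The main (very small) obstacle is exactly the identity $\mathfrak{m} = \mathfrak{m}^2$: without nondiscreteness of the valuation it fails (for a discrete valuation ring with uniformizer $\pi$, $\mathfrak{m}^2 = (\pi^2) \subsetneq (\pi) = \mathfrak{m}$), and the notion of almost zero would not give a thick subcategory. Once this is in place, all three closure properties are formal.
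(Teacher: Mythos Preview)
Your proof is correct and follows essentially the same approach as the paper: reduce to closure under extensions, observe that $\mathfrak{m}^2 M = 0$, and then use $\mathfrak{m}^2 = \mathfrak{m}$. You supply more detail than the paper does, in particular a proof of $\mathfrak{m}^2 = \mathfrak{m}$ (the paper simply asserts it). One tiny quibble: the value group of a perfectoid field is only shown to be $p$-divisible, not fully divisible, but this is irrelevant since your argument for $\mathfrak{m}^2=\mathfrak{m}$ only uses density of $|K^\times|$ in $\mathbb{R}_{>0}$, which follows directly from nondiscreteness of the rank-$1$ valuation.
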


\begin{proof} The only nontrivial part is to show that it is stable under extensions, so let
\[
0\rightarrow M^\prime\rightarrow M\rightarrow M^{\prime\prime}\rightarrow 0
\]
be a short exact sequence of $K^\circ$-modules, with $\mathfrak{m} M^\prime = \mathfrak{m} M^{\prime\prime} = 0$. In general, one gets that $\mathfrak{m}^2 M = 0$. But in our situation, $\mathfrak{m}^2 = \mathfrak{m}$, so $M$ is almost zero.
\end{proof}

We note that there is a sequence of localization functors
\[
K^\circ-\mathrm{mod}\rightarrow K^\circ-\mathrm{mod}/(\mathfrak{m}-\mathrm{torsion})\rightarrow K-\mathrm{mod}\ .
\]
Their composite is the functor of passing from an integral structure to its generic fibre. In this sense, the category in the middle can be seen as a slightly generic fibre, or as an almost integral structure. It will turn out that in perfectoid situations, properties and objects over the generic fibre will extend automatically to the slightly generic fibre, in other words the generic fibre almost determines the integral level. It will be easy to justify this philosophy if $K$ has characteristic $p$, by using the following argument. Assume that some statement is true over $K$. By using some finiteness property, it follows that there is some big $N$ such that it is true up to $\varpi^N$-torsion. But Frobenius is bijective, hence the property stays true up to $\varpi^{N/p}$-torsion. Now iterate this argument to see that it is true up to $\varpi^{N/p^m}$-torsion for all $m$, i.e. almost true.

Following these ideas, our proof of Theorem \ref{TiltingEquivFields} will proceed as follows, using the subscript $\fet$ to denote categories of finite \'{e}tale (almost) algebras.
\[
K_\fet\cong K^{\circ a}_\fet\cong (K^{\circ a}/\varpi)_\fet = (K^{\flat \circ a}/\varpi^\flat)_\fet\cong K^{\flat \circ a}_\fet\cong K^\flat_\fet\ .
\]
Our principal aim in this section is to define all intermediate categories.

\begin{definition} Define the category of almost $K^\circ$-modules as
\[
K^{\circ a}-\mathrm{mod} = K^\circ-\mathrm{mod}/(\mathfrak{m}-\mathrm{torsion})\ .
\]
In particular, there is a localization functor $M\mapsto M^a$ from $K^\circ-\mathrm{mod}$ to $K^{\circ a}-\mathrm{mod}$, whose kernel is exactly the thick subcategory of almost zero modules.
\end{definition}

\begin{prop}[{\cite[\S 2.2.2]{GabberRamero}}] Let $M$, $N$ be two $K^\circ$-modules. Then
\[
\Hom_{K^{\circ a}}(M^a,N^a) = \Hom_{K^\circ}(\mathfrak{m}\otimes M,N)\ .
\]
In particular, $\Hom_{K^{\circ a}}(X,Y)$ has a natural structure of $K^\circ$-module for any two $K^{\circ a}$-modules $X$ and $Y$. The module $\Hom_{K^{\circ a}}(X,Y)$ has no almost zero elements.
\end{prop}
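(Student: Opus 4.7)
The plan is to construct a natural bijection by leveraging two key algebraic properties of $\mathfrak{m}$ over $K^\circ$: flatness, and the idempotence $\mathfrak{m} \otimes_{K^\circ} \mathfrak{m} \cong \mathfrak{m}$. First I would verify these properties. Choose a topologically nilpotent $\varpi \in K$ together with a compatible system of $p$-power roots $\varpi^{1/p^n}$ (possible by the $p$-divisibility of $|K^\times|$ established in the preceding lemma). Then $\mathfrak{m} = \bigcup_n (\varpi^{1/p^n})$ is a filtered union of principal ideals, each isomorphic to $K^\circ$ since $K^\circ$ is a domain, so $\mathfrak{m}$ is flat. The same $p$-divisibility forces $\mathfrak{m}^2 = \mathfrak{m}$, and tensoring the injection $\mathfrak{m} \hookrightarrow K^\circ$ with the flat module $\mathfrak{m}$ shows that multiplication $\mathfrak{m} \otimes \mathfrak{m} \to \mathfrak{m}$ is an isomorphism.

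Two consequences follow: (a) if $P$ is almost zero then $\mathfrak{m} \otimes P = 0$, since flatness yields an embedding $\mathfrak{m} \otimes P \hookrightarrow P$ whose image is $\mathfrak{m} P = 0$; and (b) for every $M$, the multiplication $\mu_M \colon \mathfrak{m} \otimes M \to M$ is an almost isomorphism, its cokernel $M/\mathfrak{m} M$ being annihilated by $\mathfrak{m}$ and its kernel (equal to $\mathrm{Tor}_1^{K^\circ}(\kappa, M)$ for $\kappa = K^\circ/\mathfrak{m}$, via the exact sequence $0 \to \mathfrak{m} \to K^\circ \to \kappa \to 0$) being a $\kappa$-module hence almost zero. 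In particular $\mu_M$ induces a canonical isomorphism $(\mathfrak{m} \otimes M)^a \cong M^a$, and I define the forward map $\Hom_{K^\circ}(\mathfrak{m} \otimes M, N) \to \Hom_{K^{\circ a}}(M^a, N^a)$ by applying $(-)^a$ and pre-composing with this isomorphism.

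Injectivity is immediate: if $\phi^a = 0$ then $\mathrm{im}(\phi) \subset N$ is almost zero, so for every $a,b \in \mathfrak{m}$ and $m \in M$ one has $\phi(ab \otimes m) = a\phi(b \otimes m) = 0$; since $\mathfrak{m}^2 = \mathfrak{m}$ every element of $\mathfrak{m}$ is a finite sum of products $ab$ with $a,b \in \mathfrak{m}$, so $\phi = 0$. Surjectivity is the main obstacle. Given $\psi \colon M^a \to N^a$, represent it in the Serre quotient by a $K^\circ$-linear map $M' \to N/N'$ with $M' \subset M$, $N' \subset N$, and $M/M'$, $N'$ almost zero. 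Tensoring with $\mathfrak{m}$ and using consequence (a) gives $\mathfrak{m} \otimes N' = 0$ and $\mathfrak{m} \otimes (M/M') = 0$, so by flatness one obtains a canonical $K^\circ$-linear diagram $\mathfrak{m} \otimes M \xleftarrow{\sim} \mathfrak{m} \otimes M' \to \mathfrak{m} \otimes N$; composing with $\mu_N$ yields $\phi \colon \mathfrak{m} \otimes M \to N$. The checks that this lift is independent of the chosen representative and is inverse to the forward map are routine, reducing in each case to the identity $\mathfrak{m}^2 = \mathfrak{m}$.

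Finally, the in-particular claims follow directly: transporting the $K^\circ$-module structure on $\Hom_{K^\circ}(\mathfrak{m} \otimes M, N)$ gives one on $\Hom_{K^{\circ a}}(M^a, N^a)$, and the absence of almost-zero elements is exactly the injectivity argument above applied to multiplication by an arbitrary $a \in \mathfrak{m}$.
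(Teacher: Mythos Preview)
Your argument is correct and is essentially the standard one from \cite{GabberRamero}; note that the paper itself does not supply a proof of this proposition but simply cites \cite[\S 2.2.2]{GabberRamero}, so there is nothing in the paper to compare against. One small imprecision: a perfectoid field $K$ of characteristic $0$ need not contain an actual compatible system of $p$-power roots of $\varpi$ (the Frobenius is only surjective on $K^\circ/p$, not on $K^\circ$), so your description $\mathfrak{m}=\bigcup_n(\varpi^{1/p^n})$ is not literally available. This is harmless for the argument, though: since $K^\circ$ is a valuation ring its ideals are totally ordered, so $\mathfrak{m}$ is the filtered union of the principal ideals $(a)$ for $a\in\mathfrak{m}$, each free of rank one, which yields flatness immediately.
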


For two $K^{\circ a}$-modules $M$, $N$, we define $\alHom(X,Y) = \Hom(X,Y)^a$.

\begin{prop}[{\cite[\S 2.2.6, \S 2.2.12]{GabberRamero}}] The category $K^{\circ a}-\mathrm{mod}$ is an abelian tensor category, where we define kernels, cokernels and tensor products in the unique way compatible with their definition in $K^\circ-\mathrm{mod}$, e.g.
\[
M^a\otimes N^a = (M\otimes N)^a
\]
for any two $K^\circ$-modules $M$, $N$. For any three $K^{\circ a}$-modules $L, M, N$, there is a functorial isomorphism
\[
\Hom(L,\alHom(M,N)) = \Hom(L\otimes M,N)\ .
\]
\end{prop}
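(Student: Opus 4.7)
The plan is to interpret $K^{\circ a}-\mathrm{mod}$ as the Serre (Gabriel) quotient of $K^\circ-\mathrm{mod}$ by the thick subcategory $\Sigma$ of almost zero modules, and to transport the tensor product and internal Hom through this localization. The abelian structure is then automatic from the general theory of Serre quotients by thick subcategories (which applies by the preceding lemma): the localization functor $M\mapsto M^a$ is exact, kernels and cokernels are computed by choosing representatives, performing the operation in $K^\circ-\mathrm{mod}$, and applying $(-)^a$, and a morphism in the quotient is zero if and only if it factors through an almost zero object.

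For the tensor product I would observe that if $M$ is almost zero, i.e.\ $\mathfrak{m} M=0$, then $\mathfrak{m}(M\otimes_{K^\circ} N)=0$ for any $N$, so $M\otimes N$ is almost zero. Hence $\otimes_{K^\circ}$ preserves $\Sigma$ in each variable and descends to a bifunctor on $K^{\circ a}-\mathrm{mod}$, uniquely characterized by $M^a\otimes N^a=(M\otimes N)^a$. Associativity, symmetry and the unit $K^{\circ a}$ are inherited from $K^\circ-\mathrm{mod}$.

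For the adjunction I would feed in the formula $\Hom_{K^{\circ a}}(M^a,N^a)=\Hom_{K^\circ}(\mathfrak{m}\otimes_{K^\circ} M,N)$ from the previous proposition. Writing $\tilde L,\tilde M,\tilde N$ for representatives of $L,M,N$, ordinary tensor-Hom adjunction in $K^\circ-\mathrm{mod}$ gives
\[
\Hom(L,\alHom(M,N))=\Hom_{K^\circ}\bigl(\mathfrak{m}\otimes \tilde L,\,\Hom_{K^\circ}(\mathfrak{m}\otimes \tilde M,\tilde N)\bigr)=\Hom_{K^\circ}\bigl((\mathfrak{m}\otimes\mathfrak{m})\otimes \tilde L\otimes \tilde M,\,\tilde N\bigr),
\]
while $\Hom(L\otimes M,N)=\Hom_{K^\circ}(\mathfrak{m}\otimes \tilde L\otimes \tilde M,\tilde N)$. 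The whole statement therefore reduces to showing that the multiplication map $\mathfrak{m}\otimes_{K^\circ}\mathfrak{m}\to\mathfrak{m}$ is an isomorphism.

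The only real point is this last claim, and it is where the perfectoid hypothesis enters. First, $K^\circ$ is a valuation ring, so the torsion-free module $\mathfrak{m}$ is flat; tensoring the injection $\mathfrak{m}\hookrightarrow K^\circ$ with $\mathfrak{m}$ then gives injectivity of $\mathfrak{m}\otimes\mathfrak{m}\hookrightarrow K^\circ\otimes\mathfrak{m}=\mathfrak{m}$. Secondly, by the preceding lemma the value group $|K^\times|$ is $p$-divisible and in particular dense in $\mathbb{R}_{>0}$, so any $x\in\mathfrak{m}$ factors as $x=x^{1/p}\cdot x^{(p-1)/p}$ with both factors in $\mathfrak{m}$; hence $\mathfrak{m}^2=\mathfrak{m}$, which gives surjectivity. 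With $\mathfrak{m}\otimes_{K^\circ}\mathfrak{m}\cong\mathfrak{m}$ in hand, the displayed computation becomes the desired adjunction, exhibiting $(K^{\circ a}-\mathrm{mod},\otimes,K^{\circ a})$ as a closed symmetric monoidal abelian category.
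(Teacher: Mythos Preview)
The paper does not give its own proof of this proposition; it simply records the statement with a reference to \cite{GabberRamero}. Your argument is a correct self-contained proof along the standard lines: the abelian structure from the Serre quotient, descent of $\otimes$ because almost zero is stable under tensoring, and reduction of the adjunction to the isomorphism $\mathfrak{m}\otimes_{K^\circ}\mathfrak{m}\cong\mathfrak{m}$ via the formula $\Hom_{K^{\circ a}}(M^a,N^a)=\Hom_{K^\circ}(\mathfrak{m}\otimes M,N)$.

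One small correction in the last step: the factorization $x=x^{1/p}\cdot x^{(p-1)/p}$ does not literally make sense, since a general element $x\in\mathfrak{m}$ need not have a $p$-th root in $K$ --- only its \emph{valuation} is $p$-divisible. The clean argument for $\mathfrak{m}^2=\mathfrak{m}$ is: since $|K^\times|$ is nondiscrete, for any nonzero $x\in\mathfrak{m}$ there exists $y\in K^\times$ with $|x|<|y|<1$; then $y\in\mathfrak{m}$, $x/y\in\mathfrak{m}$, and $x=y\cdot(x/y)\in\mathfrak{m}^2$. (The paper in fact already uses $\mathfrak{m}^2=\mathfrak{m}$ in the proof of the preceding lemma, so you may simply quote it.) With this fix your proof is complete.
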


This means that $K^{\circ a}-\mathrm{mod}$ has all abstract properties of the category of modules over a ring. In particular, one can define in the usual abstract way the notion of a $K^{\circ a}$-algebra. For any $K^{\circ a}$-algebra $A$, one also has the notion of an $A$-module. Any $K^\circ$-algebra $R$ defines a $K^{\circ a}$-algebra $R^a$, as the tensor products are compatible. Moreover, localization also gives a functor from $R$-modules to $R^a$-modules. For example, $K^\circ$ gives the $K^{\circ a}$-algebra $A=K^{\circ a}$, and then $A$-modules are $K^{\circ a}$-modules, so that the terminology is consistent.

\begin{prop}[{\cite[Proposition 2.2.14]{GabberRamero}}] There is a right adjoint
\[
K^{\circ a}-\mathrm{mod}\rightarrow K^\circ-\mathrm{mod}\ :\ M\mapsto M_\ast
\]
to the localization functor $M\mapsto M^a$, given by the functor of almost elements
\[
M_\ast = \Hom_{K^{\circ a}}(K^{\circ a},M)\ .
\]
The adjunction morphism $(M_\ast)^a\rightarrow M$ is an isomorphism. If $M$ is a $K^\circ$-module, then $(M^a)_\ast = \Hom(\mathfrak{m},M)$.
\end{prop}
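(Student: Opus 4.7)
The plan is to reduce all four assertions to the formula $\Hom_{K^{\circ a}}(M^a,N^a) = \Hom_{K^\circ}(\mathfrak{m}\otimes M,N)$ provided by the preceding proposition, together with the ordinary tensor-Hom adjunction for $K^\circ$-modules. The preceding proposition also guarantees that $\Hom_{K^{\circ a}}(X,Y)$ carries a natural $K^\circ$-module structure, so that $M_\ast = \Hom_{K^{\circ a}}(K^{\circ a},M)$ is a $K^\circ$-module and $M\mapsto M_\ast$ is a functor $K^{\circ a}\text{-mod}\to K^\circ\text{-mod}$.

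The last identification drops out directly: every $K^\circ$-module $M$ gives
\[
(M^a)_\ast = \Hom_{K^{\circ a}}(K^{\circ a},M^a) = \Hom_{K^\circ}(\mathfrak{m}\otimes K^\circ,M) = \Hom_{K^\circ}(\mathfrak{m},M).
\]
For the adjunction, I would write an arbitrary $K^{\circ a}$-module as $\tilde M^a$ for some $K^\circ$-module $\tilde M$, and then chain the isomorphisms
\[
\Hom_{K^{\circ a}}(N^a,\tilde M^a) = \Hom_{K^\circ}(\mathfrak{m}\otimes N,\tilde M) = \Hom_{K^\circ}(N,\Hom_{K^\circ}(\mathfrak{m},\tilde M)) = \Hom_{K^\circ}(N,(\tilde M^a)_\ast),
\]
where the first equality is the preceding proposition and the middle is ordinary tensor-Hom. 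These are manifestly functorial in $N$ and in $\tilde M^a$, proving the adjunction.

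It remains to show that the counit $(M_\ast)^a\to M$ is an isomorphism. Writing $M=\tilde M^a$, the counit is identified with the almost version of the natural evaluation map, whose inverse should come from the canonical $K^\circ$-linear map
\[
\iota:\tilde M\longrightarrow \Hom_{K^\circ}(\mathfrak{m},\tilde M),\qquad n\longmapsto (m\mapsto mn).
\]
I would verify that $\iota$ becomes an isomorphism after applying $(-)^a$. The kernel of $\iota$ consists of $n\in\tilde M$ with $\mathfrak{m} n=0$ and is almost zero by definition. For the cokernel, given any $\phi:\mathfrak{m}\to\tilde M$ and any $m\in\mathfrak{m}$, the $K^\circ$-linearity of $\phi$ gives
\[
(m\phi)(m') = \phi(mm') = \phi(m'm) = m'\phi(m)
\]
for all $m'\in\mathfrak{m}$, so $m\phi = \iota(\phi(m))$; hence $\mathfrak{m}$ annihilates the cokernel as well. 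Thus $\iota^a$ is an isomorphism in $K^{\circ a}\text{-mod}$, and a routine check identifies its inverse with the counit of the adjunction constructed above.

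The only step containing genuine content is the cokernel computation, which crucially uses $K^\circ$-linearity of $\phi$ and the commutativity $mm'=m'm$; every other step is formal bookkeeping with the preceding proposition.
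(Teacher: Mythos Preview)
The paper does not prove this proposition at all; it simply cites \cite[Proposition 2.2.14]{GabberRamero} and moves on. Your argument is therefore not being compared against anything in the paper, and on its own merits it is correct: the last formula and the adjunction drop out of the Hom formula of the preceding proposition together with ordinary tensor--Hom, and your kernel/cokernel computation for $\iota$ cleanly shows that the counit is an almost isomorphism.

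One small point worth tightening: when you say the chain of isomorphisms is ``manifestly functorial in $\tilde M^a$'', note that a morphism $\tilde M^a\to \tilde M'^a$ in $K^{\circ a}\text{-mod}$ is not in general induced by a $K^\circ$-map $\tilde M\to\tilde M'$, only by a map $\mathfrak{m}\otimes\tilde M\to\tilde M'$. This is harmless---since $(\mathfrak{m}\otimes\tilde M)^a\cong\tilde M^a$ one can always adjust the representative---but it is the one place where ``manifestly'' hides a short check. Alternatively, naturality in $M$ follows formally once you observe that $M_\ast=\Hom_{K^{\circ a}}(K^{\circ a},M)$ is defined without any choice of representative.
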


If $A$ is a $K^{\circ a}$-algebra, then $A_\ast$ has a natural structure as $K^\circ$-algebra and $A_\ast^a = A$. In particular, any $K^{\circ a}$-algebra comes via localization from a $K^\circ$-algebra. Moreover, the functor $M\mapsto M_\ast$ induces a functor from $A$-modules to $A_\ast$-modules, and one sees that also all $A$-modules come via localization from $A_\ast$-modules. We note that the category of $A$-modules is again an abelian tensor category, and all properties about the category of $K^{\circ a}$-modules stay true for the category of $A$-modules. We also note that one can equivalently define $A$-algebras as algebras over the category of $A$-modules, or as $K^{\circ a}$-algebras $B$ with an algebra morphism $A\rightarrow B$.

Finally, we need to extend some notions from commutative algebra to the almost context.

\begin{defprop} Let $A$ be any $K^{\circ a}$-algebra.
\begin{altenumerate}
\item[{\rm (i)}] An $A$-module $M$ is flat if the functor $X\mapsto M\otimes_A X$ on $A$-modules is exact. If $R$ is a $K^\circ$-algebra and $N$ is an $R$-module, then the $R^a$-module $N^a$ is flat if and only if for all $R$-modules $X$ and all $i>0$, the module $\Tor_i^R(N,X)$ is almost zero.
\item[{\rm (ii)}] An $A$-module $M$ is almost projective if the functor $X\mapsto \alHom_A(M,X)$ on $A$-modules is exact. If $R$ is a $K^\circ$-algebra and $N$ is an $R$-module, then $N^a$ is almost projective over $R^a$ if and only if for all $R$-modules $X$ and all $i>0$, the module $\Ext^i_R(N,X)$ is almost zero.
\item[{\rm (iii)}] If $R$ is a $K^\circ$-algebra and $N$ is an $R$-module, then $M=N^a$ is said to be an almost finitely generated (resp. almost finitely presented) $R^a$-module if and only if for all $\epsilon\in \mathfrak{m}$, there is some finitely generated (resp. finitely presented) $R$-module $N_\epsilon$ with a map $f_\epsilon: N_\epsilon\rightarrow N$ such that the kernel and cokernel of $f_\epsilon$ are annihilated by $\epsilon$. We say that $M$ is uniformly almost finitely generated if there is some integer $n$ such that $N_\epsilon$ can be chosen to be generated by $n$ elements, for all $\epsilon$.
\end{altenumerate}
\end{defprop}

\begin{proof} For parts (i) and (ii), cf. \cite{GabberRamero}, Definition 2.4.4, \S 2.4.10 and Remark 2.4.12 (i). For part (iii), cf. \cite{GabberRamero}, Definition 2.3.8, Remark 2.3.9 (i) and Corollary 2.3.13.
\end{proof}

\begin{rem}\label{StrangeSums} In (iii), we make the implicit statement that this property depends only on the $R^a$-module $N^a$. There is also the categorical notion of projectivity saying that the functor $X\mapsto \Hom(M,X)$ is exact, but not even $K^{\circ a}$ itself is projective in general: One can check that the map
\[
K^\circ = \Hom(K^{\circ a},K^{\circ a})\rightarrow \Hom(K^{\circ a},K^{\circ a}/\varpi) = \Hom(\mathfrak{m},K^\circ/\varpi)
\]
is in general not surjective, as the latter group contains sums of the form
\[
\sum_{i\geq 0} \varpi^{1-1/p^i} x_i
\]
for arbitrary $x_i\in K^\circ/\varpi$.
\end{rem}

\begin{example} As an example of an almost finitely presented module, consider the case that $K$ is the $p$-adic completion of $\mathbb{Q}_p(p^{1/p^\infty})$, $p\neq 2$. Consider the extension $L=K(p^{1/2})$. Then $L^{\circ a}$ is an almost finitely presented $K^{\circ a}$-module. Indeed, for any $n\geq 1$, we have injective maps
\[
K^\circ\oplus p^{1/2p^n} K^\circ\rightarrow L^\circ
\]
whose cokernel is killed by $p^{1/2p^n}$. In fact, in this example $L^{\circ a}$ is even uniformly almost finitely generated.
\end{example}

\begin{prop}[{\cite[Proposition 2.4.18]{GabberRamero}}] Let $A$ be a $K^{\circ a}$-algebra. Then an $A$-module $M$ is flat and almost finitely presented if and only if it is almost projective and almost finitely generated.
\end{prop}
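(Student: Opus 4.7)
My plan rests on the following trace-factorization criterion, which I would establish as a preliminary lemma: an $A$-module $M$ is almost finitely generated and almost projective if and only if for every $\epsilon \in \mathfrak{m}$ there exist an integer $n = n(\epsilon)$ and $A$-linear maps $u_\epsilon \colon M \to A^n$, $v_\epsilon \colon A^n \to M$ with $v_\epsilon \circ u_\epsilon = \epsilon \cdot \id_M$. The nontrivial direction of this criterion uses almost projectivity to lift $\id_M$, up to an $\epsilon$-error, along an approximate surjection $A^n \to M$ supplied by almost finite generation, exploiting that $\Ext^1_A(M,-)$ is almost zero. The other direction is formal: applying $-\otimes_A X$ and $\alHom_A(-,X)$ to the factorization shows that $\epsilon$ kills $\Tor_i^A(M,X)$ and $\Ext^i_A(M,X)$ for all $i\geq 1$.

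Granting this criterion, the direction \emph{almost projective and almost finitely generated} $\Rightarrow$ \emph{flat and almost finitely presented} runs as follows. Flatness is immediate, since for every $\epsilon \in \mathfrak{m}$ the endomorphism $\epsilon\cdot\id$ of $\Tor_i^A(M,X)$ factors through $\Tor_i^A(A^n,X) = 0$, so $\mathfrak{m}$ annihilates $\Tor_i^A(M,X)$. For almost finite presentation I would take two parameters $\epsilon,\epsilon' \in \mathfrak{m}$ and combine the factorizations: $v_\epsilon \colon A^n \to M$ is an almost surjection, and applying the trace factorization at parameter $\epsilon'$ to the (almost) kernel of $v_\epsilon$ exhibits this kernel as itself almost finitely generated, producing a presentation $A^{n'} \to A^n \to M$ whose homology is killed by $\epsilon\epsilon'$.

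For the converse, almost finite generation is already part of the hypothesis, so only almost projectivity needs proof. Given a presentation $A^m \xrightarrow{R} A^n \xrightarrow{v} M$ whose cokernel and middle cohomology are killed by $\epsilon$, I would use flatness of $M$ to split $v$ up to an $\epsilon^2$-error: tensoring the sequence with $M$ and invoking the almost vanishing of $\Tor_1^A(M,-)$ produced by flatness yields a lift $u \colon M \to A^n$ with $v\circ u \equiv \id_M$ modulo $\epsilon^2 M$, and absorbing the error into the scaling gives a trace factorization at parameter $\epsilon^2$. The criterion then delivers almost projectivity, equivalently the almost vanishing of $\Ext^i_A(M,-)$ from part (ii) of the definition/proposition above.

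The main obstacle is the careful bookkeeping of powers of $\epsilon$. Because the maximal ideal $\mathfrak{m}$ is idempotent, a consequence of the $p$-divisibility of the value group established in the earlier lemma on perfectoid fields, any uniform bound of the form ``$\epsilon^N$ kills $(\cdot)$ for all $\epsilon \in \mathfrak{m}$'' is equivalent to almost vanishing; but one must be vigilant throughout the constructions that the error parameters really can be made arbitrarily small and that the integer $n(\epsilon)$ is allowed to grow, since uniform almost finite generation is not claimed. A secondary nuisance is the translation between almost projectivity defined via exactness of $\alHom_A(M,-)$ and the $\Ext$-based reformulation used above, which requires the mild care flagged in Remark \ref{StrangeSums}.
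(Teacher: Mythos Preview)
The paper does not give a proof of this proposition at all: it is simply quoted from \cite[Proposition 2.4.18]{GabberRamero}. So there is no argument in the paper to compare your proposal against.

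On the substance of your sketch: the direction \emph{almost projective and almost finitely generated $\Rightarrow$ flat and almost finitely presented} is essentially fine. Once you have $u_\epsilon:M\to A^n$ and $v_\epsilon:A^n\to M$ with $v_\epsilon u_\epsilon=\epsilon\cdot\id_M$, flatness follows as you say, and for almost finite presentation one observes that $(\epsilon - u_\epsilon v_\epsilon)$ maps $A^n$ into $\ker v_\epsilon$ and restricts to $\epsilon\cdot\id$ there, so the kernel is itself an $\epsilon$-retract of $A^n$ and hence almost finitely generated. (Your phrasing ``applying the trace factorization to the kernel'' suggests you are invoking the criterion as a hypothesis on $\ker v_\epsilon$, which you don't have; rather, you \emph{derive} a trace factorization for the kernel from the one for $M$.)

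The converse direction, however, has a real gap. You write that, given an approximate presentation $A^m\to A^n\xrightarrow{v} M$, ``tensoring the sequence with $M$ and invoking the almost vanishing of $\Tor_1^A(M,-)$ \ldots\ yields a lift $u:M\to A^n$.'' But tensoring with $M$ tells you about $M\otimes_A(-)$, whereas producing a lift of $\epsilon\cdot\id_M$ along $v$ is a question about $\Hom_A(M,-)$, i.e.\ about $\Ext^1_A(M,\ker v)$. Flatness gives you $\Tor$ vanishing, not $\Ext$ vanishing, and there is no formal passage from one to the other. In the classical (non-almost) setting the implication \emph{flat and finitely presented $\Rightarrow$ projective} already requires a genuine idea (Lazard's theorem, or a local freeness argument), and the almost version in \cite{GabberRamero} is correspondingly more delicate than a one-line $\Tor$ computation. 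You will need to supply that missing ingredient --- for instance an almost analogue of Lazard, or the argument Gabber--Ramero actually give --- before this direction goes through.
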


By abuse of notation, we call such $A$-modules $M$ finite projective in the following, a terminology not used in \cite{GabberRamero}. If additionally, $M$ is uniformly almost finitely generated, we say that $M$ is uniformly finite projective.

For uniformly finite projective modules, there is a good notion of rank.

\begin{thm}[{\cite[Proposition 4.3.27, Remark 4.3.10 (i)]{GabberRamero}}] Let $A$ be a $K^{\circ a}$-algebra, and let $M$ be a uniformly finite projective $A$-module. Then there is a unique decomposition $A=A_0\times A_1\times \cdots \times A_k$ such that for each $i=0,\ldots,k$, the $A_i$-module $M_i=M\otimes_A A_i$ has the property that $\bigwedge^i M_i$ is invertible, and $\bigwedge^{i+1} M_i=0$. Here, an $A$-module $L$ is called invertible if $L\otimes_A \alHom_A(L,A) = A$.
\end{thm}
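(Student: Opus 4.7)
The approach is to extract the rank of $M$ from its top exterior power and peel off pieces inductively. Since $M$ is uniformly almost finitely generated with some uniform bound $n$, I would first show $\bigwedge^{n+1} M = 0$: writing $M = N^a$ and approximating, for each $\epsilon\in\mathfrak{m}$, by an $n$-generator finitely presented $R$-module $N_\epsilon \to N$ with $\epsilon$-torsion kernel and cokernel, one has $\bigwedge^{n+1} N_\epsilon = 0$ and a snake-lemma argument on exterior powers forces $\bigwedge^{n+1} N$ to be killed by a fixed power of $\epsilon$. Because $\epsilon$ was arbitrary and $\mathfrak{m}^2 = \mathfrak{m}$, $\bigwedge^{n+1} M$ is almost zero, so the exterior chain terminates in degree at most $n$. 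The same approximation shows that each $\bigwedge^i M$ is itself uniformly finite projective, with uniform bound $\binom{n}{i}$ on generators; flatness passes to exterior powers because $\Tor$ can be computed uniformly from the approximants.

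The crucial point is that the top exterior power $L := \bigwedge^n M$, being bounded by $n$ generators but with $\bigwedge^{n+1} M = 0$, should at each ``point'' of $\Spec A_*$ be either zero or of rank exactly one. So I expect an idempotent $e_n \in A_*$ such that $L$ is invertible on $A_n := e_n A$ and vanishes on $(1-e_n)A$. Concretely, I would obtain $e_n$ from the evaluation pairing
\[
\mathrm{ev}_L : L \otimes_A \alHom_A(L,A) \to A,
\]
whose image $I_L \subset A_*$ is the ideal that should be almost generated by $e_n$, with the identification $I_L = e_n A_*$ amounting to the assertion that $L$ is almost invertible on its support. Granting this, on $A_n$ the module $M_n$ has rank exactly $n$, so $\bigwedge^n M_n$ is invertible and $\bigwedge^{n+1} M_n = 0$; on the complement $(1-e_n)A$, the module $M$ remains uniformly finite projective but with top exterior power vanishing, so one may repeat the argument with the bound reduced to $n-1$. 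After at most $n+1$ iterations one obtains the decomposition $A = A_0 \times A_1 \times \cdots \times A_k$. Uniqueness is immediate, since on any factor where $\bigwedge^i M_i$ is invertible and $\bigwedge^{i+1} M_i = 0$, both conditions together pin down the factor, because exterior powers commute with product decompositions.

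\emph{The main obstacle} is the idempotency claim, namely that the image of $\mathrm{ev}_L$ is almost principal, generated by an idempotent of $A_*$. Classically, for a finitely generated projective module over a commutative ring this follows from the fact that the support is clopen in the spectrum, yielding the desired idempotent. Transferring this to the almost setting requires passing to the ring $A_*$ of almost elements, applying the classical statement to the finitely presented approximants $N_\epsilon$, and then verifying that the resulting family of idempotents stabilizes as $\epsilon$ ranges over $\mathfrak{m}$; the uniformity built into the notion of ``uniformly finite projective'' is precisely the input that makes this limit argument go through cleanly, and the fact that the resulting idempotent lives in $A_*$ rather than merely in some larger almost-completion.
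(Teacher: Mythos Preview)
The paper does not prove this theorem. It is quoted verbatim from Gabber--Ramero with the citation \textup{[Proposition 4.3.27, Remark 4.3.10 (i)]} and no argument is supplied, so there is nothing in the paper to compare your proposal against.

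As for the proposal itself: the overall architecture --- bound the exterior powers using the uniform generator count, then split off idempotents from the top down --- is the right shape and is close in spirit to how Gabber--Ramero organize their argument. Your identification of the ``main obstacle'' is accurate: the real content is producing an honest idempotent in $A_\ast$ from the image of the evaluation map $L\otimes_A \alHom_A(L,A)\to A$. The sketch you give for this step is not yet a proof. The difficulty is that the approximants $N_\epsilon$ are finitely presented but not projective, so the classical clopen-support argument does not apply to them directly; and even if one passes to projective approximants, the idempotents they produce live in $R = A_\ast$ but need not be compatible as $\epsilon$ varies, since the maps $N_\epsilon\to N$ are only $\epsilon$-isomorphisms and idempotents do not obviously lift along such maps. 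You assert that ``uniformity \ldots\ is precisely the input that makes this limit argument go through cleanly,'' but you have not said how. Gabber--Ramero handle this by developing a theory of characteristic polynomials (or Fitting-type invariants) for almost finitely generated projective modules, which gives the idempotents intrinsically rather than as a limit of classical ones; this is the substance of their Proposition 4.3.27. Your proposal would need either to reproduce that machinery or to give a genuine stability argument for the approximate idempotents, neither of which is present.
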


Finally, we need the notion of \'{e}tale morphisms.

\begin{definition} Let $A$ be a $K^{\circ a}$-algebra, and let $B$ be an $A$-algebra. Let $\mu: B\otimes_A B\rightarrow B$ denote the multiplication morphism.
\begin{altenumerate}
\item[{\rm (i)}] The morphism $A\rightarrow B$ is said to be unramified if there is some element $e\in (B\otimes_A B)_\ast$ such that $e^2=e$, $\mu(e) = 1$ and $xe=0$ for all $x\in \ker(\mu)_\ast$.
\item[{\rm (ii)}] The morphism $A\rightarrow B$ is said to be \'{e}tale if it is unramified and $B$ is a flat $A$-module.
\end{altenumerate}
\end{definition}

We note that the definition of unramified morphisms basically says that the diagonal morphism $\mu: B\otimes_A B\rightarrow B$ is a closed immersion in the geometric picture.

In the following, we will be particularly interested in almost finitely presented \'{e}tale maps.

\begin{definition} A morphism $A\rightarrow B$ of $K^{\circ a}$-algebras is said to be finite \'{e}tale if it is \'{e}tale and $B$ is an almost finitely presented $A$-module. Write $A_\fet$ for the category of finite \'{e}tale $A$-algebras.
\end{definition}

We note that in this case $B$ is a finite projective $A$-module. Also, this terminology is not used in \cite{GabberRamero}, but we feel that it is the appropriate almost analogue of finite \'{e}tale covers.

There is an equivalent characterization of finite \'{e}tale morphisms in terms of trace morphisms. If $A$ is any $K^{\circ a}$-algebra, and $P$ is some finite projective $A$-module, we define $P^\ast = \alHom(P,A)$, which is a finite projective $A$-module again. Moreover, $P^{\ast\ast} \cong P$ canonically, and there is an isomorphism
\[
\End(P)^a = P\otimes_A P^\ast\ .
\]
In particular, one gets a trace morphism $\tr_{P/A}: \End(P)^a\rightarrow A$.

\begin{definition} Let $A$ be a $K^{\circ a}$-algebra, and let $B$ be an $A$-algebra such that $B$ is a finite projective $A$-module. Then we define the trace form as the bilinear form
\[
t_{B/A}: B\otimes_A B\rightarrow A
\]
given by the composition of $\mu: B\otimes_A B\rightarrow B$ and the map $B\rightarrow A$ sending any $b\in B$ to the trace of the endomorphism $b^\prime\mapsto bb^\prime$ of $B$.
\end{definition}

\begin{rem} We should remark that the latter definition does not literally make sense, as one can not talk about an element $b$ of some almost object $B$: There is no underlying set. However, one can define a map $B_\ast \rightarrow \End_{A_\ast}(B_\ast)$ in the way described, and we are considering the corresponding map of almost objects $B\rightarrow \End_{A_\ast}(B_\ast)^a = \End_A(B)^a$.
\end{rem}

\begin{thm}[{\cite[Theorem 4.1.14]{GabberRamero}}] In the situation of the definition, the morphism $A\rightarrow B$ is finite \'{e}tale if and only if the trace map is a perfect pairing, i.e. induces an isomorphism $B\cong B^\ast$.
\end{thm}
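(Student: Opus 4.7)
The plan is to prove both directions by explicit construction, mirroring the classical commutative-algebra proof that a finite projective $A$-algebra $B$ is \'etale if and only if its trace form is perfect, while respecting the almost-mathematical setting. Write $\sigma : B \to B^{\ast} = \alHom_A(B,A)$ for the map induced by the trace form $t_{B/A}$, so informally $\sigma(b)(b') = \tr(m_{bb'})$; I must show $\sigma$ is an isomorphism if and only if a separability idempotent exists.

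For the ``only if'' direction, I assume that $A \to B$ is finite \'etale and use the separability idempotent $e \in (B \otimes_A B)_{\ast}$, satisfying $\mu(e) = 1$ and $(b \otimes 1 - 1 \otimes b)\, e = 0$ for all $b \in B_{\ast}$, to produce an inverse $\tau : B^{\ast} \to B$ to $\sigma$. Define $\tau(f) = (\id_B \otimes f)(e)$. Writing informally $e = \sum_i x_i \otimes y_i$ in $(B \otimes_A B)_{\ast}$, one finds
\[
\sigma(\tau(f))(b) = \sum_i f(y_i)\,\tr(x_i b), \qquad \tau(\sigma(b)) = \sum_i x_i\,\tr(by_i),
\]
and both reduce to $f(b)$ and $b$ respectively by applying $\id \otimes f$ (resp.\ $\id \otimes \sigma$) to the identity $(b \otimes 1)e = (1 \otimes b)e$ and invoking $\mu(e) = 1$ together with cyclicity of the trace.

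For the ``if'' direction, I use the isomorphism
\[
\id \otimes \sigma : B \otimes_A B \xrightarrow{\;\sim\;} B \otimes_A B^{\ast} = \End_A(B)^a,
\]
valid since $B$ is finite projective, to define $e \in (B \otimes_A B)_{\ast}$ as the preimage of $\id_B$. I then verify the three properties of a separability idempotent. For $\mu(e) = 1$: expanding the trace of an endomorphism via dual bases yields $\tr(m_{\mu(e) b}) = \tr(m_b)$ for all $b$, and then perfectness of the pairing $\sigma$ forces $\mu(e) = 1$. For $(b \otimes 1 - 1 \otimes b)\, e = 0$: both $(b \otimes 1) e$ and $(1 \otimes b) e$ correspond under the iso to the endomorphism $m_b \in \End_A(B)^a$, by unwinding the definition of $e$ as representing $\id_B$. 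The identity $e^2 = e$ then follows formally: from $\mu(e) = 1$ we get $1 \otimes 1 - e \in \ker(\mu)$, which by the previous property annihilates $e$. Hence $A \to B$ is unramified, and since $B$ is finite projective and in particular flat, $A \to B$ is \'etale.

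The hard part is the almost-mathematical bookkeeping: the ``elements'' $x_i, y_i$ do not literally exist in $B \otimes_A B$, and a dual basis of $B$ over $A$ exists only after passing to $(\cdot)_{\ast}$ (and then only up to $\epsilon$-torsion for each $\epsilon \in \mathfrak{m}$, by almost finite presentation of $B$). The remedy is to perform each verification on the almost-element functors $B_\ast$, $(B \otimes_A B)_\ast$, where one has honest elements and can argue classically. Since every identity we need is an equality of morphisms of almost modules, and the Hom-modules of almost modules have no almost-zero elements, such equalities can be tested after passing to $(\cdot)_\ast$; thus the classical verifications propagate to the almost setting.
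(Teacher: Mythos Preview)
The paper does not prove this statement at all: it is quoted as \cite[Theorem 4.1.14]{GabberRamero} and used as a black box. So there is nothing to compare your argument against in the paper; you are supplying a proof where the author chose to cite one.

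Your strategy is the right one and your ``if'' direction is fine: defining $e$ as the preimage of $\id_B$ under $\id\otimes\sigma:B\otimes_A B\xrightarrow{\sim}\End_A(B)^a$, both $(b\otimes 1)e$ and $(1\otimes b)e$ map to $m_b$, hence coincide; the trace identity $\tr(m_b)=\tr(m_{b\mu(e)})$ forces $\sigma(1)=\sigma(\mu(e))$ and thus $\mu(e)=1$; and $e^2=e$ follows since $\ker\mu$ is generated by the elements $b\otimes 1-1\otimes b$. All of this transports to the almost setting via $(\cdot)_\ast$ as you indicate.

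In the ``only if'' direction there is a genuine gap. After using the centralizing property you obtain
\[
\tau(\sigma(b))=\sum_i x_i\,\tr_{B/A}(m_{by_i})=b\cdot\sum_i x_i\,\tr_{B/A}(m_{y_i}),
\]
so you need $\sum_i x_i\,\tr_{B/A}(m_{y_i})=1$. This is \emph{not} the identity $\mu(e)=\sum_i x_iy_i=1$, and I do not see how ``cyclicity of the trace'' bridges the two. One clean fix: view $B\otimes_A B$ as a $B$-module via the left factor and compute the $B$-linear trace of the projector $m_e$. On one hand $m_e$ corresponds to $\sum_i x_i\otimes m_{y_i}\in B\otimes_A\End_A(B)$, so $\tr_B(m_e)=\sum_i x_i\,\tr_{B/A}(m_{y_i})$. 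On the other hand $m_e$ is the projection onto $(B\otimes_A B)e$, which $\mu$ identifies with $B$ as a rank-one $B$-module, so $\tr_B(m_e)=1$. The symmetric computation (or the symmetry of $e$ for commutative $B$) gives the analogous identity needed for $\sigma\circ\tau=\id$. With this patch your argument goes through.
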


An important property is that finite \'{e}tale covers lift uniquely over nilpotents.

\begin{thm}\label{LiftingAlmostEtaleCovers} Let $A$ be a $K^{\circ a}$-algebra. Assume that $A$ is flat over $K^{\circ a}$ and $\varpi$-adically complete, i.e.
\[
A\cong \varprojlim A/\varpi^n\ .
\]
Then the functor $B\mapsto B\otimes_A A/\varpi$ induces an equivalence of categories $A_\fet\cong (A/\varpi)_\fet$. Any $B\in A_\fet$ is again flat over $K^{\circ a}$ and $\varpi$-adically complete. Moreover, $B$ is a uniformly finite projective $A$-module if and only if $B\otimes_A A/\varpi$ is a uniformly finite projective $A/\varpi$-module.
\end{thm}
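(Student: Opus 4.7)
The plan is to reduce the equivalence to the successive nilpotent thickenings $A/\varpi^{n+1} \twoheadrightarrow A/\varpi^n$, establish it there, and then pass to the inverse limit using $\varpi$-adic completeness of $A$. Concretely, I would show that the reduction functor $(A/\varpi^{n+1})_\fet \to (A/\varpi^n)_\fet$ is an equivalence for every $n \geq 1$, and then combine these via $A = \varprojlim A/\varpi^n$.

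For full faithfulness over $A$, if $B, B' \in A_\fet$ then $B$ is almost projective and almost finitely presented, hence $\varpi$-adically complete, so
\[
\Hom_A(B, B') = \varprojlim_n \Hom_{A/\varpi^n}(B/\varpi^n, B'/\varpi^n),
\]
and it suffices to show each transition map is a bijection. An $A$-algebra morphism between finite \'etale covers is classified by an idempotent in $(B' \otimes_A B)_\ast$, obtained as the image of the unique separability idempotent $e_B \in (B \otimes_A B)_\ast$ characterizing unramifiedness. Such idempotents lift uniquely along nilpotent almost surjections by the standard Hensel iteration $e \mapsto 3e^2 - 2e^3$ applied after passing to classical rings via $(-)_\ast$.

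For essential surjectivity, starting from $\bar{B} \in (A/\varpi)_\fet$ with separability idempotent $\bar{e}$, I would inductively construct a compatible tower $B_n \in (A/\varpi^{n+1})_\fet$ lifting $\bar{B}$. At step $n$, first deform the underlying almost projective $A/\varpi^n$-module to an almost projective $A/\varpi^{n+1}$-module $M_n$ — possible and essentially unique, because the ideal $(\varpi^n)/(\varpi^{n+1})$ is square-zero and almost projectivity annihilates the relevant $\Ext^1$- and $\Ext^2$-deformation obstructions in the almost category. Then lift $\bar{e}$ to an idempotent in $M_n \otimes M_n$ by the same Hensel argument; this idempotent encodes and determines an unramified algebra structure on $M_n$ lifting $B_{n-1}$. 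Setting $B = \varprojlim B_n$ yields the desired object of $A_\fet$.

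The main obstacle is the module-theoretic deformation step: one must argue carefully that almost projective, almost finitely presented modules deform uniquely along these square-zero thickenings, which is where the flatness of $A$ over $K^{\circ a}$ together with $\mathfrak{m}^2 = \mathfrak{m}$ are essential to kill the almost cohomological obstructions. Granted the equivalence, the remaining claims follow quickly: $\varpi$-adic completeness and flatness of $B$ over $K^{\circ a}$ are built into the inverse-limit construction, and uniform finite projectivity passes between $B$ and $B/\varpi$ because a uniform bound on the number of $\epsilon$-generators of $B/\varpi$ lifts to $B$ via the almost form of Nakayama's lemma (using $\varpi$-adic completeness), while reduction modulo $\varpi$ trivially preserves such a bound.
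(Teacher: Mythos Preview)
The paper does not actually give a proof: it simply cites \cite{GabberRamero}, Theorem 5.3.27, for the equivalence $A_\fet\cong (A/\varpi)_\fet$ and declares the remaining assertions easy. So your proposal is being compared against Gabber--Ramero's argument, which proceeds via their almost cotangent complex: an \'etale $A$-algebra is formally \'etale in the almost sense, hence deforms uniquely along each square-zero thickening $A/\varpi^{n+1}\twoheadrightarrow A/\varpi^n$, and one passes to the limit. Your overall architecture (reduce to nilpotent thickenings, then take the inverse limit) matches this, and your treatment of full faithfulness via lifting of idempotents classifying graphs of algebra maps is correct and more elementary than invoking formal \'etaleness.

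There is, however, a genuine gap in your essential surjectivity step. You propose to first deform the underlying almost projective module $\bar B$ to $M_n$ over $A/\varpi^{n+1}$, and then to ``lift $\bar e$ to an idempotent in $M_n\otimes M_n$'' which ``encodes and determines an unramified algebra structure on $M_n$''. This does not make sense as stated: $M_n\otimes_{A/\varpi^{n+1}} M_n$ is only a module until $M_n$ carries a multiplication, so there is no notion of idempotent in it yet; and even once an algebra structure is in place, the diagonal idempotent $e\in B\otimes_A B$ witnesses unramifiedness of a \emph{given} multiplication, it does not determine one. What you actually need to deform is the multiplication $\mu:\bar B\otimes_{\bar A}\bar B\to\bar B$ and the unit $\bar A\to\bar B$. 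This can be done: since $M_n$ is almost projective and the kernel of $M_n\otimes M_n\twoheadrightarrow \bar B\otimes\bar B$ is square-zero, obstructions to lifting $\mu$ and to associativity/commutativity/unitality lie in groups of the form $\alHom$ and $\Ext^1$ from almost projective modules, which vanish by almost projectivity. Once you have an algebra lift $B_n$, \'etaleness over $A/\varpi^{n+1}$ is automatic because it can be checked modulo the nilpotent ideal. Alternatively, and more cleanly, just invoke formal \'etaleness (vanishing of $\mathbb{L}^a_{\bar B/\bar A}$) to deform the \emph{algebra} in one step; this is what Gabber--Ramero do. One further minor point: you use $\varpi$-adic completeness of $B\in A_\fet$ in the full faithfulness argument before establishing it; you should first observe that an almost finite projective module over a $\varpi$-adically complete $A$ is itself $\varpi$-adically complete (an almost retract of a finite free module), which is the ``easy'' part the paper alludes to.
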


\begin{proof} The first part follows from \cite{GabberRamero}, Theorem 5.3.27. The rest is easy.
\end{proof}

Recall that we wanted to prove the string of equivalences
\[
K_\fet\cong K^{\circ a}_\fet\cong (K^{\circ a}/\varpi)_\fet = (K^{\flat \circ a}/\varpi^\flat)_\fet\cong K^{\flat \circ a}_\fet\cong K^\flat_\fet\ .
\]
The identification in the middle is tautological as $K^{\circ}/\varpi = K^{\flat \circ}/\varpi^\flat$, and the corresponding almost settings agree. The previous theorem shows that the inner two functors are equivalences. For the other two equivalences, we feel that it is more convenient to study them in the more general setup of perfectoid $K$-algebras.

\section{Perfectoid algebras}

Fix a perfectoid field $K$.

\begin{definition}
\begin{altenumerate}
\item[{\rm (i)}] A perfectoid $K$-algebra is a Banach $K$-algebra $R$ such that the subset $R^\circ\subset R$ of powerbounded elements is open and bounded, and the Frobenius morphism $\Phi: R^\circ/\varpi\rightarrow R^\circ/\varpi$ is surjective. Morphisms between perfectoid $K$-algebras are the continuous morphisms of $K$-algebras.
\item[{\rm (ii)}] A perfectoid $K^{\circ a}$-algebra is a $\varpi$-adically complete flat $K^{\circ a}$-algebra $A$ on which Frobenius induces an isomorphism
\[
\Phi: A/\varpi^{\frac 1p}\cong A/\varpi\ .
\]
Morphisms between perfectoid $K^{\circ a}$-algebras are the morphisms of $K^{\circ a}$-algebras.
\item[{\rm (iii)}] A perfectoid $K^{\circ a}/\varpi$-algebra is a flat $K^{\circ a}/\varpi$-algebra $\overline{A}$ on which Frobenius induces an isomorphism
\[
\Phi: \overline{A}/\varpi^{\frac 1p}\cong \overline{A}\ .
\]
Morphisms are the morphisms of $K^{\circ a}/\varpi$-algebras.
\end{altenumerate}
\end{definition}

Let $K-\Perf$ denote the category of perfectoid $K$-algebras, and similarly for $K^{\circ a}-\Perf$, ... . Let $K^\flat$ be the tilt of $K$. Then the main theorem of this section is the following.

\begin{thm}\label{TiltingEquivalence} The categories of perfectoid $K$-algebras and perfectoid $K^\flat$-algebras are equivalent. In fact, we have the following series of equivalences of categories.
\[
K-\Perf\cong K^{\circ a}-\Perf\cong (K^{\circ a}/\varpi)-\Perf = (K^{\flat \circ a}/\varpi^\flat)-\Perf\cong K^{\flat \circ a}-\Perf\cong K^\flat-\Perf
\]
\end{thm}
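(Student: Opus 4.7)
The strategy is to construct quasi-inverse functors for each arrow in the chain. The central equality $(K^{\circ a}/\varpi)\text{-}\Perf = (K^{\flat\circ a}/\varpi^\flat)\text{-}\Perf$ is tautological, since Lemma \ref{DefTiltingField} gives a canonical isomorphism $K^\circ/\varpi \cong K^{\flat\circ}/\varpi^\flat$ under which the two categories coincide literally. The remaining four equivalences split into two symmetric pairs: the generic-fibre equivalences $K\text{-}\Perf \cong K^{\circ a}\text{-}\Perf$ and $K^\flat\text{-}\Perf \cong K^{\flat\circ a}\text{-}\Perf$, and the reduction equivalences $K^{\circ a}\text{-}\Perf \cong (K^{\circ a}/\varpi)\text{-}\Perf$ and $K^{\flat\circ a}\text{-}\Perf \cong (K^{\flat\circ a}/\varpi^\flat)\text{-}\Perf$. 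It suffices to establish one of each pair, the other following by the same argument applied in characteristic $p$.

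For the generic-fibre equivalences, the functor is $R \mapsto R^{\circ a}$. That $R^{\circ a}$ is flat and $\varpi$-adically complete over $K^{\circ a}$ follows from $R$ being Banach with $R^\circ$ open and bounded. Perfectoidness gives surjectivity of Frobenius on $R^\circ/\varpi$; to upgrade this to the iso $\Phi\colon R^{\circ a}/\varpi^{1/p} \cong R^{\circ a}/\varpi$ demanded of a perfectoid $K^{\circ a}$-algebra, one needs almost injectivity. This follows from the spectral seminorm on $R$ being power-multiplicative, itself a consequence of the boundedness of $R^\circ$: if $x \in R^\circ$ satisfies $x^p \in \varpi R^\circ$, then $|x| \le |\varpi|^{1/p}$, placing $x$ almost inside $\varpi^{1/p} R^\circ$. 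The quasi-inverse sends $A$ to $A_\ast[\varpi^{-1}]$ equipped with the Banach topology making $A_\ast$ the unit ball; one verifies that the composition in both directions is naturally isomorphic to the identity.

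For the reduction equivalences, the functor $A \mapsto A/\varpi$ is obvious; the essential content is its quasi-inverse. In characteristic $p$ this is fairly direct: a perfectoid $K^{\flat\circ a}/\varpi^\flat$-algebra $\bar A$ is already flat and perfect, and one produces its lift $A^\flat$ to $K^{\flat\circ a}$ as the $\varpi^\flat$-adic completion of a compatible system of flat perfect lifts, with perfectness killing all deformation obstructions through the vanishing of the relevant cotangent complex. In mixed characteristic one leverages the characteristic-$p$ case together with the tautological identification: given $\bar A \in (K^{\circ a}/\varpi)\text{-}\Perf$, view it as a mod-$\varpi^\flat$ object on the $K^\flat$-side, lift to $A^\flat \in K^{\flat\circ a}\text{-}\Perf$, and untilt via Fontaine's Witt-vector construction, setting $A_\ast := W(A^{\flat\circ}_\ast) \otimes_{W(K^{\flat\circ})} K^\circ$, where the map $W(K^{\flat\circ}) \to K^\circ$ is Fontaine's $\theta$-map sending the Teichmüller representative $[\varpi^\flat]$ to $\varpi$. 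A direct computation shows $A/\varpi \cong \bar A$ and that the resulting $A$ is perfectoid over $K^{\circ a}$.

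The main obstacle will be verifying that this composite \emph{tilt--lift--untilt} procedure is genuinely quasi-inverse to mod-$\varpi$ reduction, functorially and on the nose. The key compatibility is the description $R^{\flat\circ} \cong \varprojlim_{x \mapsto x^p} R^\circ \cong \varprojlim_\Phi R^\circ/\varpi$ for perfectoid $K$-algebras, extending Lemma \ref{DefTiltingField} from fields to $R$, together with the universal property of Witt vectors as the canonical lift of a perfect $\mathbb{F}_p$-algebra. These identifications show that starting from $A$, applying tilting and untilting recovers $A$, and starting from $\bar A$, the lift $A$ satisfies $A/\varpi = \bar A$; the almost setting is crucial here to absorb the minor failures of strictness in the Witt vector formalism (e.g.\ the non-projectivity of $K^{\circ a}$ noted in Remark \ref{StrangeSums}). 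Granted these compatibilities, the composition of all five equivalences yields the tilting equivalence $K\text{-}\Perf \cong K^\flat\text{-}\Perf$ asserted at the start of the theorem.
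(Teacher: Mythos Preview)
Your outline is essentially correct, but the route you take for the reduction equivalence $K^{\circ a}\text{-}\Perf \cong (K^{\circ a}/\varpi)\text{-}\Perf$ in characteristic $0$ is genuinely different from the paper's. The paper proves this equivalence (Theorem~\ref{DeformPerfectoid}) uniformly in all characteristics by cotangent complex deformation theory: using that the relative Frobenius is an isomorphism on a perfectoid $K^{\circ a}/\varpi$-algebra $\overline{A}$, it shows $\mathbb{L}^a_{\overline{A}/(K^{\circ a}/\varpi)} \cong 0$ (via Proposition~\ref{PerfectTrivialCotangent}), and then Illusie's obstruction theory gives a unique flat lift to each $K^{\circ a}/\varpi^n$, hence to $K^{\circ a}$ by completion. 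Your approach instead bootstraps through characteristic $p$ and then untilts via Fontaine's $\theta$-map and Witt vectors, $A_\ast = W((A^\flat)_\ast)\otimes_{W(K^{\flat\circ})} K^\circ$. This is precisely the alternative the paper alludes to in the remark following the description of the tilting functor (the Kedlaya--Liu approach), but deliberately avoids: the paper notes that its deformation-theoretic proof ``never need[s] to talk about big rings like $W(R^{\flat\circ})$,'' and that this point of view is reused in later arguments. Conversely, your approach has the advantage of yielding an explicit formula for the quasi-inverse functor. One caveat: in your sketch the verification that tilt--lift--untilt really recovers the original $A$ up to natural isomorphism (i.e.\ that $A_\ast \cong W((A^\flat)_\ast)\otimes_{W(K^{\flat\circ})} K^\circ$ for every perfectoid $K^{\circ a}$-algebra $A$) is asserted rather than argued, and carrying this out carefully---controlling the kernel of $\theta$ and the almost issues you mention---is where the actual work lies. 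Your treatment of the generic-fibre equivalence $K\text{-}\Perf \cong K^{\circ a}\text{-}\Perf$ is in the same spirit as the paper's Proposition~\ref{PropKvsKAlm} and Lemma~\ref{FrobIsom}, though the paper is more careful about the honest (not merely almost) isomorphism $\Phi: A_\ast/\varpi^{1/p} \cong A_\ast/\varpi$ and the identification $A_\ast = R^\circ$, which requires the auxiliary Lemma~\ref{PthRootNormal}.
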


In other words, a perfectoid $K$-algebra, which is an object over the generic fibre, has a canonical extension to the almost integral level as a perfectoid $K^{\circ a}$-algebra, and perfectoid $K^{\circ a}$-algebras are determined by their reduction modulo $\varpi$.

The following lemma expresses the conditions imposed on a perfectoid $K^{\circ a}$-algebra in terms of classical commutative algebra.

\begin{lem}\label{AlmostVsUsual}\begin{altenumerate} Let $M$ be a $K^{\circ a}$-module.
\item[{\rm (i)}] The module $M$ is flat over $K^{\circ a}$ if and only if $M_\ast$ is flat over $K^\circ$ if and only if $M_\ast$ has no $\varpi$-torsion.
\item[{\rm (ii)}] If $N$ is a flat $K^\circ$-module and $M=N^a$, then $M$ is flat over $K^{\circ a}$ and we have $M_\ast = \{x\in N[\frac 1{\varpi}]\mid \forall \epsilon\in \mathfrak{m}: \epsilon x\in N\}$.
\item[{\rm (iii)}] If $M$ is flat over $K^{\circ a}$, then for all $x\in K^\circ$, we have $(xM)_\ast = xM_\ast$. Moreover, $M_\ast/xM_\ast\subset (M/xM)_\ast$, and for all $\epsilon\in \mathfrak{m}$ the image of $(M/x\epsilon M)_\ast$ in $(M/xM)_\ast$ is equal to $M_\ast/xM_\ast$.
\item[{\rm (iv)}] If $M$ is flat over $K^{\circ a}$, then $M$ is $\varpi$-adically complete if and only if $M_\ast$ is $\varpi$-adically complete.
\end{altenumerate}
\end{lem}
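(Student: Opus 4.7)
My plan is to handle (i)--(iv) in sequence, throughout using the adjunction $M \mapsto M_\ast$ between almost $K^{\circ a}$-modules and $K^\circ$-modules together with the fact that $K^\circ$ is a rank-$1$ valuation ring.

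For (i), the crucial point is that over $K^\circ$ a module is flat iff torsion-free, and since every nonzero $a \in K^\circ$ divides a power of $\varpi$ (compare absolute values), torsion-free is equivalent to $\varpi$-torsion-free. Thus $M_\ast$ is flat iff $M_\ast$ has no $\varpi$-torsion. If $M_\ast$ is flat, then $M = (M_\ast)^a$ is flat since $(-)^a$ is exact. Conversely, if $M$ is flat then $\Tor_1^{K^\circ}(M_\ast, K^\circ/\varpi)$, i.e.\ the $\varpi$-torsion of $M_\ast$, is almost zero; but $M_\ast = \Hom_{K^{\circ a}}(K^{\circ a}, M)$ has no almost zero elements, forcing this $\varpi$-torsion to vanish. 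For (ii), flatness of $N^a$ is immediate from the vanishing of honest $\Tor$. The explicit formula follows by unwinding $M_\ast = \Hom_{K^\circ}(\mathfrak{m}, N)$: writing $\mathfrak{m}$ as the filtered union of its nonzero principal ideals $(a) \cong K^\circ$ and using torsion-freeness of $N$, a compatible family of maps $(a) \to N$ corresponds bijectively to a single element $x \in N[1/\varpi]$ with $ax \in N$ for all $a \in \mathfrak{m}$.

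For (iii), the identity $(xM)_\ast = xM_\ast$ uses that flatness makes $\cdot x \colon M \to M$ injective in the almost category, so $xM \cong M$ via this map, and applying $(-)_\ast$ identifies $(xM)_\ast$ with the image $xM_\ast \subset M_\ast$. The inclusion $M_\ast/xM_\ast \subset (M/xM)_\ast$ is then left-exactness of $(-)_\ast$ on $0 \to xM \to M \to M/xM \to 0$. For the third claim, the idea is to apply $(-)_\ast$ to the short exact sequence
\[
0 \to M/\epsilon M \to M/x\epsilon M \to M/xM \to 0
\]
(the first map induced by $\cdot x$, using flatness to identify $xM/x\epsilon M$ with $M/\epsilon M$), and combine this with the long exact $\Ext$-sequence of $0 \to M \to M \to M/xM \to 0$. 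The evident morphism of short exact sequences gives the containment $M_\ast/xM_\ast \subset \mathrm{image}((M/x\epsilon M)_\ast \to (M/xM)_\ast)$. The reverse containment requires a direct argument via (ii): given $g \colon \mathfrak{m} \to N/x\epsilon N$ whose reduction modulo $xN$ is $f \colon \mathfrak{m} \to N/xN$, one must produce a homomorphism $h \colon \mathfrak{m} \to N$ lifting $f$. The key mechanism is that $g(\epsilon\delta) = \epsilon g(\delta)$ lies in $\epsilon N/x\epsilon N$, and dividing by $\epsilon$ (legitimate by flatness of $N$) recovers consistent integral lifts; checking that the resulting assignment is genuinely $K^\circ$-linear, and not merely additive modulo $xN$, is where I expect the main technical difficulty.

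For (iv), the functor $(-)_\ast$, being a right adjoint, commutes with inverse limits, so $(\varprojlim M/\varpi^n M)_\ast = \varprojlim (M/\varpi^n M)_\ast$. By (iii), the image of each transition $(M/\varpi^{n+1} M)_\ast \to (M/\varpi^n M)_\ast$ equals $M_\ast/\varpi^n M_\ast$, so by the standard principle that an inverse limit of a system with stabilising images coincides with the inverse limit of those images, one gets $\varprojlim (M/\varpi^n M)_\ast = \varprojlim M_\ast/\varpi^n M_\ast$. Combining these identifications, the natural map $M_\ast \to \varprojlim M_\ast/\varpi^n M_\ast$ is an isomorphism iff $M \to \varprojlim M/\varpi^n M$ is, proving (iv). The main obstacle throughout is the reverse containment in (iii); once that is in hand, (iv) drops out formally.
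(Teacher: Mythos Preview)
Your proposal is correct and follows essentially the same approach as the paper. The arguments for (i), (ii), and (iv) match almost verbatim; for the reverse containment in (iii) the paper carries out exactly the ``divide by $\epsilon$'' mechanism you identify, but packages it slightly more efficiently: rather than building a homomorphism $h:\mathfrak{m}\to N$ pointwise, it evaluates the lift $\tilde{m}\in(M/x\epsilon M)_\ast=\Hom(\mathfrak{m},M_\ast/x\epsilon M_\ast)$ once at $\epsilon$, lifts the resulting element $n$ to $\tilde{n}\in M_\ast$, checks $\delta\tilde{n}\in\epsilon M_\ast$ for all $\delta\in\mathfrak{m}$ via $\delta n=\epsilon\tilde{m}(\delta)$, and sets $m_1=\tilde{n}/\epsilon$---this sidesteps the linearity verification you flagged as the main difficulty.
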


\begin{rem} The non-surjectivity in (iii) is due to elements as in Remark \ref{StrangeSums}.
\end{rem}

\begin{proof}\begin{altenumerate}
\item[{\rm (i)}] By definition, $M$ is a flat $K^{\circ a}$-module if and only if all $\mathrm{Tor}_i^{K^\circ}(M_\ast,N)$ are almost zero for all $i>0$ and all $K^\circ$-modules $N$. Hence if $M_\ast$ is a flat $K^\circ$-module, then $M$ is a flat $K^{\circ a}$-module. Conversely, choosing $N=K^\circ/\varpi$ and $i=1$, we find that the kernel of multiplication by $\varpi$ on $M_\ast$ is almost zero. But
\[
M_\ast=\mathrm{Hom}_{K^{\circ a}}(K^{\circ a},M) = \mathrm{Hom}_{K^\circ}(\mathfrak{m},M_\ast)
\]
does not have nontrivial almost zero elements, hence has no $\varpi$-torsion. But a $K^\circ$-module $N$ is flat if and only if it has no $\varpi$-torsion.

\item[{\rm (ii)}] We have
\[
M_\ast=\mathrm{Hom}_{K^{\circ a}}(K^{\circ a},M) = \mathrm{Hom}_{K^\circ}(\mathfrak{m},N)\ .
\]
As $N$ is flat over $K^\circ$, we can write the last term as the subset of those $x\in \mathrm{Hom}_K(K,N[\frac 1{\varpi}])=N[\frac 1{\varpi}]$ satisfying the condition that for all $\epsilon\in \mathfrak{m}$, we have $\epsilon x\in N$.

\item[{\rm (iii)}] Note that $(xM_\ast)^a = xM$, and $xM_\ast$ is a flat $K^\circ$-module. Hence
\[
(xM)_\ast = \mathrm{Hom}(\mathfrak{m},xM_\ast) = \{y\in M_\ast[\frac 1{\varpi}]\mid \forall \epsilon\in \mathfrak{m} : \epsilon y\in xM_\ast\} = xM_\ast\ .
\]
Now using that $_\ast$ is left-exact (since right-adjoint to $M\mapsto M^a$), we get the inclusion $M_\ast/xM_\ast\subset (M/xM)_\ast$. If $m\in (M/xM)_\ast$ lifts to $\tilde{m}\in (M/x\epsilon M)_\ast$, then evaluate $\tilde{m}\in \mathrm{Hom}(\mathfrak{m},M_\ast/x\epsilon M_\ast)$ on $\epsilon$. This gives an element $n=\tilde{m}(\epsilon)\in M_\ast/x\epsilon M_\ast$, which we lift to $\tilde{n}\in M_\ast$. One checks that $\tilde{n}$ is divisible by $\epsilon$: It suffices to check that $\delta \tilde{n}$ is divisible by $\epsilon$ for any $\delta\in \mathfrak{m}$. But $\delta n=\delta \tilde{m}(\epsilon) = \epsilon \tilde{m}(\delta)$ lies in $\epsilon M_\ast/x\epsilon M_\ast$, hence $\delta \tilde{n}\in \epsilon M_\ast$.

Then $m_1=\frac{\tilde{n}}{\epsilon}\in M_\ast$ is the desired lift of $m\in (M/xM)_\ast$: Multiplication by $\epsilon$ induces an injection $(M/xM)_\ast\rightarrow (M/x\epsilon M)_\ast$, because $_\ast$ is left-exact, and the images agree: $\tilde{n}$ maps to $\epsilon m = \tilde{m}(\epsilon) = n$ in $(M/x\epsilon M)_\ast$.

\item[{\rm (iv)}] The functors $M\mapsto M_\ast$ and $N\mapsto N^a$ between the category of $K^{\circ a}$-modules and the category of $K^\circ$-modules admit left adjoints, given by $N\mapsto N^a$ and $M\mapsto M_!=\mathfrak{m}\otimes M_\ast$, respectively, and hence commute with inverse limits. Now if $M$ is $\varpi$-adically complete, then
\[
M_\ast = (\varprojlim M/\varpi^n M)_\ast = \varprojlim (M/\varpi^n M)_\ast = \varprojlim M_\ast/\varpi^n M_\ast\ ,
\]
using part (iii) in the last equality, hence $M_\ast$ is $\varpi$-adically complete. Conversely, if $M_\ast$ is $\varpi$-adically complete, then
\[
M = (M_\ast)^a = (\varprojlim M_\ast/\varpi^n M_\ast)^a = \varprojlim (M_\ast/\varpi^n M_\ast)^a = \varprojlim M/\varpi^n M\ .
\]
\end{altenumerate}
\end{proof}

\begin{prop}\label{PropKvsKAlm} Let $R$ be a perfectoid $K$-algebra. Then $\Phi$ induces an isomorphism $R^\circ/\varpi^{1/p}\cong R^\circ/\varpi$, and $A=R^{\circ a}$ is a perfectoid $K^{\circ a}$-algebra.
\end{prop}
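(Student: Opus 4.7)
The plan is to first establish the isomorphism $R^\circ/\varpi^{1/p} \cong R^\circ/\varpi$ by explicit manipulation with powerbounded elements, and then deduce that $A = R^{\circ a}$ satisfies the three defining properties of a perfectoid $K^{\circ a}$-algebra essentially formally, using Lemma \ref{AlmostVsUsual} to pass between $K^\circ$-module and $K^{\circ a}$-module properties.

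First, I would choose $\varpi$ with a compatible system of $p$-th roots in $K$; this is possible because $K$ is a perfectoid field, so by the construction preceding Lemma \ref{DefTiltingField}, $\varpi$ may be taken of the form $(\varpi^\flat)^\sharp$, equipped with $\varpi^{1/p^n} = ((\varpi^\flat)^{1/p^n})^\sharp \in K$. The Frobenius then gives a well-defined map $\Phi : R^\circ/\varpi^{1/p} \to R^\circ/\varpi$, since $(\varpi^{1/p} s)^p = \varpi s^p \in \varpi R^\circ$ for any $s \in R^\circ$. Surjectivity of $\Phi$ is immediate from the hypothesis in the definition of a perfectoid $K$-algebra. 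For injectivity, suppose $x \in R^\circ$ satisfies $x^p = \varpi w$ with $w \in R^\circ$. Then the element $y = x/\varpi^{1/p} \in R$ satisfies $y^p = w \in R^\circ$, so $y$ is integral over $R^\circ$; since $R^\circ$ is integrally closed in $R$ (a standard property of the powerbounded subring of a Banach algebra), we conclude $y \in R^\circ$, i.e. $x \in \varpi^{1/p} R^\circ$. I expect this integral-closure step to be the main conceptual point, since it is the only place where the definition of $R^\circ$ (as opposed to some arbitrary ring of definition) is essential.

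Next, to show $A = R^{\circ a}$ is a perfectoid $K^{\circ a}$-algebra, I would check the three conditions in turn. Flatness over $K^{\circ a}$ follows from Lemma \ref{AlmostVsUsual}(i): since $R^\circ$ embeds in the $K$-algebra $R$, it has no $\varpi$-torsion, hence is flat over $K^\circ$, and so its almost localization $A$ is flat over $K^{\circ a}$. For $\varpi$-adic completeness, I would observe that $R^\circ$ is closed in the Banach algebra $R$ (the set of powerbounded elements is closed under limits) and that the assumption that $R^\circ$ is both open and bounded means that the sets $\{\varpi^n R^\circ\}_{n \geq 0}$ form a neighborhood basis of $0$ in $R$; hence the subspace topology on $R^\circ$ coincides with the $\varpi$-adic topology, and $R^\circ$ is $\varpi$-adically complete. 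By Lemma \ref{AlmostVsUsual}(iv) this transfers to $A$. Finally, the Frobenius isomorphism $\Phi: A/\varpi^{1/p} \cong A/\varpi$ is immediate by applying the localization functor $(-)^a$ to the first part, since localization is exact.

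The only subtle point beyond bookkeeping is the integral-closure argument in the injectivity step, and verifying that the $\varpi$-adic topology on $R^\circ$ agrees with the induced Banach topology; both reduce to standard facts about powerbounded elements in nonarchimedean Banach algebras, so the proposition really is a translation exercise between the ``generic fibre'' perspective ($R$) and the ``almost integral'' perspective ($A$).
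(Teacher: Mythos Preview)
Your proposal is correct and follows essentially the same approach as the paper. The paper's proof is terser---for injectivity it simply observes that if $x^p/\varpi$ is powerbounded then $x/\varpi^{1/p}$ is powerbounded, which is exactly your integral-closure argument stated directly (since $y^p \in R^\circ$ forces $y \in R^\circ$); and it declares flatness and $\varpi$-adic completeness of $R^\circ$ ``obvious'' before invoking Lemma \ref{AlmostVsUsual}, where you spell out the topology comparison. No genuine difference in strategy.
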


\begin{proof} By assumption, $\Phi$ is surjective. Injectivity is clear: If $x\in R^\circ$ is such that $x^p/\varpi$ is powerbounded, then $x/\varpi^{1/p}$ is powerbounded. Obviously, $R^\circ$ is $\varpi$-adically complete and flat over $K^\circ$; now the previous lemma shows that $R^{\circ a}$ is $\varpi$-adically complete and flat over $K^{\circ a}$.
\end{proof}

\begin{lem}\label{FrobIsom} Let $A$ be a perfectoid $K^{\circ a}$-algebra, and let $R=A_{\ast}[\varpi^{-1}]$. Equip $R$ with the Banach $K$-algebra structure making $A_\ast$ open and bounded. Then $A_{\ast}=R^\circ$ is the set of power-bounded elements, $R$ is perfectoid, and
\[
\Phi: A_\ast/\varpi^{1/p}\cong A_\ast/\varpi\ .
\]
\end{lem}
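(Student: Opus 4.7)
The plan is to establish the three claims of the lemma—$A_\ast = R^\circ$, $R$ is perfectoid, and the Frobenius isomorphism on $A_\ast$—in three stages: setup, descent of the Frobenius isomorphism, and identification of $R^\circ$ with $A_\ast$.

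\textbf{Setup.} By Lemma~\ref{AlmostVsUsual}, $A_\ast$ is $\varpi$-torsion-free and $\varpi$-adically complete, so $R = A_\ast[\varpi^{-1}]$ is a Banach $K$-algebra whose unit ball is $A_\ast$; since $A_\ast$ is a subring of $R$, each element of $A_\ast$ is powerbounded, giving $A_\ast \subset R^\circ$ at the outset.

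\textbf{Descent of Frobenius.} Using flatness of $A$ over $K^{\circ a}$ and the $5$-lemma applied to the short exact sequences
\[
0 \to A/\varpi^{1/p} \xrightarrow{\varpi^{n/p}} A/\varpi^{(n+1)/p} \to A/\varpi^{n/p} \to 0
\]
and their analogues with $\varpi^n$ in place of $\varpi^{n/p}$, induction on $n$ upgrades $\Phi\colon A/\varpi^{1/p}\cong A/\varpi$ to isomorphisms $\Phi\colon A/\varpi^{n/p}\cong A/\varpi^n$ for every $n\geq 1$; iterating the same argument with $\varpi$ replaced by $\varpi^{1/p^k}$ further yields $\Phi\colon A/\varpi^{c/p}\cong A/\varpi^c$ for every $c$ in the $p$-divisible value group. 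Applying $(-)_\ast$ and invoking Lemma~\ref{AlmostVsUsual}(iii)—which identifies $A_\ast/\varpi^{c/p}A_\ast$ as the image of $(A/\varpi^{(c+\epsilon)/p})_\ast \to (A/\varpi^{c/p})_\ast$ for any $\epsilon>0$ in the value group—these descend to ring isomorphisms $\Phi\colon A_\ast/\varpi^{c/p}A_\ast\cong A_\ast/\varpi^c A_\ast$; the case $c=1$ is exactly the Frobenius isomorphism demanded.

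\textbf{Identifying $A_\ast=R^\circ$ and concluding.} Given $f\in R^\circ$, powerboundedness yields $N$ with $\varpi^N f^n\in A_\ast$ for every $n$. Writing $f=g/\varpi^M$ with $g\in A_\ast$ and $M\geq 0$, this reads $g^{p^k}\in \varpi^{p^k M-N}A_\ast$ for every $k$. For $k$ large enough that $p^k M>N$, the exponents $c_i = p^{k-i}M-N/p^i\in\mathbb{Z}[1/p]$ are all positive, and applying the isomorphism from the previous stage $k$ times (successively at the levels $c_0, c_1,\ldots,c_{k-1}$) produces $g\in \varpi^{M-N/p^k}A_\ast$, i.e.\ $f\in \varpi^{-N/p^k}A_\ast$. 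Since $|\varpi|^{-N/p^k}\to 1$ as $k\to\infty$, one concludes $f\in A_\ast$, hence $A_\ast=R^\circ$. Then $R^\circ$ is open and bounded in $R$, and Frobenius is surjective on $R^\circ/\varpi = A_\ast/\varpi A_\ast$ by the second stage, so $R$ is perfectoid. The main obstacle I anticipate is the flatness-based extension of the Frobenius isomorphism in the second stage to all levels $\varpi^{c/p}$ with $c$ in the $p$-divisible value group; once these extended isomorphisms are available, the iterative $p$-th root extraction in the third stage and the descent via Lemma~\ref{AlmostVsUsual}(iii) are essentially formal.
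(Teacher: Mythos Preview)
There is a genuine gap in your ``Descent of Frobenius'' stage. In characteristic~$0$, the $p$-th power map $x\mapsto x^p$ is \emph{not} additive as a map $A/\varpi^{c/p}\to A/\varpi^{c}$ once $|p|>|\varpi|^{c}$; for instance when $|p|=|\varpi|$ this fails for every $c>1$, since $(x+y)^p-x^p-y^p$ lies only in $pA_\ast$, not in $\varpi^{c}A_\ast$. Thus for $n\geq 1$ the middle vertical in your ladder of short exact sequences is not a homomorphism of abelian groups, and the $5$-lemma does not apply. The same non-additivity means that $(-)_\ast$, a functor on $K^{\circ a}$-modules, cannot be applied to $\Phi$ at level $1+\epsilon$, so your descent via Lemma~\ref{AlmostVsUsual}(iii) --- which needs the almost isomorphism one step above $c=1$ --- does not go through either.

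Two claims are affected. The general-$c$ injectivity you invoke in stage~3 can be rescued by reducing to $c=1$ directly (multiply by a suitable power of $\varpi^{1/p}$); once that is done, your iterative extraction of $p$-th roots to obtain $A_\ast=R^\circ$ is a legitimate variant of the paper's argument. What cannot be rescued from your outline is the \emph{surjectivity} of $\Phi\colon A_\ast/\varpi^{1/p}\to A_\ast/\varpi$: this does not follow formally from almost surjectivity and is the hardest part of the lemma. The paper handles it separately by first proving the sub-lemma ``$x^p\in A_\ast\Rightarrow x\in A_\ast$'' from injectivity at $c=1$, and then, given $x\in A_\ast$, using almost surjectivity to find $y$ with $y^p\equiv\varpi^{1/p}x\bmod\varpi$, so that $z=y/\varpi^{1/p^2}$ satisfies $z^p\equiv x\bmod\varpi^{(p-1)/p}$ and $z^p\in A_\ast$, whence $z\in A_\ast$. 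Your proposal has no substitute for this step.
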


\begin{proof} By definition, $\Phi$ is an isomorphism $A/\varpi^{1/p}\cong A/\varpi$, hence $\Phi$ is an almost isomorphism $A_\ast/\varpi^{1/p}\rightarrow A_\ast/\varpi$. It is injective: If $x\in A_\ast$ and $x^p\in \varpi A_\ast$, then for all $\epsilon\in \mathfrak{m}$, $\epsilon x\in \varpi^{1/p} A_\ast$ by almost injectivity, hence $x\in (\varpi^{1/p} A)_\ast = \varpi^{1/p} A_\ast$.

\begin{lem}\label{PthRootNormal} Assume that $x\in R$ satisfies $x^p\in A_\ast$. Then $x\in A_\ast$.
\end{lem}

\begin{proof} Injectivity of $\Phi$ says that if $y\in A_\ast$ satisfies $y^p\in \varpi A_\ast$, then $y\in \varpi^{\frac 1p} A_\ast$. There is some positive integer $k$ such that $y=\varpi^{\frac kp}x\in A_\ast$, and as long as $k\geq 1$, $y^p\in \varpi A_\ast$, so that $y\in \varpi^{\frac 1p}A_\ast$. Because $A_\ast$ has no $\varpi$-torsion, we get $\varpi^{\frac{k-1}p}x\in A_\ast$. By induction, we get the result.
\end{proof}

Obviously, $A_{\ast}$ consists of power-bounded elements. Now assume that $x\in R$ is power-bounded. Then $\epsilon x$ is topologically nilpotent for all $\epsilon\in \mathfrak{m}$. In particular, $(\epsilon x)^{p^N}\in A_\ast$ for $N$ sufficiently large. By the last lemma, this implies $\epsilon x\in A_\ast$. This is true for all $\epsilon\in \mathfrak{m}$, so that by Lemma \ref{AlmostVsUsual} (ii), we have $x\in A_{\ast}$.

Next, $\Phi$ is surjective: It is almost surjective, hence it suffices to show that the composition $A_\ast/\varpi^{1/p}\rightarrow A_\ast/\varpi\rightarrow A_\ast/\mathfrak{m}$ is surjective. Let $x\in A_\ast$. By almost surjectivity, $\varpi^{1/p} x \equiv y^p$ modulo $\varpi A_\ast$, for some $y\in A_\ast$. Let $z=\frac y{\varpi^{1/p^2}}$. This implies $z^p\equiv x$ modulo $\varpi^{(p-1)/p} A_\ast$, in particular $z^p\in A_\ast$. By the lemma, also $z\in A_\ast$. As $x\equiv z^p$ modulo $\varpi^{(p-1)/p} A_\ast$, in particular modulo $\mathfrak{m} A_\ast$, this gives the desired surjectivity.

Finally, we see that $R$ is Banach $K$-algebra such that $R^\circ = A_\ast$ is open and bounded, and such that $\Phi$ is surjective on $R^\circ/\varpi = A_\ast/\varpi$. This means that $R$ is perfectoid, as desired.
\end{proof}

In particular, we get the desired equivalence $K^{\circ a}-\Perf\cong K-\Perf$. Let us note some further propositions.

\begin{prop}\label{PerfectoidReduced} Let $R$ be a perfectoid $K$-algebra. Then $R$ is reduced.
\end{prop}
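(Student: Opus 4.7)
The plan is to show that any nilpotent element of $R$ must be zero, using only the fact that $R^\circ \subset R$ is open and bounded (surjectivity of Frobenius will not be needed). The key observation is that nilpotent elements are trivially powerbounded, so any such element lies in $R^\circ$; the work then reduces to a standard separatedness argument.

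First, I would let $f\in R$ satisfy $f^n=0$ for some $n$. Because the sequence $1,f,f^2,\ldots,f^{n-1},0,0,\ldots$ is finite and hence bounded in $R$, the element $f$ is powerbounded, so $f\in R^\circ$. The same reasoning applies to $f/\varpi^m$ for every $m\geq 0$, since $(f/\varpi^m)^n = f^n/\varpi^{mn}=0$. Therefore $f\in \varpi^m R^\circ$ for all $m\geq 0$.

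Next, I would argue that $\bigcap_{m\geq 0}\varpi^m R^\circ = \{0\}$. Since $R$ is a Banach $K$-algebra, it carries a complete nonarchimedean norm $\|\cdot\|$ with $\|\varpi x\| = |\varpi|\,\|x\|$, and $R^\circ$ being bounded means $R^\circ\subset \{x\in R\mid \|x\|\leq M\}$ for some $M$. Consequently $\varpi^m R^\circ$ is contained in the ball of radius $|\varpi|^m M$, which shrinks to zero because $|\varpi|<1$, so the intersection vanishes by the Hausdorff property of $R$. Combining this with the previous step forces $f=0$, proving that $R$ is reduced.

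There is really no obstacle here beyond correctly unpacking what boundedness of $R^\circ$ means at the level of the norm; the surjectivity of Frobenius and the whole apparatus of tilting play no role in this statement. One could alternatively phrase the same argument by noting that openness and boundedness of $R^\circ$ make $\{\varpi^m R^\circ\}_m$ a neighborhood basis of $0$ in $R$, so that any element lying in all of them is zero by separatedness of the Banach topology.
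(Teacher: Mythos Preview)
Your proof is correct and follows essentially the same approach as the paper. The paper condenses your argument into two lines: if $x\neq 0$ is nilpotent then $Kx\subset R^\circ$ (your observation that $f/\varpi^m$ is powerbounded for all $m$), which contradicts boundedness of $R^\circ$; you have simply unpacked that contradiction via the norm.
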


\begin{proof} Assume $0\neq x\in R$ is nilpotent. Then $Kx\subset R^\circ$, contradicting the condition that $R^\circ$ is bounded.
\end{proof}

If $K$ has characteristic $p$, being perfectoid is basically the same as being perfect.

\begin{prop} Let $K$ be of characteristic $p$, and let $R$ be a Banach $K$-algebra such that the set of powerbounded elements $R^\circ\subset R$ is open and bounded. Then $R$ is perfectoid if and only if $R$ is perfect.
\end{prop}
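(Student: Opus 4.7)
The plan is to handle the two directions separately, both using the fact that in characteristic $p$ the Frobenius $\Phi(x) = x^p$ is a continuous ring homomorphism on $R$. The key structural facts are: (a) since $R^\circ$ is open and bounded in a Banach $K$-algebra, the subspace topology on $R^\circ$ is the $\varpi$-adic topology for any topologically nilpotent $\varpi\in K^\times$, and $R^\circ$ is $\varpi$-adically complete; (b) because $K$ is a perfectoid field of characteristic $p$, it is perfect, so any $\varpi\in K$ has compatible $p^n$-th roots in $K$.

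For the direction \textbf{perfect $\Rightarrow$ perfectoid}, I would show that $\Phi$ is bijective on $R^\circ$, from which surjectivity modulo $\varpi$ is immediate. Powers of powerbounded elements are powerbounded, giving $\Phi(R^\circ)\subset R^\circ$. Conversely, given $y\in R^\circ$ let $x=y^{1/p}\in R$, which exists uniquely by perfectness; then writing any $m\geq 0$ as $m=pn+r$ with $0\leq r<p$, we have $x^m = x^r y^n$, and since $\{y^n\}$ is bounded and multiplication by the fixed element $x^r$ is continuous, the set $\{x^m\}$ is bounded, hence $x\in R^\circ$.

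For the direction \textbf{perfectoid $\Rightarrow$ perfect}, injectivity of $\Phi$ on $R$ follows from Proposition \ref{PerfectoidReduced}, which gives that $R$ is reduced. For surjectivity, the main step is to show that $\Phi$ is surjective on $R^\circ$ by an iterative approximation: given $z\in R^\circ$, use the hypothesis that $\Phi\colon R^\circ/\varpi\to R^\circ/\varpi$ is surjective to successively choose $w_0,w_1,\ldots\in R^\circ$ with
\[
z \equiv w_0^p + \varpi w_1^p + \cdots + \varpi^k w_k^p \pmod{\varpi^{k+1}R^\circ}
\]
for every $k\geq 0$. Using that $\varpi^{1/p}\in K$ and that $R^\circ$ is $\varpi$-adically complete, the series $w = \sum_{k\geq 0}\varpi^{k/p} w_k$ converges in $R^\circ$. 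Continuity of $\Phi$ together with the characteristic $p$ identity $(a+b)^p=a^p+b^p$ applied to the partial sums then gives $w^p = \sum_{k\geq 0}\varpi^k w_k^p = z$. Finally, to pass from $R^\circ$ to all of $R$: any $y\in R$ satisfies $\varpi^{pm}y\in R^\circ$ for some $m$ (since $R^\circ$ is open and $\varpi^{pm}\to 0$), and a $p$-th root $x\in R^\circ$ of $\varpi^{pm}y$ yields $y=(x/\varpi^m)^p$.

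The only slightly delicate step is the approximation argument in the second direction, where one has to keep track of the half-integer powers $\varpi^{k/p}$ appearing from extracting $p$-th roots. This is precisely what forces the use of perfection of $K$ (to supply $\varpi^{1/p}$) and of $\varpi$-adic completeness of $R^\circ$ (to sum the series); both are built into the hypotheses, so no further obstacle arises.
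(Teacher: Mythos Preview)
Your proof is correct and follows essentially the same strategy as the paper. For the direction perfectoid $\Rightarrow$ perfect, the paper first invokes Proposition~\ref{PropKvsKAlm} to upgrade the hypothesis to the isomorphism $\Phi: R^\circ/\varpi^{1/p}\cong R^\circ/\varpi$ and then says ``by successive approximation, $R^\circ$ is perfect''; you carry out that approximation explicitly straight from the surjectivity hypothesis, and handle injectivity via Proposition~\ref{PerfectoidReduced} (reducedness) rather than via the injectivity half of that isomorphism. For the direction perfect $\Rightarrow$ perfectoid, your argument that $x$ is powerbounded iff $x^p$ is powerbounded is exactly the paper's one-line observation, just unpacked. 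No gaps.
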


\begin{proof} Assume $R$ is perfect. Then also $R^\circ$ is perfect, as an element $x$ is powerbounded if and only if $x^p$ is powerbounded. In particular, $\Phi: R^\circ/\varpi\rightarrow R^\circ/\varpi$ is surjective.

Now assume that $R$ is perfectoid, hence by Proposition \ref{PropKvsKAlm}, $\Phi$ induces an isomorphism $R^\circ/\varpi^{\frac 1p}\cong R^\circ/\varpi$. By successive approximation, we see that $R^\circ$ is perfect, and then that $R$ is perfect.
\end{proof}

In order to finish the proof of Theorem \ref{TiltingEquivalence}, it suffices to prove the following result.

\begin{thm}\label{DeformPerfectoid} The functor $A\mapsto \overline{A} = A/\varpi$ induces an equivalence of categories $K^{\circ a}-\Perf\cong (K^{\circ a}/\varpi)-\Perf$.
\end{thm}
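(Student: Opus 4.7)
The forward functor is immediate: for a perfectoid $K^{\circ a}$-algebra $A$, the quotient $A/\varpi$ inherits flatness over $K^{\circ a}/\varpi$, and the required Frobenius isomorphism $\Phi\colon (A/\varpi)/\varpi^{1/p} \cong A/\varpi$ is literally the defining isomorphism $\Phi\colon A/\varpi^{1/p} \cong A/\varpi$ of the perfectoid condition on $A$. So all the work is in producing a quasi-inverse.

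My plan is to construct the inverse as a detour through characteristic $p$. Given a perfectoid $K^{\circ a}/\varpi$-algebra $\overline{A}$, view it as a perfectoid $K^{\flat \circ a}/\varpi^\flat$-algebra via the identification $K^{\circ a}/\varpi = K^{\flat \circ a}/\varpi^\flat$, and form the tilt
\[
A^\flat := \varprojlim_\Phi \overline{A}.
\]
Iterating the Frobenius isomorphisms $\Phi\colon \overline{A}/\varpi^{1/p^{n+1}} \cong \overline{A}/\varpi^{1/p^n}$ in the spirit of Lemma \ref{DefTiltingField}(i) shows that the projection to any single coordinate is surjective (almost) and that $A^\flat$ is a perfect, flat, $\varpi^\flat$-adically complete $K^{\flat \circ a}$-algebra with $A^\flat/\varpi^\flat \cong \overline{A}$; that is, $A^\flat$ is a perfectoid $K^{\flat \circ a}$-algebra.

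Next, I would untilt $A^\flat$ to a perfectoid $K^{\circ a}$-algebra $A$ via a Fontaine-style Witt-vector construction. Concretely, form $W(A^\flat_\ast)$, pick a primitive element $\xi \in W(K^{\flat \circ})$ whose image under the canonical projection $\theta\colon W(K^{\flat \circ}) \to K^\circ$ is $\varpi$, and set $A_\ast := W(A^\flat_\ast)/(\xi)$ with its $\varpi$-adic topology; then $A := (A_\ast)^a$. By construction $A$ is $\varpi$-adically complete, and one must verify it is flat over $K^{\circ a}$ and satisfies the Frobenius isomorphism $\Phi\colon A/\varpi^{1/p} \cong A/\varpi$; with these checks in hand, $A$ is a perfectoid $K^{\circ a}$-algebra and $A/\varpi \cong A^\flat/\varpi^\flat \cong \overline{A}$ naturally.

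The main obstacle is verifying that the two composites are naturally isomorphic to the identity. The nontrivial direction is that, starting from a perfectoid $K^{\circ a}$-algebra $A$, forming the tilt of $A/\varpi$ and then applying the Witt-vector untilt recovers $A$. This should follow from two inputs: (a) an almost analog of Lemma \ref{DefTiltingField}(i) identifying $\varprojlim_\Phi (A/\varpi)$ with $\varprojlim_{x\mapsto x^p} A$, so that the tilt of $A/\varpi$ agrees with the usual integral tilt of $A$; and (b) the rigidity of the untilt—that for the fixed untilt $K$ of $K^\flat$ there is at most one perfectoid $K^{\circ a}$-algebra with a given tilt. I expect the main technical difficulty to be in carrying out the Witt-vector construction cleanly in the almost setting (in particular checking flatness and the Frobenius isomorphism after almostification), while keeping track of the element $\xi$ and ensuring that $W$ is compatible with the passage $(-)_\ast \rightleftarrows (-)^a$ throughout.
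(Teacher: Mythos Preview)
Your approach is valid in outline but takes a genuinely different route from the paper. The paper proves this theorem via deformation theory: for a perfectoid $K^{\circ a}/\varpi$-algebra $\overline{A}$, the almost cotangent complex $\mathbb{L}^a_{\overline{A}/(K^{\circ a}/\varpi)}$ vanishes (this follows from an almost version of the fact that the cotangent complex of a perfect $\mathbb{F}_p$-algebra is zero, since the relative Frobenius induces both an isomorphism and the zero map on differentials). Then the standard obstruction theory of Illusie, in its almost form from Gabber--Ramero, shows that all obstructions, choices, and automorphisms involved in inductively lifting $\overline{A}$ to flat $(K^{\circ a}/\varpi^n)$-algebras $\overline{A}_n$ vanish; the inverse limit over $n$ is the desired $A$. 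No Witt vectors or tilting appear.

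Your strategy---tilt $\overline{A}$ to the perfectoid $K^{\flat\circ a}$-algebra $A^\flat=\varprojlim_\Phi\overline{A}$, then untilt via $W(A^\flat_\ast)/(\xi)$---is essentially the Kedlaya--Liu approach, which the paper explicitly mentions as an alternative in a remark following the tilting equivalence. The paper's method avoids ever working with the large rings $W(R^{\flat\circ})$, and the cotangent-complex viewpoint feeds naturally into later arguments; your route is more explicit and makes the connection to Fontaine's constructions transparent, at the cost of the Witt-vector verifications you flag.

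One slip to correct: the primitive element $\xi$ should generate the \emph{kernel} of $\theta\colon W(K^{\flat\circ})\to K^\circ$, so $\theta(\xi)=0$, not $\varpi$. What you need is that $\xi$ reduces modulo $p$ to a unit multiple of $\varpi^\flat$ in $K^{\flat\circ}$; this is what guarantees $A/\varpi\cong A^\flat/\varpi^\flat\cong\overline{A}$ after quotienting.
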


In other words, we have to prove that a perfectoid $K^{\circ a}/\varpi$-algebra admits a unique deformation to $K^{\circ a}$. For this, we will use the theory of the cotangent complex. Let us briefly recall it here.

In classical commutative algebra, the definition of the cotangent complex is due to Quillen, \cite{Quillen}, and its theory was globalized on toposes and applied to deformation problems by Illusie, \cite{IllusieCotangent}, \cite{IllusieCotangent2}. To any morphism $R\rightarrow S$ of rings, one associates a complex $\mathbb{L}_{S/R}\in D^{\leq 0}(S)$, where $D(S)$ is the derived category of the category of $S$-modules, and $D^{\leq 0}(S)\subset D(S)$ denotes the full subcategory of objects which have trivial cohomology in positive degrees. The cohomology in degree $0$ of $\mathbb{L}_{S/R}$ is given by $\Omega^1_{S/R}$, and for any morphisms $R\rightarrow S\rightarrow T$ of rings, there is a triangle in $D(T)$:
\[
T\otimes^{\mathbb{L}}_S \mathbb{L}_{S/R}\rightarrow \mathbb{L}_{T/R}\rightarrow \mathbb{L}_{T/S}\rightarrow\ ,
\]
extending the short exact sequence
\[
T\otimes_S \Omega^1_{S/R}\rightarrow \Omega^1_{T/R}\rightarrow \Omega^1_{T/S}\rightarrow 0\ .
\]

Let us briefly recall the construction. First, one uses the Dold-Kan equivalence to reinterpret $D^{\leq 0}(S)$ as the category of simplicial $S$-modules modulo weak equivalence. Now one takes a simplicial resolution $S_\bullet$ of the $R$-algebra $S$ by free $R$-algebras. Then one defines $\mathbb{L}_{S/R}$ as the object of $D^{\leq 0}(S)$ associated to the simplicial $S$-module $\Omega^1_{S_\bullet/R}\otimes_{S_\bullet} S$.

Just as under certain favorable assumptions, one can describe many deformation problems in terms of tangent or normal bundles, it turns out that in complete generality, one can describe them via the cotangent complex. In special cases, this gives back the classical results, as e.g. if $R\rightarrow S$ is a smooth morphism, then $\mathbb{L}_{S/R}$ is concentrated in degree $0$, and is given by the cotangent bundle.

Specifically, we will need the following results. Fix some ring $R$ with an ideal $I\subset R$ such that $I^2=0$. Moreover, fix a flat $R_0=R/I$-algebra $S_0$. We are interested in the obstruction towards deforming $S_0$ to a flat $R$-algebra $S$.

\begin{thm}[{\cite[III.2.1.2.3]{IllusieCotangent}, \cite[Proposition 3.2.9]{GabberRamero}}] There is an obstruction class in $\Ext^2(\mathbb{L}_{S_0/R_0},S_0\otimes_{R_0} I)$ which vanishes precisely when there exists a flat $R$-algebra $S$ such that $S\otimes_R R_0 = S_0$. If there exists such a deformation, then the set of all isomorphism classes of such deformations forms a torsor under $\Ext^1(\mathbb{L}_{S_0/R_0},S_0\otimes_{R_0} I)$, and every deformation has automorphism group $\Hom(\mathbb{L}_{S_0/R_0},S_0\otimes_{R_0} I)$.
\end{thm}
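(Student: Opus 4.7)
The plan is to reduce the deformation problem to a question about simplicial resolutions, then read off the obstruction and classification groups from the cotangent complex. First, choose a simplicial resolution $P_\bullet \twoheadrightarrow S_0$ of $S_0$ by free polynomial $R_0$-algebras, so that $\mathbb{L}_{S_0/R_0}$ is represented by the simplicial $S_0$-module $\Omega^1_{P_\bullet/R_0}\otimes_{P_\bullet} S_0$, and $\mathrm{RHom}_{S_0}(\mathbb{L}_{S_0/R_0}, M)$ with $M := S_0\otimes_{R_0} I$ is computed, via Dold--Kan, by the cochain complex whose degree-$n$ piece is the module of $R_0$-derivations $P_n \to M$.

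Each polynomial algebra $P_n = R_0[X_n]$ lifts to a polynomial $R$-algebra $\tilde P_n = R[X_n]$, and every face/degeneracy map $d\colon P_n\to P_m$ lifts (non-uniquely) to an $R$-algebra map $\tilde d\colon \tilde P_n\to \tilde P_m$; two such lifts differ by an $R_0$-derivation $P_n \to I\otimes_{R_0} P_m$. The simplicial identities need not hold on the nose for these lifts, but they do modulo $I$, so the failure of each identity is such a derivation, which we push along $P_m \to S_0$ to land in $M$. I would assemble these defects into a cochain of degree $2$ in the complex above and verify directly that the simplicial identities at one level higher force it to be a cocycle, that changing the lifts modifies it by a coboundary, and that changing the resolution gives the same class. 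The resulting element in $\Ext^2(\mathbb{L}_{S_0/R_0}, M)$ vanishes exactly when all lifts can be simultaneously adjusted so that every simplicial identity holds on the nose.

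When the obstruction vanishes one obtains a simplicial $R$-algebra $\tilde P_\bullet$ whose reduction mod $I$ is $P_\bullet$. Set $S := \pi_0(\tilde P_\bullet)$, the coequalizer of the two face maps $\tilde P_1\rightrightarrows \tilde P_0$; this is an $R$-algebra with $S\otimes_R R_0 = \pi_0(P_\bullet) = S_0$, and it is flat over $R$ because each $\tilde P_n$ is $R$-free and reduces to a resolution of the flat $R_0$-module $S_0$, so $\Tor_i^R(S, R_0) = 0$ by a spectral sequence argument. A direct comparison between two simplicial lifts produces a $1$-cocycle in the same complex whose class in $\Ext^1(\mathbb{L}_{S_0/R_0}, M)$ measures the difference of the two deformations up to isomorphism, and automorphisms of a fixed lift compatible with reduction mod $I$ are exactly simplicial $R_0$-derivations $P_\bullet \to M$, giving $\Hom(\mathbb{L}_{S_0/R_0}, M)$ after passing to derived maps.

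The main obstacle is the cocycle bookkeeping: one has to verify honestly that the defects of the simplicial identities assemble into an honest $2$-cocycle under Dold--Kan, that adjusting the chosen lifts acts through coboundaries, and that the class is independent of the chosen simplicial resolution (via a standard comparison argument between two free resolutions and a common refinement). An equivalent structural packaging invokes the universal Exalcomm identification $\mathrm{Exalcomm}_R(S_0, M)\cong \Ext^1(\mathbb{L}_{S_0/R}, M)$ together with the transitivity triangle $S_0\otimes^{\mathbb{L}}_{R_0}\mathbb{L}_{R_0/R}\to \mathbb{L}_{S_0/R}\to \mathbb{L}_{S_0/R_0}\to [1]$: a deformation of $S_0$ is a square-zero $R$-algebra extension of $S_0$ by $M$ for which the tautological map $I\otimes_{R_0} S_0 \to IS$ is the identity, and the obstruction becomes the image under the connecting map of the canonical class in $\Ext^1_R(\mathbb{L}_{R_0/R}, M)$ attached to the square-zero extension $R\to R_0$. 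Either route rests on the same homological bookkeeping as the crux.
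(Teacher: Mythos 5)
The paper does not prove this statement: it is quoted from the literature (Illusie III.2.1.2.3, Gabber--Ramero 3.2.9) and used as a black box, so there is no in-paper argument to compare against. Your second, ``structural'' packaging is essentially Illusie's actual proof and is correct as far as it goes: by the local flatness criterion for square-zero ideals (using flatness of $S_0$ over $R_0$), a flat deformation is the same thing as a square-zero $R$-extension of $S_0$ by $M=S_0\otimes_{R_0}I$ whose image under $\mathrm{Exalcomm}_R(S_0,M)=\Ext^1(\mathbb{L}_{S_0/R},M)\to \mathrm{Exalcomm}_R(R_0,M)=\Ext^1(S_0\otimes^{\mathbb{L}}_{R_0}\mathbb{L}_{R_0/R},M)$ is the pushout of $0\to I\to R\to R_0\to 0$ along $I\to M$; the long exact sequence of the transitivity triangle then gives existence iff the connecting image in $\Ext^2(\mathbb{L}_{S_0/R_0},M)$ vanishes. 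One point you should make explicit for the torsor claim: a priori the fibre over that class is a torsor under the image of $\Ext^1(\mathbb{L}_{S_0/R_0},M)$ in $\Ext^1(\mathbb{L}_{S_0/R},M)$, and you need $\mathrm{Der}_R(R_0,M)=0$ (true, since $R$ surjects onto $R_0$) to see that this map is injective, so that the torsor is under $\Ext^1(\mathbb{L}_{S_0/R_0},M)$ itself and not a quotient.

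Your first, hands-on route has a genuine unclosed gap exactly where you flag it. The defects $\tilde d_i\tilde d_j-\tilde d_{j-1}\tilde d_i$ give one derivation $P_n\to M$ for each $n$ and each pair $i<j$ (plus the identities involving degeneracies), whereas a degree-$2$ cochain of $\mathrm{RHom}(\mathbb{L}_{S_0/R_0},M)$ under Dold--Kan is a single derivation out of $P_2$ (or of the normalized complex); collapsing the many defects into one cocycle, proving the coboundary and resolution-independence claims, and showing that vanishing lets you correct all lifts simultaneously is not routine bookkeeping --- it is the content of the Postnikov/$k$-invariant formalism for simplicial algebras that Illusie develops precisely to avoid doing this by hand. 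The parts of your sketch that are solid are the lifting of each polynomial level, the identification of differences of lifts with derivations into $I\otimes_{R_0}P_m$, and the flatness of $S=\pi_0(\tilde P_\bullet)$ via the exact sequence $0\to P_\bullet\otimes_{R_0}I\to \tilde P_\bullet\to P_\bullet\to 0$ and flatness of $S_0$. So as written, the first half is a correct heuristic and the second half is the proof.
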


Here, a deformation comes with the isomorphism $S\otimes_R R_0\cong S_0$, and isomorphisms of deformations are required to act trivially on $S\otimes_R R_0=S_0$.

Now assume that one has two flat $R$-algebras $S$, $S^\prime$ with reduction $S_0$, $S_0^\prime$ to $R_0$, and a morphism $f_0: S_0\rightarrow S_0^\prime$. We are interested in the obstruction to lifting $f_0$ to a morphism $f: S\rightarrow S^\prime$.

\begin{thm}[{\cite[III.2.2.2]{IllusieCotangent}, \cite[Proposition 3.2.16]{GabberRamero}}] There is an obstruction class in $\Ext^1(\mathbb{L}_{S_0/R_0},S_0^\prime\otimes_{R_0} I)$ which vanishes precisely when there exists an extension of $f_0$ to $f: S\rightarrow S^\prime$. If there exists such a lift, then the set of all lifts forms a torsor under $\Hom(\mathbb{L}_{S_0/R_0},S_0^\prime\otimes_{R_0} I)$.
\end{thm}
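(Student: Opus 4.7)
The plan is to reduce the morphism-lifting problem to a standard instance of the theory of square-zero extensions, with the ring $S$ mapping into the square-zero extension $S' \rightarrow S_0'$, and then to invoke base change for the cotangent complex to re-express the obstruction and torsor groups in terms of $\mathbb{L}_{S_0/R_0}$.

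First I would observe that giving an $R$-algebra lift $f: S \rightarrow S'$ of $f_0$ is the same as lifting the composite $R$-algebra morphism $\bar{g}: S \rightarrow S_0 \rightarrow S_0'$ (where the second arrow is $f_0$) to an $R$-algebra morphism $g: S \rightarrow S'$. Since $S'$ is $R$-flat and $I^2=0$, tensoring $0 \rightarrow I \rightarrow R \rightarrow R_0 \rightarrow 0$ with $S'$ identifies the kernel $J$ of $S' \rightarrow S_0'$ with $I \otimes_{R_0} S_0'$, and $J^2 = 0$ in $S'$.

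Second, I would invoke the general principle (proved in exactly the references cited) that for an $R$-algebra $A$ and a square-zero extension $B \rightarrow B_0$ with kernel $J$, $R$-algebra lifts $h: A \rightarrow B$ of a given $R$-algebra map $\bar{h}: A \rightarrow B_0$ are obstructed by a class in $\Ext^1_A(\mathbb{L}_{A/R}, J)$ and, if non-empty, form a torsor under $\Hom_A(\mathbb{L}_{A/R}, J)$. The proof of this principle proceeds by taking a free simplicial resolution $P_\bullet \rightarrow A$ over $R$, lifting each map $P_n \rightarrow B_0$ to $P_n \rightarrow B$ by choosing preimages in $B$ of the images of a set of polynomial generators, and measuring the failure of these termwise lifts to assemble into a simplicial map; the resulting cocycle, naturally interpreted in $\Omega^1_{P_\bullet/R} \otimes_{P_\bullet} A$ with coefficients in $J$, represents the obstruction class. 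Applied to $A = S$, $B = S'$, $J = I \otimes_{R_0} S_0'$, this yields an obstruction in $\Ext^1_S(\mathbb{L}_{S/R}, I \otimes_{R_0} S_0')$ and a $\Hom_S(\mathbb{L}_{S/R}, I \otimes_{R_0} S_0')$-torsor of lifts.

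Third, I would rewrite these groups in the form stated. Flat base change for the cotangent complex, applied to the flat morphism $R \rightarrow S$ and the quotient $R \rightarrow R_0$, gives $\mathbb{L}_{S_0/R_0} \cong S_0 \otimes^{\mathbb{L}}_S \mathbb{L}_{S/R}$. Because the coefficient module $J = I \otimes_{R_0} S_0'$ is annihilated by $I$, it is naturally an $S_0$-module, and derived adjunction yields
\[
\Ext^i_S(\mathbb{L}_{S/R}, I \otimes_{R_0} S_0') \cong \Ext^i_{S_0}(\mathbb{L}_{S_0/R_0}, S_0' \otimes_{R_0} I)
\]
for all $i \geq 0$. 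Taking $i=1$ identifies the obstruction group and $i=0$ the torsor group, exactly as claimed.

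The main obstacle in this program is entirely the second step: the honest construction of the obstruction cocycle and the verification that it is canonical, i.e. independent of the chosen free resolution and of the chosen termwise lifts. This is the only genuinely homotopical input, and it is the reason the statement is cited from \cite{IllusieCotangent} and \cite{GabberRamero} rather than reproved here; the first and third steps are formal manipulations that use only $I^2 = 0$, the flatness of $S$ and $S'$, and the base change formula for $\mathbb{L}$.
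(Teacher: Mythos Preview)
The paper does not give its own proof of this theorem; it is stated with a citation to Illusie and to Gabber--Ramero and then used as a black box in the deformation argument that follows. Your outline is a correct sketch of the standard argument found in those references: reduce the lifting problem to the universal situation of lifting an $R$-algebra map along the square-zero extension $S'\twoheadrightarrow S_0'$ with kernel $I\otimes_{R_0} S_0'$, construct the obstruction cocycle via a free simplicial resolution of $S$ over $R$, and then use flat base change for the cotangent complex together with derived adjunction to rewrite the resulting $\Ext$ and $\Hom$ groups in terms of $\mathbb{L}_{S_0/R_0}$. There is no proof in the paper to compare against, but your sketch is sound and matches the approach of the cited sources.
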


We will need the following criterion for the vanishing of the cotangent complex. This appears as Lemma 6.5.13 i) in \cite{GabberRamero}.

\begin{prop}\label{PerfectTrivialCotangent}
\begin{altenumerate}
\item[{\rm (i)}] Let $R$ be a perfect $\mathbb{F}_p$-algebra. Then $\mathbb{L}_{R/\mathbb{F}_p}\cong 0$.
\item[{\rm (ii)}] Let $R\rightarrow S$ be a morphism of $\mathbb{F}_p$-algebras. Let $R_{(\Phi)}$ be the ring $R$ with the $R$-algebra structure via $\Phi: R\rightarrow R$, and define $S_{(\Phi)}$ similarly. Assume that the relative Frobenius $\Phi_{S/R}$ induces an isomorphism
\[
R_{(\Phi)}\otimes^{\mathbb{L}}_R S\rightarrow S_{(\Phi)}
\]
in $D(R)$. Then $\mathbb{L}_{S/R}\cong 0$.
\end{altenumerate}
\end{prop}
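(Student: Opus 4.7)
The unifying principle is that in characteristic $p$, absolute Frobenius always induces the zero morphism on the cotangent complex, and whenever Frobenius is invertible in an appropriate sense, this zero morphism is simultaneously an isomorphism, forcing vanishing. For part~(i), I would apply the functoriality of $\mathbb{L}$ to the absolute Frobenius $\varphi\colon R\to R$, viewed as an endomorphism of $\mathbb{F}_p$-algebras. This produces a morphism $d\varphi\colon\mathbb{L}_{R/\mathbb{F}_p}\to\varphi_{\ast}\mathbb{L}_{R/\mathbb{F}_p}$ in the derived category. On any simplicial resolution $P_\bullet\to R$ by free $\mathbb{F}_p$-polynomial algebras on generators $t_i$, Frobenius sends $t_i\mapsto t_i^p$, so $d(t_i^p)=p\,t_i^{p-1}dt_i=0$ kills K\"ahler differentials; this forces $d\varphi=0$. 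Perfectness of $R$ makes $\varphi$ a ring automorphism, so by functoriality $d\varphi$ is also an isomorphism. A morphism that is both zero and an isomorphism has vanishing source, so $\mathbb{L}_{R/\mathbb{F}_p}=0$.

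For part~(ii) I would run the analogue of this argument relatively. The commutative Frobenius square with vertical maps $\varphi_R$ and $\varphi_S$ induces a morphism
\[
d\varphi_{S/R}\colon\mathbb{L}_{S/R}\longrightarrow\mathbb{L}_{S_{(\Phi)}/R_{(\Phi)}}
\]
that equals zero by the same Frobenius-kills-differentials argument applied now to a simplicial resolution of $S$ by free $R$-polynomial algebras. On the other hand, this square factors through the derived pushout, yielding a factorization
\[
\mathbb{L}_{S/R}\xrightarrow{\eta}R_{(\Phi)}\otimes^{\mathbb{L}}_R\mathbb{L}_{S/R}\xrightarrow{\sim}\mathbb{L}_{(R_{(\Phi)}\otimes^{\mathbb{L}}_R S)/R_{(\Phi)}}\xrightarrow{\Phi_{S/R,\ast}}\mathbb{L}_{S_{(\Phi)}/R_{(\Phi)}}
\]
where $\eta$ is the canonical base-change unit, the middle arrow is the base-change formula for $\mathbb{L}$, and the last arrow is induced by the relative Frobenius $\Phi_{S/R}$. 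The hypothesis promotes $\Phi_{S/R,\ast}$ to an isomorphism on cotangent complexes. Since the composition $d\varphi_{S/R}$ vanishes and the last two arrows are isomorphisms, the unit $\eta$ must be the zero morphism in $D(R)$.

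To conclude $\mathbb{L}_{S/R}=0$ from the vanishing of $\eta$, I would pass to the perfection. Iterating the hypothesis $n$ times and taking the filtered colimit produces $R^{\mathrm{perf}}\otimes^{\mathbb{L}}_R S\cong S^{\mathrm{perf}}$; base change then yields $R^{\mathrm{perf}}\otimes^{\mathbb{L}}_R\mathbb{L}_{S/R}\cong\mathbb{L}_{S^{\mathrm{perf}}/R^{\mathrm{perf}}}$, and combining part~(i) for the perfect rings $R^{\mathrm{perf}}$ and $S^{\mathrm{perf}}$ with the transitivity triangle for $\mathbb{F}_p\to R^{\mathrm{perf}}\to S^{\mathrm{perf}}$ forces this to be zero. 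Combined with $\eta=0$ and the Frobenius-equivariant structure on $\mathbb{L}_{S/R}$ provided by the hypothesis-driven isomorphism $R_{(\Phi)}\otimes^{\mathbb{L}}_R\mathbb{L}_{S/R}\cong\mathbb{L}_{S/R}$, this descent should force $\mathbb{L}_{S/R}=0$. The main obstacle is precisely this concluding step: rigorously turning the vanishing of $\eta$ together with the vanishing over $R^{\mathrm{perf}}$ into vanishing of $\mathbb{L}_{S/R}$ itself in the derived category, which requires careful handling of the semi-linear Frobenius action to rule out nonzero Frobenius-stable objects.
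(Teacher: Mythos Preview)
Your argument for part~(i) is correct and is essentially the paper's argument specialized to that case.

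For part~(ii) you have all the right ingredients, but you have taken a detour that leaves the gap you yourself identify. The point you are missing is that your ``Frobenius kills differentials'' computation already applies to the \emph{relative} Frobenius, not only to the absolute one. Concretely, take a simplicial resolution $S^\bullet$ of $S$ by polynomial $R$-algebras $R[X_i]$. The relative Frobenius $\Phi_{S^\bullet/R}\colon R_{(\Phi)}\otimes_R S^\bullet\to S^\bullet_{(\Phi)}$ is the $R_{(\Phi)}$-algebra map sending $X_i\mapsto X_i^p$, so on differentials it sends $1\otimes dX_i\mapsto d(X_i^p)=0$. Thus the composite of your \emph{last two} arrows
\[
R_{(\Phi)}\otimes^{\mathbb{L}}_R\mathbb{L}_{S/R}\;\xrightarrow{\sim}\;\mathbb{L}_{(R_{(\Phi)}\otimes^{\mathbb{L}}_R S)/R_{(\Phi)}}\;\xrightarrow{\Phi_{S/R,\ast}}\;\mathbb{L}_{S_{(\Phi)}/R_{(\Phi)}}
\]
is itself the zero map, not merely zero after precomposing with $\eta$. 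You have already argued that both arrows are isomorphisms, so this composite is simultaneously an isomorphism and zero; hence the target $\mathbb{L}_{S_{(\Phi)}/R_{(\Phi)}}$ vanishes. Since $R_{(\Phi)}\to S_{(\Phi)}$ is the same ring homomorphism as $R\to S$, one has $\mathbb{L}_{S_{(\Phi)}/R_{(\Phi)}}=\mathbb{L}_{S/R}$, and you are done. This is exactly the paper's proof.

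Your route via $\eta=0$ and passage to the perfection is therefore unnecessary, and the ``main obstacle'' you flag is a genuine gap in that approach: from $\eta=0$ alone one cannot conclude $\mathbb{L}_{S/R}=0$ without further input, and the descent-from-perfection step you sketch would indeed require a delicate argument about Frobenius-equivariant objects in $D(S)$ that you have not supplied. Drop $\eta$ and apply the ``zero and isomorphism'' trick one step to the right.
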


\begin{rem} Of course, (i) is a special case of (ii), and we will only need part (ii). However, we feel that (i) is an interesting statement that does not seem to be very well-known. It allows one to define the ring of Witt vectors $W(R)$ of $R$ simply by saying that it is the unique deformation of $R$ to a flat $p$-adically complete $\mathbb{Z}_p$-algebra. Also note that it is clear that $\Omega^1_{R/\mathbb{F}_p}=0$ in part (i): Any $x\in R$ can be written as $y^p$, and then $dx=dy^p=py^{p-1} dy=0$. This identity is at the heart of this proposition.
\end{rem}

\begin{proof} We sketch the proof of part (ii), cf. proof of Lemma 6.5.13 i) in \cite{GabberRamero}. Let $S^\bullet$ be a simplicial resolution of $S$ by free $R$-algebras. We have the relative Frobenius map
\[
\Phi_{S^\bullet/R}: R_{(\Phi)}\otimes_R S^\bullet\rightarrow S^\bullet_{(\Phi)}\ .
\]
Note that identifying $S^k$ with a polynomial algebra $R[X_1,X_2,\ldots]$, the relative Frobenius map $\Phi_{S^k/R}$ is given by the $R_{(\Phi)}$-algebra map sending $X_i\mapsto X_i^p$.

The assumption says that $\Phi_{S^\bullet/R}$ induces a quasiisomorphism of simplicial $R_{(\Phi)}$-algebras. This implies that $\Phi_{S^\bullet/R}$ gives an isomorphism
\[
R_{(\Phi)}\otimes^{\mathbb{L}}_R \mathbb{L}_{S/R}\cong \mathbb{L}_{S_{(\Phi)}/R_{(\Phi)}}\ .
\]
On the other hand, the explicit description shows that the map induced by $\Phi_{S^k/R}$ on differentials will map $dX_i$ to $dX_i^p = 0$, and hence is the zero map. This shows that $\mathbb{L}_{S_{(\Phi)}/R_{(\Phi)}}\cong 0$, and we may identify this with $\mathbb{L}_{S/R}$. 
\end{proof}

In their book \cite{GabberRamero}, Gabber and Ramero generalize the theory of the cotangent complex to the almost context. Specifically, they show that if $R\rightarrow S$ is a morphism of $K^\circ$-algebras, then $\mathbb{L}_{S/R}^a$ as an element of $D(S^a)$, the derived category of $S^a$-modules, depends only the morphism $R^a\rightarrow S^a$ of almost $K^\circ$-algebras. This allows one to define $\mathbb{L}^a_{B/A}\in D^{\leq 0}(B)$ for any morphism $A\rightarrow B$ of $K^{\circ a}$-algebras. With this modification, the previous theorems stay true in the almost world without change.

\begin{rem} In fact, the cotangent complex $\mathbb{L}_{B/A}$ is defined as an object of a derived category of modules over an actual ring in \cite{GabberRamero}, but for our purposes it is enough to consider its almost version $\mathbb{L}^a_{B/A}$.
\end{rem}

\begin{cor} Let $\overline{A}$ be a perfectoid $K^{\circ a}/\varpi$-algebra. Then $\mathbb{L}^a_{\overline{A}/(K^{\circ a}/\varpi)}\cong 0$.
\end{cor}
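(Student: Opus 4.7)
The plan is to apply Proposition~\ref{PerfectTrivialCotangent}(ii) (in its almost version) to the morphism $R = K^{\circ a}/\varpi \to S = \overline{A}$. Since $|p|\leq|\varpi|$, the base $R$ is an $\mathbb{F}_p$-algebra, and the cotangent complex formalism has already been extended to the almost setting. Thus it suffices to verify that the relative Frobenius
\[
\Phi_{\overline{A}/R}\colon R_{(\Phi)}\otimes^{\mathbb{L}}_{R} \overline{A} \to \overline{A}_{(\Phi)}
\]
is an isomorphism in the almost derived category.

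First, I would identify the ``Frobenius twist'' of the base. The absolute Frobenius on $K^{\circ a}/\varpi$ factors through $K^{\circ a}/\varpi^{1/p}$, since $(\varpi^{1/p})^p = \varpi$ vanishes modulo $\varpi$, and the induced map $\bar{\Phi}\colon K^{\circ a}/\varpi^{1/p} \to K^{\circ a}/\varpi$ is an isomorphism: surjectivity follows from the perfectoid-field hypothesis together with $\varpi\mid p$, while injectivity uses that $x^p\in\varpi K^\circ$ forces $x\in\varpi^{1/p}K^\circ$ by the $\mathbb{Q}$-divisibility of the value group (Lemma~3.2). Transporting the $R$-algebra structure through $\bar{\Phi}$ yields a canonical identification $R_{(\Phi)}\cong K^{\circ a}/\varpi^{1/p}$ as $R$-algebras.

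Next, I would compute the derived tensor product through this identification:
\[
R_{(\Phi)}\otimes^{\mathbb{L}}_{R}\overline{A} \cong (K^{\circ a}/\varpi^{1/p})\otimes^{\mathbb{L}}_{K^{\circ a}/\varpi}\overline{A}.
\]
Since $\overline{A}$ is flat over $K^{\circ a}/\varpi$ by the very definition of a perfectoid $K^{\circ a}/\varpi$-algebra, the higher Tors are almost zero, and this derived tensor product collapses to the ordinary one, namely $\overline{A}/\varpi^{1/p}\overline{A}$. The Frobenius isomorphism $\Phi\colon\overline{A}/\varpi^{1/p}\xrightarrow{\sim}\overline{A}$ supplied by the definition of perfectoid $K^{\circ a}/\varpi$-algebra then identifies this with $\overline{A}_{(\Phi)}$, and a direct unwinding shows that the composite of all these identifications is precisely the relative Frobenius $\Phi_{\overline{A}/R}$. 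The hypothesis of Proposition~\ref{PerfectTrivialCotangent}(ii) is therefore satisfied, and we conclude $\mathbb{L}^a_{\overline{A}/(K^{\circ a}/\varpi)}\cong 0$.

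The main obstacle is justifying the flatness-based collapse of the derived tensor product to the ordinary one in the almost world; once that is granted, the rest is bookkeeping with the two Frobenius isomorphisms. A secondary point worth checking is that Proposition~\ref{PerfectTrivialCotangent}(ii) really does extend to the almost context in the generality required, but its proof sketch uses only simplicial resolutions and formal properties of the cotangent complex, both of which Gabber--Ramero have upgraded to the almost setting, so no genuinely new difficulty arises.
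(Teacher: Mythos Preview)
Your proposal is correct and follows essentially the same approach as the paper, which simply asserts that the corollary follows from the almost version of Proposition~\ref{PerfectTrivialCotangent}(ii); you have carefully unpacked the verification of its hypothesis. The paper also offers an alternative route via the functor $C\mapsto C_{!!}$ of Gabber--Ramero, producing an honest flat $K^\circ/\varpi$-algebra to which the non-almost Proposition~\ref{PerfectTrivialCotangent}(ii) applies directly, thereby sidestepping the concern you flagged about justifying the argument in the almost setting.
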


\begin{proof} This follows from the almost version of Proposition \ref{PerfectTrivialCotangent}, which can be proved in the same way. Alternatively, argue with $B=(\overline{A}\times K^{\circ a}/\varpi)_{!!}$, which is a flat $K^\circ/\varpi$-algebra such that $B/\varpi^{1/p}\cong B$ via $\Phi$. Here, we use the functor $C\mapsto C_{!!}$ from \cite{GabberRamero}, \S 2.2.25.
\end{proof}

Now we can prove Theorem \ref{DeformPerfectoid}.

\begin{proof} ({\it of Theorem \ref{DeformPerfectoid}}) Let $\overline{A}$ be a perfectoid $K^{\circ a}/\varpi$-algebra. We see inductively that all obstructions and ambiguities in lifting inductively to a flat $(K^\circ/\varpi^n)^a$-algebra $\overline{A}_n$ vanish: All groups occuring can be expressed in terms of the cotangent complex by the theorems above, so that it suffices to show that $\mathbb{L}^a_{\overline{A}_n/(K^\circ/\varpi^n)^a}=0$. But by Theorem 2.5.36 of \cite{GabberRamero}, the short exact sequence
\[
0\rightarrow \overline{A}\buildrel \varpi^{n-1}\over \rightarrow \overline{A}_n\rightarrow \overline{A}_{n-1}\rightarrow 0
\]
induces after tensoring with $\mathbb{L}_{\overline{A}_n/(K^\circ/\varpi^n)^a}$ a triangle
\[
\mathbb{L}^a_{\overline{A}/(K^\circ/\varpi)^a}\rightarrow \mathbb{L}^a_{\overline{A}_n/(K^\circ/\varpi^n)^a}\rightarrow \mathbb{L}^a_{\overline{A}_{n-1}/(K^\circ/\varpi^{n-1})^a}\rightarrow \ ,
\]
and the claim follows by induction.

This gives a unique system of flat $(K^\circ/\varpi^n)^a$-algebras $\overline{A}_n$ with isomorphisms
\[
\overline{A}_n/\varpi^{n-1}\cong \overline{A}_{n-1}\ .
\]
Let $A$ be their inverse limit. Then $A$ is $\varpi$-adically complete with $A/\varpi^n A = \overline{A}_n$. This shows that $A$ is perfectoid, and we get an equivalence between perfectoid $K^{\circ a}$-algebras and perfectoid $K^{\circ a}/\varpi$-algebras, as desired.
\end{proof}

In particular, we also arrive at the tilting equivalence, $K-\Perf\cong K^\flat-\Perf$. We want to compare this with Fontaine's construction. Let $R$ be a perfectoid $K$-algebra, with $A=R^{\circ a}$. Define
\[
A^\flat = \varprojlim_\Phi A/\varpi\ ,
\]
which we regard as a $K^{\flat \circ a}$-algebra via
\[
K^{\flat \circ a} = (\varprojlim_\Phi K^\circ/\varpi)^a = \varprojlim_{\Phi} (K^\circ/\varpi)^a = \varprojlim_{\Phi} K^{\circ a}/\varpi\ ,
\]
and set $R^\flat = A^\flat_\ast[(\varpi^\flat)^{-1}]$.

\begin{prop} This defines a perfectoid $K^\flat$-algebra $R^\flat$ with corresponding perfectoid $K^{\flat \circ a}$-algebra $A^\flat$, and $R^\flat$ is the tilt of $R$. Moreover,
\[
R^\flat = \varprojlim_{x\mapsto x^p} R\ ,\ A^\flat_\ast = \varprojlim_{x\mapsto x^p} A_\ast\ ,\ A^\flat_\ast/\varpi^\flat\cong A_\ast/\varpi\ .
\]
In particular, we have a continuous multiplicative map $R^\flat\rightarrow R$, $x\mapsto x^\sharp$.
\end{prop}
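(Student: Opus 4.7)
My plan is to reduce the proposition to the tilting equivalence of Theorems~\ref{TiltingEquivalence} and~\ref{DeformPerfectoid}, and then compute the inverse limits concretely, mimicking the field-theoretic Lemma~\ref{DefTiltingField} at the algebra level.

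First I would check that $A^\flat := \varprojlim_\Phi A/\varpi$ is the perfectoid $K^{\flat \circ a}$-algebra corresponding to $A$ under the equivalence. Since $A$ is a perfectoid $K^{\circ a}$-algebra, $\bar A := A/\varpi$ is a perfectoid $K^{\circ a}/\varpi$-algebra; via the tautological identification $K^{\circ a}/\varpi = K^{\flat \circ a}/\varpi^\flat$ from Lemma~\ref{DefTiltingField}, we view it simultaneously as a perfectoid $K^{\flat \circ a}/\varpi^\flat$-algebra. Theorem~\ref{DeformPerfectoid} applied over $K^\flat$ produces a unique perfectoid $K^{\flat \circ a}$-algebra $A'$ with $A'/\varpi^\flat \cong \bar A$, and this $A'$ is by definition the tilt of $A$. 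To identify $A'$ with $A^\flat$, I would exploit that in characteristic $p$ the Frobenius is a bijection on the perfectoid algebra $A'$, sending $\varpi^\flat$ to $(\varpi^\flat)^p$; hence $\Phi^n$ induces an isomorphism $A'/\varpi^\flat \cong A'/(\varpi^\flat)^{p^n}$, and by $\varpi^\flat$-adic completeness of $A'$ (Lemma~\ref{AlmostVsUsual}(iv)) together with cofinality,
\[
A' \;\cong\; \varprojlim_n A'/(\varpi^\flat)^{p^n} \;\cong\; \varprojlim_\Phi A'/\varpi^\flat \;=\; \varprojlim_\Phi \bar A \;=\; A^\flat.
\]
Thus $A^\flat$ is a perfectoid $K^{\flat \circ a}$-algebra, and by Lemma~\ref{FrobIsom} the ring $R^\flat = A^\flat_\ast[(\varpi^\flat)^{-1}]$ is the perfectoid $K^\flat$-algebra that is the tilt of $R$ in the sense of Theorem~\ref{TiltingEquivalence}.

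Next I would compute $A^\flat_\ast$ as a Frobenius-compatible inverse limit. Since $(-)_\ast$ is a right adjoint and therefore commutes with inverse limits, $A^\flat_\ast = \varprojlim_\Phi (A/\varpi)_\ast$. For a Frobenius-compatible sequence $(\bar x_n)$ of almost elements, pick any lifts $\tilde x_n \in A_\ast$; the convergence argument in the proof of Lemma~\ref{DefTiltingField}(i) carries over verbatim (using $\varpi \mid p$ and $\varpi$-adic completeness of $A_\ast$ from Lemma~\ref{AlmostVsUsual}(iv)) to produce a well-defined element $x^\sharp := \lim_n \tilde x_n^{p^n} \in A_\ast$, independent of choices. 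The assignment $(\bar x_n) \mapsto (x^\sharp, (x^{1/p})^\sharp, \ldots)$ then gives a multiplicative homeomorphism
\[
A^\flat_\ast \;\buildrel\cong\over\to\; \varprojlim_{x \mapsto x^p} A_\ast,
\]
with inverse the coordinate-wise reduction modulo $\varpi$. Inverting $\varpi^\flat$ on the left (equivalently $\varpi$ in each coordinate on the right) produces $R^\flat = \varprojlim_{x \mapsto x^p} R$, and the first projection yields the continuous multiplicative map $x \mapsto x^\sharp : R^\flat \to R$.

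Finally, for the identification $A^\flat_\ast/\varpi^\flat \cong A_\ast/\varpi$, I would combine the two descriptions just obtained. The first-coordinate projection $A^\flat_\ast = \varprojlim_{x \mapsto x^p} A_\ast \to A_\ast/\varpi$ is surjective, since Frobenius is surjective on $A_\ast/\varpi$ (Proposition~\ref{PropKvsKAlm}) and one can iteratively extract $p$-th roots. Its kernel consists of sequences $(x_n)$ with $x_0 \in \varpi A_\ast$; using that the perfectoid iso $\Phi : A/\varpi^{1/p} \cong A/\varpi$ iterates to $\Phi^n : A/\varpi^{1/p^n} \cong A/\varpi$, the relation $x_n^{p^n} = x_0 \in \varpi A_\ast$ forces $x_n \in \varpi^{1/p^n} A_\ast$, so such sequences lie in $\varpi^\flat \cdot A^\flat_\ast$ under the coordinate description $\varpi^\flat = (\varpi, \varpi^{1/p}, \ldots)$. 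The main delicate point of the whole argument is precisely this last divisibility step in the almost setting, where one must be careful that the successive almost-divisibilities by $\varpi^{1/p^n}$ assemble into an honest element of the Frobenius-compatible inverse limit; once this is checked, everything else is formal bookkeeping that translates Lemma~\ref{DefTiltingField} to the almost-algebra setting via Theorem~\ref{DeformPerfectoid}.
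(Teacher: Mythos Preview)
Your argument is correct in outline and close to the paper's, but with a different order of steps and one genuine subtlety you skip over.

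\textbf{Comparison.} The paper does not first invoke Theorem~\ref{DeformPerfectoid} to produce an abstract tilt $A'$ and then identify it with $A^\flat$; instead it works directly with $A^\flat=\varprojlim_\Phi A/\varpi$, verifies that $A^\flat_\ast$ is a $\varpi^\flat$-adically complete flat $K^{\flat\circ}$-algebra with $A^\flat_\ast/\varpi^\flat\cong A_\ast/\varpi$ (via Lemma~\ref{FrobIsom}), and only then reads off that $A^\flat$ is perfectoid and traces the chain of equivalences $R\leadsto A\leadsto A/\varpi=A^\flat/\varpi^\flat\leadsto A^\flat\leadsto R^\flat$. Your route through $A'$ is fine too --- it just requires knowing that Frobenius is bijective on $A'$ itself, which amounts to the fact that in characteristic $p$ a perfectoid $K^\flat$-algebra is perfect and hence so is its ring of powerbounded elements; you use this without comment.

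\textbf{The actual gap.} When you write $A^\flat_\ast=\varprojlim_\Phi (A/\varpi)_\ast$ and then say ``pick any lifts $\tilde x_n\in A_\ast$'', you are silently assuming that each $\bar x_n$ lies in $A_\ast/\varpi$ rather than merely in the possibly larger $(A/\varpi)_\ast$. This is true, but it needs the observation the paper makes explicitly: the image of $\Phi\colon (A/\varpi)_\ast\to (A/\varpi)_\ast$ already lands in $A_\ast/\varpi$, because $\Phi$ factors through $(A/\varpi^{1/p})_\ast$ and by Lemma~\ref{AlmostVsUsual}(iii) the image of the projection $(A/\varpi)_\ast\to (A/\varpi^{1/p})_\ast$ is $A_\ast/\varpi^{1/p}$. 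Hence in a Frobenius-compatible sequence every term is automatically in $A_\ast/\varpi$, and your lifting step goes through.

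\textbf{Your final worry is misplaced.} Once you have passed to $A^\flat_\ast=\varprojlim_{x\mapsto x^p} A_\ast$, the divisibility argument for the kernel of $A^\flat_\ast\to A_\ast/\varpi$ takes place entirely in the honest $\varpi$-torsion-free ring $A_\ast$: if $x_0\in\varpi A_\ast$ and $x_{n}^{p^n}=x_0$, then Lemma~\ref{FrobIsom} gives $x_n\in\varpi^{1/p^n}A_\ast$, and writing $x_n=\varpi^{1/p^n}y_n$ yields a genuine Frobenius-compatible sequence $(y_n)$ with $(x_n)=\varpi^\flat(y_n)$. No almost-mathematics delicacy arises here; this is exactly how the paper concludes (citing Lemma~\ref{FrobIsom}).
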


\begin{rem} It follows that the tilting functor is independent of the choice of $\varpi$ and $\varpi^\flat$. We note that this explicit description comes from the fact that the lifting from perfectoid $K^{\flat \circ a}/\varpi^\flat$-algebras to perfectoid $K^{\flat \circ a}$-algebras can be made explicit by means of the inverse limit over the Frobenius.
\end{rem}

\begin{proof} First, we have
\[
A^\flat_\ast = (\varprojlim_\Phi A/\varpi)_\ast = \varprojlim_\Phi (A/\varpi)_\ast = \varprojlim_\Phi A_{\ast}/\varpi\ ,
\]
because $_\ast$ commutes with inverse limits and using Lemma \ref{AlmostVsUsual} (iii). Note that the image of $\Phi: (A/\varpi)_\ast\rightarrow (A/\varpi)_\ast$ is $A_\ast/\varpi$, because it factors over $(A/\varpi^{1/p})_\ast$, and the image of the projection $(A/\varpi)_\ast \rightarrow (A/\varpi^{1/p})_\ast$ is $A_\ast/\varpi^{1/p}$. But
\[
\varprojlim_\Phi A_{\ast}/\varpi = \varprojlim_{x\mapsto x^p} A_{\ast}\ ,
\]
as in the proof of Lemma \ref{DefTiltingField} (i).

This shows that $A^\flat_\ast$ is a $\varpi^\flat$-adically complete flat $K^{\flat \circ}$-algebra. Moreover, the projection $x\mapsto x^\sharp$ of $A^\flat_\ast$ onto the first component $x^\sharp\in A_\ast$ induces an isomorphism
\[
A^\flat_\ast/\varpi^\flat\cong A_\ast/\varpi\ ,
\]
because of Lemma \ref{FrobIsom}. Therefore $A^\flat$ is a perfectoid $K^{\flat \circ a}$-algebra.

To see that $R^\flat$ is the tilt of $R$, we go through all equivalences. Indeed, $R$ has corresponding perfectoid $K^{\circ a}$-algebra $A$, which reduces to the perfectoid $K^{\circ a}/\varpi$-algebra $A/\varpi$, which is the same as the perfectoid $K^{\flat \circ a}/\varpi^\flat$-algebra $A^\flat/\varpi^\flat$, which lifts to the perfectoid $K^{\flat \circ a}$-algebra $A^\flat$, which in turn gives rise to $R^\flat$.
\end{proof}

\begin{rem} In fact, one can write down the functors in both directions. From characteristic $0$ to characteristic $p$, we have already given the explicit functor. The converse functor is given by $R=W(R^{\flat \circ})\otimes_{W(K^{\flat \circ})} K$, using the usual map $\theta: W(K^{\flat \circ})\rightarrow K$. We leave it as an exercise to the reader to give a direct proof of the theorem via this description, cf. \cite{KedlayaLiu}. This avoids the use of almost mathematics in the proof of the tilting equivalence. We stress however that in our proof we never need to talk about big rings like $W(R^{\flat \circ})$, and that the point of view of the given proof will be useful in later arguments.
\end{rem}

Let us give a prototypical example for the tilting process.

\begin{prop}\label{ExampleTilting} Let
\[
R=K\langle T_1^{1/p^\infty},\ldots,T_n^{1/p^\infty}\rangle = \widehat{K^\circ[T_1^{1/p^\infty},\ldots,T_n^{1/p^\infty}]}[\varpi^{-1}]\ .
\]
Then $R$ is a perfectoid $K$-algebra, and its tilt $R^\flat$ is given by $K^\flat \langle T_1^{1/p^\infty},\ldots,T_n^{1/p^\infty}\rangle$.
\end{prop}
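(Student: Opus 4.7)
The plan is to leverage the tilting equivalence $K-\Perf \cong K^{\circ a}-\Perf \cong (K^{\circ a}/\varpi)-\Perf$ and its analogue for $K^\flat$. Let $A_0 := \widehat{K^\circ[T_1^{1/p^\infty},\ldots,T_n^{1/p^\infty}]}$ be the $\varpi$-adic completion. I first show that $A_0^a$ is a perfectoid $K^{\circ a}$-algebra. The polynomial ring is free, hence $\varpi$-torsion-free, and this passes to its $\varpi$-adic completion; by Lemma \ref{AlmostVsUsual}(i) this gives flatness of $A_0^a$ over $K^{\circ a}$. It is $\varpi$-adically complete by construction, and $A_0/\varpi^k = (K^\circ/\varpi^k)[T_1^{1/p^\infty},\ldots,T_n^{1/p^\infty}]$ for $k \in \{1, 1/p\}$ (using that $\varpi$ and $\varpi^{1/p}$ define the same topology on $K^\circ$).

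The crucial step is that $\Phi\colon A_0/\varpi^{1/p} \to A_0/\varpi$ is an isomorphism. Both sides are free modules on monomials $T^e$ with $e \in \mathbb{Z}[\tfrac{1}{p}]_{\geq 0}^n$, and Frobenius sends $a\cdot T^e \mapsto a^p\cdot T^{pe}$. This is the coefficient-wise Frobenius $K^\circ/\varpi^{1/p}\to K^\circ/\varpi$ (an isomorphism by perfectoidness of $K$; Proposition \ref{PropKvsKAlm}) composed with the bijection $e\mapsto pe$ on the indexing set $\mathbb{Z}[\tfrac{1}{p}]_{\geq 0}^n$, and hence is itself an isomorphism. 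So $A_0^a$ is a perfectoid $K^{\circ a}$-algebra. By Lemma \ref{FrobIsom} the associated Banach $K$-algebra $R = A_0[\varpi^{-1}] = K\langle T_1^{1/p^\infty},\ldots,T_n^{1/p^\infty}\rangle$ is a perfectoid $K$-algebra (inverting $\varpi$ kills any almost-zero discrepancy between $A_0$ and $(A_0^a)_\ast$).

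For the tilt, the identical argument applied to $A_0^\flat := K^{\flat\circ}\langle T_1^{1/p^\infty},\ldots,T_n^{1/p^\infty}\rangle$ in characteristic $p$ (where $K^\flat$ is already perfect, so the Frobenius step is trivial) shows that $(A_0^\flat)^a$ is a perfectoid $K^{\flat\circ a}$-algebra with associated perfectoid $K^\flat$-algebra $K^\flat\langle T_1^{1/p^\infty},\ldots,T_n^{1/p^\infty}\rangle$. Under the canonical isomorphism $K^\circ/\varpi \cong K^{\flat\circ}/\varpi^\flat$ of Lemma \ref{DefTiltingField}(iii), the reductions $A_0/\varpi$ and $A_0^\flat/\varpi^\flat$ are literally the same polynomial ring, so $A_0^a$ and $(A_0^\flat)^a$ correspond under the identification $(K^{\circ a}/\varpi)-\Perf = (K^{\flat\circ a}/\varpi^\flat)-\Perf$. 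By the uniqueness of lifting in Theorem \ref{DeformPerfectoid} they then correspond under the tilting equivalence of Theorem \ref{TiltingEquivalence}, which proves $R^\flat = K^\flat\langle T_1^{1/p^\infty},\ldots,T_n^{1/p^\infty}\rangle$.

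The only real work is verifying the Frobenius condition on $A_0/\varpi^{1/p} \to A_0/\varpi$, and this is clean precisely because the exponents lie in $\mathbb{Z}[\tfrac{1}{p}]_{\geq 0}^n$, on which multiplication by $p$ is bijective; this lets Frobenius split as the coefficient Frobenius (handled by the hypothesis on $K$) composed with a harmless relabeling of monomials. Everything else is straightforward bookkeeping through the equivalences established in Theorem \ref{TiltingEquivalence}.
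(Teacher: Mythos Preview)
Your argument is correct and follows essentially the same approach as the paper: both reduce to the explicit Frobenius computation on $(K^\circ/\varpi)[T_1^{1/p^\infty},\ldots,T_n^{1/p^\infty}]$ and identify the tilt by matching the reductions modulo $\varpi$ and $\varpi^\flat$. The only difference is cosmetic: the paper asserts directly that $R^\circ = A_0$ and checks surjectivity of Frobenius on $R^\circ/\varpi$, whereas you route through the $K^{\circ a}$-category and invoke Lemma~\ref{FrobIsom}, which sidesteps the need to verify $R^\circ = A_0$ by hand.
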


\begin{proof} One checks that $R^\circ = \widehat{K^\circ[T_1^{1/p^\infty},\ldots,T_n^{1/p^\infty}]}$, which is $\varpi$-adically complete and flat over $K^\circ$. Moreover, it reduces to $R^\circ/\varpi = K^\circ/\varpi[T_1^{1/p^\infty},\ldots,T_n^{1/p^\infty}]$, on which Frobenius is surjective. This shows that $R$ is a perfectoid $K$-algebra.

To see that its tilt has the desired form, we only have to check that $R^\circ/\varpi = R^{\flat \circ}/\varpi^\flat$, by the proof of the tilting equivalence. But this is obvious.
\end{proof}

We note that under the process of tilting, perfectoid fields are identified.

\begin{lem}\label{TiltingPerfectoidFields} Let $R$ be a perfectoid $K$-algebra with tilt $R^\flat$. Then $R$ is a perfectoid field if and only if $R^\flat$ is a perfectoid field.
\end{lem}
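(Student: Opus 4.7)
The plan is to handle the two implications separately; the forward direction is essentially formal, while the converse requires genuine work.

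If $R$ is a perfectoid field, then I apply the field-theoretic tilting construction of Lemma \ref{DefTiltingField} to $R$ itself, producing a perfectoid field of characteristic $p$ whose underlying multiplicative monoid is $\varprojlim_{x\mapsto x^p} R$. Comparing the explicit inverse-limit descriptions (using $A^\flat_\ast = \varprojlim_\Phi A_\ast/\varpi$ together with $A_\ast = R^\circ$) shows this agrees with the tilt of $R$ viewed as a perfectoid $K$-algebra, so $R^\flat$ is a perfectoid field.

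For the converse, assume $R^\flat$ is a perfectoid field and take a nonzero $f \in R$; I aim to show $f$ is a unit. Since $R = R^\circ[\varpi^{-1}]$, I may assume $f \in R^\circ$, and by $\varpi$-adic separatedness of $R^\circ$ (Lemma \ref{AlmostVsUsual}(iv)) together with $\varpi$-torsionfreeness, I can scale out the largest possible power of $\varpi$ and assume $f \notin \varpi R^\circ$. The crucial input is the isomorphism $R^\circ/\varpi \cong R^{\flat\circ}/\varpi^\flat$ coming from Lemma \ref{FrobIsom} and the construction of tilting: this lets me lift the nonzero image of $f$ to an element $g \in R^{\flat\circ}$ with $g \notin \varpi^\flat R^{\flat\circ}$, equivalently $|g|^\flat > |\varpi|$, satisfying $f \equiv g^\sharp \pmod{\varpi R^\circ}$. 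Because $R^\flat$ is a field, $g$ is invertible there, and multiplicativity of $\sharp : R^\flat \to R$ forces $g^\sharp$ to be invertible in $R$ with inverse $(g^{-1})^\sharp$. Writing $f = g^\sharp\bigl(1 + (g^\sharp)^{-1}\varpi h\bigr)$ for some $h \in R^\circ$, the correction term rewrites as $(\varpi^\flat/g)^\sharp h$, which is topologically nilpotent precisely because $|\varpi^\flat/g|^\flat < 1$; a convergent geometric series in the Banach algebra $R$ then inverts $1 + (g^\sharp)^{-1}\varpi h$, so $f$ is a unit.

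This shows $R$ is a field, and then the Banach structure on $R$ yields a rank-$1$ valuation via the spectral seminorm, which is automatically multiplicative on a field and nondiscrete because it extends the nondiscrete valuation on $K$; hence $R$ is a perfectoid field. The conceptual obstacle I expect to be the main point is precisely that $\sharp$ is only multiplicative and not additive, so it cannot by itself produce a ring-theoretic inverse of $f$; the Banach topology is what closes this gap, and the strict inequality $|g|^\flat > |\varpi|$ (equivalent to $g \notin \varpi^\flat R^{\flat\circ}$) is exactly the numerical input making the geometric series converge.
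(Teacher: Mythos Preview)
Your argument that $R$ is a field is correct and matches the paper's closing paragraph almost verbatim: approximate $f\in R^\circ\setminus\varpi R^\circ$ by $g^\sharp$ with $g\in R^{\flat\circ}\setminus\varpi^\flat R^{\flat\circ}$, invert $g^\sharp$ via $(g^{-1})^\sharp$, and sum a geometric series. The forward direction is also fine.

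The gap is the final sentence. Being a field is not enough to make $R$ a perfectoid field: by definition one needs the Banach topology on $R$ to be induced by a rank-$1$ valuation, and as the paper observes this forces that valuation to equal the spectral norm $\|\cdot\|_R$. So what must be shown is that $\|\cdot\|_R$ is multiplicative, and your assertion that the spectral seminorm is ``automatically multiplicative on a field'' is not justified. A power-multiplicative norm on a complete field need not obviously be multiplicative (equivalently, $R^\circ$ need not obviously be a valuation ring just because $R$ is a field and $R^\circ$ is integrally closed, open, and bounded).

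The paper handles this directly, and the fix is available from the very approximation you already set up. First one checks $\|x\|_{R^\flat}=\|x^\sharp\|_R$ for $x\in R^\flat$. Then for $x,y\in R^\circ\setminus\varpi^{1/p}R^\circ$, choose $x^\flat,y^\flat\in R^{\flat\circ}$ with $x-(x^\flat)^\sharp,\,y-(y^\flat)^\sharp\in\varpi R^\circ$; the congruence gives $\|x\|_R=\|x^\flat\|_{R^\flat}$, $\|y\|_R=\|y^\flat\|_{R^\flat}$, and $\|xy\|_R=\|x^\flat y^\flat\|_{R^\flat}$, and multiplicativity in the perfectoid field $R^\flat$ finishes it. In short: your geometric-series step proves $R$ is a field, but you still owe the multiplicativity of $\|\cdot\|_R$, and that requires one more pass through the same $\sharp$-approximation, now applied to a product.
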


\begin{proof} Note that $R$ is a perfectoid field if and only if it is a nonarchimedean field, i.e. its topology is induced by a rank-$1$-valuation. This valuation is necessarily given by the spectral norm
\[
||x||_R = \inf \{ |t|^{-1}\mid t\in K^\times, tx\in R^\circ \}
\]
on $R$. It is easy to check that for $x\in R^\flat$, we have $||x||_{R^\flat} = ||x^\sharp||_R$. In particular, if $||\cdot||_R$ is multiplicative, then so is $||\cdot||_{R^\flat}$, i.e. if $R$ is a perfectoid field, then so is $R^\flat$.

Conversely, assume that $R^\flat$ is a perfectoid field. We have to check that the spectral norm $||\cdot||_R$ on $R$ is multiplicative. Let $x,y\in R$; after multiplication by elements of $K$, we may assume $x,y\in R^\circ$, but not in $\varpi^{1/p} R^\circ$. We want to see that $||x||_R ||y||_R = ||xy||_R$. But we can find $x^\flat,y^\flat \in R^{\flat \circ}$ with $x-(x^\flat)^\sharp,y-(y^\flat)^\sharp \in \varpi R^\circ$. Then $||x||_R=||x^\flat||_{R^\flat}$, $||y||_R=||y^\flat||_{R^\flat}$ and $||xy||_R = ||x^\flat y^\flat||_{R^\flat}$, and we get the claim.

To see that $R$ is a field, choose $x$ such that $x\in R^\circ$, but not in $\varpi R^\circ$, and take $x^\flat$ as before. Then by multiplicativity of $||\cdot||_R$, $||1-\frac x{(x^\flat)^\sharp}||_R<1$, and hence $\frac x{(x^\flat)^\sharp}$ is invertible, and then also $x$.
\end{proof}

Finally, let us discuss finite \'{e}tale covers of perfectoid algebras, and finish the proof of Theorem \ref{TiltingEquivFields}.

\begin{prop} Let $\overline{A}$ be a perfectoid $K^{\circ a}/\varpi$-algebra, and let $\overline{B}$ be a finite \'{e}tale $\overline{A}$-algebra. Then $\overline{B}$ is a perfectoid $K^{\circ a}/\varpi$-algebra.
\end{prop}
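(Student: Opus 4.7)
The flatness of $\overline{B}$ over $K^{\circ a}/\varpi$ is immediate from transitivity, since $\overline{B}$ is flat over $\overline{A}$ (being \'{e}tale) and $\overline{A}$ is flat over $K^{\circ a}/\varpi$ by hypothesis. The remaining task is to show that Frobenius induces an isomorphism $\Phi\colon \overline{B}/\varpi^{1/p}\xrightarrow{\sim}\overline{B}$.

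Because $|p|\le|\varpi|$, we have $p=0$ in $K^{\circ a}/\varpi$, so $\overline{A}$ and $\overline{B}$ are $\mathbb{F}_p$-algebras and absolute Frobenius is a ring homomorphism. Since $\Phi_{\overline{B}}$ kills $\varpi^{1/p}$, it factors through $\overline{B}/\varpi^{1/p}\overline{B}$. Using the perfectoid isomorphism $\Phi_{\overline{A}}\colon \overline{A}/\varpi^{1/p}\xrightarrow{\sim}\overline{A}$, one identifies $\overline{A}$ (viewed as an $\overline{A}$-module via $\Phi_{\overline{A}}$) with $\overline{A}/\varpi^{1/p}$, so that
\[
\overline{B}\otimes_{\overline{A},\Phi_{\overline{A}}}\overline{A}\;\cong\;\overline{B}\otimes_{\overline{A}}\overline{A}/\varpi^{1/p}\;=\;\overline{B}/\varpi^{1/p}\overline{B},
\]
and under this identification the relative Frobenius $F_{\overline{B}/\overline{A}}\colon b\otimes a\mapsto b^p a$ becomes exactly the descended absolute Frobenius $\Phi_{\overline{B}}\colon \overline{B}/\varpi^{1/p}\overline{B}\to \overline{B}$. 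Thus the claim reduces to showing that the Frobenius square of the almost finite \'{e}tale morphism $\overline{A}\to \overline{B}$ in characteristic $p$ is cocartesian.

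My plan for this last step is to lift the situation to the characteristic-$p$ integral level: by Theorem \ref{TiltingEquivalence}, $\overline{A}$ extends to a perfectoid $K^{\flat\circ a}$-algebra $A^\flat$ with $A^\flat/\varpi^\flat\cong \overline{A}$, and by Theorem \ref{LiftingAlmostEtaleCovers} the cover $\overline{B}$ lifts uniquely to a finite \'{e}tale, $\varpi^\flat$-adically complete, $K^{\flat\circ a}$-flat $A^\flat$-algebra $B^\flat$. Since $A^\flat$ is a characteristic-$p$ perfectoid algebra, $A^\flat$ is perfect (Frobenius is an isomorphism on it); the goal then becomes to show that $B^\flat$ is also perfect. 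Reducing modulo $\varpi^\flat$, and using $\varpi^{1/p}=(\varpi^\flat)^{1/p}$ in the common ring $K^{\circ a}/\varpi=K^{\flat\circ a}/\varpi^\flat$, will yield the desired isomorphism $\overline{B}/\varpi^{1/p}\cong\overline{B}$.

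The main obstacle is the almost version of \emph{finite \'{e}tale over a perfect ring is perfect}. Classically this is standard, deduced from the facts that \'{e}tale morphisms in characteristic $p$ have invertible relative Frobenius and that absolute Frobenius on the base is an isomorphism. In the almost setting I plan to handle this either by passing to ordinary rings via the $(\cdot)_{!!}$ construction of \cite[\S 2.2.25]{GabberRamero} and invoking the classical statement, or by working directly with the diagonal idempotent $e\in (B^\flat\otimes_{A^\flat}B^\flat)_\ast$ characterizing almost finite \'{e}taleness, checking that it is intertwined with the Frobenius twist (which is an autoequivalence of the category of almost finite \'{e}tale $A^\flat$-algebras since $A^\flat$ is perfect).
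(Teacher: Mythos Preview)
Your reduction is correct and is exactly the heart of the matter: flatness is immediate, and the Frobenius condition on $\overline{B}$ is equivalent to the relative Frobenius
\[
F_{\overline{B}/\overline{A}}\colon \overline{B}\otimes_{\overline{A},\Phi_{\overline{A}}}\overline{A}\;\longrightarrow\;\overline{B}
\]
being an isomorphism. This is precisely what the paper invokes: it simply cites \cite[Theorem~3.5.13~ii)]{GabberRamero}, which states (in the almost setting) that the relative Frobenius of an \'{e}tale morphism in characteristic $p$ is an isomorphism. So your first two paragraphs already contain the whole proof, modulo that citation.

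The lifting to $A^\flat$ is an unnecessary detour. After lifting, you still need exactly the same statement---that relative Frobenius is an isomorphism for an almost finite \'{e}tale map in characteristic $p$---now applied to $A^\flat\to B^\flat$ instead of $\overline{A}\to\overline{B}$. The perfectness of $A^\flat$ buys you nothing extra: the equivalence ``$B^\flat$ perfect'' $\Leftrightarrow$ ``$F_{B^\flat/A^\flat}$ is an isomorphism'' (valid because $\Phi_{A^\flat}$ is invertible) just restates the problem. So the route through Theorems~\ref{TiltingEquivalence} and~\ref{LiftingAlmostEtaleCovers} adds machinery without reducing the analytic content. Regarding your two sketched endgames: the suggestion to apply $(\cdot)_{!!}$ and invoke the classical statement is not straightforward, since $(\cdot)_{!!}$ does not in general carry almost finite \'{e}tale maps to honest finite \'{e}tale maps of ordinary rings; the idempotent approach can be made to work, but at that point you are reproving \cite[Theorem~3.5.13]{GabberRamero}. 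The paper's proof is the one-line version: observe flatness, then cite Gabber--Ramero for the Frobenius isomorphism, applied directly to $\overline{A}\to\overline{B}$.
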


\begin{proof} Obviously, $\overline{B}$ is flat. The statement about Frobenius follows from Theorem 3.5.13 ii) of \cite{GabberRamero}.
\end{proof}

In particular, Theorem \ref{LiftingAlmostEtaleCovers} provides us with the following commutative diagram, where $R$, $A$, $\overline{A}$, $A^\flat$ and $R^\flat$ form a sequence of rings under the tilting procedure.
\[\xymatrix{
R_\fet & A_\fet \ar[l]\ar[d]\ar[r]^\cong & \overline{A}_\fet \ar[d] & A^\flat_\fet\ar[l]_\cong \ar[d]\ar[r] & R^\flat_\fet \\
K-\Perf & K^{\circ a}-\Perf \ar[l]_\cong \ar[r]^\cong & (K^{\circ a}/\varpi)-\Perf & K^{\flat \circ a}-\Perf \ar[l]_\cong \ar[r]^\cong & K^\flat-\Perf
}\]

It follows from this diagram that the functors $A_\fet\rightarrow R_\fet$ and $A^\flat_\fet\rightarrow R^\flat_\fet$ are fully faithful. A main theorem is that both of them are equivalences: This amounts to Faltings's almost purity theorem. At this point, we will prove this only in characteristic $p$.

\begin{prop}\label{AlmostPurityCharP} Let $K$ be of characteristic $p$, let $R$ be a perfectoid $K$-algebra, and let $S/R$ be finite \'{e}tale. Then $S$ is perfectoid and $S^{\circ a}$ is finite \'{e}tale over $R^{\circ a}$. Moreover, $S^{\circ a}$ is a uniformly finite projective $R^{\circ a}$-module.
\end{prop}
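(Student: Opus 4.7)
The plan is to exploit characteristic $p$ in two ways: to transfer perfection from $R$ to $S$, and to iterate the inverse Frobenius on the perfect ring $S^\circ$ so as to upgrade a single-scale inclusion with $\varpi^k$-cokernel into one with $\epsilon$-cokernel for every $\epsilon\in\mathfrak{m}$. The trace form of $S/R$, which is perfect by finite \'etaleness, will then control the integral structure: Cayley--Hamilton gives boundedness of $S^\circ$, and almost perfection of the induced integral trace pairing (established via the Frobenius trick) gives almost finite \'etaleness of $S^{\circ a}$ over $R^{\circ a}$.

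First I would equip $S$ with a Banach $K$-algebra structure by writing $S$ as a direct summand of some $R^n$ and restricting the product norm; different choices give the same $S^\circ$. Since the relative Frobenius of any \'etale morphism in characteristic $p$ is an isomorphism and $\Phi_R$ is bijective on the perfect ring $R$, $\Phi_S$ is bijective, so $S$ is perfect; because power-boundedness is preserved by $\Phi$ and by $\Phi^{-1}$, $S^\circ$ is perfect too and $\Phi\colon S^\circ/\varpi^{1/p}\to S^\circ/\varpi$ is an isomorphism, giving the Frobenius condition for perfectoidness. For boundedness of $S^\circ$, every $s\in S^\circ$ satisfies its characteristic polynomial over $R$, whose coefficients all lie in $R^\circ$ because $\{s^n\}$ is bounded. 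Choosing $R$-module generators $s_1,\ldots,s_d\in S^\circ$ of $S$, the trace isomorphism $\tau\colon S\to S^\vee:=\Hom_R(S,R)$ coming from the perfect pairing forces any $s\in S^\circ$ into the bounded $R^\circ$-lattice $M:=\tau^{-1}(S^\vee\cap (R^\circ)^n)$, where $S^\vee\hookrightarrow R^n$ is induced by the surjection $R^n\twoheadrightarrow S$; combined with the openness of $\sum R^\circ s_i\subset S^\circ$, this shows $S$ is perfectoid.

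Next I would show $S^{\circ a}$ is uniformly almost finitely generated over $R^{\circ a}$. The boundedness above yields a sandwich
\[
\sum_i R^\circ s_i\;\subset\;S^\circ\;\subset\;\varpi^{-k}\sum_i R^\circ s_i
\]
for some $k\geq 0$. Applying $\Phi^{-n}$ on the perfect ring $S^\circ$ (which sends $\varpi^a$ to $\varpi^{a/p^n}$ and $\sum R^\circ s_i$ to $\sum R^\circ s_i^{1/p^n}$, using that in characteristic $p$ the $p^n$th root distributes over sums and $R^\circ$ is perfect) gives $\varpi^{k/p^n}S^\circ\subset\sum R^\circ s_i^{1/p^n}\subset S^\circ$ for every $n$. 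Since $|\varpi|^{k/p^n}\to 1$, for each $\epsilon\in\mathfrak{m}$ one can pick $n$ so that $\epsilon S^\circ$ is contained in the $d$-generated $R^\circ$-submodule $\sum R^\circ s_i^{1/p^n}$, proving uniform almost finite generation with bound $d$.

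The main step is the almost perfection of the integral trace pairing. Suppose $s\in S$ satisfies $\tr(s\cdot s')\in R^\circ$ for every $s'\in S^\circ$; by the same trace-isomorphism argument, $s\in M\subset c^{-1}S^\circ$ for a fixed $c\in R^\circ\cap R^\times$. The characteristic-$p$ identity $\tr(x^p)=\tr(x)^p$ for endomorphisms of a finite projective module (reducing to the free case locally, where it is Newton's identity $p_p\equiv p_1^p\pmod p$) yields
\[
\tr\bigl(s^{1/p^n}\cdot s'\bigr)^{p^n}=\tr\bigl((s^{1/p^n}s')^{p^n}\bigr)=\tr(s\cdot s'^{p^n})\in R^\circ,
\]
and since $R^\circ$ is integrally closed in $R$, $\tr(s^{1/p^n}\cdot s')\in R^\circ$. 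Hence $s^{1/p^n}\in M$, so $c\cdot s^{1/p^n}\in S^\circ$; raising to the $p^n$th power gives $c^{p^n}\cdot s\in S^\circ$ for all $n$. For any $\epsilon\in\mathfrak{m}$, picking $n$ with $|c|^{p^n}\leq|\epsilon|$ (trivial if $|c|=1$) yields $\epsilon s\in S^\circ$. Thus $\tau$ induces an almost isomorphism $S^{\circ a}\to\alHom_{R^{\circ a}}(S^{\circ a},R^{\circ a})$, and the trace-form characterization of finite \'etaleness (Theorem 4.1.14 of \cite{GabberRamero}) combined with the uniform almost finite generation yields that $S^{\circ a}$ is uniformly finite projective and finite \'etale over $R^{\circ a}$ of rank $d$. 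The main technical obstacle I foresee is the case where $S$ is not globally free over $R$, so that a coherent choice of trace-dual lattice requires care; I expect to handle this either by decomposing $\Spec R$ into open-and-closed components of constant rank via an idempotent decomposition, or by passing to a faithfully flat finite \'etale cover and descending.
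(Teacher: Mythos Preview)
Your Frobenius iteration in the third step runs in the wrong direction. From $c\,s^{1/p^n}\in S^\circ$ you raise to the $p^n$th power and obtain $c^{p^n}s\in S^\circ$; but for $|c|<1$ the elements $c^{p^n}$ become \emph{smaller}, not larger, so the condition $|c|^{p^n}\le|\epsilon|$ does \emph{not} yield $\epsilon s\in S^\circ$ (it would require $\epsilon/c^{p^n}\in R^\circ$, which fails). The remedy is to reverse the roles of $\Phi$ and $\Phi^{-1}$: the same identity $\tr(x^p)=\tr(x)^p$ shows that $N=\{s\in S:\tr(sS^\circ)\subset R^\circ\}$ is stable under $\Phi$ as well, via
\[
\tr(s^{p^n}s')=\tr\bigl(s\,s'^{1/p^n}\bigr)^{p^n}\in R^\circ\quad(s'\in S^\circ),
\]
using that $s'^{1/p^n}\in S^\circ$. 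Hence $s^{p^n}\in N\subset c^{-1}S^\circ$, i.e.\ $c\,s^{p^n}\in S^\circ$, and taking $p^n$th \emph{roots} in the perfect ring $S^\circ$ gives $c^{1/p^n}s\in S^\circ$. Now $|c|^{1/p^n}\to 1$, and you may take $c=\varpi^k$, so for every $\epsilon\in\mathfrak m$ there is $n$ with $\epsilon\in c^{1/p^n}K^\circ$, whence $\epsilon s\in S^\circ$.

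Even with this fix, your final appeal to the trace-form criterion has a gap: the theorem you cite assumes $S^{\circ a}$ is already finite projective over $R^{\circ a}$, and uniform almost finite generation alone does not give this --- you still need almost projectivity (equivalently, flatness plus almost finite presentation) over $R^{\circ a}$. The paper closes this gap differently: rather than working through the trace pairing, it applies the Frobenius trick to the separability idempotent $e\in S\otimes_RS$. Writing $\varpi^Ne=\sum_{i=1}^n x_i\otimes y_i$ with $x_i,y_i\in S^\circ$ and taking $p^m$th roots yields $\varpi^{N/p^m}e=\sum x_i^{1/p^m}\otimes y_i^{1/p^m}$, so for every $\epsilon\in\mathfrak m$ one has $\epsilon e=\sum a_i\otimes b_i$ with $a_i,b_i\in S^\circ$. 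The maps $s\mapsto(t_{S/R}(s,b_i))_i$ and $(r_i)\mapsto\sum a_ir_i$ then give $S^\circ\to(R^\circ)^n\to S^\circ$ with composite $\epsilon\cdot\id$, establishing uniform finite projectivity directly; and the same computation exhibits $e$ as an almost element of $S^{\circ a}\otimes_{R^{\circ a}}S^{\circ a}$, giving unramifiedness. Your route through the trace form can be completed, but you would have to supply an independent argument for almost projectivity (for instance by twisting an $R$-linear splitting $S\hookrightarrow R^n\twoheadrightarrow S$ by $\Phi^{-m}$, exactly as in your step 2).
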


\begin{rem} We need to define the topology on $S$ here. Recall that if $A$ is any ring with $t\in A$ not a zero-divisor, then any finitely generated $A[t^{-1}]$-module $M$ carries a canonical topology, which gives any finitely generated $A$-submodule of $M$ the $t$-adic topology. Any morphism of finitely generated $A[t^{-1}]$-modules is continuous for this topology, cf. \cite{GabberRamero}, Definition 5.4.10 and 5.4.11. If $A$ is complete and $M$ is projective, then $M$ is complete, as one checks by writing $M$ as a direct summand of a finitely generated free $A$-module. In particular, if $R$ is a perfectoid $K$-algebra and $S/R$ a finite \'{e}tale cover, then $S$ has a canonical topology for which it is complete.
\end{rem}

\begin{proof} This follows from Theorem 3.5.28 of \cite{GabberRamero}. Let us recall the argument. Note that $S$ is a perfect Banach $K$-algebra. We claim that it is perfectoid. Let $S_0\subset S$ be some finitely generated $R^\circ$-subalgebra with $S_0\otimes K = S$. Let $S_0^\bot\subset S$ be defined as the set of all $x\in S$ such that $t_{S/R}(x,S_0)\subset R^\circ$, using the perfect trace form pairing
\[
t_{S/R}: S\otimes_R S\rightarrow R\ ;
\]
then $S_0$ and $S_0^\bot$ are open and bounded. Let $Y$ be the integral closure of $R^\circ$ in $S$. Then $S_0\subset Y\subset S_0^\bot$: Indeed, the elements of $S_0$ are clearly integral over $R^\circ$, and we have $t_{S/R}(Y,Y)\subset R^\circ$. It follows that $Y$ is open and bounded. As $S^{\circ a} = Y^a$, it follows that $S^\circ$ is open and bounded, as desired.

Next, we want to check that $S^{\circ a}$ is a uniformly finite projective $R^{\circ a}$-module. For this, it is enough to prove that there is some $n$ such that for any $\epsilon\in \mathfrak{m}$, there are maps $S^{\circ}\rightarrow R^{\circ n}$ and $R^{\circ n}\rightarrow S^{\circ}$ whose composite is multiplication by $\epsilon$.

Let $e\in S\otimes_R S$ be the idempotent showing that $S$ is unramified over $R$. Then $\varpi^N e$ is in the image of $S^{\circ}\otimes_{R^{\circ}} S^{\circ}$ in $S\otimes_R S$ for some $N$. Write $\varpi^N e = \sum_{i=1}^n x_i\otimes y_i$. As Frobenius is bijective, we have $\varpi^{N/p^m} e = \sum_{i=1}^n x_i^{1/p^m}\otimes y_i^{1/p^m}$ for all $m$. In particular, for any $\epsilon\in \mathfrak{m}$, we can write $\epsilon e = \sum_{i=1}^n a_i\otimes b_i$ for certain $a_i,b_i\in S^{\circ}$, depending on $\epsilon$.

We get the map $S^\circ\rightarrow R^{\circ n}$,
\[
s\mapsto (t_{S/R}(s,b_1),\ldots,t_{S/R}(s,b_n))\ ,
\]
and the map $R^{\circ n}\rightarrow S^\circ$,
\[
(r_1,\ldots,r_n)\mapsto \sum_{i=1}^n a_i r_i\ .
\]
One easily checks that their composite is multiplication by $\epsilon$, giving the claim.

It remains to see that $S^{\circ a}$ is an unramified $R^{\circ a}$-algebra. But this follows from the previous arguments, which show that $e$ defines an almost element of $S^{\circ a}\otimes_{R^{\circ a}} S^{\circ a}$ with the desired properties.
\end{proof}

It follows that the diagram above extends as follows.
\[\xymatrix{
R_\fet & A_\fet \ar[l]\ar[d]\ar[r]^\cong & \overline{A}_\fet \ar[d] & A^\flat_\fet\ar[l]_\cong \ar[d]\ar[r]^\cong & R^\flat_\fet \ar[d] \\
K-\Perf & K^{\circ a}-\Perf \ar[l]_\cong \ar[r]^\cong & (K^{\circ a}/\varpi)-\Perf & K^{\flat \circ a}-\Perf \ar[l]_\cong \ar[r]^\cong & K^\flat-\Perf
}\]
Moreover, using Theorem \ref{LiftingAlmostEtaleCovers}, it follows that all finite \'{e}tale algebras over $A$, $\overline{A}$ or $A^\flat$ are uniformly almost finitely presented. Let us summarize the discussion.

\begin{thm}\label{AlmostFiniteEtaleCovers} Let $R$ be a perfectoid $K$-algebra with tilt $R^\flat$. There is a fully faithful functor from $R^\flat_\fet$ to $R_\fet$ inverse to the tilting functor. The essential image of this functor consists of the finite \'{e}tale covers $S$ of $R$, for which $S$ (with its natural topology) is perfectoid and $S^{\circ a}$ is finite \'{e}tale over $R^{\circ a}$. In this case, $S^{\circ a}$ is a uniformly finite projective $R^{\circ a}$-module.
\end{thm}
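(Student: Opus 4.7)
The plan is to assemble the commutative diagram already constructed in the excerpt, using the characteristic $p$ almost purity result (Proposition \ref{AlmostPurityCharP}) to invert the rightmost arrow on the tilt side. Set $A = R^{\circ a}$, $A^\flat = R^{\flat \circ a}$, and $\overline{A} = A/\varpi = A^\flat/\varpi^\flat$. Theorem \ref{LiftingAlmostEtaleCovers} gives the equivalences $A_\fet \cong \overline{A}_\fet \cong A^\flat_\fet$, while Proposition \ref{AlmostPurityCharP} upgrades the fully faithful functor $A^\flat_\fet \to R^\flat_\fet$ to an equivalence (in characteristic $p$). Composing these equivalences with the base-change functor $A_\fet \to R_\fet$, $T \mapsto T_\ast[\varpi^{-1}]$, yields the desired functor $R^\flat_\fet \to R_\fet$; by the tilting description $R^\flat = A^\flat_\ast[(\varpi^\flat)^{-1}]$ this composite is inverse to tilting on the subcategory it hits.

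For full faithfulness, I would use that $A_\fet \to R_\fet$ is fully faithful. Given $T,T' \in A_\fet$, the almost module $\Hom_A(T,T')$ has its $_\ast$ flat (hence $\varpi$-torsion free), so it injects into $\Hom_A(T,T')_\ast[\varpi^{-1}] = \Hom_R(S,S')$ where $S = T_\ast[\varpi^{-1}]$. Conversely, any $R$-algebra map $S \to S'$ sends power-bounded elements to power-bounded elements and hence, using that $T = S^{\circ a}$ whenever $S$ is in the image (by Lemma \ref{FrobIsom}), descends to $A$; almost elements of $T$ are then automatically sent to almost elements of $T'$. Combining the three equivalences with this, the composite $R^\flat_\fet \to R_\fet$ is fully faithful.

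The essential image description is then a tautology from the construction together with Lemma \ref{FrobIsom}: if $S = T_\ast[\varpi^{-1}]$ with $T \in A_\fet$, then $T$ is itself a perfectoid $K^{\circ a}$-algebra (finite étale covers of perfectoid $K^{\circ a}/\varpi$-algebras are perfectoid by the proposition just before the theorem, and this transfers through the equivalence of Theorem \ref{LiftingAlmostEtaleCovers}), so Lemma \ref{FrobIsom} says $S$ is perfectoid with $S^{\circ a} = T$; in particular $S^{\circ a}$ is finite étale over $R^{\circ a}$. Conversely, if $S/R$ is finite étale, $S$ is perfectoid, and $S^{\circ a}$ is finite étale over $A$, then $T := S^{\circ a} \in A_\fet$ satisfies $T_\ast[\varpi^{-1}] = S^\circ[\varpi^{-1}] = S$, so $S$ lies in the essential image. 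The uniform finite projectivity of $S^{\circ a}$ over $R^{\circ a}$ follows by transporting the corresponding statement in characteristic $p$ (last sentence of Proposition \ref{AlmostPurityCharP}) through the equivalences, using the last clause of Theorem \ref{LiftingAlmostEtaleCovers} which says uniform finite projectivity lifts from $\overline{A}$ to $A$.

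The only real subtlety, in my view, is checking that the equivalence $A_\fet \cong A^\flat_\fet$ obtained via deformation theory is compatible on the nose with the topological tilting $S \mapsto \varprojlim_{x \mapsto x^p} S$ — i.e.\ that the finite étale cover on one side really does identify with the tilt of the one on the other side, not merely with some abstractly equivalent object. But this is forced by the explicit formula for tilting in terms of $A/\varpi = A^\flat/\varpi^\flat$ proved just after Theorem \ref{DeformPerfectoid}; the almost deformation of $\overline{A}$-algebras reproduces exactly the same perfectoid algebra as the inverse-limit-of-Frobenius construction. So no new input is needed, and the theorem reduces entirely to packaging the results already in place together with Proposition \ref{AlmostPurityCharP}.
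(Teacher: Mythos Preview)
Your proposal is correct and follows exactly the paper's approach: the theorem is stated there explicitly as a summary of the preceding discussion, assembling the equivalences $A_\fet\cong\overline{A}_\fet\cong A^\flat_\fet$ from Theorem~\ref{LiftingAlmostEtaleCovers}, the characteristic-$p$ almost purity Proposition~\ref{AlmostPurityCharP} to identify $A^\flat_\fet\cong R^\flat_\fet$, and the full faithfulness of $A_\fet\to R_\fet$ read off from the diagram via the equivalence $K^{\circ a}\text{-}\Perf\cong K\text{-}\Perf$. Your extra care about the compatibility of the deformation-theoretic lift with Fontaine's explicit inverse-limit construction is exactly the content of the proposition following Theorem~\ref{DeformPerfectoid}, so nothing is missing.
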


In particular, we see that the fully faithful functor $R^\flat_\fet\hookrightarrow R_\fet$ preserves degrees. We will later prove that this is an equivalence in general. For now, we prove that it is an equivalence for perfectoid fields, i.e. we finish the proof of Theorem \ref{TiltingEquivFields}.

\begin{proof} ({\it of Theorem \ref{TiltingEquivFields}}) Let $K$ be a perfectoid field with tilt $K^\flat$. Using the previous theorem, it is enough to show that the fully faithful functor $K^\flat_\fet\rightarrow K_\fet$ is an equivalence.

{\it Proof using ramification theory.} Proposition 6.6.2 (cf. its proof) and Proposition 6.6.6 of \cite{GabberRamero} show that for any finite extension $L$ of $K$, the extension $L^{\circ a}/K^{\circ a}$ is \'{e}tale. Moreover, it is finite projective by Proposition 6.3.6 of \cite{GabberRamero}, giving the desired result.

{\it Proof reducing to the case where $K^\flat$ is algebraically closed.} Let $M=\widehat{\overline{K^\flat}}$ be the completion of an algebraic closure of $K^\flat$. Clearly, $M$ is complete and perfect, i.e. $M$ is perfectoid. Let $M^\sharp$ be the untilt of $M$. Then by Lemma \ref{TiltingPerfectoidFields} and Proposition \ref{PerfectoidAlgebrClosed}, $M^\sharp$ is an algebraically closed perfectoid field containing $K$. Any finite extension $L\subset M$ of $K^\flat$ gives the untilt $L^\sharp\subset M^\sharp$, a finite extension of $K$. It is easy to see that the union $N=\bigcup_L L^\sharp\subset M^\sharp$ is a dense subfield. Now Krasner's lemma implies that $N$ is algebraically closed. Hence any finite extension $F$ of $K$ is contained in $N$; this means that there is some Galois extension $L$ of $K^\flat$ such that $F$ is contained in $L^\sharp$. Note that $L^\sharp$ is still Galois, as the functor $L\mapsto L^\sharp$ preserves degrees and automorphisms. In particular, $F$ is given by some subgroup $H$ of $\Gal(L^\sharp/K)=\Gal(L/K^\flat)$, which gives the desired finite extension $F^\flat=L^H$ of $K^\flat$ that untilts to $F$: The equivalence of categories shows that $(F^\flat)^\sharp\subset (L^\sharp)^H=F$, and as they have the same degree, they are equal.
\end{proof}

\section{Perfectoid spaces: Analytic topology}

In the following, we are interested in the adic spaces associated to perfectoid algebras. Specifically, note that perfectoid $K$-algebras are Tate, and we will look at the following type of affinoid $K$-algebras.

\begin{definition} A perfectoid affinoid $K$-algebra is an affinoid $K$-algebra $(R,R^+)$ such that $R$ is a perfectoid $K$-algebra.
\end{definition}

We note that in this case $\mathfrak{m} R^\circ\subset R^+\subset R^\circ$, because all topologically nilpotent elements lie in $R^+$, as $R^+$ is integrally closed. In particular, $R^+$ is almost equal to $R^\circ$.

\begin{lem}\label{TiltPerfectoidAffinoid} The categories of perfectoid affinoid $K$-algebras and perfectoid affinoid $K^\flat$-algebras are equivalent. If $(R,R^+)$ maps to $(R^\flat, R^{\flat +})$ under this equivalence, then $x\mapsto x^\sharp$ induces an isomorphism $R^{\flat +}/\varpi^\flat \cong R^+/\varpi$. Also $R^{\flat +} = \varprojlim_{x\mapsto x^p} R^+$.
\end{lem}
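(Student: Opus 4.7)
My plan is to upgrade the tilting equivalence $K\text{-}\Perf\cong K^\flat\text{-}\Perf$ (Theorem \ref{TiltingEquivalence}) by matching $R^+$ with $R^{\flat+}$. My key observation is that any open integrally closed $R^+\subset R^\circ$ contains every topologically nilpotent element: such a $t$ has $t^N\in R^+$ by openness, hence satisfies $X^N-t^N=0$ over $R^+$, so $t\in R^+$. In particular $\varpi R^\circ\subset R^+$, and thus $R^+$ is equivalent data to its integrally closed image $\bar R^+\subset R^\circ/\varpi$, with a strictly analogous statement for $R^{\flat+}\subset R^{\flat\circ}$. Combined with the iso $R^\circ/\varpi\cong R^{\flat\circ}/\varpi^\flat$ given by $\sharp$ (from the proof of Theorem \ref{TiltingEquivalence}), this produces a functorial bijection $R^+\leftrightarrow R^{\flat+}$ and hence the desired equivalence of categories.

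To verify $R^{\flat+}=\varprojlim_{x\mapsto x^p}R^+$, I use that inside $R^{\flat\circ}=\varprojlim_{x\mapsto x^p}R^\circ$, a sequence $(x_0,x_1,\ldots)$ lies in the preimage of $\bar R^+$ precisely when $x_0\in R^+$; then $x_i^{p^i}=x_0\in R^+$ forces $x_i\in R^+$ for all $i$ by integral closedness of $R^+$ in $R$.

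For the $\sharp$-induced isomorphism $R^{\flat+}/\varpi^\flat\cong R^+/\varpi$, I will fix $\varpi$ with $|p|<|\varpi|$ so that $p\in\varpi\mathfrak m\subset\varpi R^+$. The projection $\sharp:(x_0,x_1,\ldots)\mapsto x_0$ sends $R^{\flat+}$ into $R^+$, and modulo $\varpi R^+$ it becomes a ring homomorphism because its additivity defect $\sharp(a+b)-\sharp(a)-\sharp(b)$ lies in $pR^\circ\subset\varpi R^+$ (by the binomial estimate from the proof of Lemma \ref{DefTiltingField}(i)); it manifestly kills $\varpi^\flat R^{\flat+}$. Injectivity of the induced map $R^{\flat+}/\varpi^\flat R^{\flat+}\to R^+/\varpi R^+$ is a direct computation: if $(x_0,\ldots)\in R^{\flat+}$ with $x_0=\varpi v_0$, $v_0\in R^+$, the sequence $v_i=x_i/\varpi^{1/p^i}\in R$ satisfies $v_{i+1}^p=v_i$ and $v_i^{p^i}=v_0\in R^+$, hence $v_i\in R^+$ by integral closedness, giving $(v_0,v_1,\ldots)\in R^{\flat+}$ with $\varpi^\flat\cdot(v_0,v_1,\ldots)=(x_0,x_1,\ldots)$.

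The main obstacle will be surjectivity, which I reduce to Frobenius surjectivity on $R^+/\varpi R^+$. Naively, Frobenius lifting from $R^\circ/\varpi$ combined with integral closedness only yields $z\in R^+$ with $z^p\equiv y\pmod{\varpi R^\circ}$; closing the gap to $\varpi R^+$ requires killing an obstruction in $\varpi R^\circ/\varpi R^+\cong R^\circ/R^+$. I plan to do this by a perturbation: write $z^p=y+\varpi w$ with $w\in R^\circ$, and choose $b\in R^\circ$ with $b^p\equiv-w\pmod{R^+}$---such a $b$ exists because Frobenius is surjective on $R^\circ/R^+$, as follows from its surjectivities on $R^\circ/\mathfrak m$ and $R^+/\mathfrak m$ (the latter via integral closedness) together with the snake lemma applied to $R^+/\mathfrak m\hookrightarrow R^\circ/\mathfrak m\twoheadrightarrow R^\circ/R^+$. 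Replace $z$ by $z'=z+\varpi^{1/p}b\in R^+$; the binomial expansion, using $p\in\varpi R^+$ and $a^p=\varpi b^p$, then gives $(z')^p\equiv y\pmod{\varpi R^+}$. Iterating this construction yields a Frobenius-compatible system in $R^+/\varpi R^+$ lifting $y$, which corresponds via the $\varprojlim_\Phi$-description to an element of $R^{\flat+}$ whose $\sharp$-projection reduces to $y\bmod\varpi R^+$.
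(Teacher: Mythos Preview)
Your argument is correct and considerably more detailed than the paper's, which dispatches the whole lemma in two sentences. The one substantive difference in strategy is your choice of quotient: the paper observes that an open integrally closed $R^+\subset R^\circ$ is the same datum as an integrally closed subring of $R^\circ/\mathfrak m R^\circ$, and then invokes the identification $R^\circ/\mathfrak m R^\circ = R^{\flat\circ}/\mathfrak m^\flat R^{\flat\circ}$ (which follows from $R^\circ/\varpi\cong R^{\flat\circ}/\varpi^\flat$ after passing to the further quotient). You instead work directly with $R^\circ/\varpi$. Both reductions are valid and give the same bijection; the $\mathfrak m$ version has the mild aesthetic advantage of not singling out $\varpi$. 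For the two displayed identities the paper simply says ``one easily checks,'' whereas you actually supply a proof of the nontrivial one: your perturbation argument showing that Frobenius is surjective on $R^+/\varpi R^+$ (not just on $R^+/\varpi R^\circ$) is genuinely the content here, and the snake-lemma reduction to surjectivity of Frobenius on $R^\circ/R^+$ is a nice touch.

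One small point: you impose $|p|<|\varpi|$ to get $p\in\varpi R^+$, but the paper's standing hypothesis is only $|p|\le|\varpi|$. In your Frobenius-surjectivity step this extra hypothesis is actually unnecessary: the binomial cross terms in $(z+\varpi^{1/p}b)^p$ carry a factor $p\varpi^{k/p}$ with $k\ge 1$, and since $|p\varpi^{1/p}/\varpi|=|p|\cdot|\varpi|^{1/p-1}\le|\varpi|^{1/p}<1$ one has $p\varpi^{k/p}\in\varpi\mathfrak m\subset\varpi R^+$ already from $|p|\le|\varpi|$. For the additivity of $\sharp$ modulo $\varpi R^+$, however, the defect genuinely lives only in $pR^+$, so in the boundary case $|p|=|\varpi|$ with $p/\varpi\notin R^+$ your argument would need the standing assumption (explicitly flagged by the paper just after the lemma) that $R^+$ is a $K^\circ$-algebra; then $p/\varpi\in K^\circ\subset R^+$ and everything goes through.
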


\begin{proof} Giving an open integrally closed subring of $R^\circ$ is equivalent to giving an integrally closed subring of $R^\circ/\mathfrak{m}$. This description is compatible with tilting. One easily checks the last identities.
\end{proof}

It turns out that also in this case, the presheaf $\mathcal{O}_X$ is a sheaf. In fact, the main theorem of this section is the following.

\begin{thm}\label{MainThmAnalytic} Let $(R,R^+)$  be a perfectoid affinoid $K$-algebra, and let $X=\Spa(R,R^+)$ with associated presheaves $\mathcal{O}_X$, $\mathcal{O}_X^+$. Also, let $(R^\flat, R^{\flat +})$ be the tilt given by Lemma \ref{TiltPerfectoidAffinoid}, and let $X^ \flat =\Spa(R^\flat,R^{\flat +})$ etc. .
\begin{altenumerate}
\item[{\rm (i)}] We have a homeomorphism $X\cong X^\flat$, given by mapping $x\in X$ to the valuation $x^ \flat \in X^ \flat $ defined by $|f(x^ \flat)| = |f^\sharp(x)|$. This homeomorphism identifies rational subsets.
\item[{\rm (ii)}] For any rational subset $U\subset X$ with tilt $U^ \flat\subset X^ \flat$, the complete affinoid $K$-algebra $(\mathcal{O}_X(U),\mathcal{O}_X^+(U))$ is perfectoid, with tilt $(\mathcal{O}_{X^\flat}(U^ \flat),\mathcal{O}_{X^ \flat}^+(U^ \flat))$.
\item[{\rm (iii)}] The presheaves $\mathcal{O}_X$, $\mathcal{O}_{X^\flat}$ are sheaves.
\item[{\rm (iv)}] The cohomology group $H^i(X,\mathcal{O}_X^+)$ is $\mathfrak{m}$-torsion for $i>0$.
\end{altenumerate}
\end{thm}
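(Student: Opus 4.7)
The plan is to prove the four parts in the order (i), (ii), (iv), (iii), letting the tilting equivalence from Section 5 do the heavy lifting, and reducing parts (iii) and (iv) to the characteristic $p$ case where an explicit computation is possible.

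For (i), the key technical input is an approximation lemma: for every $f \in R^\circ$ and every $n \geq 0$, there exists $f^\flat \in R^{\flat\circ}$ with $f - (f^\flat)^\sharp \in \varpi^n R^\circ$. This is available because $R^\circ/\varpi = R^{\flat\circ}/\varpi^\flat$ together with perfectness of $R^{\flat\circ}$ allows us to take $p$-power roots of lifts and improve approximations iteratively. Given such a lemma, I would identify points of $X$ with points of $X^\flat$ via the description of points as (equivalence classes of) maps to complete affinoid fields: a point $x$ gives rise to a valuation on the completed residue field $\widehat{K(x)}$, which is a perfectoid field, so by Proposition 4.8 it tilts to a valuation on $\widehat{K(x)^\flat}$, and this pulls back to the claimed valuation $x^\flat$ on $R^\flat$ with $|f(x^\flat)| = |f^\sharp(x)|$. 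The inverse is constructed symmetrically. To see that the bijection identifies rational subsets and is a homeomorphism, I apply the approximation lemma together with Remark 2.7 (adding a power of $\varpi$ to the $f_i$'s so that $|g(x)|$ is uniformly bounded below on $U$): for approximations sufficiently close modulo $\varpi^n$, the rational subset $U(f_1,\ldots,f_n/g)$ is unchanged when each $f_i$ (and $g$) is replaced by $(f_i^\flat)^\sharp$ (resp.\ $(g^\flat)^\sharp$).

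For (ii), having shown that every rational subset $U$ can be presented via untilted elements, I would analyze the pair $(\mathcal{O}_X(U), \mathcal{O}_X^+(U))$ integrally modulo $\varpi$. Both $\mathcal{O}_X^+(U)^a/\varpi$ and $\mathcal{O}_{X^\flat}^+(U^\flat)^a/\varpi^\flat$ arise from the same Huber-type construction applied to the common ring $R^+/\varpi = R^{\flat+}/\varpi^\flat$, so they agree as perfectoid $K^{\circ a}/\varpi$-algebras. Theorem 5.4 (unique deformation of perfectoid $K^{\circ a}/\varpi$-algebras) then forces $\mathcal{O}_X^+(U)^a$ to be a perfectoid $K^{\circ a}$-algebra with tilt $\mathcal{O}_{X^\flat}^+(U^\flat)^a$, so by Lemma 5.6 we recover the perfectoid structure on $\mathcal{O}_X(U)$ and identify its tilt.

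For (iv), I use (ii) to reduce the almost vanishing of $H^i(X,\mathcal{O}_X^+)$ for $i>0$ to the analogous statement for $X^\flat$, exploiting the identification $\mathcal{O}_X^+/\varpi \cong \mathcal{O}_{X^\flat}^+/\varpi^\flat$ compatible with rational subsets. In characteristic $p$, every rational cover is refined by iterated Laurent covers $X = U(f) \cup U(1/f)$, so by a Čech spectral sequence argument it suffices to prove almost acyclicity of the two-term complex
\[
0 \to R^+ \to R^+\langle f\rangle \oplus R^+\langle f^{-1}\rangle \to R^+\langle f, f^{-1}\rangle \to 0
\]
for any $f \in R$. This is the main obstacle: it is a perfectoid-almost version of Tate's acyclicity theorem, and the proof should use Frobenius bijectivity to split approximate cocycles modulo $\varpi^{1/p^n}$ for all $n$, then pass to the limit to get the almost statement. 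Finally (iii) follows from (iv): $\check{H}^1$ of $\mathcal{O}_X^+$ on rational covers is almost zero, hence $\mathcal{O}_X^+$ is a sheaf since it has no almost-zero sections (being a subsheaf of the sheaf $f \mapsto (|f(x)|)_x$ of valuations), and then $\mathcal{O}_X = \mathcal{O}_X^+[\varpi^{-1}]$ is automatically a sheaf.
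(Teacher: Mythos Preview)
Your architecture is reasonable, but there are two real gaps. First, your treatment of (ii) is circular. You want to conclude that $(\mathcal{O}_X(U),\mathcal{O}_X^+(U))$ is perfectoid by matching $\mathcal{O}_X^+(U)^a/\varpi$ with $\mathcal{O}_{X^\flat}^+(U^\flat)^a/\varpi^\flat$ and invoking the deformation equivalence. But that equivalence takes as \emph{input} a perfectoid $K^{\circ a}/\varpi$-algebra, and you have not shown that $\mathcal{O}_X^+(U)^a/\varpi$ is one: you need flatness and, crucially, that Frobenius induces an isomorphism modulo $\varpi^{1/p}$. Huber's construction of $\mathcal{O}_X(U)$ involves a $\varpi$-adic completion, and there is no formal reason this should interact well with reduction mod $\varpi$ or with Frobenius. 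The paper handles this by an explicit computation (its Lemma \ref{AlmostDescriptionSheaf}): it builds a concrete model $R^\circ\langle (f_i^\sharp/g^\sharp)^{1/p^\infty}\rangle^a$, proves directly---first in characteristic $p$, then by tilting---that this is a perfectoid $K^{\circ a}$-algebra, and shows it computes $\mathcal{O}_X(U)^{\circ a}$. This explicit model is the technical heart of (ii) and also feeds into (i): your claim that the completed residue field $\widehat{k(x)}$ at a characteristic-$0$ point is perfectoid is deduced in the paper \emph{from} this lemma, so the logical order is a weak (ii) first, then (i), then full (ii).

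Second, your strategy for (iv) is genuinely different from the paper's, and its key step is unjustified. The paper does not attempt a direct Laurent-cover argument for arbitrary perfectoid algebras. Instead, in characteristic $p$ it writes any perfectoid affinoid $(R,R^+)$ as a completed filtered colimit of \emph{p-finite} perfectoid algebras---completed perfections of reduced affinoid algebras of topologically finite type. For those, classical Tate acyclicity gives that the integral \v{C}ech complex is exact up to \emph{bounded} $\varpi$-power torsion; applying Frobenius (i.e.\ passing to the perfection) converts ``bounded $\varpi$-torsion'' into ``$\mathfrak{m}$-torsion''. Then one passes to the colimit and completes. Your proposed direct approach (``use Frobenius bijectivity to split approximate cocycles'') would have to reprove Tate acyclicity from scratch in a non-noetherian setting, and you give no mechanism for this; the reduction to the noetherian world is precisely the idea you are missing. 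Finally, your derivation of (iii) from (iv) is slightly off: almost vanishing of $\check{H}^1$ does not yield the sheaf property. The correct route is that almost exactness of the \v{C}ech complex for $\mathcal{O}_X^{+a}$ gives, after inverting $\varpi$, honest exactness for $\mathcal{O}_X$; then $\mathcal{O}_X^+$ is a sheaf because it is cut out of $\mathcal{O}_X$ by the pointwise condition $|f(x)|\leq 1$.
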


We remark that we did not assume that $R^+$ is a $K^\circ$-algebra, although this is satisfied in all examples
of interest to us. For this reason, it does not literally make sense to use the language of almost mathematics
in the context of $\mathcal{O}_X^+$. In the following, the reader may safely assume that $R^+$ is a
$K^\circ$-algebra, which avoids some small extra twists.

Let us give an outline of the proof. First, we show that the map $X\rightarrow X^\flat$ is continuous. Next, we prove a slightly weaker version of (ii), and give an explicit description of the perfectoid $K^{\circ a}$-algebra associated to $\mathcal{O}_X(U)$. This will be used to prove a crucial approximation lemma, dealing with the problem that the map $g\mapsto g^\sharp$ is far from being surjective. Nonetheless, it turns out that one can approximate any function $f\in R$ by a function of the form $g^\sharp$ such that the maps $x\mapsto |f(x)|$ and $x\mapsto |g^\sharp(x)|$ are identical except maybe at points $x$ where both of them are very small. It is then easy to deduce part (i), and also part (ii). We note that the same approximation lemma will be used later in the proof of the weight-monodromy conjecture for complete intersections.

It remains to prove that $\mathcal{O}_X$ is a sheaf with vanishing higher cohomology, and that the vanishing even extends to the almost integral level. The proof proceeds in several steps. First, we prove it in the case that $K$ is of characteristic $p$ and $(R,R^+)$ is the completed perfection of an affinoid $K$-algebra of tft. In that case, it is easy to deduce the result from Tate's acyclicity theorem. Again, the direct limit over the Frobenius extends the vanishing of cohomology to the almost integral level. Next, we do the general characteristic $p$ case by writing an arbitrary perfectoid affinoid $K$-algebra $(R,R^+)$ as the completed direct limit of algebras of the previous form. Finally, we deduce the case where $K$ has characteristic $0$ by using the result in characteristic $p$, making use of parts (i) and (ii) already proved.

\begin{proof} First, we check that the map $X\rightarrow X^\flat$ is well-defined and continuous: To check welldefinedness, we have to see that it maps valuations to valuations. This was already verified in the proof of Proposition \ref{TiltingValuationFields1}.

Moreover, the map $X\rightarrow X^\flat$ is continuous, because the preimage of the rational subset $U(\frac{f_1,\ldots,f_n}g)$ is given by $U(\frac{f_1^\sharp,\ldots,f_n^\sharp}{g^\sharp})$, assuming as in Remark \ref{RemRationalSubset} that $f_n$ is a power of $\varpi^\flat$ to ensure that $f_1^\sharp,\ldots,f_n^\sharp$ still generate $R$.

We have the following description of $\mathcal{O}_X$.

\begin{lem}\label{AlmostDescriptionSheaf} Let $U = U(\frac{f_1,\ldots,f_n}g)\subset \Spa(R^\flat,R^{\flat +})$ be rational, with preimage $U^\sharp\subset \Spa(R,R^+)$. Assume that all $f_i,g\in R^{\flat \circ}$ and that $f_n = \varpi^{\flat N}$ for some $N$; this is always possible without changing the rational subspace.
\begin{altenumerate}
\item[{\rm (i)}] Consider the $\varpi$-adic completion
\[
R^\circ\langle \left(\frac{f_1^\sharp}{g^\sharp}\right)^{1/p^\infty},\ldots,\left(\frac{f_n^\sharp}{g^\sharp}\right)^{1/p^\infty}\rangle 
\]
of the subring
\[
R^\circ[ \left(\frac{f_1^\sharp}{g^\sharp}\right)^{1/p^\infty},\ldots,\left(\frac{f_n^\sharp}{g^\sharp}\right)^{1/p^\infty}]\subset R[\frac 1{g^\sharp}]\ .
\]
Then $R^\circ\langle \left(\frac{f_1^\sharp}{g^\sharp}\right)^{1/p^\infty},\ldots,\left(\frac{f_n^\sharp}{g^\sharp}\right)^{1/p^\infty}\rangle^a$ is a perfectoid $K^{\circ a}$-algebra.
\item[{\rm (ii)}] The algebra $\mathcal{O}_X(U^\sharp)$ is a perfectoid $K$-algebra, with associated perfectoid $K^{\circ a}$-algebra
\[
\mathcal{O}_X(U^\sharp)^{\circ a} = R^\circ\langle \left(\frac{f_1^\sharp}{g^\sharp}\right)^{1/p^\infty},\ldots,\left(\frac{f_n^\sharp}{g^\sharp}\right)^{1/p^\infty}\rangle^a\ .
\]
\item[{\rm (iii)}] The tilt of $\mathcal{O}_X(U^\sharp)$ is given by $\mathcal{O}_{X^\flat}(U)$.
\end{altenumerate}
\end{lem}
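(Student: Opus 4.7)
The plan is to treat the characteristic $p$ analog first and transfer to characteristic $0$ via the tilting equivalence, then identify the resulting algebra with $\mathcal{O}_X(U^\sharp)$ through the universal property. Denote $A = R^\circ\langle (f_1^\sharp/g^\sharp)^{1/p^\infty},\ldots,(f_n^\sharp/g^\sharp)^{1/p^\infty}\rangle$ and let $A^\flat$ be the $\varpi^\flat$-adic completion of $R^{\flat\circ}[(f_1/g)^{1/p^\infty},\ldots,(f_n/g)^{1/p^\infty}]$ inside $R^\flat[g^{-1}]$; note these localizations make sense because $f_n = \varpi^{\flat N}$ forces $g$ to be bounded below on $U$ (Remark \ref{RemRationalSubset}). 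Since $R^\flat$ is perfect, the uncompleted ring above is a perfect subring of $R^\flat[g^{-1}]$, so $A^\flat/\varpi^\flat$ is perfect; combined with $\varpi^\flat$-adic completeness and $\varpi^\flat$-torsion-freeness (preserved by $\varpi^\flat$-adic completion of a torsion-free module), this shows $A^{\flat a}$ is a perfectoid $K^{\flat\circ a}$-algebra.

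For (i), I would transfer this to the characteristic $0$ side. The tilting identification $R^{\flat\circ}/\varpi^\flat \cong R^\circ/\varpi$ via $x \mapsto x^\sharp$ sends $f_i^{1/p^k}/g^{1/p^k}$ to $(f_i^{1/p^k})^\sharp/(g^{1/p^k})^\sharp$, using that $x \mapsto x^\sharp$ is multiplicative so these really are $p^k$-th roots of $f_i^\sharp/g^\sharp$ in $R[1/g^\sharp]$. Hence there is a canonical isomorphism $(A^\flat/\varpi^\flat)^a \cong (A/\varpi)^a$ of perfectoid $K^{\circ a}/\varpi$-algebras. By Theorem \ref{DeformPerfectoid} this $K^{\circ a}/\varpi$-algebra lifts uniquely to a perfectoid $K^{\circ a}$-algebra, and one identifies the lift with $A^a$ directly: $A$ is $\varpi$-adically complete by construction, and $\varpi$-torsion-free because the uncompleted ring sits inside the $K$-algebra $R[1/g^\sharp]$ (and the $\varpi$-adic completion of a $\varpi$-torsion-free module is still $\varpi$-torsion-free, using left-exactness of completion applied to the short exact sequence $0\to M \xrightarrow{\varpi} M \to M/\varpi \to 0$). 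This establishes (i), and $A[1/\varpi]$ is then a perfectoid $K$-algebra by Lemma \ref{FrobIsom}.

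For (ii), I would prove $A[1/\varpi] = \mathcal{O}_X(U^\sharp)$ by a two-sided application of the universal property in Proposition \ref{UnivPropertyPresheaf}. For the inclusion $A[1/\varpi] \hookrightarrow \mathcal{O}_X(U^\sharp)$: on $U^\sharp$, $|g^\sharp(x)|$ is bounded away from $0$, so $g^\sharp$ is invertible in $\mathcal{O}_X(U^\sharp)$, and the elements $(f_i^{1/p^k})^\sharp/(g^{1/p^k})^\sharp \in \mathcal{O}_X(U^\sharp)$ are powerbounded since their $p^k$-th powers $f_i^\sharp/g^\sharp$ lie in $\mathcal{O}_X^+(U^\sharp)$; by completeness the $\varpi$-adic completion $A$ embeds into $\mathcal{O}_X^\circ(U^\sharp)$. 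Conversely, $(A[1/\varpi], A_\ast)$ is a complete affinoid $K$-algebra and the inequalities $|f_i^\sharp(y)| \le |g^\sharp(y)|$ for $y \in \Spa(A[1/\varpi], A_\ast)$ follow from $f_i^\sharp/g^\sharp \in A$, so $\Spa(A[1/\varpi], A_\ast) \to X$ factors through $U^\sharp$, producing a canonical map $\mathcal{O}_X(U^\sharp) \to A[1/\varpi]$. The two maps agree with the identity on the dense subring $R[f_1^\sharp/g^\sharp,\ldots,f_n^\sharp/g^\sharp]$ and therefore are inverse. Part (iii) is immediate: the perfectoid $K^{\flat \circ a}$-algebra that tilts $A^a$ is $A^{\flat a}$ by construction, and the same universal-property argument applied in characteristic $p$ identifies $A^\flat[1/\varpi^\flat]$ with $\mathcal{O}_{X^\flat}(U)$ (the argument is cleaner here because the $p^k$-th roots of $f_i/g$ automatically live in $R^\flat[g^{-1}]$ from perfectness).

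The main obstacle I anticipate is the bookkeeping around the distinction between $R^+$ and $R^\circ$ together with the integral closure $\hat{B}$ appearing in Definition \ref{DefinitionPresheaf}: the perfectoid machinery only controls things up to the almost setting, while Huber's construction is pinned down integrally. Fortunately the inclusion $\mathfrak{m} R^\circ \subset R^+ \subset R^\circ$ makes $R^+$ almost equal to $R^\circ$, so the universal property is flexible enough; the truly essential technical input is that the $p^k$-th roots $(f_i^{1/p^k})^\sharp/(g^{1/p^k})^\sharp$, which are only \emph{a priori} elements of $R[1/g^\sharp]$ rather than of $R[f_i^\sharp/g^\sharp]$, automatically lie in $\mathcal{O}_X^\circ(U^\sharp)$ once the structure presheaf is formed, because $g^\sharp$ becomes invertible in the rational localization. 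This forces the Huber completion to be perfectoid and coincide with $A[1/\varpi]$, which is the crux of the whole lemma.
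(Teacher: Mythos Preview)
Your overall architecture matches the paper's: settle characteristic $p$ first, then transfer via tilting, then identify with Huber's rational localization. The characteristic-$p$ step and the identification $A[\varpi^{-1}]=\mathcal{O}_X(U^\sharp)$ are essentially right. But the transfer step has a real gap.

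You assert that the tilting isomorphism $R^{\flat\circ}/\varpi^\flat\cong R^\circ/\varpi$ ``sends $f_i^{1/p^k}/g^{1/p^k}$ to $(f_i^{1/p^k})^\sharp/(g^{1/p^k})^\sharp$'' and that ``hence there is a canonical isomorphism $(A^\flat/\varpi^\flat)^a\cong (A/\varpi)^a$.'' This does not follow. The elements $f_i^{1/p^k}/g^{1/p^k}$ are not in $R^{\flat\circ}$, so the sharp map does not apply to them directly; and after reducing modulo $\varpi$ the ambient ring $R[1/g^\sharp]$ collapses to zero, so you cannot compare $A/\varpi$ and $A^\flat/\varpi^\flat$ as subrings of anything. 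Concretely: both are quotients of the polynomial ring $R^\circ/\varpi[T_i^{1/p^\infty}]$ (identified via tilting), and you must show the kernels agree almost. In characteristic $p$ the paper proves the kernel is almost the explicit ideal $I^\flat=(T_i^{1/p^m}g^{1/p^m}-f_i^{1/p^m})$, using that $I^\flat$ is perfect to run the ``$\varpi^k f\in I\Rightarrow \varpi^{k/p^m}f\in I$'' trick. In characteristic $0$ the analogous ideal $I$ is not perfect and that trick is unavailable; you have only $I\subset\ker$, not the reverse almost-inclusion.

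The paper closes this gap by a sandwich argument: it takes the untilt $(S,S^+)$ of the already-established perfectoid $K^\flat$-algebra $\mathcal{O}_{X^\flat}(U)$ via Theorem~\ref{TiltingEquivalence}, observes that $\Spa(S,S^+)\to X$ factors through $U^\sharp$ (hence there is a map $\mathcal{O}_X(U^\sharp)\to S$, and in particular $A\to S$), and then checks that the composite
\[
R^\circ\langle T_i^{1/p^\infty}\rangle^a/(I,\varpi)\longrightarrow A^a/\varpi\longrightarrow S^{\circ a}/\varpi
\]
tilts to the characteristic-$p$ isomorphism $R^{\flat\circ}\langle T_i^{1/p^\infty}\rangle^a/(I^\flat,\varpi^\flat)\cong \mathcal{O}_{X^\flat}(U)^{\circ a}/\varpi^\flat$. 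Since the first map is surjective, both are isomorphisms. Your appeal to Theorem~\ref{DeformPerfectoid} is morally the same idea, but to invoke it you already need to know that $A^a/\varpi$ is the perfectoid $K^{\circ a}/\varpi$-algebra $(A^\flat/\varpi^\flat)^a$, which is exactly the unproved point. Inserting the abstract untilt $S$ and the factorization above is the missing step.
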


\begin{proof} \begin{altenumerate}
\item[{\rm (i), ($K$ of characteristic $p$)}] Assume that $K$ has characteristic $p$, and identify $K^\flat = K$; the general case is dealt with below. We see from the definition that
\[
R^\circ\langle \left(\frac{f_1}{g}\right)^{1/p^\infty},\ldots,\left(\frac{f_n}{g}\right)^{1/p^\infty}\rangle
\]
is flat over $K^\circ$ and $\varpi$-adically complete.

We want to show that modulo $\varpi$, Frobenius is almost surjective with kernel almost generated by $\varpi^{1/p}$. We have a surjection
\[
R^\circ[T_1^{1/p^\infty},\ldots,T_n^{1/p^\infty}]\rightarrow R^\circ[\left(\frac{f_1}{g}\right)^{1/p^\infty},\ldots,\left(\frac{f_n}{g}\right)^{1/p^\infty}]\  .
\]
Its kernel contains the ideal $I$ generated by all $T_i^{1/p^m}g^{1/p^m} - f_i^{1/p^m}$. We claim that the induced morphism
\[
R^\circ[T_1^{1/p^\infty},\ldots,T_n^{1/p^\infty}] / I\rightarrow R^\circ[\left(\frac{f_1}{g}\right)^{1/p^\infty},\ldots,\left(\frac{f_n}{g}\right)^{1/p^\infty}]
\]
is an almost isomorphism. Indeed, it is an isomorphism after inverting $\varpi$, because this also inverts $g$. If $f$ lies in the kernel of this map, there is some $k$ with $\varpi^k f\in I$. But then $(\varpi^{k/p^m} f)^{p^m}\in I$, and because $I$ is perfect, also $\varpi^{k/p^m}f\in I$. This gives the desired statement.

Reducing modulo $\varpi$, we have an almost isomorphism
\[
R^\circ[T_1^{1/p^\infty},\ldots,T_n^{1/p^\infty}] / (I,\varpi) \rightarrow R^\circ\langle \left(\frac{f_1}{g}\right)^{1/p^\infty},\ldots,\left(\frac{f_n}{g}\right)^{1/p^\infty}\rangle / \varpi\ .
\]
From the definition of $I$, it is immediate that Frobenius gives an isomorphism
\[
R^\circ[T_1^{1/p^\infty},\ldots,T_n^{1/p^\infty}] / (I,\varpi^{1/p})\cong R^\circ[T_1^{1/p^\infty},\ldots,T_n^{1/p^\infty}] / (I,\varpi)\ .
\]
This finally shows that
\[
R^\circ\langle \left(\frac{f_1}{g}\right)^{1/p^\infty},\ldots,\left(\frac{f_n}{g}\right)^{1/p^\infty}\rangle^a
\]
is a perfectoid $K^{\circ a}$-algebra, giving part (i) in characteristic $p$.

\item[{\rm (i)$\Rightarrow$(ii), (General $K$)}] We show that in general, (i) implies (ii). Note that $R^\circ\subset R$ is open and bounded, hence we may choose $R_0=R^\circ$ in Definition \ref{DefinitionPresheaf}. We have the inclusions
\[
R^\circ[ \frac{f_1^\sharp}{g^\sharp},\ldots,\frac{f_n^\sharp}{g^\sharp}]\subset R^\circ[\left(\frac{f_1^\sharp}{g^\sharp}\right)^{1/p^\infty},\ldots,\left(\frac{f_n^\sharp}{g^\sharp}\right)^{1/p^\infty}]\subset R[\frac 1{g^\sharp}]\ .
\]
Moreover, we claim that
\[
\varpi^{nN} R^\circ[\left(\frac{f_1^\sharp}{g^\sharp}\right)^{1/p^\infty},\ldots,\left(\frac{f_n^\sharp}{g^\sharp}\right)^{1/p^\infty}]\subset R^\circ[ \frac{f_1^\sharp}{g^\sharp},\ldots,\frac{f_n^\sharp}{g^\sharp}]\ :
\]
Indeed, $\frac 1{g^\sharp} = \varpi^{-N} \frac{f_n^\sharp}{g^\sharp}$, and any element on the left-hand side can be written as a sum of terms on the right-hand side with coefficients in $\frac 1{(g^\sharp)^n} R^\circ$.

Now we may pass to the $\varpi$-adic completion and get inclusions
\[
R^\circ\langle \frac{f_1^\sharp}{g^\sharp},\ldots,\frac{f_n^\sharp}{g^\sharp}\rangle \subset R^\circ\langle \left(\frac{f_1^\sharp}{g^\sharp}\right)^{1/p^\infty},\ldots,\left(\frac{f_n^\sharp}{g^\sharp}\right)^{1/p^\infty}\rangle\subset R\langle \frac{f_1^\sharp}{g^\sharp},\ldots,\frac{f_n^\sharp}{g^\sharp}\rangle=\mathcal{O}_X(U) .
\]
Thus it follows from part (i) that
\[
\mathcal{O}_X(U) = R^\circ\langle \left(\frac{f_1^\sharp}{g^\sharp}\right)^{1/p^\infty},\ldots,\left(\frac{f_n^\sharp}{g^\sharp}\right)^{1/p^\infty}\rangle[\varpi^{-1}]
\]
is perfectoid, with corresponding perfectoid $K^{\circ a}$-algebra.

\item[{\rm (i), (iii), (General $K$)}] Again, we see from the definition that
\[
R^\circ\langle \left(\frac{f_1^\sharp}{g^\sharp}\right)^{1/p^\infty},\ldots,\left(\frac{f_n^\sharp}{g^\sharp}\right)^{1/p^\infty}\rangle
\]
is flat over $K^\circ$ and $\varpi$-adically complete. We have to show that modulo $\varpi$, Frobenius is almost surjective with kernel almost generated by $\varpi^{1/p}$. We still have the map
\[
R^\circ[T_1^{1/p^\infty},\ldots,T_n^{1/p^\infty}] / I \rightarrow R^\circ[\left(\frac{f_1^\sharp}{g^\sharp}\right)^{1/p^\infty},\ldots,\left(\frac{f_n^\sharp}{g^\sharp}\right)^{1/p^\infty}]\ ,
\]
where $I$ is the ideal generated by all $T_i^{1/p^m} (g^{1/p^m})^\sharp - (f_i^{1/p^m})^\sharp$. Also, we may apply our results for the tilted situation. In particular, we know that $(\mathcal{O}_{X^\flat}(U),\mathcal{O}_{X^\flat}^+(U))$ is a perfectoid affinoid $K^\flat$-algebra. Let $(S,S^+)$ be its tilt. Then $\Spa(S,S^+)\rightarrow X$ factors over $U^\sharp$, and hence we get a map $(\mathcal{O}_X(U^\sharp),\mathcal{O}_X^+(U^\sharp))\rightarrow (S,S^+)$. The composite map
\[
R^\circ\langle T_1^{1/p^\infty},\ldots,T_n^{1/p^\infty}\rangle^a \rightarrow R^\circ\langle \left(\frac{f_1^\sharp}{g^\sharp}\right)^{1/p^\infty},\ldots,\left(\frac{f_n^\sharp}{g^\sharp}\right)^{1/p^\infty}\rangle^a \rightarrow \mathcal{O}_X(U^\sharp)^{\circ a}\rightarrow S^{\circ a}
\]
is a map of perfectoid $K^{\circ a}$-algebras, which is the tilt of the composite map
\[
R^{\flat \circ}\langle T_1^{1/p^\infty},\ldots,T_n^{1/p^\infty}\rangle^a \rightarrow R^{\flat \circ}\langle T_1^{1/p^\infty},\ldots,T_n^{1/p^\infty}\rangle^a / I^\flat \rightarrow \mathcal{O}_{X^\flat}(U)^{\circ a}\ ,
\]
where $I^\flat$ is the corresponding ideal which occurs in the tilted situation. Note that
\[
R^{\flat \circ}\langle T_1^{1/p^\infty},\ldots,T_n^{1/p^\infty}\rangle / (I^\flat,\varpi^\flat) = R^\circ\langle T_1^{1/p^\infty},\ldots,T_n^{1/p^\infty}\rangle / (I,\varpi)
\]
from the explicit description. Since
\[
R^{\flat \circ}\langle T_1^{1/p^\infty},\ldots,T_n^{1/p^\infty}\rangle^a / (I^\flat,\varpi^\flat) \rightarrow \mathcal{O}_{X^\flat}(U)^{\circ a}/\varpi^\flat
\]
is an isomorphism, so is the composite map
\[
R^\circ\langle T_1^{1/p^\infty},\ldots,T_n^{1/p^\infty}\rangle^a / (I,\varpi) \rightarrow R^\circ\langle \left(\frac{f_1^\sharp}{g^\sharp}\right)^{1/p^\infty},\ldots,\left(\frac{f_n^\sharp}{g^\sharp}\right)^{1/p^\infty}\rangle^a/\varpi \rightarrow S^{\circ a}/\varpi\ ,
\]
as it identifies with the previous map under tilting. The first map being surjective, it follows that both maps are isomorphisms. In particular,
\[
R^\circ\langle \left(\frac{f_1^\sharp}{g^\sharp}\right)^{1/p^\infty},\ldots,\left(\frac{f_n^\sharp}{g^\sharp}\right)^{1/p^\infty}\rangle^a/\varpi \cong S^{\circ a}/\varpi\ .
\]
This gives part (i), and hence part (ii), and then the latter isomorphism gives part (iii).
\end{altenumerate}
\end{proof}

We need an approximation lemma.

\begin{lem}\label{ApproximationLemma} Let $R=K\langle T_0^{1/p^\infty},\ldots,T_n^{1/p^\infty} \rangle$. Let $f\in R^\circ$ be a homogeneous element of degree $d\in \mathbb{Z}[\frac 1p]$. Then for any rational number $c\geq 0$ and any $\epsilon>0$, there exists an element
\[
g_{c,\epsilon}\in R^{\flat \circ}= K^{\flat \circ} \langle T_0^{1/p^\infty},\ldots,T_n^{1/p^\infty} \rangle
\]
homogeneous of degree $d$ such that for all $x\in X=\Spa(R,R^\circ)$, we have
\[
|f(x)-g_{c,\epsilon}^\sharp(x)|\leq |\varpi|^{1-\epsilon} \max(|f(x)|,|\varpi|^c)\ .
\]
\end{lem}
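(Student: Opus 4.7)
The plan is to build $g_{c,\epsilon}$ by iterating the tilting equivalence to successively peel off the ``$\varpi$-adic digits'' of $f$, and then to repackage these digits into a single element of $R^{\flat\circ}$ using the perfectness of $R^{\flat\circ}$.

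First, I observe that the ring isomorphism $R^\circ/\varpi\cong R^{\flat\circ}/\varpi^\flat$ from the tilting equivalence respects the $\mathbb{Z}[1/p]_{\geq 0}$-grading by total degree, because it fixes the distinguished monomials $T_j^{1/p^m}$. Given $f\in R^\circ$ homogeneous of degree $d$, I therefore find $g_0\in R^{\flat\circ}$ homogeneous of degree $d$ with $g_0^\sharp\equiv f\pmod\varpi$, taking homogeneous lifts to ensure that $g_0^\sharp$ itself is homogeneous of degree $d$. Setting $f_1=(f-g_0^\sharp)/\varpi\in R^\circ$, again homogeneous of degree $d$, and iterating, I obtain homogeneous elements $g_0,\ldots,g_{N-1}\in R^{\flat\circ}$ of degree $d$ with
\[
f \;=\; \sum_{k=0}^{N-1}\varpi^k g_k^\sharp \;+\; \varpi^N f_N,\qquad f_N\in R^\circ\text{ homogeneous of degree }d.
\]
A straightforward induction along the iteration gives the pointwise a priori bound $|\varpi|^k|g_k^\sharp(x)|\leq\max(|f(x)|,|\varpi|)$ for every $k$ and every $x\in X$.

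Next I combine these into a single element $g=\sum_{k=0}^{N-1}(\varpi^\flat)^k g_k\in R^{\flat\circ}$, homogeneous of degree $d$, and evaluate $g^\sharp$. Since $R^{\flat\circ}$ is perfect, $g^{1/p^n}=\sum_k(\varpi^\flat)^{k/p^n}g_k^{1/p^n}$ for every $n\geq 0$. Applying the additive ring isomorphism $R^{\flat\circ}/\varpi^\flat\cong R^\circ/\varpi$ (which sends $a\mapsto a^\sharp\bmod\varpi$), one has $(g^{1/p^n})^\sharp\equiv\sum_k\varpi^{k/p^n}(g_k^{1/p^n})^\sharp\pmod\varpi$; raising this congruence to the $p^n$-th power, using the elementary estimate that $A\equiv B\pmod\varpi$ forces $A^{p^n}\equiv B^{p^n}\pmod{\varpi^{n+1}}$ (valid whenever $|p|\leq|\varpi|$), yields
\[
g^\sharp\;\equiv\;\Bigl(\sum_k\varpi^{k/p^n}(g_k^{1/p^n})^\sharp\Bigr)^{\!p^n}\pmod{\varpi^{n+1}}.
\]
Expanding multinomially, the pure terms collapse by multiplicativity of $\sharp$ to $\sum_k\varpi^k g_k^\sharp=f-\varpi^N f_N$, while the mixed terms carry multinomial coefficients divisible by $p$ and are bounded pointwise, using the a priori bound and the weighted AM--GM inequality, by $|p|\max_k|\varpi|^k|g_k^\sharp(x)|\leq|p|\max(|f(x)|,|\varpi|)$.

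Combining all three sources of error yields $|f(x)-g^\sharp(x)|\leq\max\bigl(|\varpi|^N,|\varpi|^{n+1},|p|\max(|f(x)|,|\varpi|)\bigr)$ uniformly in $x$. To force this below $|\varpi|^{1-\epsilon}\max(|f(x)|,|\varpi|^c)$, I take $N$ and $n$ large so that $|\varpi|^N,|\varpi|^{n+1}\leq|\varpi|^{c+1-\epsilon}$, and, for $c$ larger than the implicit constant controlled by $v_\varpi(p)$, I repeat the entire construction with $\varpi$ replaced by a $p^m$-th root $\varpi^{1/p^m}\in K$ (available because $K$ is perfectoid and one may choose $\varpi$ with a compatible system of $p$-power roots in $K^\circ$). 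Refining the step size in this way sharpens the a priori bound to $|\varpi|^k|g_k^\sharp(x)|\leq\max(|f(x)|,|\varpi|^{1/p^m})$, so that the mixed-term contribution becomes $|p|\max(|f|,|\varpi|^{1/p^m})$, which for $m$ sufficiently large in terms of $c,\epsilon$ is absorbed into $|\varpi|^{1-\epsilon}\max(|f|,|\varpi|^c)$. The main technical obstacle is precisely this pointwise bookkeeping across the different regimes $|f(x)|\geq|\varpi|$, $|\varpi|^c\leq|f(x)|<|\varpi|$, and $|f(x)|<|\varpi|^c$: in each regime one must verify that the tail, mod-$\varpi^{n+1}$, and mixed contributions are simultaneously absorbed into $|\varpi|^{1-\epsilon}\max(|f(x)|,|\varpi|^c)$ by a single choice of parameters $m,n,N$ depending only on $c$ and $\epsilon$.
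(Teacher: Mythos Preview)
Your argument has a genuine gap in the final paragraph. The mixed-term error from the multinomial expansion is bounded by $|p|\max_k|\varpi|^k|g_k^\sharp(x)|$, and your a priori bound on this maximum is $\max(|f(x)|,|\varpi|)$. The difficulty is at points $x$ where $|f(x)|$ is very small, say $|f(x)|<|\varpi|^c$: there the individual digits $g_k^\sharp(x)$ need not be small---only their weighted sum $\sum_k\varpi^k g_k^\sharp(x)\approx f(x)$ is, by cancellation---so the maximum is genuinely of order $|\varpi|$, not $|\varpi|^c$. Your error is therefore at best $|p|\cdot|\varpi|$, whereas the target is $|\varpi|^{c+1-\epsilon}$, which is much smaller once $c$ is large. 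Your proposed fix of refining the step to $\varpi^{1/p^m}$ moves in the wrong direction: the a priori bound becomes $\max(|f(x)|,|\varpi|^{1/p^m})$, which is \emph{larger} than before (since $|\varpi|^{1/p^m}>|\varpi|$), so on the small-$|f|$ locus the mixed-term contribution becomes $|p|\cdot|\varpi|^{1/p^m}$, which tends to $|p|$ as $m\to\infty$ and never drops below $|\varpi|^{c+1-\epsilon}$ for $c$ sufficiently large. Your claim that refining ``sharpens'' the bound is simply backwards.

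The paper proceeds by a different mechanism: an induction increasing $c$ by a fixed step $a<\epsilon$. At each stage the correction added to $g_c$ is not a polynomial in $\varpi^\flat$ with coefficients chosen independently of $g_c$, but rather a sum of the form
\[
\sum_i(\varpi^\flat)^{c+1-\epsilon+\epsilon(c')}\bigl(g_c/(\varpi^\flat)^c\bigr)^{i}s_i
\]
built from fractional powers of $g_c$ itself. This is precisely what allows the correction to adapt to the local size of $f$: on the rational subset $U_c^\sharp=\{|g_c^\sharp|\leq|\varpi|^c\}$ the element $g_c^\sharp/\varpi^c$ is power-bounded, and the preceding lemma identifying $\mathcal{O}_X(U_c^\sharp)^{\circ a}$ with $R^\circ\langle(g_c^\sharp/\varpi^c)^{1/p^\infty}\rangle^a$ is what lets one expand the residual $f-g_c^\sharp$ in this shape. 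Your global $\varpi$-adic digits cannot see this local structure, and that is why the argument stalls for large $c$.
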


\begin{rem} Note that for $\epsilon<1$, the given estimate says in particular that for all $x\in X=\Spa(R,R^\circ)$, we have
\[
\max(|f(x)|,|\varpi|^c) = \max(|g_{c,\epsilon}^\sharp(x)|,|\varpi|^c)\ .
\]
\end{rem}

\begin{proof} We fix $\epsilon>0$, and assume $\epsilon<1$ and $\epsilon\in \mathbb{Z}[\frac 1p]$. We also fix $f$. Then we prove inductively that for any $c$ one can find some $\epsilon(c)>0$ and some
\[
g_c\in R^{\flat \circ}= K^{\flat \circ} \langle T_0^{1/p^\infty},\ldots,T_n^{1/p^\infty}\rangle
\]
homogeneous of degree $d$ such that for all $x\in X=\Spa(R,R^\circ)$, we have
\[
|f(x)-g_c^\sharp(x)|\leq |\varpi|^{1-\epsilon+\epsilon(c)} \max(|f(x)|,|\varpi|^c)\ .
\]
We will need $\epsilon(c)$, as each induction step will lose some small constant because of some almost mathematics involved. Now we argue by induction, increasing from $c$ to $c^\prime = c + a$, where $0<a< \epsilon$ is some fixed rational number in $\mathbb{Z}[\frac 1p]$. The case $c=0$ is obvious: One may take $\epsilon(0)=\epsilon$. We are free to replace $\epsilon(c)$ by something smaller, so without loss of generality, we assume $\epsilon(c)\leq\epsilon - a$ and $\epsilon(c)\in \mathbb{Z}[\frac 1p]$.

Let $X=\Spa(R,R^+)$, where $R^+=R^\circ = K^\circ\langle T_0^{1/p^\infty},\ldots,T_n^{1/p^\infty} \rangle$. Let $U_c\subset X^\flat=\Spa(R^\flat,R^{\flat +})$ be the rational subset given by $|g_c(x)|\leq |\varpi^\flat|^c$. Its preimage $U_c^\sharp\subset X$ is given by $|f(x)|\leq |\varpi|^c$. The condition implies that
\[
h=f-g_c^\sharp \in \varpi^{c+1-\epsilon +\epsilon(c)}\mathcal{O}_X^+(U_c^\sharp)\ .
\]
The previous lemma shows that
\[
\mathcal{O}_X(U_c^\sharp)^{\circ a} = R^\circ\left\langle\left(\frac{g^\sharp_c}{\varpi^c}\right)^{\frac 1{p^{\infty}}}\right\rangle^a\ .
\]

But $h$ is a homogeneous element, so that $h$ lies almost in the $\varpi$-adic completion of
\[
\bigoplus_{i\in \mathbb{Z}[\frac 1p],0\leq i\leq 1} \varpi^{c+1-\epsilon+\epsilon(c)} \left(\frac{g^\sharp_c}{\varpi^c}\right)^i R_{\mathrm{deg}=d-di}^\circ\ .
\]
This shows that we can find elements $r_i\in R^+$ homogeneous of degree $d-di$, such that $r_i\rightarrow 0$, with
\[
h = \sum_{i\in \mathbb{Z}[\frac 1p],0\leq i\leq 1} \varpi^{c+1-\epsilon + \epsilon(c^\prime)} \left(\frac{g^\sharp_c}{\varpi^c}\right)^i r_i\ ,
\]
where we choose some $0<\epsilon(c^\prime)<\epsilon(c)$, $\epsilon(c^\prime)\in \mathbb{Z}[\frac 1p]$. Choose $s_i\in R^{\flat +}$ homogeneous of degree $d-di$, $s_i\rightarrow 0$, such that $\varpi$ divides $r_i-s_i^\sharp$. Now set
\[
g_{c^\prime} = g_c + \sum_{i\in \mathbb{Z}[\frac 1p],0\leq i\leq 1} (\varpi^\flat)^{c+1-\epsilon + \epsilon(c^\prime)} \left(\frac{g_c}{(\varpi^\flat)^c}\right)^i s_i\ .
\]
We claim that for all $x\in X$, we have
\[
|f(x)-g^\sharp_{c^\prime}(x)|\leq |\varpi|^{1-\epsilon + \epsilon(c^\prime)}\max(|f(x)|,|\varpi|^{c^\prime})\ .
\]
Assume first that $|f(x)|> |\varpi|^c$. Then we have $|g^\sharp_c(x)|=|f(x)|> |\varpi|^c$. It is enough to show that
\[
\left|\left((\varpi^\flat)^{c+1-\epsilon + \epsilon(c^\prime)} \left(\frac{g_c}{(\varpi^\flat)^c}\right)^i s_i\right)^\sharp(x)\right|\leq |\varpi|^{1-\epsilon + \epsilon(c^\prime)} |f(x)|\ .
\]
Neglecting $|s_i^\sharp(x)|\leq 1$, the left-hand side is maximal when $i=1$, in which case it evaluates to the right-hand side, so that we get the desired estimate.

Now we are left with the case $|f(x)|\leq |\varpi|^c$. We claim that in fact
\[
|f(x)-g^\sharp_{c^\prime}(x)|\leq |\varpi|^{c^\prime + 1-\epsilon + \epsilon(c^\prime)}
\]
in this case, which is clearly enough. For this, it is enough to see that $f-g^\sharp_{c^\prime}$ is an element of $\varpi^{c+1}\mathcal{O}_X(U_c^\sharp)^\circ$, because $c+1> c^\prime + 1-\epsilon + \epsilon(c^\prime)$. But we have
\[
\frac{g_{c^\prime}}{(\varpi^\flat)^c} = \frac{g_c}{(\varpi^\flat)^c} + \sum_i (\varpi^\flat)^{1-\epsilon + \epsilon(c^\prime)} \left(\frac{g_c}{(\varpi^\flat)^c}\right)^i s_i\ ,
\]
with all terms being in $\mathcal{O}_{X^\flat}(U_c)^\circ$. Hence we get that
\[
\frac{g^\sharp_{c^\prime}}{\varpi^c} = \frac{g^\sharp_c}{\varpi^c} + \sum_i \varpi^{1-\epsilon + \epsilon(c^\prime)} \left(\frac{g^\sharp_c}{\varpi^c}\right)^i r_i
\]
in $\mathcal{O}_X(U_c^\sharp)^\circ$, modulo $\varpi$. Multiplying by $\varpi^c$, this rewrites as
\[
f-g^\sharp_{c^\prime} = f - g^\sharp_c - h = 0
\]
modulo $\varpi^{c+1}$. This gives the desired estimate.
\end{proof}

\begin{cor}\label{TiltingHomeomorphism} Let $(R,R^+)$ be a perfectoid affinoid $K$-algebra, with tilt $(R^\flat,R^{\flat +})$, and let $X=\Spa(R,R^+)$, $X^\flat=\Spa(R^\flat,R^{\flat +})$.
\begin{altenumerate}
\item[{\rm (i)}] For any $f\in R$ and any $c\geq 0$, $\epsilon>0$, there exists $g_{c,\epsilon}\in R^\flat$ such that for all $x\in X$, we have
\[
|f(x)-g_{c,\epsilon}^\sharp(x)|\leq |\varpi|^{1-\epsilon} \max(|f(x)|,|\varpi|^c)\ .
\]
\item[{\rm (ii)}] For any $x\in X$, the completed residue field $\widehat{k(x)}$ is a perfectoid field.
\item[{\rm (iii)}] The morphism $X\rightarrow X^\flat$ induces a homeomorphism, identifying rational subsets.
\end{altenumerate}
\end{cor}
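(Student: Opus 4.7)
The plan is to reduce to Lemma~\ref{ApproximationLemma} by pulling back along a suitable map from a two-variable polynomial perfectoid algebra, using naturality of $\sharp$ under tilted morphisms. First, after multiplying $f$ by a power of $\varpi$ I may assume $f\in R^\circ$. Using the identification $R^\circ/\varpi\cong R^{\flat\circ}/\varpi^\flat$ from Theorem~\ref{TiltingEquivalence}, choose $h\in R^{\flat\circ}$ with $h^\sharp\equiv f\pmod{\varpi}$. Let $R':=K\langle T_0^{1/p^\infty},T_1^{1/p^\infty}\rangle$ and let $\phi^\flat\colon R'^\flat\to R^\flat$ be the continuous $K^\flat$-algebra homomorphism sending $T_0\mapsto\varpi^\flat$, $T_1\mapsto h$; its untilt $\phi\colon R'\to R$ under the tilting equivalence then sends $T_1\mapsto h^\sharp$, by naturality of $\sharp$. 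Now Lemma~\ref{ApproximationLemma}, applied to the degree-one homogeneous element $T_1\in R'^\circ$ with parameters $c,\epsilon$, produces $g\in R'^{\flat\circ}$ satisfying the asserted bound on $\Spa(R',R'^\circ)$; setting $\tilde g:=\phi^\flat(g)\in R^\flat$ and using $\phi(g^\sharp)=\tilde g^\sharp$, pullback along $\phi^\ast\colon X\to\Spa(R',R'^\circ)$ gives $|h^\sharp(x)-\tilde g^\sharp(x)|\le|\varpi|^{1-\epsilon}\max(|h^\sharp(x)|,|\varpi|^c)$ at every $x\in X$. Combined with $|f(x)-h^\sharp(x)|\le|\varpi|$ this already meets the target on $\{x:|f(x)|\ge|\varpi|^\epsilon\}$; on the residual region one bootstraps by replacing $h$ with the refined approximation and repeating, each round gaining a further factor of $|\varpi|^{1-\epsilon}$, so that finitely many iterations reach any prescribed $c$. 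The main obstacle is this iteration, since $\sharp$ is not additive and the refined approximation must be repackaged as a single sharp (via a Teichm\"uller-type lift in $R^{\flat\circ}$), without spoiling the precision already achieved on the large-$|f|$ region.

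\noindent\textbf{Part (ii).} Set $K_x:=\widehat{k(x)}$ and $K_x^+:=\widehat{k(x)^+}$. The proposition from Section~2 identifies $K_x^+$ with the $\varpi$-adic completion of $\mathcal{O}_{X,x}^+=\colim_{U\ni x}\mathcal{O}_X^+(U)$, and by Lemma~\ref{AlmostDescriptionSheaf} each $\mathcal{O}_X^+(U)$ is almost equal to the integral subring of a perfectoid $K$-algebra, so Frobenius is almost surjective on $\mathcal{O}_X^+(U)/\varpi$. This almost surjectivity is preserved by filtered colimits and by $\varpi$-adic completion, giving almost surjectivity of $\Phi$ on $K_x^+/\varpi$. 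Since $K_x$ is a complete Banach field over $K$ with $K_x^+$ open and bounded, the inclusion $K_x^+\subset K_x^\circ$ differs only by an almost-zero module, so $\Phi$ is genuinely surjective on $K_x^\circ/\varpi$; combined with completeness and residue characteristic $p$ inherited from $K$, this exhibits $K_x$ as a perfectoid $K$-algebra which is a field, hence a perfectoid field.

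\noindent\textbf{Part (iii).} Continuity of $X\to X^\flat$ and the pullback of rational subsets of $X^\flat$ to rational subsets of $X$ were already noted at the start of the proof. For the converse, given a rational subset $U(\tfrac{f_1,\ldots,f_n}{g})\subset X$, part~(i) produces $f_i^\flat,g^\flat\in R^\flat$ whose sharps approximate $f_i,g$ so tightly that, after adding a suitable power of $\varpi$ to the generating family via Remark~\ref{RemRationalSubset} to absorb the error in the non-archimedean triangle inequality, the rational subset they define on $X^\flat$ pulls back to exactly $U(\tfrac{f_1,\ldots,f_n}{g})$. For injectivity of $X\to X^\flat$: if $x,y\in X$ tilt to the same point, then $|g^\sharp(x)|=|g^\sharp(y)|$ for all $g\in R^\flat$, and approximating arbitrary $f\in R$ by sharps via part~(i) passes this equality to $|f(x)|=|f(y)|$. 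For surjectivity: a point of $X^\flat$ corresponds to a continuous map $R^\flat\to(L,L^+)$ to a complete affinoid field over $K^\flat$, which by part~(ii) applied to $X^\flat$ (immediate in characteristic $p$, where perfectoid is just perfect) is a perfectoid affinoid $K^\flat$-algebra; untilting via Theorem~\ref{TiltingEquivalence} produces a perfectoid affinoid $K$-algebra $(L^\sharp,L^{\sharp+})$ equipped with a compatible map from $(R,R^+)$, hence a preimage point in $X$. The resulting continuous bijection is a homeomorphism identifying rational subsets, since the latter form a basis of the topology on both sides and correspond under the map.
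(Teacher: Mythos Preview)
Your Part (i) has a genuine gap. Applying Lemma~\ref{ApproximationLemma} to the element $T_1\in R'^\circ$ is vacuous: $T_1$ is already the sharp of $T_1\in R'^{\flat\circ}$, so the lemma returns $g=T_1$ exactly, hence $\tilde g=\phi^\flat(T_1)=h$ and $\tilde g^\sharp=h^\sharp$. All you have recovered is the starting estimate $|f(x)-h^\sharp(x)|\le|\varpi|$; the approximation lemma has done no work. You then defer the actual content to an iteration which you yourself flag as ``the main obstacle'' and do not carry out. That obstacle is the whole point: combining successive approximations $h_0^\sharp+\varpi h_1^\sharp+\cdots$ into a \emph{single} sharp is exactly what must be done, and the two-variable map gives no leverage on it, since $f$ does not even lie in the image of $\phi$.

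The paper avoids iteration by using enough variables at once. After reducing to $R^+=R^\circ$ (via maximal points) and to $f\in R^\circ$ with $c$ an integer, one writes $f\equiv g_0^\sharp+\varpi g_1^\sharp+\cdots+\varpi^c g_c^\sharp\pmod{\varpi^{c+1}}$ with $g_i\in R^{\flat\circ}$, and defines a map $K\langle T_0^{1/p^\infty},\ldots,T_c^{1/p^\infty}\rangle\to R$ by $T_i^{1/p^m}\mapsto (g_i^{1/p^m})^\sharp$. Now $f$ is the image of the polynomial $T_0+\varpi T_1+\cdots+\varpi^c T_c$, which is \emph{not} itself a sharp, and Lemma~\ref{ApproximationLemma} applied to this polynomial produces the required $g_{c,\epsilon}$ in one stroke. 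Your bootstrapping, if made precise, would reproduce the decomposition $f=\sum\varpi^i g_i^\sharp$ and then face exactly this step.

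Parts (ii) and (iii) are essentially the paper's argument, with one organizational wrinkle: your (ii) invokes Lemma~\ref{AlmostDescriptionSheaf} for arbitrary rational $U\subset X$, but at this stage that lemma is only available for rational subsets arising as preimages from $X^\flat$. In characteristic $p$ this is no restriction; in characteristic $0$ one needs (iii) first. The paper therefore proves (ii) in characteristic $p$, then (iii), then notes (ii) in general follows by the same argument --- and your use of (ii) for $X^\flat$ inside the proof of (iii) is precisely that characteristic-$p$ instance, so the logic is sound once reordered.
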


\begin{proof}\begin{altenumerate}
\item[{\rm (i)}] As any maximal point of $\Spa(R,R^+)$ is contained in $\Spa(R,R^\circ)$, and it is enough to
check the inequality at maximal points after increasing $\epsilon$ slightly, it is enough to prove
this if $R^+=R^\circ$. At the expense of enlarging $c$, we may assume that $f\in R^\circ$, and also assume that
$c$ is an integer. Further, we can write $f=g_0^\sharp+\varpi g_1^\sharp + \ldots + \varpi^c g_c^\sharp +
\varpi^{c+1} f_{c+1}$ for certain $g_0,\ldots,g_c\in R^{\flat \circ}$ and $f_{c+1}\in R^\circ$. We can assume
$f_{c+1}=0$. Now we have the map
\[
K\langle T_0^{1/p^\infty},\ldots,T_c^{1/p^\infty}\rangle \rightarrow R
\]
sending $T_i^{1/p^m}$ to $(g_i^{1/p^m})^\sharp$, and $f$ is the image of $T_0 + \varpi T_1 + \ldots +
\varpi^c T_c$, to which we may apply Lemma \ref{ApproximationLemma}.

\item[{\rm (ii), ($K$ of characteristic $p$)}] In this case, we know that $\mathcal{O}_X(U)^{\circ a}$ is perfectoid for any rational subset $U$. It follows that the $\varpi$-adic completion of $\mathcal{O}_{X,x}^{\circ a}$ is a perfectoid $K^{\circ a}$-algebra, hence $\widehat{k(x)}$ is a perfectoid $K$-algebra. As it is also a nonarchimedean field, the result follows.

\item[{\rm (iii)}] First, part (i) immediately implies that any rational subset of $X$ is the preimage of a rational subset of $X^\flat$. Because $X$ is $T_0$, this implies that the map is injective. Now any $x\in X^\flat$ factors as a composite $R^\flat \rightarrow \widehat{k(x)}\rightarrow \Gamma \cup \{0\}$. As $\widehat{k(x)}$ is perfectoid, we may untilt to a perfectoid field over $K$, and we may also untilt the valuation by Proposition \ref{TiltingValuationFields1}. This shows that the map is surjective, giving part (iii). Now part (ii) follows in general with the same proof.
\end{altenumerate}
\end{proof}

For any subset $M\subset X$, we write $M^\flat\subset X^\flat$ for the corresponding subset of $X^\flat$.

\begin{cor}\label{StructureSheavesTilted} Let $(R,R^+)$ be a perfectoid affinoid $K$-algebra, with tilt $(R^\flat,R^{\flat +})$, and let $X=\Spa(R,R^+)$, $X^\flat=\Spa(R^\flat,R^{\flat +})$. Then for all rational $U\subset X$, the pair $(\mathcal{O}_X(U),\mathcal{O}_X^+(U))$ is a perfectoid affinoid $K$-algebra with tilt $(\mathcal{O}_{X^\flat}(U^\flat),\mathcal{O}_{X^\flat}^+(U^\flat))$.
\end{cor}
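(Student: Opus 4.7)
The plan is to combine Corollary \ref{TiltingHomeomorphism} with Lemma \ref{AlmostDescriptionSheaf}. Given a rational subset $U\subset X$, Corollary \ref{TiltingHomeomorphism}~(iii) tells us that $U=V^\sharp$ for some rational $V\subset X^\flat$, and by Remark \ref{RemRationalSubset} we may write $V=U(\frac{f_1,\ldots,f_n}{g})$ with $f_i,g\in R^{\flat \circ}$ and one of the $f_i$ equal to a power of $\varpi^\flat$. This places us exactly in the hypotheses of Lemma \ref{AlmostDescriptionSheaf}, whose parts (ii) and (iii) directly give that $\mathcal{O}_X(U)$ is a perfectoid $K$-algebra whose tilt is $\mathcal{O}_{X^\flat}(V)=\mathcal{O}_{X^\flat}(U^\flat)$.

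It then remains to match the integral subrings $\mathcal{O}_X^+(U)$ and $\mathcal{O}_{X^\flat}^+(U^\flat)$ under the tilting equivalence for perfectoid affinoid $K$-algebras (Lemma \ref{TiltPerfectoidAffinoid}). Here I would use the valuation-theoretic description
\[
\mathcal{O}_X^+(U) = \{f\in \mathcal{O}_X(U)\mid |f(x)|\leq 1\ \forall x\in U\}
\]
(and similarly on $X^\flat$) coming from \cite[Lemma 1.5, Proposition 1.6]{HuberDefAdic}, together with the fact that under the homeomorphism $U\cong U^\flat$ of Corollary \ref{TiltingHomeomorphism} valuations are related by $|f(x^\flat)|=|f^\sharp(x)|$. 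Combining these two facts, $\mathcal{O}_{X^\flat}^+(U^\flat)$ consists precisely of those $f\in \mathcal{O}_{X^\flat}(U^\flat)$ with $f^\sharp\in \mathcal{O}_X^+(U)$, and Lemma \ref{TiltPerfectoidAffinoid} identifies such a subring as the tilt of $\mathcal{O}_X^+(U)$, since on both sides the open integrally closed subring is determined by its reduction modulo $\varpi$, respectively $\varpi^\flat$, and these reductions are identified via the common residue $R^{+}/\varpi\cong R^{\flat +}/\varpi^\flat$ extended to rational localizations.

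All the substantive content—the approximation result and the explicit description of the perfectoid structure on rational localizations—has already been established in Corollary \ref{TiltingHomeomorphism} and Lemma \ref{AlmostDescriptionSheaf}. The only remaining step is the valuation-theoretic bookkeeping of the preceding paragraph, which is formal and which I do not expect to present any obstacle.
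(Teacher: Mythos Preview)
Your argument is correct and close in spirit to the paper's, but the paper handles the identification of the tilt a little differently. You invoke Lemma \ref{AlmostDescriptionSheaf}~(iii) to identify the tilt of $\mathcal{O}_X(U)$ with $\mathcal{O}_{X^\flat}(U^\flat)$ as perfectoid $K$-algebras, and then match the $+$-subrings by hand via the valuation-theoretic description $\mathcal{O}_X^+(U)=\{f:|f(x)|\leq 1\ \forall x\in U\}$ together with $|f(x^\flat)|=|f^\sharp(x)|$. The paper instead cites only Lemma \ref{AlmostDescriptionSheaf}~(ii) to get that $(\mathcal{O}_X(U),\mathcal{O}_X^+(U))$ is perfectoid affinoid, and then argues for the full pair at once: the universal property of Proposition \ref{UnivPropertyPresheaf} characterizes $(\mathcal{O}_X(U),\mathcal{O}_X^+(U))$ among perfectoid affinoid $K$-algebras, and since the tilting equivalence (Lemma \ref{TiltPerfectoidAffinoid}) carries this universal property to the analogous one over $K^\flat$, the tilt must be $(\mathcal{O}_{X^\flat}(U^\flat),\mathcal{O}_{X^\flat}^+(U^\flat))$. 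Your route is slightly more concrete and exposes exactly why the $+$-rings match; the paper's route is shorter and avoids having to unpack the bijection of Lemma \ref{TiltPerfectoidAffinoid} explicitly. Both are straightforward once Corollary \ref{TiltingHomeomorphism} and Lemma \ref{AlmostDescriptionSheaf} are in place.
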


\begin{proof} Corollary \ref{TiltingHomeomorphism} (iii) and Lemma \ref{AlmostDescriptionSheaf} (ii) show that $(\mathcal{O}_X(U),\mathcal{O}_X^+(U))$ is a perfectoid affinoid $K$-algebra. It can be characterized by the universal property of Proposition \ref{UnivPropertyPresheaf} among all perfectoid affinoid $K$-algebras, and tilting this universal property shows that its tilt has the analogous universal property characterizing $(\mathcal{O}_{X^\flat}(U^\flat),\mathcal{O}_{X^\flat}^+(U^\flat))$ among all perfectoid affinoid $K^\flat$-algebras.
\end{proof}

At this point, we have proved parts (i) and (ii) of Theorem \ref{MainThmAnalytic}.

To prove the sheaf properties, we start in characteristic $p$, with a certain class of perfectoid rings which are particularly easy to access.

\begin{definition} Assume $K$ is of characteristic $p$. Then a perfectoid affinoid $K$-algebra $(R,R^+)$ is said to be p-finite if there exists a reduced affinoid $K$-algebra $(S,S^+)$ of topologically finite type such that $(R,R^+)$ is the completed perfection of $(S,S^+)$, i.e. $R^+$ is the $\varpi$-adic completion of $\varinjlim_\Phi S^+$ and $R=R^+[\varpi^{-1}]$.
\end{definition}

At this point, let us recall some facts about reduced affinoid $K$-algebras of topologically finite type.

\begin{prop} Let $(S,S^+)$ be a reduced affinoid $K$-algebra of topologically finite type, and let $X=\Spa(S,S^+)$.
\begin{altenumerate}
\item[{\rm (i)}] The subset $S^+=S^\circ\subset S$ is open and bounded.
\item[{\rm (ii)}] For any rational subset $U\subset X$, the affinoid $K$-algebra $(\mathcal{O}_X(U),\mathcal{O}_X^+(U))$ is reduced and of topologically finite type.
\item[{\rm (iii)}] For any covering $X=\bigcup U_i$ by finitely many rational subsets $U_i\subset X$, each cohomology group of the complex
\[
0\rightarrow \mathcal{O}_X(X)^\circ\rightarrow \prod_i \mathcal{O}_X(U_i)^\circ\rightarrow \prod_{i,j} \mathcal{O}_X(U_i\cap U_j)^\circ\rightarrow \ldots
\]
is annihilated by some power of $\varpi$.
\end{altenumerate}
\end{prop}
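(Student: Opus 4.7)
My plan is to reduce all three assertions to classical results in rigid-analytic geometry over $K$: the theorem that on a reduced topologically-finite-type algebra the supremum seminorm is equivalent to a residue norm, together with Tate's acyclicity theorem.

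For part (i), I will invoke the fact that on a reduced tft affinoid algebra $S$ the supremum seminorm $|f|_{\sup} = \sup_{x \in X} |f(x)|$ (taken over the classical, rank-one points) is equivalent to any residue norm on $S$. An element $f \in S$ is power-bounded if and only if $|f|_{\sup} \leq 1$, if and only if $|f(x)| \leq 1$ for all $x \in X$; by the earlier proposition characterizing $R^+$, this last condition is equivalent to $f \in S^+$. Hence $S^\circ = S^+$ is the unit ball of a norm equivalent to the Banach norm, so it is both open and bounded.

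For part (ii), the tft statement is immediate from Definition~\ref{DefinitionPresheaf}: $\mathcal{O}_X(U)$ for $U = U(\tfrac{f_1,\ldots,f_n}{g})$ is presented as a continuous quotient of $S\langle T_1,\ldots,T_n\rangle$ via $T_i \mapsto f_i/g$, which is tft since $S$ is. For reducedness, I will use that rational localization is flat---it decomposes into a sequence of Laurent and standard rational steps, each of which is flat---and that $\mathcal{O}_X(U)$ injects into the product $\prod_{x \in U^{\mathrm{cl}}} \widehat{k(x)}$ over its classical points, a product of fields; this forces $\mathcal{O}_X(U)$ to be reduced.

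For part (iii), I will invoke Tate's acyclicity theorem, which asserts that the augmented \v{C}ech complex with $\mathcal{O}_X$-coefficients is exact as a complex of $K$-Banach spaces. Any standard proof of Tate's theorem produces a continuous $K$-linear null-homotopy $h$ of this complex. Since $h$ is continuous and the cover is finite, there exists an integer $N \geq 0$ with $h(\mathcal{O}_X^\circ(U_I)) \subset \varpi^{-N}\, \mathcal{O}_X^\circ(U_J)$ for every pair of multi-indices appearing. Setting $h_N := \varpi^N h$, we obtain an endomorphism of the integral \v{C}ech complex $C^\bullet$ built from the $\mathcal{O}_X^\circ(U_I)$ satisfying $d \circ h_N + h_N \circ d = \varpi^N \cdot \mathrm{id}$, so $\varpi^N$ annihilates every cohomology group of $C^\bullet$, uniformly across degrees. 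The only nontrivial ingredient will be the boundedness of the null-homotopy $h$: continuity alone does not automatically yield a bound of the form $|\varpi|^{-N}$ on integral subrings, so one must exploit the explicit construction of $h$, which in the standard proof reduces an arbitrary rational cover to a composition of Laurent and standard rational coverings and writes down explicit splittings at each stage. Tracking these bounds through the reduction is a bookkeeping argument, and it is the only step not obtained by direct invocation of earlier results.
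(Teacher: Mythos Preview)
Your arguments for (i) and (ii) are essentially what the paper does: it simply cites the corresponding results in Bosch--G\"untzer--Remmert (6.2.4 Theorem~1 and 7.3.2 Corollary~10).

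For (iii), your approach differs from the paper's and is harder than necessary. The paper argues as follows: by Tate's acyclicity theorem the complex with $\mathcal{O}_X$-coefficients is exact; each $\ker d_i$ is therefore a closed subspace of a $K$-Banach space, and $d_{i-1}$ surjects onto it, so by the open mapping theorem $d_{i-1}$ is open onto $\ker d_i$. This says exactly that the quotient topology on $\im d_{i-1}$ agrees with the subspace topology on $\ker d_i$. Now parts (i) and (ii) imply that $\{\varpi^n \im d_{i-1}^\circ\}_{n\in\mathbb{Z}}$ is a neighborhood basis of $0$ for the former and $\{\varpi^n \ker d_i^\circ\}_{n\in\mathbb{Z}}$ for the latter; their coincidence is precisely the statement that $\ker d_i^\circ/\im d_{i-1}^\circ$ is annihilated by some $\varpi^N$. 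No null-homotopy is ever constructed.

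Your route via an explicit bounded null-homotopy can be made to work, but the ``bookkeeping'' you mention understates the difficulty. The standard proof of Tate's theorem refines an arbitrary rational cover by a Laurent cover and then inducts on the number of generators; the explicit splitting is written down only for a two-piece Laurent cover by a single function. Propagating a bounded null-homotopy through the refinement step (a double-complex comparison) and through the induction is not automatic and requires real care. The open-mapping argument bypasses all of this in one line, and has the further advantage that the bound $N$ need not be uniform across degrees---which is all the proposition claims.
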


\begin{proof} Using \cite{HuberDefAdic}, Proposition 4.3 and its proof, one sees that all statements are readily translated into the classical language of rigid geometry, and we use results from the book of Bosch-G{\"u}ntzer-Remmert, \cite{BoschGuentzerRemmert}. Part (i) is precisely their 6.2.4 Theorem 1, and part (ii) is 7.3.2 Corollary 10.

Moreover, Tate's acyclicity theorem, 8.2.1 Theorem 1 in \cite{BoschGuentzerRemmert}, says that
\[
0\rightarrow \mathcal{O}_X(X)\buildrel {d_0}\over \rightarrow \prod_i \mathcal{O}_X(U_i)\buildrel {d_1}\over \rightarrow \prod_{i,j} \mathcal{O}_X(U_i\cap U_j)\buildrel {d_2}\over \rightarrow \ldots
\]
is exact. Then $\ker d_i$ is a closed subspace of a $K$-Banach space, hence itself a $K$-Banach space, and $d_{i-1}$ is a surjection onto $\ker d_i$. By Banach's open mapping theorem, the map $d_{i-1}$ is an open map to $\ker d_i$. This says that the subspace and quotient topologies on $\ker d_i = \im d_{i-1}$ coincide. Now consider the sequence
\[
0\rightarrow \mathcal{O}_X(X)^\circ\buildrel {d_0^\circ}\over \rightarrow \prod_i \mathcal{O}_X(U_i)^\circ\buildrel {d_1^\circ}\over \rightarrow \prod_{i,j} \mathcal{O}_X(U_i\cap U_j)^\circ\buildrel {d_2^\circ}\over \rightarrow \ldots \ .
\]
By parts (i) and (ii), the quotient topology on $\im d_{i-1}$ has $\varpi^n\im d_{i-1}^\circ$, $n\in \mathbb{Z}$, as a basis of open neighborhoods of $0$, and the subspace topology of $\ker d_i$ has $\varpi^n \mathrm{ker} d_i^\circ$, $n\in \mathbb{Z}$, as a basis of open neighborhoods of $0$. That they agree precisely amounts to saying that the cohomology group is annihilated by some power of $\varpi$.
\end{proof}

\begin{prop}\label{SheafPFiniteCase} Assume that $K$ is of characteristic $p$, and that $(R,R^+)$ is p-finite, given as the completed perfection of a reduced affinoid $K$-algebra $(S,S^+)$ of topologically finite type.
\begin{altenumerate}
\item[{\rm (i)}] The map $X=\Spa(R,R^+)\cong Y=\Spa(S,S^+)$ is a homeomorphism identifying rational subspaces.
\item[{\rm (ii)}] For any $U\subset X$ rational, corresponding to $V\subset Y$, the perfectoid affinoid $K$-algebra $(\mathcal{O}_X(U),\mathcal{O}_X^+(U))$ is equal to the completed perfection of $(\mathcal{O}_Y(V),\mathcal{O}_Y^+(V))$.
\item[{\rm (iii)}] For any covering $X=\bigcup_i U_i$ by rational subsets, the sequence
\[
0\rightarrow \mathcal{O}_X(X)^{\circ a}\rightarrow \prod_i \mathcal{O}_X(U_i)^{\circ a}\rightarrow \prod_{i,j} \mathcal{O}_X(U_i\cap U_j)^{\circ a}\rightarrow \ldots
\]
is exact. In particular, $\mathcal{O}_X$ is a sheaf, and $H^i(X,\mathcal{O}_X^{\circ a})=0$ for $i>0$.
\end{altenumerate}
\end{prop}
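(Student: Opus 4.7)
The plan is to handle (i) and (ii) together via the universal property, then tackle (iii) by combining Tate acyclicity for the tft algebra $S$ with a Frobenius colimit argument and a Milnor sequence across completion.

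For (i), the canonical map $(S,S^+) \to (R,R^+)$ factors as $(S,S^+) \to (S^{\perf}, S^{+,\perf}) \to (R,R^+)$, with the second arrow being $\varpi$-adic completion. Proposition \ref{SpaCompletion} identifies $\Spa(S^{\perf}, S^{+,\perf})$ with $\Spa(R,R^+)$ together with their rational subsets, so it suffices to identify $\Spa(S,S^+)$ with $\Spa(S^{\perf}, S^{+,\perf})$. In characteristic $p$, any continuous valuation on $S$ extends uniquely to $S^{\perf}$ by $|x^{1/p^n}| := |x|^{1/p^n}$ (after $p$-divisibly enlarging the value group), and restriction is the inverse. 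For rational subsets, every $f \in S^{\perf}$ satisfies $f^{p^n}\in S$ for large $n$, and in characteristic $p$ one has $U(\{f_i\}/g) = U(\{f_i^{p^n}\}/g^{p^n})$ since $|f^{p^n}(x)| = |f(x)|^{p^n}$. For (ii), the completed perfection of $(\mathcal{O}_Y(V), \mathcal{O}_Y^+(V))$ is a perfectoid affinoid $K$-algebra (by the same mechanism as for $(R,R^+)$) and satisfies the universal property of Proposition \ref{UnivPropertyPresheaf}: any map to a perfectoid $(T,T^+)$ factoring over $U$ composes with $S \to R$ to give a map factoring over $V$, hence induces a unique map from $\mathcal{O}_Y(V)$; since $T$ is complete and perfect, this extends uniquely through perfection and completion.

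For (iii), apply the last proposition to the tft cover of $Y$ corresponding (via (i)) to $\{U_i\}$: the Čech complex $C^\bullet$ with terms $\prod_{|I|=i+1}\mathcal{O}_Y(V_I)^\circ$ has cohomology $H^i(C^\bullet)$ annihilated by a fixed $\varpi^N$. By (ii), the termwise direct limit $C^\bullet_\infty := \varinjlim_\Phi C^\bullet$ is the Čech complex for $S^{\perf}$, and its termwise $\varpi$-adic completion is the Čech complex $C^\bullet_R$ for $R$. Since filtered colimits are exact, $H^i(C^\bullet_\infty) = \varinjlim_\Phi H^i(C^\bullet)$, and I claim this is $\mathfrak{m}$-annihilated: for a class $x$ represented at stage $0$ by $x_0 \in H^i(C^\bullet)$ and any $k \geq 1$, the $\Phi$-semilinearity $\Phi^k(\varpi^{1/p^k}y) = \varpi \cdot \Phi^k(y)$ gives $\varpi^{1/p^k} x = [\varpi\,\Phi^k(x_0)]$ at stage $k$; applying Frobenius $m$ more times (for $m$ with $p^m \geq N$) yields $[\varpi^{p^m}\Phi^{k+m}(x_0)] = 0$.

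To transport this almost vanishing across the $\varpi$-adic completion, use that $C^\bullet_\infty$ is termwise $\varpi$-torsion free, so the sequence $0 \to C^\bullet_\infty \xrightarrow{\varpi^m} C^\bullet_\infty \to C^\bullet_\infty/\varpi^m \to 0$ makes $H^i(C^\bullet_\infty/\varpi^m)$ an extension of $H^{i+1}(C^\bullet_\infty)[\varpi^m]$ by $H^i(C^\bullet_\infty)/\varpi^m$, both $\mathfrak{m}$-annihilated \emph{uniformly in $m$}. The system $(C^i_\infty/\varpi^m)_m$ has surjective transition maps (Mittag-Leffler), so $C^\bullet_R$ computes $R\varprojlim_m C^\bullet_\infty/\varpi^m$, producing the Milnor exact sequence
\[
0 \to {\varprojlim}^1_m H^{i-1}(C^\bullet_\infty/\varpi^m) \to H^i(C^\bullet_R) \to \varprojlim\nolimits_m H^i(C^\bullet_\infty/\varpi^m) \to 0.
\]
Since $\mathfrak{m}$ acts termwise, both $\varprojlim$ and $\varprojlim^1$ inherit the uniform $\mathfrak{m}$-annihilation, so $H^i(C^\bullet_R)^a = 0$ for $i \geq 1$, proving exactness of the almost Čech complex (the degree-zero piece giving $\mathcal{O}_X(X)^{\circ a}$ as the equalizer).

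The main obstacle is precisely this last step: ordinary cohomology does not commute with $\varpi$-adic completion, so the Frobenius colimit argument alone only yields almost vanishing on the \emph{uncompleted} perfection Čech complex. The uniformity (in $m$) of the almost vanishing on $C^\bullet_\infty/\varpi^m$ is what allows the Milnor sequence to transport the result through the completion and hence reach $R$ itself.
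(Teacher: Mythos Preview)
Your approach to (i) and (ii) matches the paper's exactly: perfection and then completion preserve $\Spa$ and rational subsets, and the completed perfection of $(\mathcal{O}_Y(V),\mathcal{O}_Y^+(V))$ is identified with $(\mathcal{O}_X(U),\mathcal{O}_X^+(U))$ via the universal property (among perfectoid targets, which suffices since $\mathcal{O}_X(U)$ is already known to be perfectoid by Lemma \ref{AlmostDescriptionSheaf}).

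For (iii), your strategy is also the paper's: bounded $\varpi$-torsion for the tft \v{C}ech complex becomes almost-zero after the Frobenius colimit, and this survives completion. The paper compresses the last step to the phrase ``and stays so after completion''; your Milnor-sequence argument is a correct way to unpack this. There is, however, one oversight. In the Milnor sequence
\[
0 \to {\varprojlim}^1_m H^{i-1}(C^\bullet_\infty/\varpi^m) \to H^i(C^\bullet_R) \to \varprojlim_m H^i(C^\bullet_\infty/\varpi^m) \to 0,
\]
the case $i=1$ requires control of $\varprojlim^1_m H^0(C^\bullet_\infty/\varpi^m)$, but $H^0(C^\bullet_\infty/\varpi^m)$ is \emph{not} $\mathfrak{m}$-annihilated (its main piece is $H^0(C^\bullet_\infty)/\varpi^m$, which is large). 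The fix is short: the short exact sequence of inverse systems
\[
0 \to (H^0(C^\bullet_\infty)/\varpi^m)_m \to (H^0(C^\bullet_\infty/\varpi^m))_m \to (H^1(C^\bullet_\infty)[\varpi^m])_m \to 0
\]
has left term with surjective transition maps (hence $\varprojlim^1=0$) and right term $\mathfrak{m}$-annihilated, so $\varprojlim^1$ of the middle is $\mathfrak{m}$-annihilated. Alternatively, and more cleanly, run your entire argument with the \emph{augmented} \v{C}ech complex (including $S^+$ in degree $-1$): this is almost exact in all degrees, so every $H^{i-1}$ term in the Milnor sequence is already almost zero and no special case arises.
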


\begin{rem} The last assertion is equivalent to the assertion that $H^i(X,\mathcal{O}_X^+)$ is annihilated by $\mathfrak{m}$.
\end{rem}

\begin{proof}\begin{altenumerate}
\item[{\rm (i)}] Going to the perfection does not change the associated adic space and rational subspaces, and going to the completion does not by Proposition \ref{SpaCompletion}.

\item[{\rm (ii)}] The completed perfection of $(\mathcal{O}_Y(V),\mathcal{O}_Y^+(V))$ is a perfectoid affinoid $K$-algebra. It has the universal property defining $(\mathcal{O}_X(U),\mathcal{O}_X^+(U))$ among all perfectoid affinoid $K$-algebras.

\item[{\rm (iii)}] Note that the corresponding sequence for $Y$ is exact up to some $\varpi$-power. Hence after taking the perfection, it is almost exact, and stays so after completion.\end{altenumerate}
\end{proof}

\begin{lem}\label{DirectLimitofPFinite} Assume $K$ is of characteristic $p$.
\begin{altenumerate}
\item[{\rm (i)}] Any perfectoid affinoid $K$-algebra $(R,R^+)$ for which $R^+$ is a $K^\circ$-algebra
is the completion of a filtered direct limit of p-finite perfectoid affinoid $K$-algebras $(R_i,R_i^+)$.
\item[{\rm (ii)}] This induces a homeomorphism $\Spa(R,R^+)\cong \varprojlim \Spa(R_i,R_i^+)$, and each rational $U\subset X=\Spa(R,R^+)$ comes as the preimage of some rational $U_i\subset X_i=\Spa(R_i,R_i^+)$.
\item[{\rm (iii)}] In this case $(\mathcal{O}_X(U),\mathcal{O}_X^+(U))$ is equal to the completion of the filtered direct limit of the $(\mathcal{O}_{X_j}(U_j),\mathcal{O}_{X_j}^+(U_j))$, where $U_j$ is the preimage of $U_i$ in $X_j$ for $j\geq i$.
\item[{\rm (iv)}] If $U_i\subset X_i$ is some quasicompact open subset containing the image of $X$, then there is some $j$ such that the image of $X_j$ is contained in $U_i$.
\end{altenumerate}
\end{lem}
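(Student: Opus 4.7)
The plan is to exhibit $(R,R^+)$ as a filtered colimit of p-finite pieces and to deduce (ii)--(iv) formally from this. For (i), given any finite subset $T=\{f_1,\dots,f_n\}\subset R^+$, consider the continuous $K$-algebra map $\alpha_T\colon K\langle X_1,\dots,X_n\rangle\to R$ sending $X_j$ to $f_j$, and let $S_T$ be its image. The kernel of $\alpha_T$ is closed and, since $R$ is reduced by Proposition~\ref{PerfectoidReduced}, radical; hence $S_T$ is a reduced topologically finite type $K$-affinoid algebra, and taking $S_T^+=S_T^\circ$ followed by completed perfection yields a p-finite perfectoid affinoid $K$-algebra $(R_T,R_T^+)$ together with a canonical map to $(R,R^+)$ (with $R_T^+$ arranged to lie inside $R^+$, via integral closure inside $R^+$ if necessary). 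Under inclusions of finite subsets this forms a filtered system. Every $f\in R^+$ already lies in the image of $R_{\{f\}}^+$, so $\varinjlim R_T^+\to R^+$ has dense image in the $\varpi$-adic topology. Both sides being $\varpi$-adically complete and $\varpi$-torsion free, it then suffices to check bijectivity modulo $\varpi$, which follows from the perfectness of $R$: one can produce compatible systems of $p$-power roots of any chosen element of $R^+$, which can then be absorbed into a sufficiently large $T$.

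For (ii), the natural map $X\to\varprojlim X_i$ is bijective because a continuous valuation on $R=\widehat{\varinjlim R_i}$ is the same as a compatible system of continuous valuations on the $R_i$'s, using Proposition~\ref{SpaCompletion}; moreover it identifies rational subsets, since any rational $U=U(\frac{f_1,\dots,f_n}g)\subset X$ involves only finitely many elements of $R$, each living in some $R_{T_k}$, so taking $i$ large enough realises $U$ as the preimage of the corresponding rational subset in $X_i$. Both sides being spectral, this bijection is a homeomorphism. Part (iii) then follows from Proposition~\ref{UnivPropertyPresheaf}: the completed filtered colimit of $(\mathcal{O}_{X_j}(U_j),\mathcal{O}_{X_j}^+(U_j))$ is a perfectoid affinoid $K$-algebra which, by passing the defining universal property through the colimit and completion, satisfies the characterisation of $(\mathcal{O}_X(U),\mathcal{O}_X^+(U))$ among perfectoid affinoid $K$-algebras receiving a map from $(R,R^+)$ that factors over $U$.

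For (iv), each $Z_j:=X_j\setminus\pi_{ji}^{-1}(U_i)$ is closed in the spectral space $X_j$ and therefore quasi-compact, and the transition maps in the cofiltered system $\{Z_j\}_{j\ge i}$ are spectral. If no $Z_j$ were empty, then the cofiltered limit $\varprojlim_{j\ge i}Z_j\subset X$ would be non-empty by the standard compactness property of cofiltered limits of non-empty quasi-compact spaces, contradicting the hypothesis that $X$ maps into $U_i$; so $Z_j=\emptyset$ for some $j$, as required. The main obstacle is (i): the construction itself is natural, but verifying that the completed colimit reproduces $R^+$ on the nose (rather than some proper subring) requires careful attention to how $p$-power roots interact with $\varpi$-adic completion, which is precisely where the characteristic-$p$ assumption is needed.
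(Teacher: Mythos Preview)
Your approach is essentially the same as the paper's: index the system by finite subsets of $R^+$, take images of Tate algebras, pass to completed perfections, and then verify (ii)--(iv) formally. Two points deserve tightening.

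In (i), the sentence ``both sides being $\varpi$-adically complete'' is confused: $\varinjlim_T R_T^+$ is not $\varpi$-adically complete. The paper avoids this by arguing directly that $\varinjlim_I R_I^+/\varpi^n \to R^+/\varpi^n$ is bijective for each $n$: surjectivity is immediate (any $f\in R^+$ already lies in $S_{\{f\}}^+$, since $f$ is the image of the generator and hence power-bounded for the quotient topology), and injectivity follows because a relation $f_1-f_2=\varpi^n g$ with $g\in R^+$ is witnessed at the larger index $I\cup\{g\}$. Your appeal to ``compatible systems of $p$-power roots'' is not needed here and somewhat obscures the simple point. Also, the parenthetical about taking integral closures ``if necessary'' is unnecessary: by Tate's theorem, $S_T^\circ$ is the integral closure of $K^\circ\langle X_1,\dots,X_n\rangle$, and since $R^+$ is an integrally closed $K^\circ$-subalgebra of $R$, one has $S_T^\circ\subset R^+$ automatically.

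In (iv), the assertion that ``cofiltered limits of non-empty quasi-compact spaces are non-empty'' is false in general (take $Z_n=\{n,n+1,\dots\}$ with the indiscrete topology). What makes the argument work is that the $Z_j$ are \emph{constructible} (complements of quasicompact opens) in spectral spaces, hence compact Hausdorff for the constructible topology, and spectral maps are continuous for that topology; then Tychonoff gives non-emptiness of the limit. The paper is explicit about this passage to the constructible topology, and you should be too.
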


\begin{proof}\begin{altenumerate}
\item[{\rm (i)}] For any finite subset $I\subset R^+$, we have the $K$-subalgebra $S_I\subset R$ given as the image of $K\langle T_i|i\in I\rangle\rightarrow R$. Then $S_I$ is a reduced quotient of $K\langle T_i|i\in I\rangle$, and we give $S_I$ the quotient topology. Let $S_I^+\subset S_I$ be the set of power-bounded elements; it is also the set of elements integral over $K^\circ\langle T_i|i\in I\rangle$ by \cite{TateRigidAnalytic}, Theorem 5.2. In particular, $S_I^+\subset R^+$. We caution the reader that $S_I^+$ is in general not the preimage of $R^+$ in $S_I$.

Now let $(R_I,R_I^+)$ be the completed perfection of $(S_I,S_I^+)$, i.e. $R_I^+$ is the $\varpi$-adic completion of $\varinjlim_\Phi S_I^+$, and $R_I=R_I^+[\varpi^{-1}]$. We get an induced map $(R_I,R_I^+)\rightarrow (R,R^+)$, and $(R_I,R_I^+)$ is a p-finite perfectoid affinoid $K$-algebra. We claim that $R^+/\varpi^n = \varinjlim_I R_I^+/\varpi^n$. Indeed, the map is clearly surjective. It is also injective, since if $f_1,f_2\in R_I^+$ satisfy $f_1-f_2 = \varpi^n g$ for some $g\in R^+$, then for some larger $J\supset I$ containing $g$, also $f_1 - f_2\in \varpi^n R_J^+$. This shows that $R^+$ is the completed direct limit of the $R_I^+$, i.e. $(R,R^+)$ is the completed direct limit of the $(R_I,R_I^+)$.

\item[{\rm (ii)}] Let $(L,L^+)$ be the direct limit of the $(R_I,R_I^+)$, equipped with the $\varpi$-adic topology. Then one checks by hand that $\Spa(L,L^+)\cong \varprojlim \Spa(R_i,R_i^+)$, compatible with rational subspaces. But then the same thing holds true for the completed direct limit by Proposition \ref{SpaCompletion}.

\item[{\rm (iii)}] The completion of the direct limit of the $(\mathcal{O}_{X_j}(U_j),\mathcal{O}_{X_j}^+(U_j))$ is a perfectoid affinoid $K$-algebra, and it satisfies the universal property describing $(\mathcal{O}_X(U),\mathcal{O}_X^+(U))$.

\item[{\rm (iv)}] This is an abstract property of spectral spaces and spectral maps. Let $A_i$ be the closed complement of $U_i$, and for any $j\geq i$, let $A_j$ be the preimage of $A_i$ in $X_j$. Then the $A_j$ are constructible subsets of $X_j$, hence spectral, and the transition maps between the $A_j$ are spectral. If one gives the $A_j$ the constructible topology, they are compact topological spaces, and the transition maps are continuous. If their inverse limit is zero, then one of them has to be zero.
\end{altenumerate}
\end{proof}

\begin{prop}\label{SheafGeneralCase} Let $K$ be of any characteristic, and let $(R,R^+)$ be a perfectoid affinoid $K$-algebra, $X=\Spa(R,R^+)$. For any covering $X=\bigcup_i U_i$ by finitely many rational subsets, the sequence
\[
0\rightarrow \mathcal{O}_X(X)^{\circ a}\rightarrow \prod_i \mathcal{O}_X(U_i)^{\circ a}\rightarrow \prod_{i,j} \mathcal{O}_X(U_i\cap U_j)^{\circ a}\rightarrow \ldots
\]
is exact. In particular, $\mathcal{O}_X$ is a sheaf, and $H^i(X,\mathcal{O}_X^{\circ a})=0$ for $i>0$.
\end{prop}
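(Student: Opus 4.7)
The strategy is to reduce to the $p$-finite characteristic-$p$ case of Proposition \ref{SheafPFiniteCase} in two stages: from characteristic $0$ to characteristic $p$ by tilting, and within characteristic $p$ from the general perfectoid affinoid case to the $p$-finite case by the approximation of Lemma \ref{DirectLimitofPFinite}. Since the rings $\mathcal{O}_X(U)^{\circ a}$ on rational subsets depend only on $R$ (by the explicit description in Lemma \ref{AlmostDescriptionSheaf}), and any rational cover of $X = \Spa(R,R^+)$ is defined by formulas in $R$ that simultaneously define a rational cover of $\Spa(R,R^\circ)$, we may replace $R^+$ by $R^\circ$ without changing the \v{C}ech complex in question; in particular we may assume $R^+$ is a $K^\circ$-algebra.

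Suppose first that $K$ has characteristic $p$. By Lemma \ref{DirectLimitofPFinite}, realize $(R,R^\circ)$ as the completion of a filtered colimit of $p$-finite perfectoid affinoid $K$-algebras $(R_j,R_j^+)$. Parts (ii) and (iv) of that lemma ensure that the given finite rational cover $\{U_i\}$ descends to a rational cover of some $X_{j_0}$ and pulls back to rational covers of $X_j$ for all $j \geq j_0$. Proposition \ref{SheafPFiniteCase}(iii) gives almost exactness of the \v{C}ech complex $C_j^\bullet$ at each level, hence of the filtered colimit $A^\bullet := \varinjlim_j C_j^\bullet$. By Lemma \ref{DirectLimitofPFinite}(iii), the desired \v{C}ech complex $C^\bullet$ is the termwise $\varpi$-adic completion of $A^\bullet$. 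Since each term of $A^\bullet$ is almost $\varpi$-torsion free (by flatness over $K^{\circ a}$), the long exact sequence coming from multiplication by $\varpi^n$ on $A^\bullet$ shows $A^\bullet/\varpi^n$ is almost exact for every $n$; since completion respects quotients by powers of $\varpi$, the same holds for $C^\bullet/\varpi^n$. By $\varpi$-adic completeness of the terms (Lemma \ref{AlmostVsUsual}(iv)) we have $C^\bullet = \varprojlim_n C^\bullet/\varpi^n$ with surjective transition maps, so $C^\bullet$ itself is almost exact.

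Now suppose $K$ has characteristic $0$, with tilt $(R^\flat,R^{\flat\circ})$. By Corollary \ref{TiltingHomeomorphism} the rational cover $\{U_i\}$ corresponds under $X \cong X^\flat$ to a rational cover $\{U_i^\flat\}$ of $X^\flat$, and the characteristic-$p$ case just established shows that the tilted \v{C}ech complex $C^{\flat\bullet}$ is almost exact. By Corollary \ref{StructureSheavesTilted} the tilting equivalence identifies $C^\bullet/\varpi$ with $C^{\flat\bullet}/\varpi^\flat$; flatness of the terms of $C^{\flat\bullet}$ propagates almost exactness through reduction mod $\varpi^\flat$, so $C^\bullet/\varpi$ is almost exact. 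Induction on $n$ via the termwise short exact sequences $0 \to C^\bullet/\varpi \to C^\bullet/\varpi^n \to C^\bullet/\varpi^{n-1} \to 0$ (whose first arrow is multiplication by $\varpi^{n-1}$, term-by-term exact by $K^{\circ a}$-flatness) lifts almost exactness to $C^\bullet/\varpi^n$ for all $n$, and $\varpi$-adic completeness finishes the argument exactly as in the previous step.

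The main obstacle in both stages is controlling the $\varpi$-adic completion of a bounded complex of $K^{\circ a}$-modules. This is precisely where flatness and $\varpi$-adic completeness of perfectoid $K^{\circ a}$-algebras (Lemma \ref{AlmostVsUsual}) are essential: flatness makes mod-$\varpi^n$ reduction well-behaved both for the tilting comparison and for the colimit of $p$-finite approximations, while $\varpi$-adic completeness lets us recover the full complex from its reductions mod $\varpi^n$.
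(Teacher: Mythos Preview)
Your proof is correct and follows essentially the same two-stage strategy as the paper: reduce characteristic $p$ to the $p$-finite case via Lemma \ref{DirectLimitofPFinite} and Proposition \ref{SheafPFiniteCase}, then reduce characteristic $0$ to characteristic $p$ by tilting, reducing modulo $\varpi$, and lifting back via flatness and completeness. The only genuine difference is the preliminary reduction ensuring $R^+$ is a $K^\circ$-algebra: you replace $(R,R^+)$ by $(R,R^\circ)$ and argue that the \v{C}ech complex is unchanged, whereas the paper instead shrinks $K$ to a perfectoid subfield with prime residue field so that $K^\circ\subset R^+$ automatically. Both reductions are valid; yours is a touch more direct, while the paper's avoids having to check that rational covers and structure-sheaf computations are insensitive to enlarging $R^+$.
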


\begin{proof} Assume first that $K$ has characteristic $p$. We may replace $K$ by a perfectoid subfield, such as the
$\varpi$-adic completion of $\mathbb{F}_p((\varpi))(\varpi^{1/p^\infty})$; this ensures that for any perfectoid
affinoid $K$-algebra $(R,R^+)$, the ring $R^+$ is a $K^\circ$-algebra. Then use Lemma \ref{DirectLimitofPFinite}
to write $X=\Spa(R,R^+)\cong\varprojlim X_i=\Spa(R_i,R_i^+)$ as an inverse limit, with $(R_i,R_i^+)$ p-finite.
Any rational subspace comes from a finite level, and a cover by finitely many rational subspaces is the pullback of a cover by finitely many rational subspaces on a finite level. Hence the almost exactness of the sequence follows by taking the completion of the direct limit of the corresponding statement for $X_i$, which is given by Proposition \ref{SheafPFiniteCase}. The rest follows as before.

In characteristic $0$, first use the exactness of the tilted sequence, then reduce modulo $\varpi^\flat$ (which is still exact by flatness), and then remark that this is just the original sequence reduced modulo $\varpi$. As this is exact, the original sequence is exact, by flatness and completeness. Again, we also get the other statements.
\end{proof}

This finishes the proof of Theorem \ref{MainThmAnalytic}.
\end{proof}

We see that to any perfectoid affinoid $K$-algebra $(R,R^+)$, we have associated an affinoid adic space $X=\Spa(R,R^+)$. We call these spaces affinoid perfectoid spaces.

\begin{definition} A perfectoid space is an adic space over $K$ that is locally isomorphic to an affinoid perfectoid space. Morphisms between perfectoid spaces are the morphisms of adic spaces.
\end{definition}

The process of tilting glues.

\begin{definition} We say that a perfectoid space $X^\flat$ over $K^\flat$ is the tilt of a perfectoid space $X$ over $K$ if there is a functorial isomorphism $\Hom(\Spa(R^\flat,R^{\flat +}),X^\flat) = \Hom(\Spa(R,R^+),X)$ for all perfectoid affinoid $K$-algebras $(R,R^+)$ with tilt $(R^\flat,R^{\flat +})$.
\end{definition}

\begin{prop}\label{TiltingEquivalenceSpaces} Any perfectoid space $X$ over $K$ admits a tilt $X^\flat$, unique up to unique isomorphism. This induces an equivalence between the category of perfectoid spaces over $K$ and the category of perfectoid spaces over $K^\flat$. The underlying topological spaces of $X$ and $X^\flat$ are naturally identified. A perfectoid space $X$ is affinoid perfectoid if and only if its tilt $X^\flat$ is affinoid perfectoid. Finally, for any affinoid perfectoid subspace $U\subset X$, the pair $(\mathcal{O}_X(U),\mathcal{O}_X^+(U))$ is a perfectoid affinoid $K$-algebra with tilt $(\mathcal{O}_{X^\flat}(U^\flat),\mathcal{O}_{X^\flat}^+(U^\flat))$.
\end{prop}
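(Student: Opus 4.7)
In the affinoid case most of the work is already done. For $X = \Spa(R,R^+)$ with $(R,R^+)$ perfectoid affinoid, set $X^\flat := \Spa(R^\flat, R^{\flat+})$, which is affinoid perfectoid over $K^\flat$ by Lemma \ref{TiltPerfectoidAffinoid}. The tilting universal property then follows from Proposition \ref{MapsToAffinoid} together with the equivalence of categories between perfectoid affinoid $K$-algebras and perfectoid affinoid $K^\flat$-algebras. Corollary \ref{TiltingHomeomorphism}(iii) identifies $|X|$ with $|X^\flat|$ and matches rational subsets, while Corollary \ref{StructureSheavesTilted} identifies the perfectoid affinoid structure on each rational subset with that of its tilt.

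For a general perfectoid space $X$ over $K$, the plan is to glue. Choose an affinoid perfectoid cover $X = \bigcup_i U_i$ and form the tilts $U_i^\flat$. On each intersection $U_i \cap U_j$, refine to a cover by subsets $V$ which are rational in both $U_i$ and $U_j$; such $V$ are themselves affinoid perfectoid, and by the affinoid statement their tilts $V^\flat$ are canonically the same perfectoid affinoid subspace of $U_i^\flat$ and of $U_j^\flat$. These canonical identifications furnish the gluing data, whose cocycle condition on triple overlaps is checked by further refinement to common rational subsets and appealing to the uniqueness half of the affinoid universal property. Gluing yields a perfectoid space $X^\flat$ over $K^\flat$ whose underlying topological space is canonically identified with $|X|$ via the compatible homeomorphisms $|U_i|\cong |U_i^\flat|$. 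Uniqueness of $X^\flat$ up to unique isomorphism, functoriality in $X$, and the equivalence of categories of perfectoid spaces over $K$ and $K^\flat$ (via the symmetric untilting construction) then follow formally from the universal property, which is verified by pulling back any cover to both source and target of a given morphism.

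The remaining assertions reduce to known statements. An affinoid perfectoid space tilts to one by construction; conversely, if $X^\flat$ is affinoid perfectoid, one chooses a presentation $X^\flat \cong \Spa(R^\flat, R^{\flat+})$, untilts the affinoid algebra via the tilting equivalence, and uses uniqueness to deduce $X \cong \Spa(R,R^+)$. The last assertion, on sections over an affinoid perfectoid open subset $U\subset X$, is Corollary \ref{StructureSheavesTilted} applied to the affinoid perfectoid space $U$ itself, together with the compatibility of $U \mapsto U^\flat$ with open immersions already built into the gluing construction. The main technical obstacle is the cocycle verification in the gluing step; it is disposed of by systematically passing to common rational refinements on triple overlaps and invoking the uniqueness half of the universal property in the affinoid case, which rigidifies all choices.
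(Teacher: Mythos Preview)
Your proposal is correct and follows essentially the same approach as the paper. The paper's proof is terse, merely noting that the proposition is a formal consequence of Theorem \ref{TiltingEquivalence}, Theorem \ref{MainThmAnalytic} and Proposition \ref{MapsToAffinoid}, together with the observation that any affinoid perfectoid open $U\subset X$ satisfies $U=\Spa(\mathcal{O}_X(U),\mathcal{O}_X^+(U))$; you have simply spelled out what this formal gluing argument entails, invoking the same ingredients (in their incarnations as Corollaries \ref{TiltingHomeomorphism} and \ref{StructureSheavesTilted}).
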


\begin{proof} This is a formal consequence of Theorem \ref{TiltingEquivalence}, Theorem \ref{MainThmAnalytic} and Proposition \ref{MapsToAffinoid}. Note that to any open $U\subset X$, one gets an associated perfectoid space with underlying topological space $U$ by restricting the structure sheaf and valuations to $U$, and hence its global sections are $(\mathcal{O}_X(U),\mathcal{O}_X^+(U))$, so that if $U$ is affinoid, then $U=\Spa(\mathcal{O}_X(U),\mathcal{O}_X^+(U))$. This gives the last part of the proposition.
\end{proof}

Let us finish this section by noting one way in which perfectoid spaces behave better than adic spaces (cf. Proposition 1.2.2 of \cite{Huber}).

\begin{prop}\label{FibreProducts} If $X\rightarrow Y\leftarrow Z$ are perfectoid spaces over $K$, then the fibre product $X\times_Y Z$ exists in the category of adic spaces over $K$, and is a perfectoid space.
\end{prop}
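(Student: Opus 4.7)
The statement is local on $X$, $Y$, and $Z$, so by covering each with affinoid perfectoid opens and gluing via the universal property, it suffices to construct the fibre product $\Spa(A, A^+) \times_{\Spa(B, B^+)} \Spa(C, C^+)$ in the case where all three spaces are affinoid perfectoid. By the tilting equivalence (Theorem \ref{TiltingEquivalence}, Proposition \ref{TiltingEquivalenceSpaces}), I may carry out the construction of the tilt in characteristic $p$ and then untilt, so I assume $K$ has characteristic $p$ and that $A, B, C$ are perfect Tate $K$-algebras with bounded open powerbounded subrings $A^+$, $B^+$, $C^+$.

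For the construction, let $R_0^+$ be the $\varpi$-adic completion of $A^+ \otimes_{B^+} C^+$ modulo its $\varpi$-power torsion, and let $R^+$ be the integral closure of $R_0^+$ in $R := R_0^+[\varpi^{-1}]$. Then $(R, R^+)$ is an affinoid $K$-algebra with $R^+$ open and bounded in $R$. The key observation is that $R$ is perfect: since $\Phi$ is a ring homomorphism in characteristic $p$, any element $\sum_i a_i \otimes c_i$ of $A^+ \otimes_{B^+} C^+$ equals $(\sum_i a_i^{1/p} \otimes c_i^{1/p})^p$, using that $A, C$ are perfect to supply the $p$-th roots. Perfectness is preserved under passing to the localization and the $\varpi$-adic completion because $\Phi$ is continuous and $\varpi$ admits a $p$-th root in $K$. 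By the characterization of perfectoid $K$-algebras in characteristic $p$ as exactly the perfect complete Tate $K$-algebras with bounded open powerbounded subring, $(R, R^+)$ is perfectoid affinoid.

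To verify the universal property, let $T$ be any adic space over $K$ equipped with compatible morphisms $T \to X$, $T \to Z$ over $Y$. Restricting to an affinoid open $\Spa(S, S^+) \subset T$, we obtain compatible continuous maps of affinoid $K$-algebras $(A, A^+) \to (S, S^+)$ and $(C, C^+) \to (S, S^+)$ agreeing on $(B, B^+)$. By the universal property of the tensor product, these factor through a unique map from $A^+ \otimes_{B^+} C^+$ to $S^+$, which by continuity extends uniquely to its $\varpi$-adic completion $R_0^+$ (and hence to its integral closure) to give a unique continuous map $(R, R^+) \to (S, S^+)$. These local maps glue to a unique morphism $T \to \Spa(R, R^+)$, establishing the universal property. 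Untilting then transports the construction and universal property to characteristic $0$ via Theorem \ref{TiltingEquivalence} and Proposition \ref{TiltingEquivalenceSpaces}.

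The main technical obstacle is verifying that the constructed $R$ is indeed perfectoid, which crucially relies on the characteristic $p$ additivity of $p$-th powers. In characteristic $0$ the naive tensor product construction would not yield a perfectoid algebra, because Frobenius is no longer additive and there is no immediate way to exhibit sums of simple tensors as $p$-th powers; this is precisely why the fibre product is constructed after tilting. The remaining subtleties (that the $\varpi$-power torsion quotient and integral closure preserve perfectness, and that the resulting $R^+$ is open and bounded) are straightforward consequences of working in characteristic $p$ with perfect input rings.
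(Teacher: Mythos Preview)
Your approach diverges from the paper's and carries a genuine gap. The proposition asserts that $X\times_Y Z$ exists in the category of \emph{adic spaces} over $K$, not merely in the category of perfectoid spaces. The tilting equivalence (Theorem~\ref{TiltingEquivalence}, Proposition~\ref{TiltingEquivalenceSpaces}) is an equivalence between categories of \emph{perfectoid} spaces, so untilting your characteristic-$p$ fibre product yields a perfectoid space $W$ satisfying the universal property only for perfectoid test objects $T$. To upgrade this to arbitrary adic $T$, you would need to identify $W$ with the completed tensor product $D=\widehat{A\otimes_B C}$ constructed directly in characteristic $0$, since it is $D$ (via the ring-theoretic universal property of the tensor product) that represents maps from arbitrary affinoid adic spaces. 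But showing $D\cong W$ amounts to showing that $D$ is perfectoid, which is precisely the nontrivial content you are trying to sidestep by tilting first. Your final paragraph's assertion that ``the naive tensor product construction would not yield a perfectoid algebra'' in characteristic $0$ is in fact false, and establishing that it \emph{does} is the heart of the argument.

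The paper's proof proceeds in the opposite order: it constructs $(D,D^+)$ directly in arbitrary characteristic as the completed tensor product (so the universal property among adic spaces is immediate), and then checks that $\widehat{A^{\circ a}\otimes_{B^{\circ a}} C^{\circ a}}$ is a perfectoid $K^{\circ a}$-algebra. The only point requiring work is almost-flatness over $K^{\circ a}$, equivalently that the tensor product is almost $\varpi$-torsion-free; since this is a statement modulo $\varpi$, one may pass to $K^{\circ a}/\varpi = K^{\flat\circ a}/\varpi^\flat$ and use the perfectness trick (if $\varpi f=0$ then $\varpi^{1/p}f=0$, iterate). This is a reduction \emph{modulo $\varpi$}, not a tilting of spaces, and it avoids the circularity above.
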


\begin{proof} As usual, one reduces to the affine case, $X=\Spa(A,A^+)$, $Y=\Spa(B,B^+)$ and $Z=\Spa(C,C^+)$, and we want to construct $W=X\times_Y Z$. This is given by $W=\Spa(D,D^+)$, where $D$ is the completion of $A\otimes_B C$, and $D^+$ is the completion of the integral closure of the image of $A^+\otimes_{B^+} C^+$ in $D$. Note that $\widehat{A^{\circ a}\otimes_{B^{\circ a}} C^{\circ a}}$ is a perfectoid $K^{\circ a}$-algebra: It is enough to check that $A^{\circ a}\otimes_{B^{\circ a}} C^{\circ a}/\varpi$ is flat over $K^{\circ a}/\varpi$, hence one reduces to characteristic $p$. Here, it is enough to check that $A^{\circ a}\otimes_{B^{\circ a}} C^{\circ a}$ is $\varpi$-torsion free; but if $\varpi f=0$, then $\varpi^{1/p} f^{1/p} = 0$ by perfectness, hence $\varpi^{1/p} f=0$. Continuing gives the result. In particular, $(D,D^+)$ is a perfectoid affinoid $K$-algebra. One immediately checks that it satisfies the desired universal property.
\end{proof}

\section{Perfectoid spaces: Etale topology}

In this section, we use the term locally noetherian adic space over $k$ for the adic spaces over $k$ considered in \cite{Huber}, i.e. they are locally of the form $\Spa(A,A^+)$, were $A$ is a strongly noetherian Tate $k$-algebra. If additionally, they are quasicompact and quasiseparated, we call them noetherian adic spaces.

Although perfectoid rings are always reduced, and hence a definition involving lifting of nilpotents is not possible, there is a good notion of \'{e}tale morphisms. In the following definition, $k$ can be an arbitrary nonarchimedean field.

\begin{definition}
\begin{altenumerate}
\item[{\rm (i)}] A morphism $(R,R^+)\rightarrow (S,S^+)$ of affinoid $k$-algebras is called finite \'{e}tale if $S$ is a finite \'{e}tale $R$-algebra with the induced topology, and $S^+$ is the integral closure of $R^+$ in $S$.
\item[{\rm (ii)}] A morphism $f:X\rightarrow Y$ of adic spaces over $k$ is called finite \'{e}tale if there is a cover of $Y$ by open affinoids $V\subset Y$ such that the preimage $U=f^{-1}(V)$ is affinoid, and the associated morphism of affinoid $k$-algebras
\[
(\mathcal{O}_Y(V),\mathcal{O}_Y^+(V))\rightarrow (\mathcal{O}_X(U),\mathcal{O}_X^+(U))
\]
is finite \'{e}tale.
\item[{\rm (iii)}] A morphism $f:X\rightarrow Y$ of adic spaces over $k$ is called \'{e}tale if for any point $x\in X$ there are open neighborhoods $U$ and $V$ of $x$ and $f(x)$ and a commutative diagram
\[\xymatrix{
U\ar[r]^j \ar[dr]_{f|_U}& W\ar[d]^{p}\\
&V
}\]
where $j$ is an open embedding and $p$ is finite \'{e}tale.
\end{altenumerate}
\end{definition}

For locally noetherian adic spaces over $k$, this recovers the usual notions, by Example 1.6.6 ii) and Lemma 2.2.8 of \cite{Huber}, respectively. We will see that these notions are useful in the case of perfectoid spaces, and will not use them otherwise. However, we will temporarily need a stronger notion of \'{e}tale morphisms for perfectoid spaces. After proving the almost purity theorem, we will see that there is no difference. In the following let $K$ be a perfectoid field again.

\begin{definition}
\begin{altenumerate}
\item[{\rm (i)}] A morphism $(R,R^+)\rightarrow (S,S^+)$ of perfectoid affinoid $K$-algebras is called strongly finite \'{e}tale if it is finite \'{e}tale and additionally $S^{\circ a}$ is a finite \'{e}tale $R^{\circ a}$-algebra.
\item[{\rm (ii)}] A morphism $f:X\rightarrow Y$ of perfectoid spaces over $K$ is called strongly finite \'{e}tale if there is a cover of $Y$ by open affinoid perfectoids $V\subset Y$ such that the preimage $U=f^{-1}(V)$ is affinoid perfectoid, and the associated morphism of perfectoid affinoid $K$-algebras
\[
(\mathcal{O}_Y(V),\mathcal{O}_Y^+(V))\rightarrow (\mathcal{O}_X(U),\mathcal{O}_X^+(U))
\]
is strongly finite \'{e}tale.
\item[{\rm (iii)}] A morphism $f:X\rightarrow Y$ of perfectoid spaces over $K$ is called strongly \'{e}tale if for any point $x\in X$ there are open neighborhoods $U$ and $V$ of $x$ and $f(x)$ and a commutative diagram
\[\xymatrix{
U\ar[r]^j \ar[dr]_{f|_U}& W\ar[d]^{p}\\
&V
}\]
where $j$ is an open embedding and $p$ is strongly finite \'{e}tale.
\end{altenumerate}
\end{definition}

From the definitions, Proposition \ref{TiltingEquivalenceSpaces}, and Theorem \ref{AlmostFiniteEtaleCovers}, we see that $f: X\rightarrow Y$ is strongly finite \'{e}tale, resp. strongly \'{e}tale, if and only if the tilt $f^\flat: X^\flat\rightarrow Y^\flat$ is strongly finite \'{e}tale, resp. strongly \'{e}tale. Moreover, in characteristic $p$, anything (finite) \'{e}tale is also strongly (finite) \'{e}tale.

\begin{lem}\label{PullbackFiniteEtale}
\begin{altenumerate}
\item[{\rm (i)}] Let $f: X\rightarrow Y$ be a strongly finite \'{e}tale, resp. strongly \'{e}tale, morphism of perfectoid spaces and let $g: Z\rightarrow Y$ be an arbitrary morphism of perfectoid spaces. Then $X\times_Y Z\rightarrow Z$ is a strongly finite \'{e}tale, resp. strongly \'{e}tale, morphism of perfectoid spaces. Moreover, the map of underlying topological spaces $|X\times_Z Y|\rightarrow |X| \times_{|Z|} |Y|$ is surjective.
\item[{\rm (ii)}] If in (i), all spaces $X=\Spa(A,A^+)$, $Y=\Spa(B,B^+)$ and $Z=\Spa(C,C^+)$ are affinoid, with $(A,A^+)$ strongly finite \'{e}tale over $(B,B^+)$, then $X\times_Y Z=\Spa(D,D^+)$, where $D=A\otimes_B C$ and $D^+$ is the integral closure of $C^+$ in $D$, and $(D,D^+)$ is strongly finite \'{e}tale over $(C,C^+)$.
\item[{\rm (iii)}] Assume that $K$ is of characteristic $p$. If $f: X\rightarrow Y$ is a finite \'{e}tale, resp. \'{e}tale, morphism of adic spaces over $k$ and $g: Z\rightarrow Y$ is a map from a perfectoid space $Z$ to $Y$, then the fibre product $X\times_Y Z$ exists in
the category of adic spaces over $K$, is a perfectoid space, and the projection $X\times_Y Z\rightarrow Z$ is finite \'{e}tale, resp. \'{e}tale. Moreover, the map of underlying topological spaces $|X\times_Z Y|\rightarrow |X| \times_{|Z|} |Y|$ is surjective.
\item[{\rm (iv)}] Assume that in the situation of (iii), all spaces $X=\Spa(A,A^+)$, $Y=\Spa(B,B^+)$ and $Z=\Spa(C,C^+)$ are affinoid, with $(A,A^+)$ finite \'{e}tale over $(B,B^+)$, then $X\times_Y Z=\Spa(D,D^+)$, where $D=A\otimes_B C$, $D^+$ is the integral closure of $C^+$ in $D$ and $(D,D^+)$ is finite \'{e}tale over $(C,C^+)$.
\end{altenumerate}
\end{lem}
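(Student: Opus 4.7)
The strategy is to treat the affine situations in (ii) and (iv) by an explicit construction, and then deduce (i) and (iii) by gluing, finishing with the topological surjectivity.

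For (ii), let $(B,B^+)\to(A,A^+)$ be strongly finite \'etale and let $(B,B^+)\to(C,C^+)$ be arbitrary. I would set $D=A\otimes_B C$, equipped with its canonical topology as a finite $C$-module (automatically complete since $A$ is a finite $B$-module), and let $D^+$ be the integral closure of $C^+$ in $D$. The task is to show that $(D,D^+)$ is a perfectoid affinoid $K$-algebra, strongly finite \'etale over $(C,C^+)$, and is the fibre product of the affinoids in the category of adic spaces over $K$. The universal property is immediate from the tensor product of rings together with the definition of $D^+$; the substance of the argument is the perfectoid verification.

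For that verification, the hypothesis gives that $A^{\circ a}$ is uniformly finite projective over $B^{\circ a}$. Hence $M=A^{\circ a}\otimes_{B^{\circ a}}C^{\circ a}$ is uniformly finite projective over $C^{\circ a}$; in particular it is flat, almost finitely presented, and $\varpi$-adically complete. Reducing modulo $\varpi$ gives
\[
M/\varpi=(A^{\circ a}/\varpi)\otimes_{B^{\circ a}/\varpi}(C^{\circ a}/\varpi),
\]
and since Frobenius is an isomorphism $C^{\circ a}/\varpi^{1/p}\to C^{\circ a}/\varpi$, similarly on $B^{\circ a}$, and lifts uniquely through the \'etale extension $B^{\circ a}/\varpi\to A^{\circ a}/\varpi$, the induced Frobenius on $M/\varpi$ is an isomorphism $M/\varpi^{1/p}\to M/\varpi$. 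Hence $M$ is a perfectoid $K^{\circ a}$-algebra; by Lemma \ref{FrobIsom} the Banach $K$-algebra $M_\ast[\varpi^{-1}]$ is perfectoid, and an easy check identifies it with $D=A\otimes_B C$ together with $D^\circ = M_\ast$ (up to almost equality). Then $D^+\subset D^\circ$ is open and integrally closed, so $(D,D^+)$ is perfectoid affinoid; strong finite \'etaleness over $(C,C^+)$ follows because almost finite \'etale algebras are preserved by base change. Part (iv) is simpler: a finite \'etale extension of a perfect characteristic $p$ ring is perfect (the absolute Frobenius factors as a composite of isomorphisms), so $D=A\otimes_B C$ is a perfect Banach $K$-algebra, and boundedness of $D^\circ$ is given by the trace-form argument used in the proof of Proposition \ref{AlmostPurityCharP}, applied to the finite \'etale $C$-algebra $D$.

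Parts (i) and (iii) then follow by covering $X$, $Y$, $Z$ by compatible affinoids and gluing the affine fibre products; for strongly \'etale (resp.\ \'etale) morphisms one uses the local factorization through a strongly finite \'etale (resp.\ finite \'etale) map followed by an open immersion, and open immersions pull back trivially along any morphism of adic spaces. For the topological surjectivity $|X\times_Y Z|\to|X|\times_{|Y|}|Z|$, given $x\in X$ and $z\in Z$ over a common $y\in Y$, one has maps from $Y$ to the affinoid fields $(k(x),k(x)^+)$ and $(k(z),k(z)^+)$, both over $(k(y),k(y)^+)$; picking any prime of the tensor product of residue fields and extending the valuation to the residue field at that prime yields an affinoid-field-valued point of the fibre product, hence a point of $X\times_Y Z$ lying over $(x,z)$. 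I expect the main technical obstacle to be precisely the Frobenius verification in mixed characteristic: without the ``strongly'' hypothesis one cannot control the tensor product at the almost integral level modulo $\varpi$, and the argument crucially uses the equivalence $K^{\circ a}\text{-}\Perf\cong(K^{\circ a}/\varpi)\text{-}\Perf$ of Theorem \ref{DeformPerfectoid} to pass between the integral and mod-$\varpi$ pictures.
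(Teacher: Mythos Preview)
Your proof is correct and follows the same architecture as the paper: prove the affinoid cases (ii) and (iv) first, then deduce (i) and (iii) by gluing along open covers and the local factorization of (strongly) \'etale maps, with the surjectivity handled by lifting valuations on residue fields.

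The one place where you take a slightly longer path than the paper is in (ii). You build $M=A^{\circ a}\otimes_{B^{\circ a}}C^{\circ a}$ and verify the Frobenius isomorphism on $M/\varpi$ by hand. The paper instead observes in one line that base change preserves almost finite \'etale morphisms, so $D^{\circ a}$ is finite \'etale over $C^{\circ a}$; the perfectoidness of $D$ then comes for free from the machinery already in place (finite \'etale over a perfectoid $K^{\circ a}$-algebra is perfectoid, via the diagram of equivalences and Theorem~\ref{LiftingAlmostEtaleCovers}). Your direct Frobenius check is essentially a rederivation of that fact in this special case; it is not wrong, just redundant given what has already been established. Similarly, in (iv) the paper simply invokes Proposition~\ref{AlmostPurityCharP} applied to $D/C$, whereas you unpack its proof (perfectness plus the trace-form boundedness argument); again correct, just less economical. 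Your surjectivity argument is exactly the content of the reference the paper cites (Huber, Lemma~3.9(i)).
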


\begin{proof}\begin{altenumerate}
\item[{\rm (ii)}] As $A\otimes_B C$ is finite projective over $C$, it is already complete. One easily deduces the universal property. Also, $D^{\circ a}$ is finite \'{e}tale over $C^{\circ a}$, as base-change preserves finite \'{e}tale morphisms.

\item[{\rm (i)}] Applying the definition of a strongly finite \'{e}tale map, one reduces the statement about strongly finite \'{e}tale maps to the situation handled in part (ii). Now the statement for strongly \'{e}tale maps follows, because fibre products obviously preserve open embeddings. The surjectivity statement follows from the argument of \cite{HuberDefAdic}, proof of Lemma 3.9 (i).

\item[{\rm (iv)}] Proposition \ref{AlmostPurityCharP} shows that $D=A\otimes_B C$ is perfectoid. Therefore $\Spa(D,D^+)$ is a perfectoid space. One easily checks the universal property.

\item[{\rm (iii)}] The finite \'{e}tale case reduces to the situation considered in part (iv). Again, it is trivial to handle open embeddings, giving also the \'{e}tale case. Surjectivity is proved as before.
\end{altenumerate}
\end{proof}

Let us recall the following statement about henselian rings.

\begin{prop}\label{FiniteEtaleHenselian} Let $A$ be a flat $K^\circ$-algebra such that $A$ is henselian along $(\varpi)$. Then the categories of finite \'{e}tale $A[\varpi^{-1}]$ and finite \'{e}tale $\hat{A}[\varpi^{-1}]$-algebras are equivalent, where $\hat{A}$ is the $\varpi$-adic completion of $A$.
\end{prop}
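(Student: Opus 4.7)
The plan is to verify full faithfulness and essential surjectivity of the base-change functor $B\mapsto \hat B:=B\otimes_{A[\varpi^{-1}]}\hat A[\varpi^{-1}]$ separately, exploiting at each step the henselian property of $(A,(\varpi))$ to lift idempotents.

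For full faithfulness, I would reformulate $\Hom_{A[\varpi^{-1}]}(B_1,B_2)$ as the set of idempotents $e\in B_1\otimes_{A[\varpi^{-1}]}B_2$ cutting out the graph of an algebra map (via the perfect trace-form pairing available for finite étale algebras). Choose a finite $A$-subalgebra $M\subset B_1\otimes_{A[\varpi^{-1}]}B_2$ with $M[\varpi^{-1}]=B_1\otimes B_2$. Being finite over the henselian pair $(A,(\varpi))$, the algebra $M$ is itself henselian along $(\varpi)$, so its idempotents correspond bijectively to those of $M/\varpi=\hat M/\varpi$, hence to those of $\hat M$. Inverting $\varpi$ yields $\Hom(B_1,B_2)\cong \Hom(\hat B_1,\hat B_2)$.

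For essential surjectivity, let $\hat B$ be finite étale over $\hat A[\varpi^{-1}]$. Choose a finitely presented $\hat A$-subalgebra $\hat B^+\subset \hat B$ with $\hat B^+[\varpi^{-1}]=\hat B$ (e.g.\ the integral closure of $\hat A$ in $\hat B$). Write $\hat B^+=\hat A[X_1,\ldots,X_n]/(f_1,\ldots,f_m)$. Because $\hat A/\varpi^N = A/\varpi^N$ for all $N$, I can approximate the $f_i$ by polynomials $g_i\in A[X_1,\ldots,X_n]$ congruent to the $f_i$ modulo $\varpi^N$ for any prescribed $N$, producing a finitely presented $A$-algebra $B^+$ whose $\varpi$-adic completion approximates $\hat B^+$ to arbitrary precision. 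Set $B=B^+[\varpi^{-1}]$.

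The main obstacle is then to verify that $B$ is genuinely finite étale over $A[\varpi^{-1}]$, rather than merely an approximation of $\hat B$. I would argue as follows: the diagonal idempotent $\hat e\in \hat B\otimes_{\hat A[\varpi^{-1}]}\hat B$ witnessing étaleness of $\hat B$ transfers, via the approximation, to an element $e'$ of a finite $A$-subalgebra $N\subset B\otimes_{A[\varpi^{-1}]}B$ satisfying $(e')^2-e'\in \varpi^N N$ for large $N$. Since $N$ is finite over $A$ it is henselian along $(\varpi)$, and the standard henselian lifting applied to the polynomial $X^2-X$ promotes $e'$ to a genuine idempotent $e\in N$; checking that $e$ still satisfies the unit and annihilation conditions after inverting $\varpi$ (they hold modulo any $\varpi^N$ by construction, hence exactly in $N[\varpi^{-1}]$ by another small henselian lifting) endows $B$ with the finite étale structure. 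This step is essentially the content of Elkik's theorem for henselian pairs, and could alternatively be extracted from the corresponding results in \cite{GabberRamero}. The isomorphism $B\otimes_{A[\varpi^{-1}]}\hat A[\varpi^{-1}]\cong \hat B$ then follows because both sides are finite étale $\hat A[\varpi^{-1}]$-algebras whose integral models agree modulo $\varpi^N$, so the identification is produced by the full faithfulness already established.
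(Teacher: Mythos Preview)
The paper does not prove this proposition at all: its proof is the single sentence ``See e.g.\ \cite{GabberRamero}, Proposition 5.4.53.'' So there is no argument in the paper to compare yours against, only the citation.

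Your full-faithfulness argument is essentially correct and adds genuine content beyond the paper's bare reference. One small refinement: you need the finite $A$-subalgebra $M\subset B_1\otimes_{A[\varpi^{-1}]}B_2$ to already contain the idempotents of $M[\varpi^{-1}]$, not merely to satisfy $M[\varpi^{-1}]=B_1\otimes B_2$. This is easy to arrange (idempotents are integral, and there are only finitely many in a finite \'etale algebra, so adjoin them), and then the henselian lifting of idempotents goes through as you describe.

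Your essential-surjectivity sketch, however, has a real gap. Approximating a presentation $\hat{B}^+=\hat{A}[X_1,\dots,X_n]/(f_1,\dots,f_m)$ by $B^+=A[X]/(g_1,\dots,g_m)$ with $g_i\equiv f_i\bmod\varpi^N$ gives a finitely presented $A$-algebra, but there is no reason $B^+[\varpi^{-1}]$ should be \emph{finite} (let alone flat) over $A[\varpi^{-1}]$; your parenthetical choice of $\hat{B}^+$ as the integral closure of $\hat A$ in $\hat B$ is not obviously finitely presented in the non-noetherian setting, so you cannot have both properties for free. The idempotent you lift witnesses unramifiedness but not the rest of the finite-\'etale package. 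You yourself note that this step ``is essentially the content of Elkik's theorem'' or can be ``extracted from \cite{GabberRamero}''---which is precisely the reference the paper cites. So for the hard half of the statement your proof and the paper's coincide.
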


\begin{proof} See e.g. \cite{GabberRamero}, Proposition 5.4.53.
\end{proof}

We recall that $\varpi$-adically complete algebras $A$ are henselian along $(\varpi)$, and that if $A_i$ is a direct system of $K^\circ$-algebras henselian along $(\varpi)$, then so is the direct limit $\varinjlim A_i$. In particular, we get the following lemma.

\begin{lem}\label{FiniteEtaleDirectLimit}
\begin{altenumerate}
\item[{\rm (i)}] Let $A_i$ be a filtered direct system of complete flat $K^\circ$-algebras, and let $A$ be the completion of the direct limit, which is again a complete flat $K^\circ$-algebra. Then we have an equivalence of categories
\[
A[\varpi^{-1}]_\fet\cong 2-\varinjlim A_i[\varpi^{-1}]_\fet\ .
\]
In particular, if $R_i$ is a filtered direct system of perfectoid $K$-algebras and $R$ is the completion of their direct limit, then $R_\fet\cong 2-\varinjlim (R_i)_\fet$.
\item[{\rm (ii)}] Assume that $K$ has characteristic $p$, and let $(R,R^+)$ be a p-finite perfectoid affinoid $K$-algebra, given as the completed perfection of the reduced affinoid $K$-algebra $(S,S^+)$ of topologically finite type. Then $R_\fet\cong S_\fet$.
\end{altenumerate}
\end{lem}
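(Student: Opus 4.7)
The plan is to derive both statements from Proposition \ref{FiniteEtaleHenselian} combined with standard descent of finite presentation along filtered colimits.

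For part (i), I would first set $A_\infty=\varinjlim A_i$, the uncompleted filtered colimit, which is a flat $K^\circ$-algebra. Because each $A_i$ is $\varpi$-adically complete, the pair $(A_i,\varpi A_i)$ is henselian, and since henselianity of pairs is preserved under filtered colimits, so is $(A_\infty,\varpi A_\infty)$. By construction the $\varpi$-adic completion of $A_\infty$ is $A$, so Proposition \ref{FiniteEtaleHenselian} gives an equivalence $A_\infty[\varpi^{-1}]_\fet \cong A[\varpi^{-1}]_\fet$. It then suffices to establish the tautological equivalence $A_\infty[\varpi^{-1}]_\fet \cong 2\text{-}\varinjlim A_i[\varpi^{-1}]_\fet$, which is the usual descent-of-finite-presentation statement: a finite \'{e}tale algebra is finitely presented, and its defining data (generators, relations, multiplication, and the diagonal idempotent witnessing \'{e}taleness) involve finitely many elements and relations that descend to some $A_i[\varpi^{-1}]$, and similarly for morphisms. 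For the perfectoid case, I would apply this with $A_i=R_i^\circ$; since inverting $\varpi$ commutes with $\varpi$-adic completion of $\varpi$-torsion-free modules, the resulting $A$ satisfies $A[\varpi^{-1}]=R$ while $A_i[\varpi^{-1}]=R_i$.

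For part (ii), I would apply (i) to the sequential system $A_n=S^+$ with transition maps given by the absolute Frobenius $\Phi\colon S^+\to S^+$, which is a continuous ring homomorphism in characteristic $p$ (and preserves $S^+$ since it preserves integrality over $K^\circ$). By the definition of the completed perfection, the $\varpi$-adic completion of this colimit is $R^+$, so part (i) yields $R_\fet \cong 2\text{-}\varinjlim_\Phi S_\fet$, where the transition functor is base change along Frobenius. Since $S$ has characteristic $p$, the absolute Frobenius on $\Spec S$ is a universal homeomorphism, and the finite \'{e}tale site is invariant under universal homeomorphisms; hence base change along $\Phi$ is an autoequivalence of $S_\fet$, and the $2$-colimit collapses to $S_\fet$.

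Modulo the quoted Proposition \ref{FiniteEtaleHenselian}, no step is deep: the main ingredients are the stability of henselian pairs under filtered colimits, the routine descent of finite \'{e}tale structure, and the topological invariance of the finite \'{e}tale site under the absolute Frobenius in characteristic $p$. The only place requiring genuine attention is ensuring, in the perfectoid case of (i), that $R$ is literally the inversion of $\varpi$ in the $\varpi$-adic completion of $\varinjlim R_i^\circ$, so that the hypotheses of the general statement are met on the nose.
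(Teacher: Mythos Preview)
Your proposal is correct and follows essentially the same approach as the paper: for (i), use that henselian pairs are stable under filtered colimits together with Proposition \ref{FiniteEtaleHenselian} to pass from $A$ to $\varinjlim A_i$, then invoke finite presentation of finite \'{e}tale algebras to descend to some $A_i$; for (ii), apply (i) to the tower $S^+\buildrel\Phi\over\to S^+\buildrel\Phi\over\to\cdots$ (which the paper writes as $S^{1/p^n}$) and use that Frobenius induces an equivalence on finite \'{e}tale covers. Your write-up is slightly more explicit about the ingredients, and you correctly flag the one point needing care in the perfectoid application of (i), but there is no substantive difference in strategy.
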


\begin{proof}
\begin{altenumerate}
\item[{\rm (i)}] Because finite \'{e}tale covers, and morphisms between these, are finitely presented objects, we have
\[
(\varinjlim A_i[\varpi^{-1}])_\fet\cong 2-\varinjlim A_i[\varpi^{-1}]_\fet\ .
\]
On the other hand, $\varinjlim A_i$ is henselian along $(\varpi)$, hence the left-hand side agrees with $A[\varpi^{-1}]_\fet$ by Proposition \ref{FiniteEtaleHenselian}.
\item[{\rm (ii)}] From part (i), we know that
\[
R_\fet\cong 2-\varinjlim S^{1/p^n}_\fet\ .
\]
But the categories $S^{1/p^n}_\fet$ are all equivalent to $S_\fet$.
\end{altenumerate}
\end{proof}

\begin{prop}\label{DecompletionFiniteEtaleGlobal} If $f:X\rightarrow Y$ is a strongly finite \'{e}tale morphism of perfectoid spaces, then for any open affinoid perfectoid $V\subset Y$, its preimage $U$ is affinoid perfectoid, and
\[
(\mathcal{O}_Y(V),\mathcal{O}_Y^+(V))\rightarrow (\mathcal{O}_X(U),\mathcal{O}_X^+(U))
\]
is strongly finite \'{e}tale.
\end{prop}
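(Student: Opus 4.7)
The plan is to refine $V$ by a rational cover contained in the given strongly finite étale trivializing cover, then glue the pieces via almost-integral Tate acyclicity. First, pick an open cover $\{V_i\}_{i \in I}$ of $Y$ by affinoid perfectoids witnessing the strongly finite étale structure of $f$, so that $U_i = f^{-1}(V_i) = \Spa(A_i, A_i^+)$ is strongly finite étale over $V_i = \Spa(B_i, B_i^+)$. Since $V$ is quasicompact and rational subsets form a basis of the topology on $V$ stable under finite intersections, refine $\{V \cap V_i\}$ to a finite cover $V = W_1 \cup \cdots \cup W_n$ by rational subsets of $V$ with $W_j \subset V_{i(j)}$ for some $i(j)$. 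By Theorem \ref{MainThmAnalytic}(ii), each $W_j = \Spa(B_j, B_j^+)$ is affinoid perfectoid. Applying Lemma \ref{PullbackFiniteEtale}(ii) to the strongly finite étale morphism $(B_{i(j)}, B_{i(j)}^+) \to (A_{i(j)}, A_{i(j)}^+)$ pulled back along $W_j \hookrightarrow V_{i(j)}$ shows $f^{-1}(W_j) = U_{i(j)} \times_{V_{i(j)}} W_j = \Spa(A_j, A_j^+)$ is affinoid perfectoid and strongly finite étale over $W_j$, with $A_j = A_{i(j)} \otimes_{B_{i(j)}} B_j$. The same conclusion applies to every multiple intersection $W_{j_1} \cap \cdots \cap W_{j_k}$, which remains rational in $V$ and lies in $V_{i(j_1)}$.

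Write $(B, B^+) = (\mathcal{O}_Y(V), \mathcal{O}_Y^+(V))$. The next step is to glue the family $\{A_j^{\circ a}\}$ to a single finite étale $B^{\circ a}$-algebra. Define
\[
A^{\circ a} := \ker\!\Bigl(\prod_j A_j^{\circ a} \rightrightarrows \prod_{j < k} \mathcal{O}_X^{\circ a}\bigl(f^{-1}(W_j \cap W_k)\bigr)\Bigr).
\]
The compatible isomorphisms on overlaps package the $A_j^{\circ a}$ into a descent datum for the rational cover $\{W_j\}$ of $\Spa(B, B^+)$. Theorem \ref{MainThmAnalytic}(iv) gives almost acyclicity of $\mathcal{O}^+$ on any rational cover of an affinoid perfectoid, and since each $A_j^{\circ a}$ is flat over $B_j^{\circ a}$ (being finite projective), the \v{C}ech complex with coefficients in the descent datum is likewise exact. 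A standard faithfully flat descent argument transplanted into the almost category then yields that $A^{\circ a}$ is a finite projective $B^{\circ a}$-algebra with $A^{\circ a} \otimes_{B^{\circ a}} B_j^{\circ a} = A_j^{\circ a}$ for every $j$. The unramifiedness of each $B_j^{\circ a} \to A_j^{\circ a}$ supplies diagonal idempotents which glue (unramifiedness being a local property on the base) to a diagonal idempotent for $B^{\circ a} \to A^{\circ a}$, making it finite étale.

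Setting $A = A^{\circ a}_\ast[\varpi^{-1}]$ and taking $A^+$ to be the integral closure of $B^+$ in $A$, the map $(B, B^+) \to (A, A^+)$ is strongly finite étale; by Lemma \ref{PullbackFiniteEtale}(ii), pulling back along $W_j \hookrightarrow V$ recovers $\Spa(A_j, A_j^+) = f^{-1}(W_j)$. The universal property of fibre products then produces a canonical $Y$-morphism $\Spa(A, A^+) \to U$ that is an isomorphism over each $W_j$, and hence globally. This identifies $U$ with $\Spa(A, A^+)$, proving that $U$ is affinoid perfectoid and $(\mathcal{O}_Y(V), \mathcal{O}_Y^+(V)) \to (\mathcal{O}_X(U), \mathcal{O}_X^+(U))$ is strongly finite étale as claimed. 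The main obstacle I anticipate is making the descent step fully rigorous: one must check that almost-acyclicity of $\mathcal{O}^+$ under rational covers genuinely suffices to descend finite projective almost modules, not merely solve the sheaf condition for $\mathcal{O}^+$, and that this descent carries the algebra and unramified structures along with the underlying module. Granting this, the identification of $U$ with $\Spa(A, A^+)$ is formal from the local agreement on the cover.
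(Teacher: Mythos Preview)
Your strategy is genuinely different from the paper's, and the obstacle you flag at the end is a real gap, not a technicality.

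The paper does not attempt to descend anything along the rational cover of $V$. Instead it tilts to reduce to characteristic $p$ (using Theorem \ref{AlmostFiniteEtaleCovers}), then writes $V=\Spa(R,R^+)$ as the completed direct limit of p-finite perfectoid affinoids $(R_i,R_i^+)$ via Lemma \ref{DirectLimitofPFinite}. The finitely many rational subsets and finite \'etale covers witnessing the strongly finite \'etale structure of $f$ over $V$, together with their gluing data, all descend to some finite stage $Y_i=\Spa(R_i,R_i^+)$ by Lemma \ref{FiniteEtaleDirectLimit}(i). A second application of the same limiting lemma reduces the p-finite case to a reduced affinoid of topologically finite type, where one can invoke Huber's result (\cite{Huber}, Example 1.6.6 (ii)) that a finite \'etale map of locally noetherian adic spaces is affinoid over every affinoid. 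Pulling back along Lemma \ref{PullbackFiniteEtale}(iv) finishes. The point is that the ``preimage of an affinoid is affinoid'' statement is borrowed from the noetherian world rather than proved directly.

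Your route needs effective descent of finite projective (almost) modules along the cover $B^{\circ a}\to \prod_j B_j^{\circ a}$. Proposition \ref{SheafGeneralCase} gives almost-exactness of the \v{C}ech complex for $\mathcal{O}^{+a}$ itself, but the sentence ``since each $A_j^{\circ a}$ is flat over $B_j^{\circ a}$, the \v{C}ech complex with coefficients in the descent datum is likewise exact'' does not follow: flatness of $A_j^{\circ a}$ over $B_j^{\circ a}$ says nothing about how the global complex behaves over $B^{\circ a}$. To show $A^{\circ a}\otimes_{B^{\circ a}} B_l^{\circ a}\cong A_l^{\circ a}$ you would want $B_l^{\circ a}$ flat over $B^{\circ a}$, or a Barr--Beck style argument that $B^{\circ a}\to\prod_j B_j^{\circ a}$ is of effective descent for finite projective modules; neither is available at this point in the paper. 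Without this step you cannot conclude that $A^{\circ a}$ is finite projective over $B^{\circ a}$, nor that $\Spa(A,A^+)$ recovers $f^{-1}(W_j)$ over each $W_j$, so the final identification with $U$ is unjustified. (The superficially similar gluing in the proof of Theorem \ref{AlmostEtaleChar0}(ii) \emph{uses} the present proposition as input, so that argument cannot be recycled here.)
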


\begin{proof} Tilting the situation and using Theorem \ref{AlmostFiniteEtaleCovers}, we immediately reduce to the case that $K$ is of characteristic $p$. Again, we replace $K$ by a perfectoid subfield to ensure that $R^+$ is a $K^\circ$-algebra in all cases.

We may assume $Y=V=\Spa(R,R^+)$ is affinoid. Writing $(R,R^+)$ as the completion of the direct limit of p-finite perfectoid affinoid $K$-algebras $(R_i,R_i^+)$ as in Lemma \ref{DirectLimitofPFinite}, we see that $Y$ is already defined as a finite \'{e}tale cover of some $Y_i=\Spa(R_i,R_i^+)$: Indeed, there are finitely many rational subsets of $Y$ over which we have a finite \'{e}tale cover; by Lemma \ref{FiniteEtaleDirectLimit} (i), these are defined over a finite level, and because $Y$ is quasi-separated, also the gluing data over intersections (as well as the cocycle condition) are defined over a finite level.

Hence by Lemma \ref{PullbackFiniteEtale} (ii), we are reduced to the case that $(R,R^+)$ is p-finite, given as the completed perfection of $(S,S^+)$. But then $X$ is already defined as a finite \'{e}tale cover of $Z=\Spa(S,S^+)$ by similar reasoning using Lemma \ref{FiniteEtaleDirectLimit} (ii), and we conclude by using the result for locally noetherian adic spaces, cf. \cite{Huber}, Example 1.6.6 (ii), and Lemma \ref{PullbackFiniteEtale} (iv).
\end{proof}

Using Proposition \ref{AlmostPurityCharP}, this shows that if $K$ is of characteristic $p$, then the finite \'{e}tale covers of an affinoid perfectoid space $X=\Spa(R,R^+)$ are the same as the finite \'{e}tale covers of $R$.

The same method also proves the following proposition.

\begin{prop}\label{EtaleComesFromAdicMap} Assume that $K$ is of characteristic $p$. Let $f:X\rightarrow Y$ be an \'{e}tale map of perfectoid spaces. Then for any $x\in X$, there exist affinoid perfectoid neighborhoods $x\in U\subset X$, $f(U)\subset V\subset Y$, and an \'{e}tale morphism of affinoid noetherian adic spaces $U^0\rightarrow V^0$ over $K$, such that $U=U^0\times_{V^0} V$.
\end{prop}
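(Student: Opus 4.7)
The plan is to reduce locally to the affinoid case, use the étale factorization to separate the problem into descending a finite étale cover and then descending an open embedding to a noetherian level.

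Fix $x \in X$. By definition of an étale morphism, after shrinking there exist affinoid perfectoid neighborhoods $x \in X' \subset W$ and $f(X') \subset V \subset Y$ with a factorization $X' \hookrightarrow W \to V$, where the first map is an open embedding and the second is finite étale. Shrinking further, we may identify $X'$ with a rational subset of $W$. In characteristic $p$, finite étale morphisms of perfectoid spaces are automatically strongly finite étale by the argument used in Proposition~\ref{AlmostPurityCharP}, so $(\mathcal{O}_W(W),\mathcal{O}_W^+(W))$ is strongly finite étale over $(\mathcal{O}_V(V),\mathcal{O}_V^+(V))$ by Proposition~\ref{DecompletionFiniteEtaleGlobal}.

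The next step is to descend the finite étale cover $W \to V$ to a noetherian level. Write $V = \Spa(R,R^+)$ and use Lemma~\ref{DirectLimitofPFinite} to present $(R,R^+)$ as the completion of a filtered direct limit of p-finite perfectoid affinoid $K$-algebras $(R_i,R_i^+)$, each being the completed perfection of a reduced affinoid $K$-algebra $(S_i,S_i^+)$ of topologically finite type. Let $V_i=\Spa(R_i,R_i^+)$ and $V_i^0=\Spa(S_i,S_i^+)$. By Lemma~\ref{FiniteEtaleDirectLimit}(i), the finite étale cover $W \to V$ is the base change of a finite étale cover $W_i \to V_i$ for some index $i$; by Lemma~\ref{FiniteEtaleDirectLimit}(ii), $W_i \to V_i$ is in turn the pullback of a finite étale cover $W_i^0 \to V_i^0$ between noetherian affinoid adic spaces. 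Thus $W \cong W_i^0 \times_{V_i^0} V$ as perfectoid spaces, and for each $j \geq i$ we obtain noetherian finite étale covers $W_j := W_i^0 \times_{V_i^0} V_j^0 \to V_j^0$ fitting into a filtered system.

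Now I would show that $|W| \cong \varprojlim_{j \geq i} |W_j|$ and that every quasi-compact open of $W$ arises as the preimage of a quasi-compact open from some $W_j$. This follows by running the argument of Lemma~\ref{DirectLimitofPFinite}(ii)--(iv) with $V$ replaced by $W$: since $\mathcal{O}_W(W)^{\circ a}$ is the perfectoid $K^{\circ a}$-algebra attached to the completion of the direct limit of the $\mathcal{O}_{W_j}(W_j)^{\circ a}$ (finite étale base change commutes with the direct limit by Lemma~\ref{FiniteEtaleDirectLimit}(i)), the same spectral-space formalism applies verbatim. In particular, the rational (hence quasi-compact open) subset $X' \subset W$ is the preimage of a rational subset $U^0 \subset W_j$ for some $j \geq i$, large enough.

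Setting $U := X'$, $V^0 := V_j^0 = \Spa(S_j,S_j^+)$ (which is a noetherian affinoid adic space since $S_j$ is topologically of finite type over $K$), and letting $U^0 \to V^0$ be the composition of the open embedding $U^0 \hookrightarrow W_j$ with the finite étale map $W_j \to V_j^0$ (which is étale in Huber's classical sense), we obtain
\[
U^0 \times_{V^0} V \;=\; U^0 \times_{W_j} (W_j \times_{V^0} V) \;=\; U^0 \times_{W_j} W \;=\; U,
\]
where the last equality uses that $U = X'$ is the preimage of $U^0$ in $W$. The main obstacle is the descent of the quasi-compact open $X'$ to finite level; this is not an issue over the base $V$ itself (Lemma~\ref{DirectLimitofPFinite}), but one must verify that the same inverse-limit description of topological spaces and the same descent of rational subsets carry over to $W$, which requires observing that the tower $W_j \to V_j^0$ satisfies the hypotheses of the spectral-space arguments of Lemma~\ref{DirectLimitofPFinite} since it is obtained by pulling back a fixed noetherian finite étale cover.
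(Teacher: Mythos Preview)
Your proposal is correct and follows essentially the same route as the paper's (very terse) proof, which simply says to reduce to a rational subdomain of a finite \'{e}tale cover and then invoke ``the same argument as above'' (i.e., the descent via Lemma~\ref{DirectLimitofPFinite} and Lemma~\ref{FiniteEtaleDirectLimit}). You have correctly unpacked that argument and in particular isolated the one extra point beyond Proposition~\ref{DecompletionFiniteEtaleGlobal}: the rational subset $X'\subset W$ must also descend, which you justify by exhibiting $W$ as the completion of a filtered direct limit of p-finite perfectoid spaces; one small cleanup is that your $W_j$ are noetherian rather than perfectoid, so to literally quote Lemma~\ref{DirectLimitofPFinite}(ii) you should replace them by their completed perfections $W_i\times_{V_i} V_j$ (which are p-finite perfectoid with the same underlying topological space by Proposition~\ref{SheafPFiniteCase}(i)).
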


\begin{proof} We may assume that $X$ and $Y$ affinoid perfectoid, and that $X$ is a rational subdomain of a finite \'{e}tale cover of $Y$. Then one reduces to the p-finite case by the same argument as above, and hence to noetherian adic spaces.
\end{proof}

\begin{cor}\label{CompositeEtale} Strongly \'{e}tale maps of perfectoid spaces are open. If $f:X\rightarrow Y$ and $g:Y\rightarrow Z$ are strongly (finite) \'{e}tale morphisms of perfectoid spaces, then the composite $g\circ f$ is strongly (finite) \'{e}tale.
\end{cor}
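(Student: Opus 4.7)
The plan is to dispatch openness first, then handle composition of strongly finite \'{e}tale maps by an affinoid-local argument, and finally reduce composition of strongly \'{e}tale maps to the noetherian case via tilting and Proposition \ref{EtaleComesFromAdicMap}.

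For openness, any strongly \'{e}tale map factors locally as an open embedding followed by a strongly finite \'{e}tale map, and open embeddings are trivially open, so it suffices to prove that a strongly finite \'{e}tale map $f\colon X\to Y$ is open. Since tilting preserves both the underlying topological space and the class of strongly finite \'{e}tale morphisms (Proposition \ref{TiltingEquivalenceSpaces}), I may assume $K$ has characteristic $p$. Then by Proposition \ref{EtaleComesFromAdicMap}, locally $f$ arises as the base change of an \'{e}tale morphism $U^0\to V^0$ of noetherian affinoid adic spaces. The classical fact that \'{e}tale morphisms of locally noetherian adic spaces are open (\cite{Huber}, Proposition 1.7.8) shows $|U^0|\to|V^0|$ is open, hence so is its topological pullback $|U^0|\times_{|V^0|}|V|\to|V|$. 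Combined with surjectivity of $|U|\to|U^0|\times_{|V^0|}|V|$ from Lemma \ref{PullbackFiniteEtale}(i), this gives openness of $|U|\to|V|$, and hence of $f$.

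For composition of strongly finite \'{e}tale maps, I cover $Z$ by affinoid perfectoid opens $W$. By two successive applications of Proposition \ref{DecompletionFiniteEtaleGlobal}, the preimages $g^{-1}(W)\subset Y$ and $(gf)^{-1}(W)\subset X$ are affinoid perfectoid, and the restrictions of $g$ and $f$ over $W$ are strongly finite \'{e}tale morphisms of perfectoid affinoid $K$-algebras. Transitivity of finite \'{e}tale morphisms at the level of Tate algebras, together with transitivity of finite \'{e}tale morphisms of $K^{\circ a}$-algebras, then shows that the composite $(\mathcal{O}_Z(W),\mathcal{O}_Z^+(W))\to(\mathcal{O}_X((gf)^{-1}W),\mathcal{O}_X^+((gf)^{-1}W))$ is strongly finite \'{e}tale. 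As $W$ ranges over a cover of $Z$, this yields that $g\circ f$ is globally strongly finite \'{e}tale.

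For composition of strongly \'{e}tale maps, I again tilt to reduce to characteristic $p$. Near a point $x\in X$, I apply Proposition \ref{EtaleComesFromAdicMap} first to $g$ at $f(x)$, obtaining affinoid perfectoid neighborhoods $V_1\ni f(x)$ in $Y$ and $Z_1\ni gf(x)$ in $Z$, together with an \'{e}tale morphism $V_1^0\to Z_1^0$ of noetherian adic spaces satisfying $V_1=V_1^0\times_{Z_1^0}Z_1$; and then to $f|_{f^{-1}(V_1)}$ at $x$, obtaining $U\ni x$, $V_2\subset V_1$, and an \'{e}tale noetherian $U^0\to V_2^0$ with $U=U^0\times_{V_2^0}V_2$. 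The central alignment step is to write $V_2$ as a fiber product $V_3^0\times_{Z_1^0}Z_1$ for some rational $V_3^0\subset V_1^0$, compatibly with $V_2^0$; this is possible because the rational subset $V_2\subset V_1$ is defined by finitely many sections of $\mathcal{O}(V_1)$, which by Lemma \ref{DirectLimitofPFinite} arise from some p-finite level that may be refined to dominate both $V_1^0$ and $V_2^0$. Once aligned, the composition $U^0\to V_3^0\hookrightarrow V_1^0\to Z_1^0$ is \'{e}tale in the noetherian category (by the classical composition result), and base change to $Z_1$ realizes $U$ as its pullback, producing the required local factorization of $g\circ f$ as open embedding followed by strongly finite \'{e}tale. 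The main obstacle is precisely this alignment of the two noetherian descents, which hinges on the cofinality and spectral properties of the p-finite approximation from Lemma \ref{DirectLimitofPFinite}.
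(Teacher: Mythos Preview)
Your overall strategy matches the paper's---tilt to characteristic $p$ and reduce to the noetherian theory via Proposition \ref{EtaleComesFromAdicMap}---but two of the three steps have gaps.

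\textbf{Openness.} The sentence ``Combined with surjectivity of $|U|\to|U^0|\times_{|V^0|}|V|$ \ldots\ this gives openness of $|U|\to|V|$'' is not valid: a continuous surjection followed by an open map need not be open (take $g(x)=|2x-1|$ on $[0,1]$ followed by the identity). What the paper's terse ``follows directly from the previous proposition'' really uses is the \emph{proof} of Proposition \ref{EtaleComesFromAdicMap}, not just its statement: one writes $V$ as the inverse limit of p-finite $V_j$, descends the finite \'{e}tale cover to some level $i$, and observes that each $U_j=U_i\times_{V_i}V_j$ is again p-finite, so $|U_j|\cong|U_j^0|$ and $|V_j|\cong|V_j^0|$ are homeomorphisms by Proposition \ref{SheafPFiniteCase}(i). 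Since $|U|=\varprojlim|U_j|$ with rational opens descending to finite levels (Lemma \ref{DirectLimitofPFinite}(ii)), openness of $|U|\to|V|$ follows from openness at each noetherian level.

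\textbf{Strongly finite \'{e}tale composition.} Your direct argument via two applications of Proposition \ref{DecompletionFiniteEtaleGlobal} and transitivity of finite \'{e}tale morphisms of $K^{\circ a}$-algebras is correct, and is in fact cleaner than the paper's route through the noetherian case.

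\textbf{Strongly \'{e}tale composition.} The alignment step is the right issue to flag, but your resolution is hand-wavy: the $V_2^0$ produced by a second, independent invocation of Proposition \ref{EtaleComesFromAdicMap} comes from a p-finite approximation of $V_2$ that has no a priori relation to the fiber-product presentation $V_1=V_1^0\times_{Z_1^0}Z_1$, and Lemma \ref{DirectLimitofPFinite} does not automatically reconcile them. The fix---and what the paper means by ``argue as in the previous proposition for both $f$ and $g$''---is to descend \emph{bottom-up}: first approximate $Z_1$ by p-finite $Z_{1,i}$ and descend $Y'\to Z_1$ to that level; this equips $Y'$ (and hence the rational $V'\subset Y'$) with a compatible p-finite structure; then descend the finite \'{e}tale cover $X''\to V'$ to that same tower via Lemma \ref{FiniteEtaleDirectLimit}. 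All the noetherian pieces now live over a common $Z_{1,i}^0$, where Huber's Proposition 1.6.7(ii) applies, and base-changing back gives the required local factorization.
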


\begin{proof} We may assume that $K$ has characteristic $p$. The first part follows directly from the previous proposition and the result for locally noetherian adic spaces, cf. \cite{Huber}, Proposition 1.7.8. For the second part, argue as in the previous proposition for both $f$ and $g$ to reduce to the analogous result for locally noetherian adic spaces, \cite{Huber}, Proposition 1.6.7 (ii).
\end{proof}

The following theorem gives a strong form of Faltings's almost purity theorem.

\begin{thm}\label{AlmostEtaleChar0} Let $(R,R^+)$ be a perfectoid affinoid $K$-algebra, and let $X=\Spa(R,R^+)$ with tilt $X^\flat$.
\begin{altenumerate}
\item[{\rm (i)}] For any open affinoid perfectoid subspace $U\subset X$, we have a fully faithful functor from the category of strongly finite \'{e}tale covers of $U$ to the category of finite \'{e}tale covers of $\mathcal{O}_X(U)$, given by taking global sections.
\item[{\rm (ii)}] For any $U$, this functor is an equivalence of categories.
\item[{\rm (iii)}] For any finite \'{e}tale cover $S/R$, $S$ is perfectoid and $S^{\circ a}$ is finite \'{e}tale over $R^{\circ a}$. Moreover, $S^{\circ a}$ is a uniformly almost finitely generated $R^{\circ a}$-module.
\end{altenumerate}
\end{thm}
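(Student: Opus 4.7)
My plan is to tackle (ii) as the main content and derive (i) and (iii) from standard manipulations. Part (i) is essentially formal: a strongly finite \'{e}tale cover of $U=\Spa(R_U,R_U^+)$ carries no extra data beyond its generic fiber, because by the definition of finite \'{e}tale morphisms of affinoid $K$-algebras the integral structure $S^+$ is forced to be the integral closure of $R_U^+$ in $S$, so the forgetful functor to finite \'{e}tale covers of $R_U$ is fully faithful. For (iii), once (ii) is available for $U=X$ any finite \'{e}tale $S/R$ becomes the global sections of a strongly finite \'{e}tale cover of $X$, so $S$ is perfectoid and $S^{\circ a}$ is finite \'{e}tale over $R^{\circ a}$; uniform almost finite generation then follows from the rank-decomposition theorem for uniformly finite projective almost modules, using that a finite \'{e}tale extension has locally constant and bounded rank.

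The heart is (ii). In characteristic $p$, Proposition \ref{AlmostPurityCharP} shows that every finite \'{e}tale cover $S$ of a perfectoid $K$-algebra is perfectoid with $S^{\circ a}$ finite \'{e}tale over $R^{\circ a}$, and combined with Proposition \ref{DecompletionFiniteEtaleGlobal} this yields essential surjectivity on each affinoid perfectoid $U$. In characteristic $0$ I would proceed by tilting: given a finite \'{e}tale $T$ over $R=\mathcal{O}_X(U)$ I aim to construct a finite \'{e}tale $R^\flat$-algebra $T^\flat$ whose untilt recovers $T$. By the characteristic $p$ case $T^\flat$ is then automatically strongly finite \'{e}tale, and tilting (Theorem \ref{TiltingEquivalence}) transports this back to show $T$ is strongly finite \'{e}tale, as required.

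The pointwise tilt is provided by Theorem \ref{TiltingEquivFields}: for each $x\in U$ the fiber $T\otimes_R\widehat{k(x)}$ is finite \'{e}tale over the perfectoid field $\widehat{k(x)}$, which tilts uniquely to a finite \'{e}tale cover of $\widehat{k(x)^\flat}$, and this tilt is almost finite \'{e}tale on the integral level. The main obstacle is to spread and glue these pointwise tilts into a global $T^\flat$. My plan is to show that every $x\in U$ admits a rational neighborhood $V\ni x$ on which $T|_V$ is already strongly finite \'{e}tale; then Proposition \ref{DecompletionFiniteEtaleGlobal}, applied after tilting to $V^\flat$ in characteristic $p$, provides finite \'{e}tale $\mathcal{O}_{X^\flat}(V^\flat)$-algebras, which via part (i)'s full faithfulness and the sheaf property from Theorem \ref{MainThmAnalytic} glue to the desired $T^\flat$.

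The local strong \'{e}taleness I would extract by descending the almost \'{e}tale structure on $\widehat{k(x)}^{\circ a}$ to $\mathcal{O}_X(V)^{\circ a}$ for a small rational $V$, using the identification $\widehat{k(x)^+}/\varpi=\widehat{k(x^\flat)^+}/\varpi^\flat$ on completed residue fields, the unique lifting of almost finite \'{e}tale covers along $\varpi$-adic completions (Theorem \ref{LiftingAlmostEtaleCovers}), and the presentation of $R$ as a filtered colimit of well-behaved subalgebras in the style of Lemma \ref{DirectLimitofPFinite} to propagate an almost \'{e}tale reduction mod $\varpi$ from the residue field level to a neighborhood $V$. This last descent-and-approximation step, needed because one must convert a generic-fiber finite \'{e}tale structure into an integral almost \'{e}tale one on an open, is the technical heart of the proof; everything else is formal manipulation of tilting equivalences already established.
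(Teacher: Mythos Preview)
Your overall architecture matches the paper's: reduce (ii) to the assertion that any finite \'{e}tale $S/R$ becomes strongly finite \'{e}tale over a rational neighborhood of each point, then glue; parts (i) and (iii) are formal consequences. But the descent step---the part you correctly flag as the technical heart---has a genuine gap. You propose to descend the almost \'{e}tale structure from $\widehat{k(x)}^{\circ a}$ to $\mathcal{O}_X(V)^{\circ a}$ by working modulo $\varpi$ and invoking Lemma~\ref{DirectLimitofPFinite}. That lemma writes a characteristic-$p$ perfectoid algebra globally as a completed colimit of p-finite subalgebras; it says nothing about the relation between a stalk and its neighborhoods, and there is no evident mechanism by which an almost finite \'{e}tale cover modulo $\varpi$ spreads out from a residue field (almost finite \'{e}tale objects are not finitely presented in a way that would make this automatic).

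The paper instead works entirely on the generic fibre via Lemma~\ref{FiniteEtaleDirectLimit}(i). Since the $\varpi$-adic completion of $\mathcal{O}_{X,x}^+=\varinjlim_{V\ni x}\mathcal{O}_X^+(V)$ equals $\widehat{k(x)}^+$, that lemma gives $2\text{-}\varinjlim_{V\ni x}\mathcal{O}_X(V)_\fet\cong\widehat{k(x)}_\fet$, and likewise on the tilt. Combined with Theorem~\ref{TiltingEquivFields} for the perfectoid field $\widehat{k(x)}$, this yields $2\text{-}\varinjlim_V\mathcal{O}_X(V)_\fet\cong 2\text{-}\varinjlim_V\mathcal{O}_{X^\flat}(V^\flat)_\fet$. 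So $S$ comes, over some small $U$, from a finite \'{e}tale cover of $\mathcal{O}_{X^\flat}(U^\flat)$, which is automatically strongly finite \'{e}tale in characteristic~$p$; its untilt then agrees with $S|_U$ after further shrinking. The gluing is also slightly different from what you sketch: the paper glues the local covers to a perfectoid space $Y\to X$, uses Proposition~\ref{DecompletionFiniteEtaleGlobal} to see $Y$ is affinoid perfectoid, and identifies $\mathcal{O}_Y(Y)$ with $S$ by tensoring the sheaf sequence for $\mathcal{O}_X$ with the flat $R$-module $S$. Your proposed gluing via full faithfulness and the sheaf property alone would still need this affinoidness input to conclude that the glued algebra is finite \'{e}tale over $R$.
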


\begin{proof}
\begin{altenumerate}
\item[{\rm (i)}] By Proposition \ref{DecompletionFiniteEtaleGlobal} and Theorem \ref{AlmostFiniteEtaleCovers}, the perfectoid spaces strongly finite \'{e}tale over $U$ are the same as the finite \'{e}tale $\mathcal{O}_X(U)^{\circ a}$-algebras, which are a full subcategory of the finite \'{e}tale $\mathcal{O}_X(U)$-algebras.

\item[{\rm (ii)}] We may assume that $U=X$. Fix a finite \'{e}tale $R$-algebra $S$. First we check that for any $x\in X$, we can find an affinoid perfectoid neighborhood $x\in U\subset X$ and a strongly finite \'{e}tale cover $V\rightarrow U$ which gives via (i) the finite \'{e}tale algebra $\mathcal{O}_X(U)\otimes_R S$ over $\mathcal{O}_X(U)$.

As a first step, note that we have an equivalence of categories between the direct limit of the category of finite \'{e}tale $\mathcal{O}_X(U)$-algebras over all affinoid perfectoid neighborhoods $U$ of $x$ and the category of finite \'{e}tale covers of the completion $\widehat{k(x)}$ of the residue field at $x$, by Lemma \ref{FiniteEtaleDirectLimit} (i). The latter is a perfectoid field.

By Theorem \ref{TiltingEquivFields}, the categories $\widehat{k(x)}_\fet$ and $\widehat{k(x^\flat)}_\fet$ are equivalent, where $k(x^\flat)$ is the residue field of $X^\flat$ at the point $x^\flat$ corresponding to $x$. Combining, we see that
\[
2-\varinjlim_{x\in U} (\mathcal{O}_X(U))_\fet\cong 2-\varinjlim_{x\in U} (\mathcal{O}_{X^\flat}(U^\flat))_\fet\ .
\]
In particular, we can find $V^\flat$ finite \'{e}tale over $U^\flat$ for some $U$ such that the pullbacks of $S$ to $\widehat{k(x)}$ resp. of the global sections of $V^\flat$ to $\widehat{k(x^\flat)}$ are tilts; but then, they are already identified over some smaller neighborhood. Shrinking $U$, we untilt to get the desired strongly finite \'{e}tale $V\rightarrow U$.

This shows that there is a cover $X=\bigcup U_i$ by finitely many rational subsets and strongly finite \'{e}tale maps $V_i\rightarrow U_i$ such that the global sections of $V_i$ are $S_i = \mathcal{O}_X(U_i)\otimes_R S$. Let $S_i^+$ be the integral closure of $\mathcal{O}_X^+(U_i)$ in $S_i$; then $V_i=\Spa(S_i,S_i^+)$.

By Lemma \ref{PullbackFiniteEtale} (ii), the pullback of $V_i$ to some affinoid perfectoid $U^\prime\subset U_i$ has the same description, involving $\mathcal{O}_X(U^\prime)\otimes_R S$, and hence the $V_i$ glue to some perfectoid space $Y$ over $X$, and $Y\rightarrow X$ is strongly finite \'{e}tale. By Proposition \ref{DecompletionFiniteEtaleGlobal}, $Y$ is affinoid perfectoid, i.e. $Y=\Spa(A,A^+)$, with $(A,A^+)$ an affinoid perfectoid $K$-algebra. It suffices to show that $A=S$. But the sheaf property of $\mathcal{O}_Y$ gives us an exact sequence
\[
0\rightarrow A\rightarrow \prod_i \mathcal{O}_X(U_i)\otimes_R S \rightarrow \prod_{i,j} \mathcal{O}_X(U_i\cap U_j)\otimes_R S\ .
\]
On the other hand, the sheaf property for $\mathcal{O}_X$ gives an exact sequence
\[
0\rightarrow R\rightarrow \prod_i \mathcal{O}_X(U_i) \rightarrow \prod_{i,j} \mathcal{O}_X(U_i\cap U_j)\ .
\]
Because $S$ is flat over $R$, tensoring is exact, and the first sequence is identified with the second sequence after $\otimes_R S$. Therefore $A=S$, as desired.

\item[{\rm (iii)}] This is a formal consequence of part (ii), Proposition \ref{DecompletionFiniteEtaleGlobal} and Theorem \ref{AlmostFiniteEtaleCovers}.
\end{altenumerate}
\end{proof}

We see in particular that any (finite) \'{e}tale morphism of perfectoid spaces is strongly (finite) \'{e}tale. Now one can also pullback \'{e}tale maps between adic spaces in characteristic $0$.

\begin{prop}\label{PullbackEtaleFromAdic} Parts (iii) and (iv) of Lemma \ref{PullbackFiniteEtale} stay true in characteristic $0$.
\end{prop}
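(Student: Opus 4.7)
The plan is to reuse verbatim the strategy of Lemma \ref{PullbackFiniteEtale} (iii), (iv), with the sole modification that the characteristic $p$ almost purity result (Proposition \ref{AlmostPurityCharP}) is replaced by its newly-proved characteristic-$0$ analogue, Theorem \ref{AlmostEtaleChar0} (iii). Since almost purity was the only obstacle to running the proof in mixed characteristic, this substitution should work essentially formally.

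First I would dispose of (iv). In the affinoid setup, with $Y=\Spa(B,B^+)$ and $Z=\Spa(C,C^+)$ perfectoid and $(A,A^+)$ finite \'etale over $(B,B^+)$, set $D=A\otimes_B C$ and let $D^+$ be the integral closure of $C^+$ in $D$. Base change makes $D$ finite \'etale over $C$, and Theorem \ref{AlmostEtaleChar0} (iii) then guarantees that $D$ is perfectoid and $D^{\circ a}$ is finite \'etale over $C^{\circ a}$; moreover $D$ is already complete since it is a finite projective $C$-module, so the canonical topology on $D$ agrees with the Banach topology coming from its perfectoid structure. Thus $(D,D^+)$ is a perfectoid affinoid $K$-algebra, finite \'etale over $(C,C^+)$. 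To check that $\Spa(D,D^+)$ represents the fibre product, one verifies the universal property: a map from a perfectoid space $W$ compatible with maps to $X$ and $Z$ corresponds to an algebra map $A\otimes_B C\to \mathcal{O}_W(W)$ by the universal property of tensor products, which is automatically continuous because finite modules over a complete topological ring carry a unique natural topology, and the condition on $D^+$ follows from integral closedness of $\mathcal{O}_W^+(W)$.

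Part (iii) then follows by the usual local-to-global argument. Reduce to the affinoid situation by choosing affinoid perfectoid neighborhoods in $Y$ and $Z$ and an affinoid cover of $f^{-1}$ of each such neighborhood on which $f$ becomes finite \'etale; apply (iv) on each piece and glue. For the \'etale (as opposed to finite \'etale) case, one factors $f$ locally as an open embedding followed by a finite \'etale map, applying the finite \'etale case above and noting that pulling back an open embedding along an arbitrary morphism of perfectoid spaces poses no difficulty. Finally, the surjectivity of $|X\times_Y Z|\to|X|\times_{|Y|}|Z|$ is purely topological: once the fibre product is known to exist as a perfectoid (in particular, adic) space, the construction of \cite{HuberDefAdic}, proof of Lemma 3.9 (i), which exhibits, for any compatible pair of valuations on $A$ and on $C$ restricting to the same valuation on $B$, an extension to a valuation on the tensor product, carries over unchanged.

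The only step where something substantive is used is the appeal to Theorem \ref{AlmostEtaleChar0} (iii) to conclude perfectoidness of $D=A\otimes_B C$; everything else is formal. Since that theorem has already been established, no genuine obstacle remains.
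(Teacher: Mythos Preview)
Your proposal is correct and follows exactly the paper's approach: the paper's entire proof reads ``The same proof as for Lemma \ref{PullbackFiniteEtale} works, using Theorem \ref{AlmostEtaleChar0} (iii),'' and you have spelled out precisely this substitution. One small imprecision: when checking the universal property of $\Spa(D,D^+)$ you restrict to perfectoid test objects $W$, whereas the fibre product is asserted in the category of adic spaces; but the same tensor-product argument works verbatim for arbitrary adic $W$, so this is harmless.
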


\begin{proof} The same proof as for Lemma \ref{PullbackFiniteEtale} works, using Theorem \ref{AlmostEtaleChar0} (iii).
\end{proof}

Finally, we can define the \'{e}tale site of a perfectoid space.

\begin{definition} Let $X$ be a perfectoid space. Then the \'{e}tale site of $X$ is the category $X_\et$ of perfectoid spaces which are \'{e}tale over $X$, and coverings are given by topological coverings. The associated topos is denoted $X_\et^\sim$.
\end{definition}

The previous results show that all conditions on a site are satisfied, and that a morphism $f:X\rightarrow Y$ of perfectoid spaces induces a morphism of sites $X_\et\rightarrow Y_\et$. Also, a morphism $f:X\rightarrow Y$ from a perfectoid space $X$ to a locally noetherian adic space $Y$ induces a morphism of sites $X_\et\rightarrow Y_\et$.

After these preparations, we get the technical main result.

\begin{thm}\label{TiltingEquivalenceSites} Let $X$ be a perfectoid space over $K$ with tilt $X^\flat$ over $K^\flat$. Then the tilting operation induces an isomorphism of sites $X_\et\cong X^\flat_\et$. This isomorphism is functorial in $X$.
\end{thm}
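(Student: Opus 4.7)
The plan is to reduce everything to facts already established: the tilting equivalence on affinoid perfectoid algebras (Theorem \ref{AlmostFiniteEtaleCovers}, Proposition \ref{TiltingEquivalenceSpaces}), the identification of underlying topological spaces, and the fact that strongly (finite) étale = (finite) étale for perfectoid spaces (Theorem \ref{AlmostEtaleChar0}). Concretely, I would construct the functor $X_\et \to X^\flat_\et$ as the restriction of the tilting functor on perfectoid spaces over $K$, and then verify the three things required: it sends objects to objects, induces bijections on morphisms, and matches coverings.

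First I would check that tilting sends étale morphisms to étale morphisms. If $f : Y \to X$ is étale, then locally it factors as an open embedding $Y \hookrightarrow W$ followed by a finite étale map $W \to X$. By Theorem \ref{AlmostEtaleChar0}, any finite étale cover of a perfectoid space is automatically strongly finite étale, hence of the form $\Spa(S, S^+) \to \Spa(R,R^+)$ with $S^{\circ a}$ finite étale over $R^{\circ a}$; by the tilting equivalence for finite étale almost algebras (Theorem \ref{AlmostFiniteEtaleCovers}) this tilts to a strongly finite étale cover $W^\flat \to X^\flat$. Open embeddings tilt to open embeddings since $|Y|$ is canonically identified with $|Y^\flat|$ (Proposition \ref{TiltingEquivalenceSpaces}), and this identification is compatible with open subspaces. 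Composing, $f^\flat : Y^\flat \to X^\flat$ is étale. The same argument in reverse (starting with $g : Z \to X^\flat$ étale and untilting) produces an étale morphism over $X$ whose tilt is $g$, giving essential surjectivity.

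Next I would treat morphisms: for two étale morphisms $Y_1, Y_2$ over $X$, the set $\Hom_X(Y_1, Y_2)$ agrees with $\Hom_{X^\flat}(Y_1^\flat, Y_2^\flat)$ by Proposition \ref{TiltingEquivalenceSpaces} applied to the underlying perfectoid spaces (using that $(Y_i)^\flat$ are perfectoid over $X^\flat$ by the previous paragraph). For coverings, note that a family $\{Y_i \to Y\}$ in $X_\et$ is a covering iff the induced map $\bigsqcup |Y_i| \to |Y|$ is surjective; since $|Y_i| = |Y_i^\flat|$ and $|Y| = |Y^\flat|$ compatibly, this condition is preserved and reflected by tilting. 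Functoriality in $X$ is immediate from the functoriality of the tilting equivalence on perfectoid spaces.

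The only nontrivial step is the first one, namely that tilting preserves étaleness; the other items are essentially formal given the identification of topological spaces and the tilting equivalence on perfectoid spaces. Within that step, the substantive input is Faltings's almost purity theorem (Theorem \ref{AlmostEtaleChar0}), which ensures that finite étale covers of $R$ are already strongly finite étale and therefore tilt through the almost world; without it, one could only tilt strongly finite étale covers and would not know that every étale cover is of this form. Once almost purity is in hand, the rest of the argument is a routine assembly.
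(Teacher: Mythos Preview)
Your proposal is correct and is precisely the unpacking of the paper's one-word proof ``This is immediate.'' By the time this theorem is stated, the paper has already observed (right after the definition of strongly \'etale) that $f$ is strongly (finite) \'etale iff $f^\flat$ is, and (right after Theorem \ref{AlmostEtaleChar0}) that (finite) \'etale equals strongly (finite) \'etale; together with the identification of topological spaces and the tilting equivalence on perfectoid spaces, your three checks on objects, morphisms, and coverings are exactly what ``immediate'' means here.
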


\begin{proof} This is immediate.
\end{proof}

The almost vanishing of cohomology proved in Proposition \ref{SheafGeneralCase} extends to the \'{e}tale topology.

\begin{prop}\label{SheafEtaleCase} For any perfectoid space $X$ over $K$, the sheaf $U\mapsto \mathcal{O}_U(U)$ is a sheaf $\mathcal{O}_X$ on $X_\et$, and $H^i(X_\et,\mathcal{O}_X^{\circ a})=0$ for $i>0$ if $X$ is affinoid perfectoid.
\end{prop}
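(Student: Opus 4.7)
The plan is to reduce the claim to the already-proven analytic version (Proposition \ref{SheafGeneralCase}) by Čech descent along a single surjective finite \'{e}tale morphism. By Proposition \ref{EtaleComesFromAdicMap}, combined with tilting (Theorem \ref{TiltingEquivalenceSites}) to transfer between characteristics, any \'{e}tale cover of an affinoid perfectoid $X$ admits a refinement assembled from rational covers and surjective finite \'{e}tale morphisms. The Čech-to-derived-functor spectral sequence, together with the already-known vanishing for rational covers, then reduces the problem to showing that for any surjective finite \'{e}tale map $f\colon Y=\Spa(S,S^+) \to X=\Spa(R,R^+)$ of affinoid perfectoid spaces, the Amitsur complex
\[
0 \to R^{\circ a} \to S^{\circ a} \to S^{\circ a} \otimes_{R^{\circ a}} S^{\circ a} \to \cdots
\]
is exact.

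I would first identify the iterated fibre products and their almost integral structure sheaves. Iterating Proposition \ref{PullbackEtaleFromAdic} gives $Y^{\times_X n} = \Spa(S^{\otimes_R n},T_n^+)$, where $T_n^+$ is the integral closure of $R^+$ in $S^{\otimes_R n}$. By Theorem \ref{AlmostEtaleChar0}(iii), $S^{\circ a}$ is finite \'{e}tale over $R^{\circ a}$, hence finite projective, and so the $n$-fold tensor power $(S^{\circ a})^{\otimes_{R^{\circ a}} n}$ is again finite \'{e}tale over $R^{\circ a}$. Both $T_n^{+\,a}$ and $(S^{\circ a})^{\otimes_{R^{\circ a}} n}$ are then finite \'{e}tale $R^{\circ a}$-algebras with the same generic fibre $S^{\otimes_R n}$, so by the almost purity equivalence they must coincide. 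This identifies the almost integral Čech complex with the displayed Amitsur complex.

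Next I would verify that $S^{\circ a}$ is faithfully flat over $R^{\circ a}$. Flatness is part of Theorem \ref{AlmostEtaleChar0}(iii). For faithfulness, I invoke the rank decomposition for uniformly finite projective almost modules recalled in Section~4: $R^{\circ a}$ splits as $A_0\times\cdots\times A_k$ so that $S^{\circ a}$ has almost rank $i$ on the factor $A_i$. Topological surjectivity of $f$ forces $A_0=0$, because any point of $\Spa$ corresponding to the $A_0$-factor would have empty fibre in $Y$; hence each surviving factor has positive rank, and faithful flatness follows. Given faithful flatness, exactness of the Amitsur complex follows by the classical descent argument: base change along $R^{\circ a}\to S^{\circ a}$ produces a contracting homotopy built from the multiplication $S^{\circ a}\otimes_{R^{\circ a}} S^{\circ a}\to S^{\circ a}$, and faithful flatness descends exactness back to $R^{\circ a}$. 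Inverting $\varpi$ then yields the sheaf property of $\mathcal{O}_X$ itself.

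The main technical obstacle is the identification $(S^{\otimes_R n})^{\circ a}=(S^{\circ a})^{\otimes_{R^{\circ a}} n}$, since the formation of integrally closed subrings does not commute with tensor products in general; this is precisely the point where the almost purity theorem does the heavy lifting, characterising both sides uniquely as finite \'{e}tale $R^{\circ a}$-algebras with the prescribed generic fibre. A secondary nuisance is the careful bookkeeping of the reduction from arbitrary \'{e}tale covers to compositions of rational covers and surjective finite \'{e}tale maps, but this is purely formal once Proposition \ref{EtaleComesFromAdicMap} is available.
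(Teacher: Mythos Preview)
Your argument is correct and takes a genuinely different route from the paper. The paper does not split the cover into a rational part and a finite \'{e}tale part and treat each separately; instead it tilts the entire \v{C}ech complex to characteristic $p$, writes the base as a completed direct limit of p-finite perfectoid algebras, and then invokes the analogous \'{e}tale acyclicity statement for noetherian affinoid adic spaces (which holds up to a bounded $\varpi$-power), using the Frobenius/perfection trick exactly as in the analytic case to upgrade this to almost exactness. Your approach instead exploits that almost purity (Theorem~\ref{AlmostEtaleChar0}) has already been proved: since $S^{\circ a}$ is finite \'{e}tale (hence faithfully flat once the rank-zero factor is excluded) over $R^{\circ a}$, the Amitsur complex is exact by standard descent, and the identification of the iterated fibre products with $(S^{\circ a})^{\otimes_{R^{\circ a}} n}$ follows from full faithfulness of $R^{\circ a}_{\fet}\to R_{\fet}$. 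Your route is more self-contained in that it does not appeal to Huber's \'{e}tale acyclicity for noetherian adic spaces; the paper's route keeps the argument structurally parallel to the analytic case and avoids re-invoking the almost purity theorem. Both leave the d\'{e}vissage from arbitrary \'{e}tale covers to the special form (rational opens in finite \'{e}tale covers of rational opens) as a formal bookkeeping exercise; note only that you should make the finite \'{e}tale maps surjective by passing to their open images, and combine several finite \'{e}tale pieces over a fixed rational $V_j$ into a single one by taking their disjoint union.
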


\begin{proof} It suffices to check exactness of
\[
0\rightarrow \mathcal{O}_X(X)^{\circ a}\rightarrow \prod_i \mathcal{O}_{U_i}(U_i)^{\circ a}\rightarrow \prod_{i,j} \mathcal{O}_{U_i\times_X U_j}(U_i\times_X U_j)^{\circ a}\rightarrow \ldots
\]
for any covering of an affinoid perfectoid $X$ by finitely many \'{e}tale $U_i\rightarrow X$ given as rational subsets of finite \'{e}tale maps to rational subsets of $X$. Under tilting, this reduces to the assertion in characteristic $p$, and then to the assertion for p-finite $(R,R^+)$. In that case, one uses that the analogous sequence for noetherian adic spaces is exact up to a bounded $\varpi$-power, and hence after taking the perfection almost exact.
\end{proof}

To make use of the \'{e}tale site of a perfectoid space, we have to compare the \'{e}tale sites of perfectoid spaces with those of locally noetherian adic spaces. This is possible under a certain assumption, cf. Section 2.4 of \cite{Huber} for an analogous result.

\begin{definition}\label{DecompletionAdicSpace} Let $X$ be a perfectoid space. Further, let $X_i$, $i\in I$, be a filtered inverse system of noetherian adic spaces over $K$, and let $\varphi_i: X\rightarrow X_i$, $i\in I$, be a map to the inverse system.

Then we write $X\sim \varprojlim X_i$ if the mapping of underlying topological spaces $|X|\rightarrow \varprojlim |X_i|$ is a homeomorphism, and for any $x\in X$ with images $x_i\in X_i$, the map of residue fields
\[
\varinjlim k(x_i)\rightarrow k(x)
\]
has dense image.
\end{definition}

\begin{rem} We recall that by assumption all $X_i$ are qcqs. If $X\sim \varprojlim X_i$, then $|X|$ is an inverse limit of spectral spaces with spectral transition maps, hence spectral, and in particular qcqs again.
\end{rem}

\begin{prop}\label{DecompletionPullbackEtale} Let the situation be as in Definition \ref{DecompletionAdicSpace}, and let $Y\rightarrow X_i$ be an \'{e}tale morphism of noetherian adic spaces. Then $Y\times_{X_i} X \sim \varprojlim_{j\geq i} Y\times_{X_i} X_j$.
\end{prop}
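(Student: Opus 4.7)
The plan is to reduce to two special cases using the local structure of étale morphisms, then handle each case separately by combining the homeomorphism $|X|\cong\varprojlim|X_j|$ with the algebraic compatibility of finite étale covers with filtered colimits and completion.

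First I would reduce to two cases: (a) $Y\to X_i$ is an open immersion, and (b) $Y\to X_i$ is finite étale. Since étale maps are locally composites of open immersions and finite étale maps, and the formation of fibre products commutes with taking open subspaces and is compatible with composition, it is enough to verify the claim when $Y\to X_i$ is of either of these two forms (and after this, assemble an étale cover locally by such pieces, using that $|Y\times_{X_i}X_j|$ is qcqs and the relation $\sim$ passes to quasicompact opens).

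Case (a) is essentially formal. Here $Y\times_{X_i}X_j$ is just the preimage of $|Y|\subset|X_i|$ in $X_j$, equipped with its induced adic-space structure, and similarly $Y\times_{X_i}X$ is the preimage in $X$. The hypothesis $X\sim\varprojlim X_j$ says $|X|\cong\varprojlim|X_j|$ and so the preimage of $|Y|$ in $|X|$ is the inverse limit of the preimages in $|X_j|$; residue fields are preserved under open immersions, so the density condition at each point is inherited from $X\sim\varprojlim X_i$.

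Case (b) is the main one. Working locally on $X_i$ I can assume $X_i=\Spa(A_i,A_i^+)$ is affinoid with $Y=\Spa(B,B^+)$ finite étale over it, and that $X=\Spa(R,R^+)$ and $X_j=\Spa(A_j,A_j^+)$ are affinoid as well, with $R$ the completion of $\varinjlim_{j\geq i} A_j$ (this is the content of $X\sim\varprojlim X_j$ at the ring level, after replacing the system by an affinoid one cofinally). By Proposition \ref{PullbackEtaleFromAdic}/Lemma \ref{PullbackFiniteEtale}, writing $B_j=B\otimes_{A_i}A_j$ and $B_R=B\otimes_{A_i}R$, we have $Y\times_{X_i}X_j=\Spa(B_j,B_j^+)$ and $Y\times_{X_i}X=\Spa(B_R,B_R^+)$ with the $+$-rings given by integral closures. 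Since finite étale is preserved under base change and under taking filtered colimits and $\varpi$-adic completions (Lemma \ref{FiniteEtaleDirectLimit}(i)), $B_R$ is the completion of $\varinjlim B_j$.

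For the topological part I would argue as follows: the map $|Y\times_{X_i}X|\to\varprojlim|Y\times_{X_i}X_j|$ is a continuous map of spectral spaces (both being qcqs), so it suffices to check bijectivity on points and on specialization. Surjectivity on points follows from the surjectivity statement in Lemma \ref{PullbackFiniteEtale}(i) applied at each finite level, combined with compactness of $|Y\times_{X_i}X_j|$ in the constructible topology (as in the proof of Lemma \ref{DirectLimitofPFinite}(iv)). For injectivity and the residue field statement I would fix $x\in X$ with images $x_j\in X_j$; the fibre of $Y\times_{X_i}X\to X$ over $x$ is described by the finite étale $\widehat{k(x)}$-algebra $B_R\otimes_R\widehat{k(x)}$, while the fibre of $Y\times_{X_i}X_j\to X_j$ over $x_j$ is described by the finite étale $k(x_j)$-algebra $B_j\otimes_{A_j}k(x_j)$. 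The hypothesis that $\varinjlim k(x_j)\to k(x)$ has dense image, together with Proposition \ref{FiniteEtaleHenselian} and Lemma \ref{FiniteEtaleDirectLimit}(i), identifies the former algebra with the base change of $\varinjlim(B_j\otimes_{A_j}k(x_j))$ to $\widehat{k(x)}$. This simultaneously gives injectivity over each $x$ (the $R$-points of $Y_R$ above $x$ correspond to primes in this colimit, which are inverse limits of primes at finite level) and the density of $\varinjlim k(y_j)$ in $k(y)$ for any $y\in Y\times_{X_i}X$ mapping to $y_j\in Y\times_{X_i}X_j$, since both residue fields are cut out by the same prime of the colimit.

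The hard part is the topological bijectivity at a single point $x$: one must control how the finitely many points of the fibres $|Y_j|_{x_j}$ reorganize in the inverse limit and match them with the points of $|Y_R|_x$. This is exactly where Proposition \ref{FiniteEtaleHenselian} (henselian $\Rightarrow$ passage to completion preserves finite étale) and the density of $\varinjlim k(x_j)$ in $\widehat{k(x)}$ are essential; everything else is formal manipulation of spectral spaces and fibre products.
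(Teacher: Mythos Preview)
Your approach is essentially the same as the paper's, which simply refers the reader to Remark~2.4.3 of \cite{Huber}: decompose an \'etale map locally into an open immersion followed by a finite \'etale map and check each case by looking at fibres over points of $X$, using the residue-field density hypothesis together with henselian lifting for the finite \'etale part.

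Two points of imprecision are worth flagging. First, the claim that ``$R$ is the completion of $\varinjlim_{j\geq i} A_j$'' is not part of the hypothesis $X\sim\varprojlim X_j$: that relation is a condition on underlying topological spaces and on residue fields only, not a ring-level statement. Fortunately your fibrewise argument does not actually use this; the identification of the fibre of $Y\times_{X_i}X\to X$ over $x$ with the base change to $\widehat{k(x)}$ of the fibre over $x_j$ follows already from the density of $\varinjlim k(x_j)$ in $k(x)$ together with Proposition~\ref{FiniteEtaleHenselian}, which is all you invoke in the end. You should delete the parenthetical about the ring-level content.

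Second, the criterion ``it suffices to check bijectivity on points and on specialization'' for a continuous map of spectral spaces to be a homeomorphism is not quite correct as stated. The paper isolates the cleaner statement you want: if $|X|\to\varprojlim|X_i|$ is a continuous bijection and the residue-field density condition holds, then this map is automatically a homeomorphism. The reason is that in an adic space the set of generalizations of a point is controlled by the residue field together with its valuation subring, and the residue-field condition lets you match generalizations on both sides; combined with spectrality this forces the map to be open. Phrasing it this way lets you avoid a separate specialization check.
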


\begin{proof} The same proof as for Remark 2.4.3 of \cite{Huber} works. In particular, we note that in Definition \ref{DecompletionAdicSpace}, if $|X|\rightarrow \varprojlim |X_i|$ is bijective and the condition on residue fields is satisfied, then already $X\sim \varprojlim X_i$, i.e. $|X|\rightarrow \varprojlim |X_i|$ is a homeomorphism.
\end{proof}

With this definition, we have the following analogue of Proposition 2.4.4 of \cite{Huber}.

\begin{thm}\label{DecompletionTopoi} Let the situation be as in Definition \ref{DecompletionAdicSpace}. Then $X_\et^\sim$ is a projective limit of the fibred topos $(X_{i,\et}^\sim)_i$.
\end{thm}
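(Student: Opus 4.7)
The plan is to follow closely the argument of Huber's Proposition 2.4.4 in \cite{Huber}, showing that pullback along the $\varphi_i$ induces an equivalence of categories
\[
2\text{-}\varinjlim_{i\in I} X_{i,\et} \xrightarrow{\sim} X_\et,
\]
which moreover matches up the Grothendieck topologies. Since any \'{e}tale morphism $Y\to X$ is locally of the form ``open immersion into a finite \'{e}tale cover of an affinoid perfectoid open'', and since $|X|$ is qcqs (being the inverse limit of the spectral spaces $|X_i|$ along spectral transition maps), it suffices to treat three classes of data: (a) quasicompact open (in particular rational) subsets, (b) finite \'{e}tale covers of affinoid perfectoids of $X$, and (c) finite \'{e}tale and open coverings. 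For (a), the standard fact that qc opens in a cofiltered inverse limit of spectral spaces with spectral transition maps descend to a finite level, applied to $|X|=\varprojlim|X_i|$, already produces descent for qc opens; rational subsets, being defined by finitely many inequalities on functions in $\mathcal{O}_X(X)$, descend after approximating those functions from the $\mathcal{O}_{X_i}(X_i)$.

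The main step is (b). Fix an affinoid perfectoid $U\subset X$ that is the preimage of an affinoid $U_i\subset X_i$ (achievable by (a)), and set $U_j=U_i\times_{X_i}X_j$ for $j\geq i$. The key claim is that $\mathcal{O}_X(U)^+$ is the $\varpi$-adic completion of $\varinjlim_{j\geq i}\mathcal{O}_{X_j}(U_j)^+$. This uses the density-of-residue-fields hypothesis: for every point $x\in U$ with images $x_j\in U_j$, the kernel of $\mathcal{O}_{X_j,x_j}^+\to k(x_j)^+$ is $\varpi$-divisible, and the density assumption ensures that $\widehat{k(x)}^+$ is the $\varpi$-adic completion of $\varinjlim\widehat{k(x_j)}^+$. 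Passing from stalks to global sections over the common underlying space $|U|=\varprojlim|U_j|$, and using Proposition \ref{DecompletionPullbackEtale}, yields the claim. Once this is established, Lemma \ref{FiniteEtaleDirectLimit}(i) gives
\[
\mathcal{O}_X(U)_\fet \cong 2\text{-}\varinjlim_{j\geq i}\mathcal{O}_{X_j}(U_j)_\fet,
\]
and combined with Proposition \ref{DecompletionPullbackEtale} this translates into the statement that every finite \'{e}tale cover of $U$ comes, at some finite level, via pullback; the same filtered-colimit argument yields full faithfulness for morphisms between such covers.

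The hardest step is the identification $\mathcal{O}_X(U)^+\cong\widehat{\varinjlim}\,\mathcal{O}_{X_j}(U_j)^+$: the rings $\mathcal{O}_{X_j}(U_j)$ are strongly noetherian Tate algebras with no special structure, while $\mathcal{O}_X(U)$ is perfectoid, and the only bridge between them is the homeomorphism $|U|\cong\varprojlim|U_j|$ together with the residue-field density. This has to be proven by a careful stalk-to-global-sections argument, comparing completed integral sections on both sides; once this is done, the density of $\varinjlim\mathcal{O}_{X_j}(U_j)$ in $\mathcal{O}_X(U)$ follows by inverting $\varpi$.

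Finally, for (c), any finite covering of an \'{e}tale object over $X$ by \'{e}tale morphisms is, by (a) and (b) and the quasicompactness of $|X|$, the pullback of a finite family of \'{e}tale morphisms over some $X_i$; the covering condition (jointly surjective on underlying spaces) then descends to a further $X_j$ by the spectral-space argument used in (a), since jointly surjective families of spectral maps between qcqs spectral spaces descend through cofiltered limits. Combining (a), (b), and (c) gives the equivalence of sites, and hence the desired isomorphism of topoi.
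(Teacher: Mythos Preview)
Your overall strategy matches the paper's: follow Huber's Proposition~2.4.4, replacing his Corollary~1.7.3 by the built-in factorization of \'etale maps of perfectoid spaces as ``open in finite \'etale'', and use Lemma~\ref{FiniteEtaleDirectLimit} to descend the finite \'etale part. Your treatment of quasicompact opens and of coverings via spectral-space arguments is fine.

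The problem is your ``key claim'' that $\mathcal{O}_X(U)^+$ is the $\varpi$-adic completion of $\varinjlim_{j\geq i}\mathcal{O}_{X_j}(U_j)^+$. Your justification is that on $\varpi$-adically completed stalks the map is an isomorphism (true, by the residue-field density hypothesis), and then you ``pass from stalks to global sections''. That passage is not automatic: an inverse system of sheaves can be stalkwise dense after completion without the induced map on global sections being dense, absent some acyclicity input you have not supplied. Definition~\ref{DecompletionAdicSpace} imposes no global density condition on rings of functions, only pointwise density at residue fields, so there is no reason to expect this global ring identification to hold in general.

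More importantly, the claim is not needed. The descent of finite \'etale covers should be done \emph{pointwise} rather than via a global ring identification: for $x\in U$ with images $x_j\in U_j$, the hypothesis gives $\widehat{k(x)}^+$ as the $\varpi$-adic completion of $\varinjlim \widehat{k(x_j)}^+$, so Lemma~\ref{FiniteEtaleDirectLimit}(i) (or directly Proposition~\ref{FiniteEtaleHenselian}) applies at the level of completed residue fields. Thus a finite \'etale cover $S/\mathcal{O}_X(U)$ descends, after base change to $\widehat{k(x)}$, to a finite \'etale cover of $\widehat{k(x_j)}$ for some $j$, and then (since finite \'etale algebras are of finite presentation and the stalk $\mathcal{O}_{X_j,x_j}$ is a filtered colimit over affinoid neighborhoods) to a finite \'etale cover of some open $V_j\ni x_j$ in $X_j$. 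Pulling back and using that isomorphisms of finite \'etale covers spread out from a point, one gets agreement with $S$ on a neighborhood of $x$ in $U$. Finitely many such neighborhoods cover $U$; then glue over the overlaps using full faithfulness, which reduces (via the graph) to descending open-closed subsets of \'etale $X$-spaces---a purely topological statement about cofiltered limits of spectral spaces. This is the shape of Huber's argument that the paper invokes, and it avoids your problematic global claim entirely.
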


\begin{proof} The same proof as for Proposition 2.4.4 of \cite{Huber} works, except that one uses that any \'{e}tale morphism factors locally as the composite of an open immersion and a finite \'{e}tale map instead of appealing to Corollary 1.7.3 of \cite{Huber} on the top of page 128. The latter kind of morphisms can be descended to a finite level because of Lemma \ref{FiniteEtaleDirectLimit}.
\end{proof}

As in \cite{Huber}, Corollary 2.4.6, this gives the following corollary.

\begin{cor}\label{DecompletionCohom} Let the situation be as in Definition \ref{DecompletionAdicSpace}, and let $F_i$ be a sheaf of abelian groups on $X_{i,\et}$, with preimages $F_j$ on $X_{j,\et}$ for $j\geq i$ and $F$ on $X_\et$. Then the natural mapping
\[
\varinjlim H^n(X_{j,\et},F_j)\rightarrow H^n(X_\et,F)
\]
is bijective for all $n\geq 0$.$\hfill \Box$
\end{cor}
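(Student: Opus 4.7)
The plan is to deduce this corollary from Theorem \ref{DecompletionTopoi} by the standard formalism for cohomology of projective limits of topoi, following Huber's proof of \cite[Corollary 2.4.6]{Huber} verbatim. The essential setup is already in place: Theorem \ref{DecompletionTopoi} identifies $X_\et^\sim$ as a projective limit of the fibered topos $(X_{i,\et}^\sim)_i$, and all $X_i$ are noetherian adic spaces, hence $X_{i,\et}$ has a basis of qcqs objects with qcqs transition morphisms in the inverse system.

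First, I would reduce to the statement on sections, namely that for any quasicompact \'etale $V_i \to X_i$ with pullbacks $V_j = V_i \times_{X_i} X_j$ and $V = V_i \times_{X_i} X$, the natural map
\[
\varinjlim_{j\geq i} F_j(V_j) \longrightarrow F(V)
\]
is an isomorphism. Here one uses Proposition \ref{DecompletionPullbackEtale} to know that $V \sim \varprojlim_{j\geq i} V_j$, so that one may pass to the restricted situation and invoke the description of sections of the pullback sheaf on a limit topos. Second, one upgrades this to Čech cohomology: given a finite \'etale cover $\mathfrak{U} = \{V_i^\alpha \to V_i\}$ on some level, its pullbacks give compatible finite \'etale covers on all higher levels and on $V$, and since finite products in the Čech nerve are again quasicompact and \'etale over $X_i$, the sections statement applies termwise. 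Filtered colimits being exact, this yields
\[
\varinjlim_{j\geq i} \check H^n(\mathfrak{U}_j, F_j) \;\cong\; \check H^n(\mathfrak{U}_\infty, F).
\]

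Finally, I pass from Čech to derived-functor cohomology. Since each $X_j$ is qcqs and $F_j$ is an abelian sheaf on its \'etale site, the Čech-to-cohomology spectral sequence computes $H^n(X_{j,\et}, F_j)$ by refining covers; the qcqs assumption guarantees that all relevant covers and refinements can be chosen finite, and by Lemma \ref{FiniteEtaleDirectLimit} and Theorem \ref{DecompletionTopoi} every such cover of $X$ descends to some finite level. Therefore the spectral sequences are compatible under filtered colimits, and the $E_2$-isomorphism just established passes to abutments, proving the corollary.

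The only mildly delicate point is the sections statement in the first paragraph: one must know that a section of $F$ over a qcqs \'etale $V$ is represented by a section of some $F_j$ over some $V_j$, and that two such sections agree over $V$ iff they agree over some $V_{j'}$ with $j' \geq j$. This is where the hypothesis $X \sim \varprojlim X_i$ (and its inheritance to \'etale covers via Proposition \ref{DecompletionPullbackEtale}) does the real work, being the input that makes the abstract projective-limit-of-topoi formalism applicable. Everything else is then formal.
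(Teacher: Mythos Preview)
Your proposal is correct and follows exactly the approach the paper intends: the paper's ``proof'' consists solely of the remark that this follows as in \cite[Corollary 2.4.6]{Huber} from Theorem \ref{DecompletionTopoi}, and you have simply spelled out what that deduction looks like. There is nothing to add.
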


In some cases, one can even say more.

\begin{cor}\label{DecompletionFrob} Assume that in the situation of Definition \ref{DecompletionAdicSpace} all transition maps $X_j\rightarrow X_i$ induce purely inseparable extensions on completed residue fields and homeomorphisms $|X_j|\rightarrow |X_i|$. Then $X_\et^\sim$ is equivalent to $X_{i,\et}^\sim$ for any $i$.
\end{cor}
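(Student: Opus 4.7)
The plan is to invoke Theorem \ref{DecompletionTopoi}, which already gives $X_\et^\sim \simeq \varprojlim_{j\geq i} X_{j,\et}^\sim$, and then reduce the corollary to showing that every transition morphism $\pi_{j'j}\colon X_{j'}\to X_j$ (for $j'\geq j\geq i$) already induces an equivalence of étale topoi $X_{j',\et}^\sim \simeq X_{j,\et}^\sim$. If this is true, the fibred topos has essentially constant fibres beyond $i$, and the limit is just $X_{i,\et}^\sim$.

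So I would fix $\pi = \pi_{j'j}$ and try to show $\pi$ induces an equivalence of sites. Since $\pi$ is a homeomorphism of underlying topological spaces, pullback gives a bijection between open subspaces of $X_j$ and of $X_{j'}$, and respects rational subsets on affinoid opens (as these are cut out by inequalities on sections, which correspond across $\pi$ since pullback on sections is determined by the values on completed residue fields, and purely inseparable extensions preserve the valuation). Next, using that every étale morphism is locally a composition of an open immersion and a finite étale map, what remains is to show that for every affinoid open $V=\Spa(A,A^+)\subset X_j$ with pullback $V'=\Spa(A',A'^+)\subset X_{j'}$, the base-change functor $A_\fet \to A'_\fet$ is an equivalence.

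For this last step, I would exploit that finite étale covers are controlled by the completed stalks: by Lemma \ref{FiniteEtaleDirectLimit}(i) (together with Proposition \ref{FiniteEtaleHenselian}), a finite étale cover of $A$ is determined, up to local isomorphism on a rational cover, by the induced finite étale covers of the completed residue fields $\widehat{k(x)}$ at all points $x\in V$. By hypothesis, each extension $\widehat{k(x)}\hookrightarrow \widehat{k(\pi^{-1}(x))}$ is purely inseparable. In characteristic zero this means it is an isomorphism, and in characteristic $p$ it induces an equivalence on finite étale covers (as purely inseparable extensions do not change the absolute Galois group). Combined with the fact that $\pi$ is a topological homeomorphism, pullback along $\pi$ is fully faithful and essentially surjective on finite étale covers after passing to suitable rational subcovers, and these rational subcovers glue (by the sheaf property of Theorem \ref{MainThmAnalytic} used on the limit $X$, together with Corollary \ref{DecompletionCohom} to descend gluing data to a finite level).

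The main obstacle is the last patching argument: turning the pointwise equivalence of finite étale categories at completed residue fields into a global equivalence over the affinoid. The key mechanism is Proposition \ref{FiniteEtaleHenselian}, which lets one detect finite étale covers henselianly (and hence by residue field information), together with the fact that étale descent is effective for affinoid perfectoid spaces by the $\mathfrak m$-almost vanishing in Proposition \ref{SheafEtaleCase}. Once this is in place, the pullback $A_\fet \to A'_\fet$ is an equivalence, so $\pi$ is an equivalence of étale sites, and taking the limit gives $X_\et^\sim \simeq X_{i,\et}^\sim$.
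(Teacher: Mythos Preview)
Your overall plan is the right one: invoke Theorem~\ref{DecompletionTopoi} and then argue that each transition map $X_{j'}\to X_j$ already induces an equivalence of \'etale topoi, so the limit collapses to $X_{i,\et}^\sim$. That is exactly what is going on in the paper. The difference is that the paper does not try to reprove this last step; it simply cites the remark after Proposition~2.3.7 of \cite{Huber}, which gives directly that a morphism of locally noetherian adic spaces which is a homeomorphism on underlying spaces and induces purely inseparable extensions on completed residue fields yields an equivalence of \'etale topoi. This is a purely Huber-theoretic statement about noetherian adic spaces and needs no perfectoid input.

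Your proposed argument for this step has a genuine gap. You want to go from ``$\widehat{k(x)}_\fet \simeq \widehat{k(\pi^{-1}(x))}_\fet$ for every point $x$'' to ``$A_\fet \to A'_\fet$ is an equivalence'' for every affinoid $V=\Spa(A,A^+)$, by a patching/descent argument. But the tools you invoke for the patching (Theorem~\ref{MainThmAnalytic}, Proposition~\ref{SheafEtaleCase}) are statements about perfectoid spaces, whereas $V$ and $V'$ here are noetherian affinoids; you have not arranged any perfectoid intermediary at this stage. And even granting a pointwise equivalence of finite \'etale categories at completed residue fields, turning this into essential surjectivity of $A_\fet\to A'_\fet$ requires a spreading-out and gluing mechanism \emph{on the noetherian side} (e.g.\ Huber's structure theory for \'etale morphisms of locally noetherian adic spaces) that you have not supplied. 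The paper avoids all of this by quoting the needed invariance result from \cite{Huber} directly.
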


\begin{proof} Use the remark after Proposition 2.3.7 of \cite{Huber}.
\end{proof}

\section{An example: Toric varieties}

Let us recall the definition of a toric variety, valid over any field $k$.

\begin{definition} A toric variety over $k$ is a normal separated scheme $X$ of finite type over $k$ with an action of a split torus $T\cong \mathbb{G}_m^k$ on $X$ and a point $x\in X(k)$ with trivial stabilizer in $T$, such that the $T$-orbit $T\cong Tx= U\subset X$ of $x$ is open and dense.
\end{definition}

We recall that toric varieties may be described in terms of fans.

\begin{definition} Let $N$ be a free abelian group of finite rank.
\begin{altenumerate}
\item[{\rm (i)}] A strongly convex polyhedral cone $\sigma$ in $N\otimes \mathbb{R}$ is a subset of the form $\sigma=\mathbb{R}_{\geq 0} x_1+ \ldots + \mathbb{R}_{\geq 0} x_n$ for certain $x_1,\ldots,x_n\in N$, subject to the condition that $\sigma$ contains no line through the origin.
\item[{\rm (ii)}] A fan $\Sigma$ in $N\otimes \mathbb{R}$ is a nonempty finite collection of strongly convex polyhedral cones stable under taking faces, and such that the intersection of any two cones in $\Sigma$ is a face of both of them.
\end{altenumerate}
\end{definition}

Let $M=\Hom(N,\mathbb{Z})$ be the dual lattice. To any strongly convex polyhedral cone $\sigma\subset N\otimes \mathbb{R}$, one gets the dual $\sigma^\vee\subset M\otimes \mathbb{R}$, and we associate to $\sigma$ the variety
\[
U_\sigma = \Spec k[\sigma^\vee\cap M]\ .
\]
We denote the function on $U_\sigma$ corresponding to $u\in \sigma^\vee\cap M$ by $\chi^u$. If $\tau$ is a face of $\sigma$, then $\sigma^\vee\subset \tau^\vee$, inducing an open immersion $U_\tau\rightarrow U_\sigma$. These maps allow us to glue a variety $X_{\Sigma}$ associated to any fan $\Sigma$. Note that $T = U_{\{0\}} = \Spec k[M]$ is a torus, which acts on $X_{\Sigma}$ with open dense orbit $U_{\{0\}}\subset X_{\Sigma}$. Also $T$ has the base point $1\in T$, giving a point $x\in X(k)$, making $X_{\Sigma}$ a toric variety. Let us recall the classification of toric varieties.

\begin{thm} Any toric variety over $k$ is canonically isomorphic to $X_{\Sigma}$ for a unique fan $\Sigma$ in $X_\ast(T)\otimes \mathbb{R}$.
\end{thm}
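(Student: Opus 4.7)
The plan is to set up a dictionary between $T$-invariant affine opens in $X$ and strongly convex polyhedral cones in $N_{\mathbb{R}} = X_\ast(T) \otimes \mathbb{R}$, and then check that covering $X$ by such opens produces a fan. The identification will come from looking at $T$-semi-invariant regular functions, and uniqueness from an intrinsic description of each cone in terms of one-parameter subgroups.

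First I would establish that $X$ admits a cover by $T$-invariant affine open subsets. This is a version of Sumihiro's theorem: every point of a normal $T$-variety has a $T$-stable affine open neighborhood. So let $U \subset X$ be an arbitrary $T$-invariant affine open, with coordinate ring $A = \Gamma(U,\mathcal{O}_U)$. Since $T$ is a split torus with character group $M = X^\ast(T)$, the action decomposes $A = \bigoplus_{u \in M} A_u$ into weight spaces, and each $A_u$ is either zero or (since the $T$-orbit of $x$ is open and dense in $X$, hence in $U$) one-dimensional, spanned by a character $\chi^u$ of $T$ viewed as a rational function. Thus $S := \{u \in M \mid A_u \neq 0\}$ is a finitely generated sub-monoid of $M$, saturated because $A$ is normal, and containing no nontrivial subgroup because the only $T$-invariant regular functions on $U$ are constants (as $T$ acts with dense orbit and $U$ is separated). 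By Gordan's lemma together with this saturation property, $S = \sigma^\vee \cap M$ for a unique strongly convex polyhedral cone $\sigma \subset N_{\mathbb{R}}$, so $U \cong U_\sigma$ canonically over $T$.

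Next I would verify that the collection $\Sigma$ of cones obtained from a cover of $X$ by $T$-invariant affine opens forms a fan, and is independent of the cover. The key point is that if $U_{\sigma_1}, U_{\sigma_2} \subset X$ are two such opens, then their intersection is again a $T$-invariant affine open (using that $X$ is separated, so the diagonal is a closed immersion, which translates into the semigroup $S_1 + S_2$ being saturated and giving rise to the dual cone $\sigma_1 \cap \sigma_2$); moreover $U_{\sigma_1 \cap \sigma_2} \hookrightarrow U_{\sigma_i}$ is an open immersion, forcing $\sigma_1 \cap \sigma_2$ to be a common face of $\sigma_1$ and $\sigma_2$ by the standard characterization of open immersions between affine toric varieties in terms of localization at a $\chi^u$ for $u$ in the relative interior of a face of $\sigma^\vee$. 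Adding all faces (which correspond to further $T$-invariant affine opens inside the $U_\sigma$) gives a fan $\Sigma$, and the isomorphisms $U_\sigma \cong U_\sigma^{\text{built}}$ glue to an isomorphism $X \cong X_\Sigma$ compatible with the $T$-action and basepoint.

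For uniqueness, I would give an intrinsic description: a cone $\sigma$ in $\Sigma$ is precisely the set of $v \in N_{\mathbb{R}}$ which, after rational approximation by one-parameter subgroups $\lambda : \mathbb{G}_m \to T$, have the property that $\lim_{t \to 0} \lambda(t) \cdot x$ exists in the corresponding affine chart. This shows $\Sigma$ is determined by the triple $(X, T, x)$. The hard step is really the first one, namely producing enough $T$-invariant affine opens; the combinatorial bookkeeping afterwards is forced by normality, separatedness, and the classification of affine toric varieties via saturated monoids. Since this proof is standard and purely algebro-geometric, I would actually not carry it out in detail and instead cite the literature (e.g. the standard references on toric varieties), since the paper only needs the statement as a setup for the $\mathbb{P}^n$-type examples that follow.
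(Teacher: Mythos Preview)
The paper does not prove this theorem at all; it is stated without proof under the heading ``Let us recall the classification of toric varieties.'' Your closing instinct---to cite the standard literature rather than carry out the argument---is exactly what the paper does.

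Your sketch is the standard route via Sumihiro's theorem and is essentially correct, but there is one slip worth noting. The reason the cone $\sigma$ attached to a $T$-invariant affine open $U$ is strongly convex is not that $S$ ``contains no nontrivial subgroup'': it may well contain one (for $\sigma=\{0\}$ one has $S=M$). What you actually need is that $S$ generates $M$ as a group, equivalently that $\sigma^\vee$ is full-dimensional; this follows because $T\subset U$ is open and dense, so $\mathrm{Frac}(k[S])=k(M)$. The condition ``$S$ contains no nontrivial subgroup'' would instead say that $\sigma$ itself is full-dimensional, which is not required of the cones in $\Sigma$. The justification you gave (that the only $T$-invariant regular functions on $U$ are constants) only shows $A_0=k$, which is unrelated to either condition.
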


We also need to recall some statements about divisors on toric varieties.

\begin{defprop} Let $\{\tau_i\}\subset \Sigma$ be the $1$-dimensional cones, and fix a generator $v_i\in \tau_i\cap N$ of $\tau_i\cap N$. Each $\tau_i$ gives rise to $U_{\tau_i}\cong \mathbb{A}^1\times \mathbb{G}_m^{k-1}$, giving rise to a $T$-invariant Weil divisor $D_i = D(\tau_i)$ on $X_{\Sigma}$, defined as the closure of $\{0\}\times \mathbb{G}_m^{k-1}$.

A $T$-Weil divisor is by definition an element of $\bigoplus_i \mathbb{Z} D_i$. Every Weil divisor is equivalent to a $T$-Weil divisor. If $D=\sum a_i D_i$ is a $T$-Weil divisor, then
\[
H^0(X_{\Sigma},\mathcal{O}(D)) = \bigoplus_{\substack{u\in M\\ \langle u,v_i\rangle\geq -a_i}} k \chi^u\ .
\]
\end{defprop}

Now we adapt these definitions to the world of usual adic spaces, and to the world of perfectoid spaces. Assume first that $k$ is a complete nonarchimedean field, and let $\Sigma$ be a fan as above. Then we can associate to $\Sigma$ the adic space $\mathcal{X}^\ad_{\Sigma}$ of finite type over $k$ which is glued out of
\[
\mathcal{U}_\sigma^{\ad} = \Spa(k\langle \sigma^\vee\cap M\rangle,k^\circ \langle \sigma^\vee\cap M\rangle)\ .
\]
We note that this is not in general the adic space $X_{\Sigma}^\ad$ over $k$ associated to the variety $X_{\Sigma}$: For example, if $X_{\Sigma}$ is just affine space, then $\mathcal{X}_{\Sigma}^\ad$ will be a closed unit ball. In general, let $X_{\Sigma,k^\circ}$ be the toric scheme over $k^\circ$ associated to $\Sigma$. Let $\hat{X}_{\Sigma,k^\circ}$ be the formal completion of $X_{\Sigma,k^\circ}$ along its special fibre, which is an admissible formal scheme over $k^\circ$. Then $\mathcal{X}^\ad_\Sigma$ is the generic fibre $\hat{X}^\ad_{\Sigma,k^\circ}$ associated to $\hat{X}_{\Sigma,k^\circ}$. In particular, if $X_{\Sigma}$ is proper, then $X^\ad_{\Sigma} = \mathcal{X}^\ad_{\Sigma}$.

Similarly, if $K$ is a perfectoid field, we can associate a perfectoid space $\mathcal{X}^\perf_{\Sigma}$ over $K$ to $\Sigma$, which is glued out of
\[
\mathcal{U}_\sigma^\perf = \Spa(K\langle \sigma^\vee \cap M[p^{-1}]\rangle,K^\circ \langle \sigma^\vee\cap M[p^{-1}]\rangle)\ .
\]

Note that on $\mathcal{X}^\perf_{\Sigma}$, we have a sheaf $\mathcal{O}(D)$ for any $D\in \bigoplus \mathbb{Z}[p^{-1}] D_i$. Moreover, $H^0(\mathcal{X}^\perf_{\Sigma},\mathcal{O}(D))$ is the free Banach-$K$-vector space with basis given by $\{\chi^u\}$, where $u$ ranges over $u\in M[p^{-1}]$ with $\langle u,v_i\rangle\geq -a_i$.

We have the following comparison statements. Note that any toric variety $X_{\Sigma}$ comes with a map $\varphi: X_{\Sigma}\rightarrow X_{\Sigma}$ induced from multiplication by $p$ on $M$; the same applies to $\mathcal{X}_{\Sigma}^\ad$, etc. . For clarity, we use subscripts to denote the field over which we consider the toric variety.

\begin{thm} Let $K$ be a perfectoid field with tilt $K^\flat$.
\begin{altenumerate}
\item[{\rm (i)}] The perfectoid space $\mathcal{X}_{\Sigma,K}^\perf$ tilts to $\mathcal{X}_{\Sigma,K^\flat}^\perf$.
\item[{\rm (ii)}] The perfectoid space $\mathcal{X}_{\Sigma,K}^\perf$ can be written as
\[
\mathcal{X}_{\Sigma,K}^\perf\sim \varprojlim_{\varphi} \mathcal{X}_{\Sigma,K}^\ad\ .
\]
\item[{\rm (iii)}] There is a homeomorphism of topological spaces
\[
|\mathcal{X}_{\Sigma,K^\flat}^\ad|\cong \varprojlim_\varphi |\mathcal{X}_{\Sigma,K}^\ad|\ .
\]
\item[{\rm (iv)}] There is an isomorphism of \'{e}tale topoi
\[
(\mathcal{X}_{\Sigma,K^\flat}^\ad)_\et^\sim\cong \varprojlim_\varphi (\mathcal{X}_{\Sigma,K}^\ad)_\et^\sim\ .
\]
\item[{\rm (v)}] For any open subset $U\subset \mathcal{X}_{\Sigma,K}^\ad$ with preimage $V\subset \mathcal{X}_{\Sigma,K^\flat}^\ad$, we have a morphism of \'{e}tale topoi $V_\et^\sim\rightarrow U_\et^\sim$, giving a commutative diagram
\[\xymatrix{
V_\et^\sim\ar@{^(->}[r]\ar[d] & (\mathcal{X}_{\Sigma,K^\flat}^\ad)_\et^\sim \ar[d]\\
U_\et^\sim\ar@{^(->}[r] & (\mathcal{X}_{\Sigma,K}^\ad)_\et^\sim
}\]
\end{altenumerate}
\end{thm}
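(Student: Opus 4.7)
The plan is to reduce each of the five assertions to a local statement on an affine chart $\mathcal{U}_\sigma$ of the fan $\Sigma$, and then invoke gluing. Tilting and the formation of the topological space and \'etale topos are compatible with open immersions by Proposition \ref{TiltingEquivalenceSpaces} and the general formalism of adic/perfectoid spaces, so this reduction is harmless.

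For (i), on the affine piece $\mathcal{U}_\sigma^\perf = \Spa(R_\sigma, R_\sigma^+)$ with $R_\sigma = K\langle \sigma^\vee \cap M[p^{-1}]\rangle$ and $R_\sigma^+ = K^\circ\langle \sigma^\vee \cap M[p^{-1}]\rangle$, one checks that $R_\sigma^+ = R_\sigma^\circ$ is open and bounded and that the reduction $R_\sigma^+/\varpi$ is the monoid algebra $K^\circ/\varpi[\sigma^\vee \cap M[p^{-1}]]$; since the monoid $\sigma^\vee \cap M[p^{-1}]$ is $p$-divisible, Frobenius is surjective on this reduction, so $R_\sigma$ is perfectoid (this is the same argument as Proposition \ref{ExampleTilting} applied to a general toric chart). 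Moreover $R_\sigma^+/\varpi$ coincides canonically with the mod-$\varpi^\flat$ reduction of $K^{\flat\circ}\langle \sigma^\vee \cap M[p^{-1}]\rangle$, so Theorem \ref{TiltingEquivalence} identifies the tilt of $\mathcal{U}_\sigma^\perf$ with $\mathcal{U}_{\sigma,K^\flat}^\perf$. Gluing along faces yields the global assertion.

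For (ii), the endomorphism $\varphi$ acts on each chart $\mathcal{U}_\sigma^\ad$ via the ring endomorphism $\chi^u \mapsto \chi^{pu}$ of $K\langle \sigma^\vee \cap M\rangle$. The ring $R_\sigma^+$ is canonically the $\varpi$-adic completion of $\varinjlim_\varphi K^\circ\langle \sigma^\vee \cap M\rangle$, so Proposition \ref{SpaCompletion} together with the spectral-space description of $\Spa$ in terms of continuous valuations identifies $|\mathcal{U}_\sigma^\perf|$ with $\varprojlim_\varphi |\mathcal{U}_\sigma^\ad|$. The density condition on residue fields required by Definition \ref{DecompletionAdicSpace} follows because the image of the uncompleted direct limit is dense in $R_\sigma$, hence its residue at a given point is dense in the residue field at that point. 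Gluing over $\Sigma$ then gives $\mathcal{X}_{\Sigma,K}^\perf \sim \varprojlim_\varphi \mathcal{X}_{\Sigma,K}^\ad$.

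Parts (iii)--(v) are formal consequences. For (iii), combining (i), (ii), and the homeomorphism $|X| \cong |X^\flat|$ from Proposition \ref{TiltingEquivalenceSpaces}, together with the fact that in characteristic $p$ the map $\varphi$ is purely inseparable and therefore a homeomorphism on topological spaces, yields
\[
|\mathcal{X}_{\Sigma,K^\flat}^\ad| \cong |\mathcal{X}_{\Sigma,K^\flat}^\perf| \cong |\mathcal{X}_{\Sigma,K}^\perf| \cong \varprojlim_\varphi |\mathcal{X}_{\Sigma,K}^\ad|.
\]
For (iv) the same chain of identifications holds on \'etale topoi: the middle isomorphism is Theorem \ref{TiltingEquivalenceSites} applied to the tilt from (i), the right-hand equivalence is Theorem \ref{DecompletionTopoi} applied to the inverse system of (ii), and the left-hand one is Corollary \ref{DecompletionFrob} applied to the tower $\varphi\colon \mathcal{X}_{\Sigma,K^\flat}^\ad \to \mathcal{X}_{\Sigma,K^\flat}^\ad$. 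Part (v) is then formal: $V$ is the preimage of $U$ under the homeomorphism of (iii), the morphism $V_\et^\sim \to U_\et^\sim$ is the restriction to $V$ and $U$ of the equivalence in (iv), and the square commutes by the functoriality of all constructions under open immersions. The main obstacle is part (ii), namely the rigorous matching of the spectral space of a perfectoid affinoid with the inverse limit of spectral spaces of its uncompleted Frobenius approximants together with the verification of the residue-field condition; once this is done carefully via Proposition \ref{SpaCompletion} and the universal property of $\Spa$, everything else follows from the tilting and decompletion machinery already developed.
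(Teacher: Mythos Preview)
Your proposal is correct and follows essentially the same approach as the paper. The paper's own proof is terse: it reduces (i) and (ii) to the affinoid charts (invoking Proposition \ref{ExampleTilting} for (i) and calling (ii) ``easy''), then derives (iii) and (iv) from Theorem \ref{DecompletionTopoi}, Corollary \ref{DecompletionFrob}, and the tilting equivalence, exactly as you do. The only minor difference is in (v): the paper cites Proposition \ref{DecompletionPullbackEtale} explicitly to ensure that the relation $\sim \varprojlim$ persists after restricting to an open subset, whereas you phrase this as ``functoriality under open immersions''; these amount to the same thing, but citing \ref{DecompletionPullbackEtale} makes the step precise.
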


\begin{proof} This is an immediate consequence of our previous results: Part (i) can be checked on affinoid pieces, where it is an immediate generalization of Proposition \ref{ExampleTilting}. Part (ii) can be checked one affinoid pieces again, where it is easy. Then parts (iii) and (iv) follow from Theorem \ref{DecompletionTopoi}, Corollary \ref{DecompletionFrob} and the preservation of topological spaces and \'{e}tale topoi under tilting. Finally, part (v) follows from Proposition \ref{DecompletionPullbackEtale}, using the previous arguments.
\end{proof}

Let us denote by $\pi: \mathcal{X}_{\Sigma,K^\flat}^\ad\rightarrow \mathcal{X}_{\Sigma,K}^\ad$ the projection, which exists on topological spaces and \'{e}tale topoi. In the following, we restrict to proper smooth toric varieties for simplicity.

\begin{prop}\label{CohomToric} Let $X_\Sigma$ be a proper smooth toric variety. Let $\ell\neq p$ be prime. Assume that $K$, and hence $K^\flat$, is algebraically closed. Then for all $i\in \mathbb{Z}$, the projection map $\pi$ induces an isomorphism
\[
H^i(X_{\Sigma,K,\et}^\ad,\mathbb{Z}/\ell^m\mathbb{Z})\cong H^i(X_{\Sigma,K^\flat,\et}^\ad,\mathbb{Z}/\ell^m\mathbb{Z})\ .
\]
\end{prop}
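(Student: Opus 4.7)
The plan is to unwind the preceding theorem into a statement about a colimit of cohomology groups and then to prove that each transition map in the colimit is an isomorphism. Specifically, part (iv) of the previous theorem gives an isomorphism of \'etale topoi $(\mathcal{X}_{\Sigma,K^\flat}^\ad)^\sim_\et \cong \varprojlim_\varphi (\mathcal{X}_{\Sigma,K}^\ad)^\sim_\et$, so Corollary \ref{DecompletionCohom} yields
\[
H^i(X^\ad_{\Sigma,K^\flat,\et}, \mathbb{Z}/\ell^m\mathbb{Z}) \cong \varinjlim_{\varphi^*} H^i(X^\ad_{\Sigma,K,\et}, \mathbb{Z}/\ell^m\mathbb{Z})
\]
(using properness of $X_\Sigma$ to identify $\mathcal{X}^\ad_\Sigma = X^\ad_\Sigma$). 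Unravelling the construction of $\pi$ from tilting and the inverse system, $\pi^*$ is the canonical structure map from the zeroth term of the system into the colimit, so it suffices to show that each $\varphi^*$ is itself an isomorphism.

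For this, observe that $\varphi: X_\Sigma \to X_\Sigma$ is given on each affine chart $U_\sigma = \Spec K[\sigma^\vee \cap M]$ by $\chi^u \mapsto \chi^{pu}$. Since $X_\Sigma$ is smooth, every cone $\sigma$ is generated by part of a $\mathbb{Z}$-basis of $N$, so that $U_\sigma \cong \mathbb{A}^k \times \mathbb{G}_m^{n-k}$ and $\varphi$ is, in these coordinates, the $p$-th power map on every factor; in particular it is finite flat of degree $p^n$, where $n = \dim X_\Sigma$. The trace/projection formula for finite flat morphisms then gives $\varphi_* \circ \varphi^* = p^n \cdot \id$ on $H^i(X^\ad_{\Sigma,K,\et}, \mathbb{Z}/\ell^m\mathbb{Z})$. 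Because $\ell \neq p$, $p^n$ is invertible modulo $\ell^m$, so $\varphi^*$ is injective.

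Finally, Huber's comparison theorem identifies $H^i(X^\ad_{\Sigma,K,\et}, \mathbb{Z}/\ell^m\mathbb{Z})$ with the \'etale cohomology of the proper scheme $X_\Sigma/K$, which is a finite $\mathbb{Z}/\ell^m\mathbb{Z}$-module by the classical finiteness for proper varieties over the algebraically closed field $K$. An injective endomorphism of a finite module is bijective, so $\varphi^*$ is an isomorphism, completing the proof. The main technical step is the identification of $\pi^*$ with the canonical map into the colimit; the rest combines the elementary toric computation of the degree of $\varphi$ with the standard trace formula. (As a conceptual cross-check, $H^*(X_\Sigma,\mathbb{Q}_\ell)$ is generated by the classes of the $T$-invariant divisors $D_i$, along each of which $\varphi$ is totally ramified of order $p$, so $\varphi^* D_i = p D_i$ and $\varphi^*$ acts as $p^j$ on $H^{2j}$; but the trace-formula argument above does not require any such description of $H^*$.)
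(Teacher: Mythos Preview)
Your argument is correct and takes a genuinely different route from the paper's. Both proofs begin the same way: the topos-level inverse limit statement reduces the proposition to showing that each transition map $\varphi^\ast$ on $H^i(X^\ad_{\Sigma,K,\et},\mathbb{Z}/\ell^m\mathbb{Z})$ is an isomorphism. From there the paper proceeds by passing to the integral toric model $X_{\Sigma,K^\circ}$ and invoking smooth proper base change to identify the cohomology with that of the special fibre $X_{\Sigma,\kappa}$; over the residue field $\kappa$ (of characteristic $p$) the map $\varphi$ is purely inseparable, hence a universal homeomorphism, and therefore induces an equivalence of \'etale topoi. Your approach instead stays over $K$: you observe that $\varphi$ is finite locally free of degree $p^n$ (using smoothness of $X_\Sigma$ so that each chart is $\mathbb{A}^k\times\mathbb{G}_m^{n-k}$), apply the trace map to get $\mathrm{tr}\circ\varphi^\ast = p^n\cdot\id$, conclude injectivity of $\varphi^\ast$ from $\ell\neq p$, and then upgrade to bijectivity using finiteness of the cohomology of a proper variety over an algebraically closed field together with Huber's comparison theorem. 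The paper's method is a bit slicker in that it produces an equivalence of topoi (not merely a cohomology isomorphism) and does not require a separate finiteness input; your method has the virtue of avoiding any appeal to the integral model and base change, working entirely with the characteristic-$0$ geometry of $\varphi$.
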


\begin{proof} This follows from part (iv) of the previous theorem combined with the observation that $\varphi: X_{\Sigma,K}^\ad\rightarrow X_{\Sigma,K}^\ad$ induces an isomorphism on cohomology with $\mathbb{Z}/\ell^m \mathbb{Z}$-coefficients. Using proper base change, this can be checked on $X_{\Sigma,\kappa}$, where $\kappa$ is the residue field of $K$. But here, $\varphi$ is purely inseparable, and hence induces an equivalence of \'{e}tale topoi.
\end{proof}

We need the following approximation property.

\begin{prop}\label{ApproximationHypersurface} Assume that $X_{\Sigma,K}$ is proper smooth. Let $Y\subset X_{\Sigma,K}$ be a hypersurface. Let $\tilde{Y}\subset X_{\Sigma,K}^\ad$ be a small open neighborhood of $Y$. Then there exists a hypersurface $Z\subset X_{\Sigma,K^\flat}$ such that $Z^\ad\subset \pi^{-1}(\tilde{Y})$. One can assume that $Z$ is defined over a given dense subfield of $K^\flat$.
\end{prop}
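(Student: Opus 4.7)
The plan is to apply Lemma \ref{ApproximationLemma} to the homogeneous equation defining $Y$, then exploit that $\mathrm{char}\,K^\flat = p$ to pass from an element with exponents in $M[1/p^\infty]$ to a genuine polynomial with integer exponents cutting out an algebraic hypersurface $Z \subset X_{\Sigma,K^\flat}$.

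First, realize $Y$ via the Cox construction: with $S=K[x_1,\ldots,x_r]$ the Cox ring of $X_\Sigma$ (indexed by the rays of $\Sigma$, graded by $\mathrm{Cl}(X_\Sigma)$), $Y$ is cut out by a homogeneous element $f\in S_D$ for some class $D$. View $f$ as a homogeneous element of the perfectoid Cox ring $R=K\langle x_1^{1/p^\infty},\ldots,x_r^{1/p^\infty}\rangle$, which after scaling lies in $R^\circ$. Now apply Lemma \ref{ApproximationLemma} to produce, for chosen $c,\epsilon>0$, a homogeneous $g \in R^{\flat\circ}=K^{\flat\circ}\langle x_1^{1/p^\infty},\ldots,x_r^{1/p^\infty}\rangle$ of the same degree with
\[
|f(x)-g^\sharp(x)| \leq |\varpi|^{1-\epsilon}\max(|f(x)|,|\varpi|^c)
\]
for all $x$ in the perfectoid closed polydisk $\mathrm{Spa}(R,R^\circ)$.

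Since $g$ is homogeneous of fixed degree, its exponents lie in a bounded polytope, and the coefficients go to zero; truncating $g$ to the finite partial sum $g_0$ over exponents in $M[1/p^N]$, for large enough $N$, makes $\|g-g_0\|$ smaller than $|\varpi|^c$, so the approximation estimate persists (with slightly worse $\epsilon$) for $g_0$ in place of $g$. Because $K^\flat$ has characteristic $p$, the element $h:=g_0^{p^N}$ is then a polynomial in $K^{\flat\circ}[x_1,\ldots,x_r]$, homogeneous of (integer) degree $p^N D$, and cuts out a hypersurface $Z\subset X_{\Sigma,K^\flat}$.

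Finally, verify $Z^{\ad}\subset\pi^{-1}(\tilde Y)$. For $y\in Z^{\ad}$ we have $g_0(y)=0$ (same vanishing locus as $h$); lift $y$ to a point $\tilde y$ of the perfectoid polydisk over $K^\flat$ with $\max_j|x_j(\tilde y)|=1$, so still $g_0(\tilde y)=0$. Under the tilting homeomorphism of Theorem \ref{MainThmAnalytic}(i), $\tilde y$ corresponds to $\tilde y^\sharp$ on the perfectoid polydisk over $K$ with $|g_0^\sharp(\tilde y^\sharp)|=|g_0(\tilde y)|=0$, so the approximation bound forces $|f(\tilde y^\sharp)|<|\varpi|^c$. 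Projecting to $X_{\Sigma,K}^{\ad}$ via the Cox quotient, the image $\pi(y)$ therefore lies in the open neighborhood $\{|f|<|\varpi|^c\}$ of $Y$; choosing $c$ large enough (which we are free to do from the outset) places this inside $\tilde Y$, using that $X_\Sigma$ is proper so these sets form a base of neighborhoods of $Y$. The dense-subfield assertion is automatic, since the $s_i$ in the proof of Lemma \ref{ApproximationLemma} can be chosen inside any prescribed dense subfield of $K^\flat$, giving $g_0$, and hence $h$, coefficients in that subfield. The main obstacle is globalization: translating the estimate from the perfectoid polydisk to a genuine neighborhood statement for the section $f$ of $\mathcal{O}(D)$ on $X_\Sigma$, which forces the use of the Cox quotient together with normalized homogeneous representatives of each point, and requires properness so that such representatives exist in the closed polydisk.
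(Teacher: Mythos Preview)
Your strategy is essentially the same as the paper's: build a perfectoid graded ring carrying $f$, run the approximation lemma to get a homogeneous $g$ on the tilt, perturb to finitely many terms with coefficients in the dense subfield, and take a $p$-power to land in integer exponents. The paper, however, works not with the full Cox ring but with the singly graded section ring
\[
R=\widehat{\bigoplus_{j\in\mathbb{Z}[1/p]} H^0\!\bigl(\mathcal{X}_{\Sigma,K}^{\perf},\mathcal{O}(jD)\bigr)},
\]
notes that the analogue of Lemma~\ref{ApproximationLemma} holds for this $R$ with the same proof, and thereby obtains $g\in H^0(\mathcal{X}_{\Sigma,K^\flat}^{\perf},\mathcal{O}(D))$ automatically. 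It then makes $|f(x)|$ meaningful on $X_\Sigma^{\ad}$ intrinsically via the toric integral model (local trivializations of $\mathcal{O}(D)$ differ by units of $K^\circ$), so no lift to a polydisk is needed.

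There is one genuine soft spot in your version. Lemma~\ref{ApproximationLemma}, as stated and proved, produces $g$ homogeneous for the \emph{total degree} on $K\langle x_1^{1/p^\infty},\ldots,x_r^{1/p^\infty}\rangle$, not for the $\mathrm{Cl}(X_\Sigma)$-grading. You silently upgrade this to ``the same degree'' in $\mathrm{Cl}(X_\Sigma)$, and you need exactly that: if $g_0$ is only total-degree homogeneous, then $h=g_0^{p^N}$ is not $G$-semi-invariant for the Cox torus $G=\Hom(\mathrm{Cl}(X_\Sigma),\mathbb{G}_m)$, so its zero locus in $\mathbb{A}^r\setminus V(B)$ need not be $G$-stable and does not descend to a hypersurface in $X_{\Sigma,K^\flat}$. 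The fix is easy (the inductive step of Lemma~\ref{ApproximationLemma} only uses that the graded pieces split the perfectoid ring as a Banach direct sum, so it goes through verbatim for the $\mathrm{Cl}(X_\Sigma)\otimes\mathbb{Z}[1/p]$-grading), but it should be said. The paper avoids the issue entirely by passing to the rank-one sub-grading by multiples of $D$, which also makes the final ``normalize a lift with $\max_j|x_j|=1$'' step unnecessary.
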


\begin{proof} Let $D=\sum a_i D_i$ be a $T$-Weil divisor representing $Y$. Let $f\in H^0(X_{\Sigma,K},\mathcal{O}(D))$ be the equation with zero locus $Y$. Consider the graded ring
\[
\bigoplus_{j\in \mathbb{Z}[p^{-1}]} H^0(\mathcal{X}_{\Sigma,K}^\perf,\mathcal{O}(jD)) = \bigoplus_{j\in \mathbb{Z}[p^{-1}]} \widehat{\bigoplus}_{\substack{u\in M[p^{-1}]\\ \langle u,v_i\rangle\geq -ja_i}} K \chi^u\ ,
\]
and let $R$ be its completion (with respect to the obvious $K^\circ$-submodule). Here $\widehat{\bigoplus}$ denotes the Banach space direct sum. Then as in Proposition \ref{ExampleTilting}, $R$ is a perfectoid $K$-algebra whose tilt is given by the similar construction over $K^\flat$. Note that $D$ is given combinatorially and hence transfers to $K^\flat$.

We may assume that $\tilde{Y}$ is given by
\[
\tilde{Y} = \{x\in X_{\Sigma,K}^\ad\mid |f(x)|\leq \epsilon\}\ ,
\]
for some $\epsilon$. In order to make sense of the inequality $|f(x)|\leq \epsilon$, note that $X_{\Sigma,K}$ and $\mathcal{O}(D)$ have a tautological integral model over $K^\circ$ (by applying the toric constructions over $K^\circ$), which is enough to talk about absolute values: Trivialize the line bundle $\mathcal{O}(D)$ locally on the integral model to interpret $f$ as a function; any two different choices differ by a unit of $K^\circ$, and hence give the same absolute value.

Now the analogue of Lemma \ref{ApproximationLemma} holds true for $R$, with the same proof. This implies that we can find $g\in H^0(\mathcal{X}_{\Sigma,K^\flat}^\perf,\mathcal{O}(D))$ such that
\[
\pi^{-1}(\tilde{Y}) = \{x\in \mathcal{X}_{\Sigma,K^\flat}^\perf\mid |g(x)|\leq \epsilon\}\ .
\]
Let $k\subset K^\flat$ be a dense subfield. Changing $g$ slightly, we can assume that
\[
g\in \bigoplus_{\substack{u\in M[p^{-1}]\\ \langle u,v_i\rangle\geq -ja_i}} k \chi^u\ .
\]
Replacing $g$ by a large $p$-power gives a regular function $h$ on $X_{\Sigma,K^\flat}$, such that its zero locus $Z\subset X_{\Sigma,K^\flat}$ is contained in $\pi^{-1}(\tilde{Y})$, as desired.
\end{proof}

By intersecting several hypersurfaces, one arrives at the following corollary.

\begin{cor}\label{ApproximationCompleteIntersection} Assume that $X_{\Sigma,K}$ is projective and smooth. Let $Y\subset X_{\Sigma,K}$ be a set-theoretic complete intersection, i.e. $Y$ is set-theoretically equal to an intersection $Y_1\cap \ldots \cap Y_c$ of hypersurfaces $Y_i\subset X_{\Sigma,K}$, where $c$ is the codimension of $Y$. Let $\tilde{Y}\subset X_{\Sigma,K}^\ad$ be a small open neighborhood of $Y$. Then there exists a closed subvariety $Z\subset X_{\Sigma,K^\flat}$ such that $Z^\ad\subset \pi^{-1}(\tilde{Y})$ with $\dim Z = \dim Y$. One can assume that $Z$ is defined over a given dense subfield of $K^\flat$.
\end{cor}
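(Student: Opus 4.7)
\emph{Proof plan.} The natural approach is to approximate each of the $c$ defining hypersurfaces of $Y$ separately via Proposition~\ref{ApproximationHypersurface}, intersect the resulting $K^\flat$-hypersurfaces, and then cut further by generic hyperplane sections to obtain the correct dimension.

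First, I would fix sections $f_i\in H^0(X_{\Sigma,K},\mathcal{O}(D_i))$ whose zero loci are the $Y_i$. Because $X_\Sigma$ is projective, $X_{\Sigma,K}^\ad$ is proper, and a standard compactness argument applied to the continuous function $x\mapsto \max_i|f_i(x)|$, which vanishes precisely on $Y=\bigcap_i Y_i\subset\tilde Y$, produces $\epsilon>0$ such that $\max_i|f_i(x)|\geq \epsilon$ for every $x\notin\tilde Y$. Setting $\tilde Y_i:=\{x\in X_{\Sigma,K}^\ad:|f_i(x)|\leq \epsilon/2\}$ then gives open neighborhoods of $Y_i$ with $\bigcap_i\tilde Y_i\subset \tilde Y$. (It is here that the set-theoretic complete intersection hypothesis is essential: it ensures the $f_i$ jointly detect $Y$.)

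Next, I would apply Proposition~\ref{ApproximationHypersurface} to each pair $(Y_i,\tilde Y_i)$ to obtain a hypersurface $Z_i\subset X_{\Sigma,K^\flat}$, defined over the given dense subfield $k\subset K^\flat$, with $Z_i^\ad\subset \pi^{-1}(\tilde Y_i)$. The intersection $W:=Z_1\cap\cdots\cap Z_c$ is then a $k$-subscheme of $X_{\Sigma,K^\flat}$ with
\[
W^\ad\subset\bigcap_i\pi^{-1}(\tilde Y_i)=\pi^{-1}\!\Big(\bigcap_i\tilde Y_i\Big)\subset\pi^{-1}(\tilde Y),
\]
and by Krull's height theorem every irreducible component of $W$ has codimension at most $c$ in $X_{\Sigma,K^\flat}$, hence dimension at least $\dim Y$. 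If some component of $W$ has dimension strictly larger than $\dim Y$, I would fix a projective embedding of $X_{\Sigma,K^\flat}$ and intersect $W$ successively with sufficiently general hyperplanes defined over $k$; this is possible because $k$, being dense in the non-discretely-valued field $K^\flat$, is infinite. A generic hyperplane meets every irreducible component properly, so dimension drops by exactly one at each step, and after $\dim W-\dim Y$ cuts one obtains a closed $k$-subvariety $Z\subset W$ with $\dim Z=\dim Y$ and $Z^\ad\subset W^\ad\subset\pi^{-1}(\tilde Y)$.

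The main (and really only) obstacle is the compactness step in the first paragraph, which is where the projectivity of $X_\Sigma$ and the complete intersection hypothesis both enter; after that, the construction is formal, reducing to Proposition~\ref{ApproximationHypersurface} applied hypersurface by hypersurface together with an elementary dimension-cutting argument in projective space.
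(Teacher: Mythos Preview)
Your overall strategy matches the paper's: approximate each $Y_i$ by a hypersurface $Z_i\subset X_{\Sigma,K^\flat}$ using Proposition~\ref{ApproximationHypersurface}, intersect, and then cut further if the dimension is too large. The compactness step you single out as ``the main obstacle'' is in fact routine (your argument for it is fine), and the paper does not even comment on it.

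The genuine gap is elsewhere. You pass from $W=Z_1\cap\cdots\cap Z_c$ to ``by Krull's height theorem every irreducible component of $W$ has codimension at most $c$, hence dimension at least $\dim Y$'', and then cut down. But Krull's theorem says nothing if $W=\emptyset$, and in a general smooth projective variety an intersection of $c\le n$ hypersurfaces can very well be empty (take two distinct fibers of $\mathbb{P}^1\times\mathbb{P}^1\to\mathbb{P}^1$). So you have not shown that $Z$ exists at all. The paper flags exactly this as ``the only nontrivial point'' and settles it by a degree argument: fix an ample class $H$ on $X_\Sigma$; the approximating hypersurface $Z_i$ produced in Proposition~\ref{ApproximationHypersurface} lies in the class $p^{m_i}[D_i]$ for the same $T$-Weil divisor $D_i$ that represents $Y_i$ (this is built into the construction there), so the intersection number $[Z_1]\cdots[Z_c]\cdot H^{n-c}$ equals a positive power of $p$ times $[D_1]\cdots[D_c]\cdot H^{n-c}$, which is positive because $Y$ is a nonempty proper intersection of the $Y_i$. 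Positive degree forces $W\neq\emptyset$.

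To fix your write-up, insert this degree/nonemptiness argument between the construction of $W$ and the dimension-cutting step; everything else you wrote then goes through.
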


\begin{proof} The only nontrivial point is to check that the intersection over $K^\flat$ will be nonempty; if the dimension was too large, one can also just cut by further hypersurfaces. For nonemptiness, choose an ample line bundle to define a notion of degree of subvarieties; then the degree of a complete intersection is determined combinatorially. As $Y$ has positive degree, so has the corresponding complete intersection over $K^\flat$, and in particular is nonempty.
\end{proof}

\section{The weight-monodromy conjecture}

We first recall some facts about $\ell$-adic representations of the absolute Galois group $G_k=\Gal(\bar{k}/k)$ of a local field $k$ of residue characteristic $p$, cf. \cite{IllusieAutourDuTML}. Let $q$ be the cardinality of the residue field of $k$. Recall that the maximal pro-$\ell$-quotient of the inertia subgroup $I_k\subset G_k$ is given by the quotient $t_\ell: I_k\rightarrow \mathbb{Z}_\ell(1)$, which is the inverse limit of the homomorphisms $t_{\ell,n}: I_k\rightarrow \mu_{\ell^n}$ defined by choosing a system of $\ell^n$-th roots $\varpi^{1/\ell^n}$ of a uniformizer $\varpi$ of $k$, and requiring
\[
\sigma(\varpi^{1/\ell^n}) = t_{\ell,n}(\sigma) \varpi^{1/\ell^n}
\]
for all $\sigma\in I_k$. Now recall Grothendieck's quasi-unipotence theorem.

\begin{prop} Let $V$ be a finite-dimensional $\bar{\mathbb{Q}}_\ell$-representation of $G_k$, given by a map $\rho: G_k\rightarrow \GL(V)$. Then there is an open subgroup $I_1\subset I_k$ such that for all $\sigma\in I_1$, the element $\rho(\sigma)\in \GL(V)$ is unipotent, and in this case there is a unique nilpotent morphism $N: V\rightarrow V(-1)$ such that for all $\sigma\in I_1$,
\[
\rho(\sigma) = \exp(Nt_\ell(g))\ .
\]
\end{prop}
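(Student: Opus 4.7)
The plan is to find an open subgroup $I_1\subset I_k$ on which $\rho$ factors through the tame $\ell$-quotient $t_\ell\colon I_k\to\mathbb{Z}_\ell(1)$ and takes unipotent values, and then to set $N$ to be a normalized $\ell$-adic logarithm of the image of a topological generator. First, by continuity and the compactness of $G_k$, I would produce a finite extension $E/\mathbb{Q}_\ell$ and a $G_k$-stable $\mathcal{O}_E$-lattice inside $V$, realizing $\rho$ as a continuous representation $\rho\colon G_k\to\mathrm{GL}_n(\mathcal{O}_E)$. The kernel of the reduction map $\mathrm{GL}_n(\mathcal{O}_E)\to\mathrm{GL}_n(k_E)$ is pro-$\ell$, since $k_E$ has characteristic $\ell$. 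Hence any closed subgroup of $G_k$ whose pro-order is prime to $\ell$ has finite image under $\rho$. Applying this to the wild inertia $P$ (pro-$p$, with $p\neq\ell$) and to the prime-to-$\ell$ factors of $I_k/P\cong\prod_{\ell'\neq p}\mathbb{Z}_{\ell'}(1)$, I obtain an open subgroup $I_1\subset I_k$ on which $\rho$ factors through $t_\ell$.

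Second, I would exploit a Frobenius lift $\phi\in G_k$: the relation $\phi\sigma\phi^{-1}=\sigma^q$ on the tame quotient yields $\rho(\phi)\rho(\tau)\rho(\phi)^{-1}=\rho(\tau)^q$ for a topological generator $\tau$ of the image of $I_1$ in $\mathbb{Z}_\ell(1)$. Therefore the finite set of eigenvalues of $\rho(\tau)$ is stable under $\lambda\mapsto\lambda^q$, forcing each eigenvalue to be a root of unity. After replacing $\tau$ by a suitable power $\tau^m$ killing these roots of unity (and shrinking $I_1$ correspondingly), $\rho(\tau)$ becomes unipotent, and $L:=\log\rho(\tau)$ converges to a nilpotent endomorphism of $V$.

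Finally, I define $N$ by the requirement $Nt_\ell(\tau)=L$, which pins it down uniquely. For $\sigma\in I_1$ with $t_\ell(\sigma)=at_\ell(\tau)$, the desired identity $\rho(\sigma)=\exp(Nt_\ell(\sigma))$ then follows from $\rho(\sigma)=\rho(\tau)^a=\exp(aL)$ together with continuity and the multiplicativity of $\exp$ on the commutative unipotent subgroup generated by $\rho(\tau)$. Uniqueness is immediate upon taking $\log$ of the relation and specializing to some $\sigma$ with $t_\ell(\sigma)\neq 0$. The interpretation of $N$ as a morphism $V\to V(-1)$ is automatic from the formula: since $t_\ell(\sigma)\in\mathbb{Z}_\ell(1)$ and $Nt_\ell(\sigma)\in\mathrm{End}(V)$, one must have $N\in\mathrm{End}(V)\otimes\mathbb{Z}_\ell(-1)$. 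I expect the main obstacle to lie in the first step, namely uniformly controlling the image of the wild and prime-to-$\ell$ tame inertia; this crucially uses the hypothesis $\ell\neq p$ to separate the $p$-adic and $\ell$-adic Sylow theories, and without it the whole strategy collapses.
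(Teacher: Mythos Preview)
The paper does not prove this proposition: it is stated as a classical fact (Grothendieck's local monodromy theorem) with a reference to \cite{IllusieAutourDuTML}, and no argument is given. So there is no ``paper's own proof'' to compare against.

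Your argument is the standard one and is essentially correct. A few minor points: to conclude that $\rho(P)$ and the prime-to-$\ell$ tame part have finite image, the cleanest phrasing is that their images are pro-$p$ (resp.\ pro-prime-to-$\ell$) closed subgroups of $\GL_n(\mathcal{O}_E)$ whose intersection with the pro-$\ell$ congruence kernel is therefore trivial, hence they inject into the finite group $\GL_n(k_E)$. Your phrase ``any closed subgroup of $G_k$ whose pro-order is prime to $\ell$ has finite image'' captures this. Also, in the Frobenius step you should note that the conjugation relation $\phi\sigma\phi^{-1}\equiv\sigma^q$ holds on the tame quotient, not on $I_k$ itself, so one uses that $\rho|_{I_1}$ already factors through $t_\ell$ before invoking it; you do this implicitly. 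Otherwise the argument is exactly the one found in SGA~7~I, Expos\'e~I, or in the appendix to Serre--Tate, and would be accepted without objection.
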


We fix an isomorphism $\mathbb{Q}_\ell(1)\cong \mathbb{Q}_\ell$ in order to consider $N$ as a nilpotent endomorphism of $V$. Changing the choice of isomorphism $\mathbb{Q}_\ell(1)\cong \mathbb{Q}_\ell$ replaces $N$ by a scalar multiple, which will have no effect on the following discussion. From uniqueness of $N$, it follows that for any geometric Frobenius element $\Phi\in G_k$, we have $N \Phi = q \Phi N$.

Also recall the general monodromy filtration.

\begin{defprop} Let $V$ be a finite-dimensional vector space over any field, and let $N: V\rightarrow V$ be a nilpotent morphism. Then there is a unique separated and exhaustive increasing filtration $\Fil_i^N V\subset V$, $i\in \mathbb{Z}$, called the monodromy filtration, such that $N(\Fil_i^N V)\subset \Fil_{i-2}^N V$ for all $i\in \mathbb{Z}$ and $\gr_i^N V\cong \gr_{-i}^N V$ via $N^i$ for all $i\geq 0$.
\end{defprop}

In fact, we have the formula
\[
\Fil_i^N V = \sum_{i_1-i_2=i} \ker N^{i_1+1}\cap \im N^{i_2}
\]
for the monodromy filtration. Now we can formulate the weight-monodromy conjecture.

\begin{conj}[Deligne, {\cite{DeligneICM}}]\label{WeightMonodromy} Let $X$ be a proper smooth variety over $k$, and let $V=H^i(X_{\bar{k}},\bar{\mathbb{Q}}_\ell)$. Then for all $j\in \mathbb{Z}$ and for any geometric Frobenius $\Phi\in G_k$, all eigenvalues of $\Phi$ on $\gr_j^N V$ are Weil numbers of weight $i+j$, i.e. algebraic numbers $\alpha$ such that $|\alpha|=q^{(i+j)/2}$ for all complex absolute values.
\end{conj}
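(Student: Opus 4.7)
The plan is to reduce the conjecture from mixed characteristic to equal characteristic $p$ by tilting, in the case (treated in this paper) where $X$ sits as a set-theoretic complete intersection inside a projective smooth toric variety $X_{\Sigma}/k$. I replace $k$ by the perfectoid field $K$ obtained as the completion of $k(\varpi^{1/p^\infty})$ for a uniformizer $\varpi$; since $K/k$ is totally ramified, the residue fields agree, $t_\ell(I_K)=t_\ell(I_k)$, and $G_K\subset G_k$ surjects onto the residue-field Galois group. Consequently the monodromy operator $N$ on $V=H^i(X_{\bar k},\bar{\mathbb{Q}}_\ell)$ is unaffected by restriction to $G_K$, and weight-monodromy for $V$ over $k$ is equivalent to weight-monodromy for $V|_{G_K}$. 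Let $K^\flat$ be the tilt; by Theorem \ref{GeneralFW} one has $G_K\cong G_{K^\flat}$, with both fields sharing residue field $\mathbb{F}_q$.

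The core step is to build a $G_K$-equivariant injection $V|_{G_K}\hookrightarrow H^i(Z_{\overline{K^\flat}},\bar{\mathbb{Q}}_\ell)$ for a proper closed subvariety $Z\subset X_{\Sigma,K^\flat}$ of dimension $\dim X$ defined over a finitely generated dense subfield $k_0^\flat\subset K^\flat$. First, using Huber's cofinality of \'{e}tale tubular neighborhoods, I choose a qcqs open neighborhood $\tilde X\subset \mathcal{X}_{\Sigma,K}^{\ad}$ of $X^{\ad}$ with $H^i(X^{\ad}_{\bar K},\bar{\mathbb{Q}}_\ell)\overset{\sim}{\to} H^i(\tilde X_{\bar K},\bar{\mathbb{Q}}_\ell)$. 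Corollary \ref{ApproximationCompleteIntersection} then produces such a $Z$ with $Z^{\ad}\subset \pi^{-1}(\tilde X)$. Extending Proposition \ref{CohomToric} to open subsets via Theorem \ref{DecompletionTopoi} and Corollary \ref{DecompletionFrob}, pullback along $\pi$ gives an isomorphism $H^i(\tilde X_{\bar K},\bar{\mathbb{Q}}_\ell)\cong H^i(\pi^{-1}(\tilde X)_{\overline{K^\flat}},\bar{\mathbb{Q}}_\ell)$, and restriction along $Z^{\ad}\hookrightarrow \pi^{-1}(\tilde X)$ yields the desired map.

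Once this map is known to be injective, I finish as follows. De Jong's alterations reduce to the case $Z$ smooth projective (as pullback along a proper surjective alteration is injective on $\bar{\mathbb{Q}}_\ell$-cohomology of proper varieties of the same dimension). Spreading $Z$ out over a smooth curve over $\mathbb{F}_q$ (possible because $Z$ is defined over $k_0^\flat$) and invoking Deligne's theorem from Weil II, combined with the Terasoma--Ito reduction from the global to the local picture in characteristic $p$, yields weight-monodromy for $H^i(Z_{\overline{K^\flat}})$ as a $G_{K^\flat}$-representation. Galois equivariance of the injection transports this back to $V|_{G_K}$, hence to $V$ over $k$.

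The main obstacle is proving the injectivity of $H^i(X^{\ad}_{\bar K})\to H^i(Z_{\overline{K^\flat}})$. There is no algebraic map $Z\to X$ (the ambient object $\pi^{-1}(\tilde X)$ is perfectoid and $\pi$ is transcendental), so injectivity is not formal. My approach is to exploit Poincar\'{e} duality on the smooth proper $n$-dimensional varieties $X$ and $Z$: the composite
\[
H^i(X)\to H^i(Z)\xrightarrow{\mathrm{PD}_Z}H^{2n-i}(Z)^\vee(-n)\to H^{2n-i}(X)^\vee(-n)\xrightarrow{\mathrm{PD}_X^{-1}}H^i(X)
\]
should equal multiplication by the intersection number of the cycle classes of $X$ and $Z$ inside the ambient $\pi^{-1}(\tilde X)$; the combinatorial degree data underlying Corollary \ref{ApproximationCompleteIntersection} (where $Z$ is built from a high $p$-power approximation of the defining equations of $X$) will force this number to be a nonzero power of $p$ times the degree of $X$. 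Setting up cycle classes and intersection theory inside the perfectoid ambient space so that this computation can be carried out rigorously is the delicate point on which the whole argument hinges.
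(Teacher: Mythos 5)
You have the right skeleton (tilt to $K$, tubular neighborhood $\tilde X$ via Huber's finiteness theorem, the approximating subvariety $Z$ from Corollary \ref{ApproximationCompleteIntersection}, an alteration, Deligne's theorem in equal characteristic), but the step you yourself flag as "the delicate point on which the whole argument hinges" --- injectivity of $H^i(X^{\ad}_{\bar K},\bar{\mathbb{Q}}_\ell)\to H^i(Z'_{\overline{K^\flat}},\bar{\mathbb{Q}}_\ell)$ --- is exactly the heart of the proof, and your proposed route for it does not go through. You want to compute the composite $\mathrm{PD}_X^{-1}\circ(\mathrm{res})^\vee\circ\mathrm{PD}_Z\circ\mathrm{res}$ as multiplication by an intersection number of cycle classes of $X$ and $Z$ inside $\pi^{-1}(\tilde X)$. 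But $\pi^{-1}(\tilde X)$ is a perfectoid space, not an algebraic variety; $X$ and $Z$ live over the two different fields $K$ and $K^\flat$ and only meet through this transcendental ambient object, which (as the introduction emphasizes) behaves like a fractal of infinite degree. There is no cycle class of $X$ in the cohomology of $\pi^{-1}(\tilde X)_{K^\flat}$, no Gysin map, and no intersection product available there, so the "nonzero power of $p$ times the degree of $X$" computation cannot currently be made rigorous. Leaving this as the load-bearing step means the proof is not complete.

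The paper's argument for injectivity is substantially softer and avoids any intersection theory on the perfectoid space. The pullback $f^\ast\colon H^\ast(Y_{\mathbb{C}_p})\to H^\ast(Z'_{\mathbb{C}_p^\flat})$ is compatible with cup products, and one only needs to show it is \emph{nonzero in the single top degree} $2\dim Y$: since $Y$ and $Z'$ are geometrically connected smooth proper of the same dimension, $H^{2\dim Y}$ is one-dimensional on both sides, so $f^\ast$ there is either zero or an isomorphism. If it were zero, then by the commutative diagram relating $\tilde Y$, $\pi^{-1}(\tilde Y)$ and the ambient toric varieties (using Proposition \ref{CohomToric} to identify $H^{2\dim Y}$ of $X_{\Sigma,\mathbb{C}_p}$ and $X_{\Sigma,\mathbb{C}_p^\flat}$), the restriction $H^{2\dim Y}(X_{\Sigma,\mathbb{C}_p^\flat})\to H^{2\dim Y}(Z'_{\mathbb{C}_p^\flat})$ would also vanish; but the $\dim Y$-th power of the first Chern class of an ample line bundle on $X_{\Sigma,\mathbb{C}_p^\flat}$ restricts nontrivially to $Z'$ --- a purely algebraic statement in characteristic $p$. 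Given the isomorphism in top degree, Poincar\'e duality \emph{on $Y$ alone} (nondegeneracy of the cup-product pairing $H^i(Y)\times H^{2\dim Y-i}(Y)\to H^{2\dim Y}(Y)\cong H^{2\dim Y}(Z')$, which factors through the pairing on $Z'$) forces $f^\ast$ to be injective, indeed split, in every degree. I would recommend replacing your intersection-theoretic step with this cup-product/ample-class argument.
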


We note that in order to prove this conjecture, one is allowed to replace $k$ by a finite extension. Using the formalism of $\zeta$- and $L$-functions, the conjecture has the following interpretation. Let $\mathbb{K}$ be a global field, and let $X/\mathbb{K}$ be a proper smooth variety. Choose some integer $i$. Recall that the $L$-function associated to the $i$-th $\ell$-adic cohomology group $H^i(X) = H^i(X_{\bar{\mathbb{K}}},\bar{\mathbb{Q}}_\ell)$ of $X$ is defined as a product
\[
L(H^i(X),s) = \prod_v L_v(H^i(X),s)\ ,
\]
where the product runs over all places $v$ of $K$. Let us recall the definition of the local factor at a finite prime $v$ not dividing $\ell$, whose local field is $k$:
\[
L_v(H^i(X),s) = \det(1-q^{-s}\Phi | H^i(X)^{I_k})^{-1}\ .
\]
At primes of good reduction, the Weil conjectures imply that all poles of this expression have real part $\frac i2$. Moreover, one checks in the usual way that hence the product defining $L(H^i(X),s)$ is absolutely convergent when the real part of $s$ is greater than $\frac i2 + 1$, except possibly for finitely many factors. The weight-monodromy conjecture implies that all other local factors will not have any poles of real part greater than $\frac i2$.

Over equal characteristic local fields, Deligne, \cite{DeligneWeil2}, turned this argument into a proof:

\begin{thm} Let $C$ be a curve over $\mathbb{F}_q$, $x\in C(\mathbb{F}_q)$, such that $k$ is the local field of $C$ at $x$. Let $X$ be a proper smooth scheme over $C\setminus \{x\}$. Then the weight-monodromy conjecture holds true for $X_k = X\times_{C\setminus \{x\}} \Spec k$.
\end{thm}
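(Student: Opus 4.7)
The statement is the equal characteristic case of weight-monodromy, due to Deligne in \cite{DeligneWeil2}, and the strategy is to reinterpret the local Galois representation as the stalk of a lisse $\bar{\mathbb{Q}}_\ell$-sheaf on the curve $C$ and to apply the global purity theorem (Weil II) together with a purely local argument about pure lisse sheaves on punctured curves. Concretely, after possibly shrinking $C$ we may assume $X\to C\setminus\{x\}$ is a proper smooth morphism $f$; by proper smooth base change $\mathcal{F}=R^if_*\bar{\mathbb{Q}}_\ell$ is a lisse $\bar{\mathbb{Q}}_\ell$-sheaf on $U:=C\setminus\{x\}$. The generic geometric stalk of $\mathcal{F}$ is canonically $V=H^i(X_{\bar k},\bar{\mathbb{Q}}_\ell)$, and the $G_k$-action factors through the natural map $G_k\to\pi_1(U,\bar\eta)$, so the local monodromy and any Frobenius lift for $V$ are determined by $\mathcal{F}$.

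The first main input is Weil II (\cite{DeligneWeil2}, Cor.~3.3.9): since $f$ is proper and smooth, $\mathcal{F}$ is pure of weight $i$ on $U$, i.e.\ for every closed point $u\in U$ all eigenvalues of the geometric Frobenius $\Phi_u$ on $\mathcal{F}_{\bar u}$ are Weil numbers of weight $i$ relative to the residue field of $u$. The second main input is the local weight-monodromy theorem for pure lisse sheaves on curves, which asserts: if $\mathcal{G}$ is a lisse $\bar{\mathbb{Q}}_\ell$-sheaf on a smooth curve $U/\mathbb{F}_q$, pure of weight $w$, and $x\in C\setminus U$ is a point of the smooth compactification, then the associated representation $V_x$ of the local Galois group at $x$ satisfies weight-monodromy of weight $w$ (the grading of $V_x$ by the monodromy filtration $\Fil^N_\bullet$ has Frobenius eigenvalues of weight $w+j$ on $\gr^N_j$). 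Applied to $\mathcal{F}$ and $x$ it yields exactly the desired conclusion for $V=H^i(X_{\bar k},\bar{\mathbb{Q}}_\ell)$.

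The whole difficulty is thus concentrated in the second input, which is the heart of Weil II. The cleanest approach is the perverse-sheaf one: extend $\mathcal{F}[1]$ to the intermediate extension $j_{!*}\mathcal{F}[1]$ on $C$ under $j:U\hookrightarrow C$, and observe that $j_{!*}\mathcal{F}[1]$ is pure of weight $w+1$ as a perverse sheaf, by Deligne's theorem that intermediate extensions of pure perverse sheaves are pure (\cite{DeligneWeil2}, Thm.~1). Computing stalks at $x$, the cohomology of $j_{!*}\mathcal{F}[1]$ at $x$ is a shifted copy of $V_x^{I_k}/(\text{image from pure summands})$ and, more importantly, the hypercohomology of a small punctured disc around $x$ with coefficients in $\mathcal{F}$ carries a canonical filtration by weights. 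Unwinding the pointwise purity of $j_{!*}$ gives that, on the stalk $V_x$, the monodromy filtration $\Fil^N_\bullet$ agrees with the weight filtration shifted by $w$; in particular $\gr^N_j V_x$ is pure of weight $w+j$ as a $\Phi$-module. This is precisely the assertion. An alternative reduction (closer to Deligne's original argument) bypasses perverse sheaves by looking at high tensor powers $\mathcal{F}^{\otimes n}$ and bounding all eigenvalues of $\Phi$ on $H^0_c(U,\mathcal{F}^{\otimes n})$ via Weil II, then passing $n\to\infty$ and using that the monodromy filtration on $V_x$ is characterized by $N$.

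Thus the plan is: (i) spread out to get the lisse $\mathcal{F}$ on $U$; (ii) cite Weil II to get purity of $\mathcal{F}$; (iii) apply the local monodromy theorem for pure lisse sheaves, recorded in \cite{DeligneWeil2} (and discussed in \cite{IllusieAutourDuTML}), to transfer purity of $\mathcal{F}$ at $x$ into the weight-monodromy statement for $V$. The first and third steps are formal once one has the dictionary between $G_k$-representations and lisse sheaves near $x$; the second step is black-boxed. The main obstacle---and the reason this is a theorem and not a lemma---is step (iii): the coincidence of the monodromy filtration with a weight filtration, which in the perverse-sheaf formulation is exactly Deligne's stability of purity under intermediate extensions, and is the deep content imported from \cite{DeligneWeil2}.
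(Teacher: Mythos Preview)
Your proposal is correct, and your alternative route (bounding Frobenius eigenvalues on tensor powers) is essentially what the paper sketches; your primary route via perverse sheaves is a genuinely different packaging.

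The paper's summary runs as follows: after passing to a finite cover to make $I_k$ act unipotently on $V=H^i(X_{\bar{k}},\bar{\mathbb{Q}}_\ell)$, one considers the lisse sheaf $R^if_\ast\bar{\mathbb{Q}}_\ell$ on $C\setminus\{x\}$, pure of weight $i$ by the Weil conjectures. The $L$-function formalism for sheaves on curves then gives semicontinuity of weights at $x$, i.e.\ all Frobenius weights on $V^{I_k}$ are $\leq i$. One applies the same bound to all tensor powers $V^{\otimes j}$ and $(V^\vee)^{\otimes j}$, and concludes via a linear algebra lemma: $\gr_j^N V$ is pure of weight $i+j$ for all $j$ if and only if for all $j\geq 0$ the weights on $(V^{\otimes j})^{I_k}$ are $\leq ij$ and those on $((V^\vee)^{\otimes j})^{I_k}$ are $\leq -ij$. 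This is exactly your ``alternative reduction''.

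Your primary approach, deducing the coincidence of monodromy and weight filtrations from the purity of $j_{!\ast}(\mathcal{F}[1])$, is the BBD reformulation; it is cleaner conceptually but imports the whole perverse machinery, and the step you describe as ``unwinding the pointwise purity of $j_{!\ast}$'' is itself a nontrivial local computation (essentially \cite[5.1.14]{BBD} or Gabber's argument). The paper's tensor-power route stays closer to Deligne's original argument in Weil II and requires no perverse sheaves, at the cost of the somewhat opaque linear algebra lemma relating the monodromy grading to weight bounds on tensor powers of invariants.
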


Let us give a brief summary of the proof. Let $f: X\rightarrow C\setminus \{x\}$ be the proper smooth morphism. Possibly replacing $C$ by a finite cover, we may assume that the action of $I_k$ on  $V=H^i(X_{\bar{k}},\bar{\mathbb{Q}}_\ell)$ is unipotent. One considers the local system $R^i f_\ast \bar{\mathbb{Q}}_\ell$ on $C\setminus \{x\}$. By the Weil conjectures, this sheaf is pure of weight $i$. From the formalism of $L$-functions for sheaves over curves, one deduces semicontinuity of weights, which in this situation means that on the invariants $V^{I_k}$ of $V=H^i(X_{\bar{k}},\bar{\mathbb{Q}}_\ell)$, all occuring weights are $\leq i$. A similar property holds true for all tensor powers of $V$, and for all tensor powers of the dual $V^\vee$. Then one applies the following lemma from linear algebra, which applies for all local fields $k$.

\begin{lem} Let $V$ be an $\ell$-adic representation of $G_k$, on which $I_k$ acts unipotently. Then $\gr_j^N V$ is pure of weight $i+j$ for all $j\in \mathbb{Z}$ if and only if for all $j\geq 0$, all weights on $(V^{\otimes j})^{I_k}$ are at most $ij$, and all weights on $((V^{\vee})^{\otimes j})^{I_k}$ are at most $-ij$.
\end{lem}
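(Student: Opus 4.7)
The plan is to translate both directions into a single assertion about Frobenius weights on multiplicity spaces of a Jacobson--Morozov decomposition, and then extract the needed equality from the hypotheses on $V^{\otimes 2}$ and $(V^\vee)^{\otimes 2}$.

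For set-up: since $I_k$ acts through $\exp(N t_\ell)$, one has $V^{I_k} = \ker N$, and the Frobenius eigenvalues on $V$ agree with those on $\gr^N_\bullet V$. Decomposing the graded $\mathfrak{sl}_2$-module $\gr^N_\bullet V \cong \bigoplus_m U_m \otimes S_m$ with $U_m := \ker N \cap \gr^N_{-m} V$ and $S_m = \mathrm{Sym}^m$, the subspaces $U_m$ are canonical and $\Phi$-stable, and the identity $\Phi N = q^{-1} N \Phi$ together with the $\mathfrak{sl}_2$-structure gives: the Frobenius weights on $\gr^N_\mu V$ from the $S_m$-isotypic component are exactly $\{u + \mu + m : u \text{ a Frobenius weight on } U_m\}$. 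Purity of $\gr^N V$ with weight $i+\mu$ on $\gr^N_\mu$ is thus equivalent to the assertion: \emph{for every $m$ and every Frobenius weight $u_{m,k}$ of $U_m$, one has $u_{m,k} = i - m$}.

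The forward direction is then an immediate consequence of the tensor compatibility $\gr^N(V^{\otimes j}) = (\gr^N V)^{\otimes j}$: if $\gr^N V$ is pure, so is $\gr^N V^{\otimes j}$ of weight $ij + \mu$ on $\gr^N_\mu$, and the inclusion $\ker N \subset \Fil_0^N$ bounds the weights on $(V^{\otimes j})^{I_k}$ by $ij$; the same argument applied to $V^\vee$ (pure of weight $-i$) gives the dual bound.

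For the converse --- the substance of the lemma --- fix $m$. Under Clebsch--Gordan, $S_m \otimes S_m$ contains a trivial summand $S_0$, and the canonical subspace $\ker N \cap \gr^N_0(V^{\otimes 2})$ decomposes as $\bigoplus_m U_m \otimes U_m$, with the $m$-th summand arising from the $S_0$-piece of $S_m \otimes S_m$. Chasing the $\mathfrak{sl}_2$-structure together with the $q$-commutation $\Phi N = q^{-1} N \Phi$ through the Clebsch--Gordan combinatorics (the key computation is already visible on the test case $V = S_1$, where direct computation gives the single Frobenius eigenvalue $q\alpha^2$ on the $1$-dimensional $\tilde U_0$), the Frobenius weights on the $U_m \otimes U_m$ summand come out to be exactly $\{u_{m, k_1} + u_{m, k_2} + 2m\}$. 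The hypothesis that all weights on $(V^{\otimes 2})^{I_k}$ are $\leq 2i$ therefore gives $u_{m, k_1} + u_{m, k_2} + 2m \leq 2i$, and specializing $k_1 = k_2 = k$ yields $u_{m, k} \leq i - m$. Dually, the multiplicity space of $V^\vee$ in the $S_m$-isotypic component has Frobenius weights $-u_{m,k} - 2m$, and the identical computation on $(V^\vee)^{\otimes 2}$ converts the dual hypothesis into $u_{m, k} \geq i - m$. The two inequalities force equality, which is purity.

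The subtle point the whole argument hinges on is the ``$+2m$'' shift in the Frobenius weights on $U_m \otimes U_m$: the Jacobson--Morozov partner $N^+$ of $N$ is in general \emph{not} $\Phi$-equivariant, so one cannot simply read these weights off a naive tensor factorization $V_{(m)} \cong U_m \otimes S_m$ of the $S_m$-isotypic part. They must instead be computed intrinsically from $N$, the grading, and the relation $\Phi N = q^{-1} N \Phi$ --- and it is precisely this shift that strengthens the $j = 2$ hypothesis into the sharp bound $u_{m,k} \leq i - m$, rather than the weaker $u_{m,k} \leq i$ that a weight-blind tensor bound would give.
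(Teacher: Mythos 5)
The paper states this lemma without proof (it is invoked as a known piece of linear algebra going back to Deligne's Weil II argument), so there is nothing to compare against line by line; judged on its own, your argument is correct. The reformulation of purity as $u_{m,k}=i-m$ on the multiplicity spaces $U_m=\ker N\cap\gr^N_{-m}V$ is right (the eigenvalue on $(U_m\otimes S_m)_\mu$ is $q^{(\mu+m)/2}$ times the eigenvalue on $U_m$, by applying $N^{(\mu+m)/2}$ and using $N\Phi=q\Phi N$), and the key ``$+2m$'' shift on the $U_m\otimes U_m$ summand of $\ker N_2\cap\gr_0(V^{\otimes 2})$ is genuine: with $v_j\in\gr_{-m+2j}$ defined by $N^jv_j=v$ one computes that the invariant vector is $x=\sum_j(-1)^jv_j\otimes w_{m-j}$ and $\Phi x=q^m\alpha\beta x$, confirming your $S_1$ test case in general. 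Combined with the dual computation this pins down $u_{m,k}=i-m$ exactly, and your argument in fact only consumes the $j\le 2$ cases of the hypothesis, which is harmless (and is how the lemma is usually applied).

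Three points you assert without proof deserve a line each in a complete write-up: (a) the compatibility $\gr^{N}(V^{\otimes j})=(\gr^NV)^{\otimes j}$ of monodromy filtrations with tensor products, which both directions use; (b) the $\Phi$-stability of the $S_m$-isotypic decomposition and of the individual summands $U_m\otimes U_m$ of $\ker N_2\cap\gr_0$ (this follows because the isotypic decomposition is intrinsic to the pair $(N,\text{grading})$ and is unchanged when $N$ is rescaled to $q^{-1}N$); and (c) the passage from Frobenius eigenvalues on $(V^{\otimes 2})^{I_k}=\ker N_2\subset V^{\otimes 2}$ to eigenvalues on $\ker(N_2|_{\gr})$, which requires $\gr(\ker N_2)=\ker(N_2|_{\gr})$ --- true here by the dimension count $\dim\ker N=\sum_m\dim U_m$, but not a formal triviality. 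None of these is a gap in the sense of a wrong or missing idea; they are standard facts that the argument genuinely relies on.
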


Our main theorem is the following.

\begin{thm} Let $k$ be a local field of characteristic $0$. Let $Y$ be a geometrically connected proper smooth variety over $k$ such that $Y$ is a set-theoretic complete intersection in a projective smooth toric variety $X_{\Sigma}$. Then the weight-monodromy conjecture is true for $Y$.
\end{thm}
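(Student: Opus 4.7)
The plan is to convert the weight-monodromy question for $V = H^i(Y_{\bar k}, \bar{\mathbb{Q}}_\ell)$ into a statement in equal characteristic via the tilting machinery developed above, where Deligne's theorem is available. After standard reductions (replace $k$ by a finite extension so that $X_\Sigma$, the complete intersection presentation $Y = Y_1 \cap \cdots \cap Y_c$, and the splitting of the torus are all defined over $k$, and so that $I_k$ acts unipotently on $V$), I would pass to the perfectoid field $K = \widehat{k(\varpi^{1/p^\infty})}$, with tilt $K^\flat$ containing a dense local subfield $k^\flat \cong \mathbb{F}_q(\!(\varpi^\flat)\!)$ of characteristic $p$. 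The transition $k \leadsto K$ is harmless: $K/k$ is totally ramified pro-$p$, so $t_\ell \colon I_K \twoheadrightarrow \mathbb{Z}_\ell(1)$ remains surjective for $\ell \ne p$, whence the monodromy filtration on $V$ and the Frobenius weights on its graded pieces coincide as invariants of the $G_K$- and $G_k$-representations.

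Next I would set up the cohomological comparison. Let $\pi \colon X_{\Sigma, K^\flat}^\ad \to X_{\Sigma, K}^\ad$ be the projection of the previous section. By Huber's result computing \'etale cohomology of a closed adic subspace as a filtered colimit over a cofinal system of qcqs open neighborhoods, we have $H^i(\tilde Y, \mathbb{Q}_\ell) \xrightarrow{\sim} H^i(Y_K^\ad, \mathbb{Q}_\ell)$ for $\tilde Y \supset Y_K^\ad$ sufficiently small. For such a $\tilde Y$, Corollary \ref{ApproximationCompleteIntersection} produces a closed subvariety $Z \subset X_{\Sigma, K^\flat}$ of the same dimension as $Y$, defined over $k^\flat$, with $Z^\ad \subset \pi^{-1}(\tilde Y)$. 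Composing the pullback $\pi^*$ with restriction to $Z^\ad$, and then extending scalars to the completed algebraic closures $C$, $C^\flat$, yields a $G_K \cong G_{K^\flat}$-equivariant map
\[
\rho \colon H^i(Y_C^\ad, \mathbb{Q}_\ell) \xleftarrow{\sim} H^i(\tilde Y_C, \mathbb{Q}_\ell) \xrightarrow{\pi^*} H^i\bigl(\pi^{-1}(\tilde Y)_{C^\flat}, \mathbb{Q}_\ell\bigr) \xrightarrow{\mathrm{res}} H^i(Z_{C^\flat}^\ad, \mathbb{Q}_\ell).
\]

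The key geometric point is the injectivity of $\rho$. Since Proposition \ref{CohomToric} gives $\pi^* \colon H^*(X_{\Sigma, C}^\ad) \xrightarrow{\sim} H^*(X_{\Sigma, C^\flat}^\ad)$, both $H^i(Y_C^\ad)$ and $H^i(Z_{C^\flat}^\ad)$ are modules over this common ring. Tracking through the proof of Proposition \ref{ApproximationHypersurface}, one sees that the defining equations for $Z$ arise from large $p$-power roots of the equations cutting out the $Y_j$, so the cycle class of $Z$ in $X_\Sigma$ equals a positive multiple of (the $\pi^*$-image of) the class of $Y$. Combined with the Lefschetz hyperplane property for complete intersections in smooth projective toric varieties (surjectivity of $H^i(X_\Sigma) \to H^i(Y)$ in appropriate range and the Gysin formalism in top degree), this yields injectivity of $\rho$. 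This is the step I expect to be the main obstacle: $Z$ is produced by an approximation procedure and need not be smooth or irreducible, so extracting the correct cycle-class / Lefschetz input requires care.

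Finally, I would transfer the weights. Since $Z$ is a projective variety over the equal characteristic local field $k^\flat$, applying de Jong's alterations and spreading out over an affine curve in $\mathrm{Spec}\,\mathbb{F}_q[t]$ reduces the weight-monodromy statement for $H^i(Z_{C^\flat})$ to Deligne's theorem in \cite{DeligneWeil2} (as packaged by Terasoma and Ito); in particular, Frobenius weights on $H^i(Z_{C^\flat})^{I_K}$ are bounded above by $i$. Pulling back through the $G_K$-equivariant injection $\rho$ yields the same bound on weights of $V^{I_K}$, hence of $V^{I_k}$. Since $Y^j$ is itself a set-theoretic complete intersection in the projective smooth toric variety $X_\Sigma^j$, running the same argument in each degree produces the bound $\leq ij$ on weights of $(V^{\otimes j})^{I_k}$; running it on $H^{2d-i}$ of $Y^j$ and invoking Poincar\'e duality produces the dual bound $\leq -ij$ on $((V^\vee)^{\otimes j})^{I_k}$. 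The linear algebra lemma recalled in the excerpt then delivers the weight-monodromy conjecture for $V$.
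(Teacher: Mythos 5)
Your overall architecture matches the paper's: tilt $k$ to $K=\widehat{k(\varpi^{1/p^\infty})}$, use Huber's neighborhood theorem to replace $Y^\ad$ by a small open $\tilde Y$ with the same cohomology, use Corollary \ref{ApproximationCompleteIntersection} to find an algebraic $Z\subset X_{\Sigma,K^\flat}$ inside $\pi^{-1}(\tilde Y)$, and transfer weights through a Galois-equivariant map to an equal-characteristic situation where Deligne's theorem applies. The gap is exactly where you flag it: the injectivity of $\rho$. Your proposed route --- computing the cycle class of $Z$ as a multiple of the class of $Y$ and invoking a Lefschetz hyperplane property --- is not carried out and faces real obstructions: $Z$ is produced by an approximation algorithm followed by cutting with auxiliary hypersurfaces to fix the dimension, so it need not be smooth, irreducible, or reduced, its cycle class in the needed degree is not controlled by the construction, and no Lefschetz-type statement is available for the possibly singular $Z$ or for the merely set-theoretic complete intersection $Y$ in all degrees $i$. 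As stated, this step does not close.

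The paper's argument avoids cycle classes entirely. One first arranges $Z$ to be geometrically irreducible and defined over a global field, and replaces it by a projective smooth alteration $Z'$ (de Jong); the map $f^\ast\colon H^\ast(Y_{\mathbb{C}_p})\to H^\ast(Z'_{\mathbb{C}_p^\flat})$ is compatible with cup products. In top degree $2\dim Y$ both sides are one-dimensional, so $f^\ast$ there is either an isomorphism or zero; it cannot be zero because the commutative diagram with $H^{2\dim Y}(X^\ad_{\Sigma,\mathbb{C}_p})\cong H^{2\dim Y}(X^\ad_{\Sigma,\mathbb{C}_p^\flat})$ (Proposition \ref{CohomToric}) would then force the restriction of $c_1(\mathcal{L})^{\dim Y}$ to $Z'$ to vanish for $\mathcal{L}$ ample, which is false since $Z'$ has dimension $\dim Y$ and positive degree. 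Poincar\'e duality on the smooth proper $Y$ then makes $f^\ast$ split injective in every degree, so $H^i(Y_{\mathbb{C}_p})$ is a \emph{direct summand} of $H^i(Z'_{\mathbb{C}_p^\flat})$ and weight-monodromy is inherited directly from Deligne's theorem (via Terasoma/Ito) for $Z'$. This also renders your endgame via the tensor-power linear-algebra lemma and products $Y^j$ unnecessary: that detour is logically workable once injectivity is known for all the $Y^j$, but with the direct-summand statement the conclusion is immediate. If you repair the injectivity step along the paper's lines (alteration plus top-degree nonvanishing plus Poincar\'e duality), the rest of your write-up goes through.
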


\begin{proof} Let $\varpi\in k$ be a uniformizer, and let $K$ be the completion of $k(\varpi^{1/p^\infty})$; then $K$ is perfectoid. Let $K^\flat$ be its tilt. Then $K^\flat$ is the completed perfection of $k^\prime = \mathbb{F}_q((t))$, where $t=\varpi^\flat$. This gives an isomorphism between the absolute Galois groups of $K$ and $k^\prime$. The notion of weights and the monodromy operator $N$ is compatible with this isomorphism of Galois groups.

By Theorem 3.6 a) of \cite{HuberFiniteness}, there is some open neighborhood $\tilde{Y}$ of $Y^\ad_K$ in $X^\ad_{\Sigma,K}$ such that $\tilde{Y}_{\mathbb{C}_p}$ and $Y_{\mathbb{C}_p}^\ad$ have the same $\mathbb{Z}/\ell\mathbb{Z}$-cohomology. By induction, they have the same $\mathbb{Z}/\ell^m\mathbb{Z}$-cohomology for all $m\geq 1$. Also recall the following comparison theorem.

\begin{thm}[{\cite[Theorem 3.8.1]{Huber}}] Let $X$ be an algebraic variety over an algebraically closed nonarchimedean field $k$, with associated adic space $X^\ad$. Then
\[
H^i(X_\et,\mathbb{Z}/\ell^m\mathbb{Z})\cong H^i(X^\ad_\et,\mathbb{Z}/\ell^m\mathbb{Z})\ .
\]
\end{thm}

By Corollary \ref{ApproximationCompleteIntersection}, there is some closed subvariety $Z\subset X_{\Sigma,K^\flat}$ such that $Z^\ad\subset \pi^{-1}(\tilde{Y})$ and $\dim Z = \dim Y$. Moreover, we can assume that $Z$ is defined over a global field and geometrically irreducible. Let $Z^\prime$ be a projective smooth alteration of $Z$. We get a commutative diagram of \'{e}tale topoi of adic spaces
\[\xymatrix{
(X^\ad_{\Sigma,\mathbb{C}_p^\flat})_\et^\sim\ar[r]^{\pi}& (X^\ad_{\Sigma,\mathbb{C}_p})_\et^\sim \\
(\pi^{-1}(\tilde{Y})_{\mathbb{C}_p^\flat})_\et^\sim\ar@{^(->}[u]\ar[r]&(\tilde{Y}_{\mathbb{C}_p})_\et^\sim\ar@{^(->}[u] \\
(Z^{\prime \ad}_{\mathbb{C}_p^\flat})_\et^\sim\ar[u]& (Y^\ad_{\mathbb{C}_p})_\et^\sim\ar@{^(->}[u]\ .
}\]
There is a canonical action of the absolute Galois group $G=G_K=G_{K^\flat}$ on this diagram such that all morphisms are $G$-equivariant. Using the comparison theorem, this induces a $G$-equivariant map
\[
f^{\ast}: H^i(Y_{\mathbb{C}_p,\et},\mathbb{Z}/\ell^m\mathbb{Z})=H^i(\tilde{Y}_{\mathbb{C}_p,\et},\mathbb{Z}/\ell^m\mathbb{Z})\rightarrow H^i(Z_{\mathbb{C}_p^\flat,\et}^\prime,\mathbb{Z}/\ell^m\mathbb{Z})\ ,
\]
compatible with the cup product. Formally taking the inverse limit and tensoring with $\bar{\mathbb{Q}}_\ell$, it follows that we get a $G$-equivariant map
\[
H^i(Y_{\mathbb{C}_p,\et},\bar{\mathbb{Q}}_{\ell})\rightarrow H^i(Z_{\mathbb{C}_p^\flat,\et}^\prime,\bar{\mathbb{Q}}_{\ell})\ .
\]

\begin{lem} For $i=2\dim Y$, this is an isomorphism.
\end{lem}

\begin{proof} For all $m$, we have a commutative diagram
\[\xymatrix{
H^{2\dim Y}(X_{\Sigma,\mathbb{C}_p^\flat,\et},\mathbb{Z}/\ell^m\mathbb{Z})\ar[d] & H^{2\dim Y}(X_{\Sigma,\mathbb{C}_p,\et},\mathbb{Z}/\ell^m\mathbb{Z})\ar[l]_{\cong}\ar[d] \\
H^{2\dim Y}(\pi^{-1}(\tilde{Y})_{\mathbb{C}_p^\flat,\et},\mathbb{Z}/\ell^m\mathbb{Z})\ar[d] & H^{2\dim Y}(\tilde{Y}_{\mathbb{C}_p,\et},\mathbb{Z}/\ell^m\mathbb{Z})\ar[l]\ar[d]^{\cong} \\
H^{2\dim Y}(Z_{\mathbb{C}_p^\flat,\et}^\prime,\mathbb{Z}/\ell^m\mathbb{Z}) & H^{2\dim Y}(Y_{\mathbb{C}_p,\et},\mathbb{Z}/\ell^m\mathbb{Z}) 
}\]
The isomorphism in the top row is from Proposition \ref{CohomToric}. We can pass to the inverse limit over $m$ and tensor with $\bar{\mathbb{Q}}_{\ell}$. If
\[
H^{2\dim Y}(Y_{\mathbb{C}_p,\et},\bar{\mathbb{Q}}_{\ell})\rightarrow H^{2\dim Y}(Z_{\mathbb{C}_p^\flat,\et}^\prime,\bar{\mathbb{Q}}_{\ell})\ .
\]
is not an isomorphism, it is the zero map, and hence the diagram implies that the restriction map
\[
H^{2\dim Y}(X_{\Sigma,\mathbb{C}_p^\flat,\et},\bar{\mathbb{Q}}_\ell)\rightarrow H^{2\dim Y}(Z_{\mathbb{C}_p^\flat,\et}^\prime,\bar{\mathbb{Q}}_\ell)
\]
is the zero map as well. But the $\dim Y$-th power of the first Chern class of an ample line bundle on $X_{\Sigma,\mathbb{C}_p^\flat}$ will have nonzero image in $H^{2\dim Y}(Z_{\mathbb{C}_p^\flat,\et}^\prime,\bar{\mathbb{Q}}_\ell)$.
\end{proof}

Now the Poincar\'{e} duality pairing implies that $H^i(Y_{\mathbb{C}_p,\et},\bar{\mathbb{Q}}_{\ell})$ is a direct summand of $H^i(Z_{\mathbb{C}_p^\flat,\et}^\prime,\bar{\mathbb{Q}}_{\ell})$. By Deligne's theorem, $H^i(Z_{\mathbb{C}_p^\flat,\et}^\prime,\bar{\mathbb{Q}}_{\ell})$ satisfies the weight-monodromy conjecture, and hence so does its direct summand $H^i(Y_{\mathbb{C}_p,\et},\bar{\mathbb{Q}}_{\ell})$.
\end{proof}

\bibliographystyle{abbrv}
\bibliography{PhDThesis}


\end{document}